\newtheorem{df}{Definition}[section]
\newtheorem{thm}{Theorem}[section]
\newtheorem{lem}{Lemma}[section]
\newtheorem{pro}{Proposition}[section]
\newtheorem{rem}{Remark}[section]
\newcommand{\dis}{\displaystyle}
\newcommand{\R}{{\Bbb R}}
\newcommand{\N}{{\Bbb N}}
\newcommand{\pa}{\partial}
\newcommand{\loc}{\text{loc}}
\newcommand{\h}{\hspace{0.5mm}}
\newcommand{\hh}{\hspace{5mm}}
\newcommand{\hhh}{\hspace{7.5mm}}
\def\cad#1{\csname #1\endcsname }
\title{Non self-similar blow-up solutions to the heat equation
with nonlinear boundary conditions}
\author{Junichi Harada}
\date{}
\begin{document}
\maketitle

\begin{abstract}
This paper is concerned with finite blow-up solutions of
the heat equation with nonlinear boundary conditions.
It is known that
a rate of blow-up solutions is the same as the self-similar rate
for a Sobolev subcritical case.
A goal of this paper is to construct a blow-up solution
whose blow-up rate is different from the self-similar rate
for a Sobolev supercritical case.
\end{abstract}

\noindent
{\bf Keyword}
Type II blow-up; nonlinear boundary condition

\section{Introduction}

We study positive solutions of the heat equation with nonlinear boundary conditions:
\begin{equation}\label{u-eq}
\begin{cases}
\dis
 \pa_tu = \Delta u,
& (x,t)\in\R_+^n\times(0,T),
\\ \dis
 \pa_{\nu}u=u^q,
& (x,t)\in\pa\R_+^n\times(0,T),
\\ \dis
 u(x,0) = u_0(x),
& x\in\R_+^n,
\end{cases}
\end{equation}
where $\R_+^n=\{x\in\R^n;x_n>0\}$, $\pa_{\nu}=-\pa/\pa x_n$, $q>1$ and
\[
 u_0\in C(\overline{\R_+^n})\cap L^\infty(\R_+^n),
\hh u_0(x)\geq0.
\]
It is well known that \eqref{u-eq} admits a unique local classical solution
$u(x,t)\in BC(\overline{\R_+^n}\times[0,\tau))\cap C^{2,1}(\overline{\R_+^n}\times(0,\tau))$
for small $\tau>0$,
where $BC(\Omega)=C(\Omega)\cap L^\infty(\Omega)$.
However
by the presence of nonlinearity $u^q$ on the boundary,
a solution $u(x,t)$ may blow up in a finite time $T>0$, namely
\[
 \limsup_{t\to T}\|u(t)\|_{L^{\infty}(\R_+^n)} = \infty.
\]
In fact,
a solution of \eqref{u-eq} actually blows up in a finite time
under some conditions on the initial data
(e.g. \cite{Deng-F-L}, \cite{Fila-Q}, \cite{Ishige-K}).
In this paper,
we are concerned with the asymptotic behavior of blow-up solutions of \eqref{u-eq}.
Let $q_S=n/(n-2)$ if $n\geq3$ and $q_S=\infty$ if $n=1,2$.
For the case $1<q<q_S$,
it is known that
a finite time blow-up solution $u(x,t)$ of \eqref{u-eq} satisfies
\begin{equation}\label{TypeI-eq}
 \sup_{t\in(0,T)}(T-t)^{1/2(q-1)}\|u(t)\|_{L^\infty(\R_+^n)} < \infty,
\end{equation}
where $T>0$ is the blow-up time of $u(x,t)$ (\cite{Chlebik-F}, \cite{Quittner-S}).
More precisely,
let $x_0\in\pa\R_+^n$ be the blow-up point of $u(x,t)$,
then the asymptotic behavior of $u(x,t)$ is described by
the backward self-similar blow-up solution (\cite{Chlebik-F2}):
\begin{equation}\label{selfsimilar-eq}
 \lim_{t\to T}
 \sup_{|z|<R(T-t)^{1/2}}\left| (T-t)^{1/2(q-1)}u(x_0+z,t)-\chi\left( z_n/\sqrt{T-t} \right) \right|
\end{equation}
for any $R>0$,
where $\chi(\xi)$ is a unique positive solution of
\[
\begin{cases}
\dis
 \chi'' - \frac{\xi}{2}\chi' - \frac{\chi}{2(q-1)} = 0
& \text{for } \xi>0,
\\ \dis
 \chi' = -\chi^q
& \text{on } \xi=0.
\end{cases}
\]
Following their works,
more precise asymptotic behavior of blow-up solutions were studied in \cite{HaradaBlow-up,HaradaBlow-up2}.
In general,
the estimate \eqref{TypeI-eq} is known to be important as the first step
to study the asymptotic behavior of blow-up solutions.
Once \eqref{TypeI-eq} is derived,
one may obtain more precise asymptotic behavior such as \eqref{selfsimilar-eq}.
However
it is not yet  known whether \eqref{TypeI-eq} always holds for the case $q\geq q_S$.
An aim of this paper is to show the existence of finite time blow-up solutions of \eqref{u-eq}
which does not satisfy \eqref{TypeI-eq} for some range of $q\geq q_S$.
This kind of non self-similar blow-up phenomenon was already studied in various semilinear parabolic equations.
Particularly,
this paper is motivated by \cite{Herrero-V,Herrero-V2}.
In that paper,
they studied finite time blow-up solutions of
\begin{equation}\label{Fujita-eq}
 u_t = \Delta u + u^p,
\hh (x,t)\in\R^n\times(0,T).
\end{equation}
There are vast papers devoting finite time blow-up solutions of \eqref{Fujita-eq}
and their asymptotic behavior.
Let
\[
 p_S =
 \begin{cases}
 \infty & \text{if } n=1,2,\\ \dis
 \frac{n+2}{n-2} & \text{if } n\geq3,
 \end{cases}
\hspace{10mm}
 p_{JL} =
 \begin{cases}
 \dis
 \infty & \text{if } n\leq10,\\ \dis
 \frac{(n-2)^2-4n+8\sqrt{n-1}}{(n-2)(n-10)} & \text{if } n\geq11.
 \end{cases}
\]
As for the blow-up rate,
it was shown in \cite{Giga-M-S,Giga-K} that
if $1<p<p_S$, every finite blow-up solution of \eqref{Fujita-eq} satisfies
\begin{equation}\label{FujitaTypeI-eq}
 \sup_{t\in(0,T)}(T-t)^{1/(p-1)}\|u(t)\|_{L^\infty(\R^n)}
<
 \infty.
\end{equation}
This estimate is corresponding to \eqref{TypeI-eq},
which is called type I blow-up.
However
\eqref{FujitaTypeI-eq} does not hold in general for $p\geq p_S$.
In fact,
Herrero and Vel\'azuquez (\cite{Herrero-V,Herrero-V2}) constructed finite time blow-up
solutions satisfying
\begin{equation*}
 \sup_{t\in(0,T)}(T-t)^{1/(p-1)}\|u(t)\|_{L^\infty(\R^n)} = \infty
\end{equation*}
for $q>q_{JL}$ (see also \cite{Mizoguchi}).
This blow-up is called type II.
They also gave the exact blow-up rate for type II blow-up solutions constructed in that paper.
Their method relies on the matched asymptotic expansion technique.
However
this technique includes a formal argument,
it is justified by Brouwer's fixed point type theorem with tough pointwise a priori estimates.
This technique is known to be a strong tool to study the non self-similar phenomena
in semilinear parabolic equations.

In this paper,
following their arguments,
we will construct non self-similar blow-up solutions of \eqref{u-eq}
which does not satisfy \eqref{TypeI-eq}.

\begin{thm}\label{1-thm}
Let $q$ be JL-supercritical $(${\rm see Definition \ref{HJL-df}}$)$.
Then there exists a positive $x_n$-axial symmetric initial data $u_0(x)\in BC(\overline{\R_+^n})$
such that
a solution $u(x,t)$ of {\rm\eqref{u-eq}} with the initial data $u_0(x)$ blows up in a finite time $T>0$
and satisfies
\begin{equation}\label{TypeII-eq}
 \sup_{t\in(0,T)}(T-t)^{1/2(q-1)}\|u(t)\|_{L^\infty(\R_+^n)} = \infty.
\end{equation}
\end{thm}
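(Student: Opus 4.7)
The strategy is to adapt the matched asymptotic expansion technique of Herrero and Vel\'azquez (\cite{Herrero-V, Herrero-V2}) for \eqref{Fujita-eq} to the half-space with nonlinear boundary. First I would pass to backward self-similar variables by setting $y = x/\sqrt{T-t}$, $s = -\log(T-t)$, and
\[
 w(y,s) = (T-t)^{1/2(q-1)} u(x,t).
\]
A direct computation turns \eqref{u-eq} into
\begin{equation*}
 w_s = \Delta w - \tfrac12 y\cdot\nabla w - \frac{w}{2(q-1)}
 \hh\text{in } \R_+^n,
 \hh
 \pa_{\nu}w = w^q \hh\text{on } \pa\R_+^n,
\end{equation*}
and the sought estimate \eqref{TypeII-eq} is equivalent to $\|w(\cdot,s)\|_{L^{\infty}(\R_+^n)}\to\infty$ as $s\to\infty$.

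Second, I would identify a singular stationary profile $\varphi^{*}$ (either of the elliptic problem $\Delta U=0$ with $\pa_\nu U=U^q$, or of the self-similar stationary equation above) which behaves like $c_{*}|x|^{-1/(q-1)}$ near the origin, the unique scaling compatible with the boundary nonlinearity. The JL-supercritical hypothesis (Definition \ref{HJL-df}) should be precisely what guarantees both the existence of $\varphi^{*}$ with the required monotonicity/decay and the correct spectral structure of the linearized operator
\[
 \cH v = \Delta v - \tfrac12 y\cdot\nabla v - \frac{v}{2(q-1)},
 \hh
 \pa_\nu v = q\,(\varphi^{*})^{q-1} v \hh\text{on } \pa\R_+^n,
\]
namely that its unstable (resp.\ stable) subspace has the dimension needed to parametrize the formal profile and to run the fixed point argument below.

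Third, I would build a formal solution by splitting $\R_+^n$ into an inner region $|y|\lesssim \lambda(s)$, where $w(\cdot,s)$ follows a time-dependent rescaling $\lambda(s)^{-1/(q-1)}\varphi^{*}(\,\cdot\,/\lambda(s))$ of the singular profile, and an outer region where the linearized self-similar flow around $\varphi^{*}$ dominates. Matching these expansions across an overlap annulus fixes the scale $\lambda(s)\to 0$ and thereby determines the type II rate, strictly faster than $(T-t)^{-1/2(q-1)}$. To convert the formal ansatz into a rigorous construction I would follow the Brouwer fixed point strategy of \cite{Herrero-V, Herrero-V2}: parametrize initial data by the coefficients of the finitely many unstable eigenmodes of $\cH$ (supplied by JL-supercriticality), and show that for some choice of parameters the corresponding trajectory stays pointwise close to the formal profile on a set exhausting $\R_+^n$ as $s\to\infty$. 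The imposed $x_n$-axial symmetry is preserved by the flow and considerably reduces the spectral analysis, since only axially symmetric eigenfunctions enter.

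The main obstacle will be the derivation of uniform pointwise a priori estimates on the entire trajectory together with the compatibility check at the matching interface: unlike in \eqref{Fujita-eq}, the nonlinearity here lives on $\pa\R_+^n$, so $\varphi^{*}$ is singular at a boundary point and $\cH$ carries a nonstandard Robin-type boundary condition with the unbounded coefficient $q(\varphi^{*})^{q-1}$. This makes the weighted eigenfunction estimates, the boundary-layer analysis of the inner profile, and the control of the outer linear semigroup near $\pa\R_+^n$ significantly heavier than in the original Herrero--Vel\'azquez setting, and will require most of the technical work.
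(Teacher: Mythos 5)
Your overall plan follows the same route as the paper: backward self-similar variables, the singular elliptic profile $U_\infty(x)=V(\theta)r^{-1/(q-1)}$ of $\Delta U=0$, $\pa_\nu U=U^q$, linearization around it, matched inner/outer asymptotics, and a Brouwer/degree fixed point parametrized by the finitely many spectral modes with eigenvalue below the target one. The JL-supercritical hypothesis does play exactly the role you assign it: it makes the singular Robin coefficient ${\cal K}r^{-1}$ subcritical for the trace Hardy inequality, which controls the spectrum of the linearization.

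There is, however, a concrete error in the third step as written. You propose the inner ansatz $\lambda(s)^{-1/(q-1)}\varphi^{*}(\,\cdot\,/\lambda(s))$ where $\varphi^{*}$ is the \emph{singular} stationary profile. But $\varphi^{*}=V(\theta)|x|^{-1/(q-1)}$ is exactly homogeneous of degree $-1/(q-1)$, so this rescaling reproduces $\varphi^{*}$ verbatim for every $\lambda$: it is not a one-parameter family, and it can never match a solution that is bounded at the origin. What is needed in the inner region is a rescaled \emph{regular} (bounded) stationary solution: there must exist a family $\{U_{\alpha}\}_{\alpha>0}$ of positive $x_n$-axially symmetric classical solutions of $\Delta U=0$, $\pa_\nu U=U^q$ with $U_{\alpha}(0)=\alpha$, $U_{\alpha}(x)=\alpha U_1(\alpha^{q-1}x)$, $U_{\alpha}<U_\infty$, and $U_{\alpha}\nearrow U_\infty$ as $\alpha\to\infty$, together with the precise large-$r$ expansion $U_{\alpha}=U_\infty-(k_\alpha+o(1))e_1(\theta)r^{-(m+\mu_1)}$. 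The inner profile is then $U_{\beta(s)}$ with $\beta(s)\to\infty$, and the exponent $\mu_1$ coming out of that expansion is what gets matched against the outer decay rate $e^{-\lambda^*s}$. The existence of this regular family and its expansion is itself a nontrivial input (Theorem 3.1 in the paper, valid precisely in the JL-supercritical/critical range) and without it the matching has nothing to match to.

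Beyond this, the obstacle you only gesture at — the unbounded Robin coefficient on $\pa\R_+^n$ — absorbs most of the paper's effort, and the way it is overcome is not obvious from the Herrero--Vel\'azquez template: one conjugates by the weight $\sigma(y)=r^{-\gamma}e_1(\theta)$ (a bounded-below harmonic function satisfying the linear boundary condition) to kill the singular coefficient and obtain a degenerate divergence-form operator with weight ${\cal B}=\sigma^2$, builds the heat kernel by Grigor'yan/Saloff-Coste two-sided Gaussian bounds in the associated weighted volume, and, because the pointwise radial decay argument of Herrero--Vel\'azquez is unavailable in this non-radial half-space setting, replaces it by a combination of $L^2_{\cal C}$-decay for the projected evolution with an $L^\infty$--$L^2$ smoothing estimate for the weighted semigroup. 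You would need to supply those ingredients for the pointwise a priori estimates in the fixed-point argument to close.
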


\begin{rem}
As far as the author knows,
this paper seems to be the first one
which treats non self-similar blow-up solutions in a non radial setting.
However
we will see that
our argument is reduced to a radial case in the matching process.
\end{rem}

Our idea of the proof is almost same as that of \cite{Herrero-V2}.
To study finite time blow-up solutions,
we first introduce the self-similar variables as usual.
\[
 \varphi(y,s) =
 (T-t)^{1/2(q-1)}u\left( (T-t)^{1/2}x,t \right),
\hspace{7.5mm} T-t=e^{-s}.
\]
Then
we will construct a solution which converges to the singular stationary solution $U_\infty(y)$
in the self-similar variables.
Since $U_\infty(0)=\infty$,
this solution gives the desired non self-similar blow-up solution
satisfying $\|\varphi(s)\|_\infty\to\infty$ as $s\to\infty$,
which is equivalent to \eqref{TypeII-eq}.
To do that,
we linearize the rescaled equation around the singular stationary solution $U_\infty(y)$
and
construct a solution which behaves as
\begin{equation}\label{expdecay-eq}
 \varphi(y,s) \sim U_\infty(y) + ce^{-\lambda_\ell s}\phi_\ell(y),
\end{equation}
where $\phi_\ell(y)$ and $\lambda_\ell$ are the $\ell$-th eigenfunction
and the $\ell$-th eigenvalue of the linearized operator.
However
since $U_\infty(0)=0$ and $|\phi_\ell(0)|=\infty$,
this does not give the desired blow-up solution.
To overcome this difficulty,
following the idea in \cite{Herrero-V2},
we assume a solution behaves a different way
in an inner region $0<|y|<R(s)$ and an outer region $|y|>R(s)$ ($R(s)\to0$ as $s\to0$).
In fact,
we
will see that a solutions behaves as \eqref{expdecay-eq} in the outer region,
however 
it is described in a different way in the inner region.
In this argument,
the matching process at $|y|=R(s)$ plays a crucial role.
Finally
we justify this formal argument by Brouwer's type fixed point theorem. 

In a non radial setting,
to obtain a large time decay estimate,
we can not apply the method in \cite{Herrero-V2}.
Here
we improve their argument
by combining the $L^2$-decay of solutions and the $L^\infty$-$L^2$ estimate for
the linearized equation.
Furthermore
we provide the fundamental solution of the heat equation with a singular boundary condition
and establish its upper and lower bound.
This equation is strongly related the heat equation with an inverse-square potential.

The rest of this paper is organized as follows.
In Section \ref{Notations-sec},
we collect notations for convenience.
In Section \ref{Preliminaries-sec},
we recall a singular stationary solution and regular stationary solutions. 
Furthermore
we provide a definition of a JL-critical exponent.
Section \ref{Linearizedproblems-sec}
is devoted to the study of linearized problem in the self-similar variables.
Here
the fundamental solution of the heat equation with a singular boundary condition
and
its Duhamel's principle are discussed.
In Section \ref{Functionalsetting-sec},
we introduce a suitable functional space and give fundamental properties of solutions
for a later argument.
Finally
Section \ref{Shorttime-sec}\hspace{0.5mm}-\hspace{0.5mm}Section \ref{Exterior-sec} provides 
a priori estimates to apply a fixed point theorem.
This part is a key and the most heavy part in this paper.
In Appendix,
we collect fundamental properties of eigenfunctions of the linearized problem.

\section{Notations}\label{Notations-sec}

\begin{df}\label{Hx_naxial-df}
A function $u(x)$ defined on $\R_+^n$ is called a $x_n$-axial symmetric function,
if $u(x)$ is expressed by $u(x)=U(|x'|,x_n)$ for some function
$U$ defined on $\R_+\times\R_+$.
\end{df}

Throughout this paper,
solutions are always assumed to be $x_n$-axial symmetric functions.
For $x_n$-axial symmetric functions,
we use the polar coordinate:
\[
 r=\sqrt{|x'|^2+x_n^2},
\hspace{5mm}
 \theta=\arctan\left( \frac{|x'|}{x_n} \right).
\]
Let
$S_+^{n-1}=\{\omega=(\omega',\cos\theta);
 \omega'\in\R^{n-1},\ \theta\in[0,\pi/2),\ |\omega'|^2+(\cos\theta)^2=1\}$
be a half unit sphere.
A function $\xi(\omega)$ on $S_+^{n-1}$ is called a $x_n$-axial symmetric function
if $\xi(\omega)$ depends only on $\theta$.
We denote by $\Delta_S$
the Laplace-Beltrami operator on $S^{n-1}$.
Then
the Laplace-Beltrami operator $\Delta_S$
is expressed in a local coordinate by
\[
 \Delta_S\xi
=
 \bigl(
 \pa_{\theta\theta}
 +
 (n-2)(\cot\theta)\pa_{\theta}
 \bigr)\xi
\]
for any $x_n$-axial symmetric function $\xi\in C^2(S_+^{n-1})$.
Furthermore
the positive (negative) part of a function $u$
is denoted by
$u_+=\max\{u,0\}$ ($u_-=\max\{-u,0\}$).
From this definition,
it is clear that $u=u_+-u_-$.
Throughout this paper,
we use
\[
 m = 1/(q-1).
\]

\section{Preliminaries}\label{Preliminaries-sec}

\subsection{Singular stationary solutions}

First we introduce a singular solution of the following elliptic problem:
\begin{equation}\label{AA2-eq}
 \Delta U = 0 \hspace{3mm}\text{in } \R_+^n,
\hspace{10mm}
 \pa_\nu U = U^q \hspace{3mm}\text{on } \pa\R_+^n.
\end{equation}
We look for a singular solution which has a special form:
\[
 U_{\infty}(x)=V(\theta)r^{-1/(q-1)}.
\]
Then $V(\theta)$ is a solution of
\begin{equation}\label{HeatJL1-eq}
\begin{cases}
 \Delta_SV
=
 m(n-2-m) V
& \text{in } (0,\pi/2),
\\
 \pa_{\theta}V=V^q
& \text{on } \{\pi/2\}.
\end{cases}
\end{equation}

\begin{lem}[Lemma 9 \cite{Quittner-R}]\label{HexistenceV-lem}
For $q>(n-1)/(n-2)$,
there exists a unique positive solution of \eqref{HeatJL1-eq}.
\end{lem}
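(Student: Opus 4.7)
The key observation is that although the boundary condition in \eqref{HeatJL1-eq} is nonlinear, the interior equation is \emph{linear} in $V$. Writing $\Delta_S$ in the $x_n$-axial form turns the problem into the ODE
\[
V''(\theta)+(n-2)(\cot\theta)V'(\theta)-m(n-2-m)V(\theta)=0,\qquad \theta\in(0,\pi/2),
\]
with the regularity condition $V'(0)=0$ at the pole and the Robin-type condition $V'(\pi/2)=V(\pi/2)^q$. Multiplying by $\sin^{n-2}\theta$ puts this in Sturm--Liouville form
\[
\bigl(\sin^{n-2}\theta \cdot V'\bigr)' = m(n-2-m)\sin^{n-2}\theta \cdot V,
\]
and the hypothesis $q>(n-1)/(n-2)$ is equivalent to $m<n-2$, so the coefficient $m(n-2-m)$ is strictly positive.

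I would then let $W$ denote the unique regular solution of the initial value problem with $W(0)=1$ and $W'(0)=0$. Existence and uniqueness of $W$ on $[0,\pi/2]$, despite the regular singular point at $\theta=0$, follow from the Volterra formulation
\[
W(\theta)=1+m(n-2-m)\int_0^\theta (\sin\sigma)^{-(n-2)}\int_0^\sigma(\sin\tau)^{n-2}W(\tau)\,d\tau\,d\sigma,
\]
whose kernel is integrable near $\theta=0$ (the inner integral is $O(\sigma^{n-1})$ while $(\sin\sigma)^{-(n-2)}=O(\sigma^{-(n-2)})$), so a contraction-mapping argument on a small interval followed by standard continuation produces $W$ on $[0,\pi/2]$. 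By linearity every positive regular solution of the interior ODE has the form $V=aW$ with $a>0$, and the boundary condition collapses to the scalar equation
\[
a^{q-1}=\frac{W'(\pi/2)}{W(\pi/2)^q}.
\]

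It therefore suffices to show $W>0$ on $[0,\pi/2]$ and $W'(\pi/2)>0$. Integrating the Sturm--Liouville form from $0$ gives $\sin^{n-2}\theta \cdot W'(\theta)=m(n-2-m)\int_0^\theta(\sin\tau)^{n-2}W(\tau)\,d\tau$, so an elementary bootstrap (start from $W(0)=1$, deduce $W'\geq 0$ locally, hence $W\geq 1$ on the maximal interval where $W\geq 0$, which is thus all of $[0,\pi/2]$) yields both $W\geq 1$ and $W'>0$ on $(0,\pi/2]$. Consequently the right-hand side of the scalar equation above is strictly positive and $a^*=\bigl(W'(\pi/2)/W(\pi/2)^q\bigr)^{1/(q-1)}$ is its unique positive root, giving the unique positive solution $V=a^*W$. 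The only genuinely delicate point I anticipate is the rigorous handling of the regular singular point at $\theta=0$---the Volterra formulation sidesteps it cleanly by automatically encoding the compatibility $W'(0)=0$ and making uniqueness trivial on a small initial interval; everything else reduces to one-variable monotonicity and algebra.
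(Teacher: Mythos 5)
Your proof is correct, and it is genuinely self-contained, whereas the paper itself supplies no argument and simply cites Lemma 9 of \cite{Quittner-R}. Your central observation --- that the interior equation is \emph{linear} in $V$ while the nonlinearity sits only in the Robin condition at $\theta=\pi/2$ --- is exactly what collapses the problem to a one-parameter scaling $V=aW$ and turns the boundary condition into the single scalar equation $a^{q-1}=W'(\pi/2)/W(\pi/2)^{q}$. The Sturm--Liouville reduction is correct, the Volterra formulation does cleanly handle the regular singular point at $\theta=0$, and the monotonicity bootstrap giving $W\geq 1$ and $W'>0$ on $(0,\pi/2]$ uses $m(n-2-m)>0$, which is precisely where $q>(n-1)/(n-2)$ enters; this is well matched to the statement, since for $q=(n-1)/(n-2)$ the coefficient vanishes, $W$ is constant, $W'(\pi/2)=0$, and no positive $a$ exists.

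One step should be made explicit rather than left implicit. The Volterra equation establishes existence and uniqueness of the particular normalized solution $W$ with $W(0)=1$, $W'(0)=0$, but the claim that ``every positive regular solution is $aW$'' additionally uses the fact that the solutions of the ODE that extend to continuous functions on $S_{+}^{n-1}$ form a one-dimensional space. This follows from the indicial analysis at $\theta=0$: the Frobenius exponents are $0$ and $3-n$, so the second independent solution behaves like $\theta^{3-n}$ for $n>3$ (respectively $\log\theta$ for $n=3$) and is unbounded at the pole, hence inadmissible. With that sentence added the argument is complete. Compared with invoking the general very-weak-solution machinery of \cite{Quittner-R}, your route is more elementary; because it produces $V$ by an explicit scaling of a monotone $W$, it also gives positivity and monotonicity of $V$ essentially for free.
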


Throughout this paper,
we denote by $V(\theta)$ the unique solution of \eqref{HeatJL1-eq} and
by $U_{\infty}(x)=V(\theta)r^{-1/(q-1)}$ a singular solution of \eqref{AA2-eq}.
Furthermore
for simplicity of notations,
we put
\[
 {\cal K} = qV|_{\theta=\pi/2}^{q-1}.
\]

\subsection{JL-critical exponent}

To define a JL-critical exponent,
we first introduce the trace Hardy inequality.

\begin{lem}[Theorem 1.4 \cite{Davila-D-M}]\label{HHardy-lem}
Let $n\geq3$.
We define
\[
 c_H
=
 \inf_{u\in H^1(\R_+^n)}
 \frac{\dis\int_{\R_+^n}|\nabla u|^2dx}
 {\dis\int_{\pa\R_+^n}|x'|^{-1}|u|^2dx'}.
\]
Then $c_H$ is given by
$c_H=2\Gamma\left( n/4 \right)^2\Gamma\left( (n-2)/4 \right)^{-2}$.
\end{lem}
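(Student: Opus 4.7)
The plan is to reduce the trace Hardy problem to a standard fractional Hardy inequality on the boundary $\pa\R_+^n\simeq\R^{n-1}$ and then invoke Herbst's sharp constant for the half-Laplacian. For any $u\in H^1(\R_+^n)$ with trace $f=u|_{\pa\R_+^n}$, the Dirichlet energy $\int_{\R_+^n}|\nabla u|^2dx$ is minimized (with $f$ fixed) by the Poisson extension $\cH f$, so we may assume $u=\cH f$. The Dirichlet--to--Neumann identity
\[
\int_{\R_+^n}|\nabla\cH f|^2\,dx=\langle f,(-\Delta')^{1/2}f\rangle_{L^2(\R^{n-1})},
\]
where $\Delta'=\sum_{i=1}^{n-1}\pa_{x_i}^2$, follows from integration by parts together with the identity $-\pa_{x_n}\cH f|_{x_n=0}=(-\Delta')^{1/2}f$. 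This recasts the variational problem as
\[
c_H=\inf_{0\ne f\in H^{1/2}(\R^{n-1})}\frac{\langle f,(-\Delta')^{1/2}f\rangle}{\int_{\R^{n-1}}|x'|^{-1}|f|^2\,dx'},
\]
which is the sharp constant in the classical half-Laplacian fractional Hardy inequality on $\R^{n-1}$.

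To compute this constant I would decompose $f$ into spherical harmonics on $S^{n-2}$ and apply the Mellin transform in the radial variable $\rho=|x'|$. Both $\langle(-\Delta')^{1/2}f,f\rangle$ and $\int|x'|^{-1}|f|^2dx'$ diagonalize simultaneously on the Mellin side, so the Rayleigh quotient reduces to the infimum of an explicit Mellin multiplier built from ratios of Gamma functions. This multiplier is minimized over spherical harmonic degree $\ell$ and radial Mellin frequency $\tau$ at $\ell=\tau=0$, and Legendre's duplication formula then yields
\[
c_H=2\,\frac{\Gamma(n/4)^2}{\Gamma((n-2)/4)^2}.
\]
Sharpness is witnessed by truncating the formal scale-invariant optimizer $f^{\ast}(x')=|x'|^{-(n-2)/2}$, whose harmonic extension $|x|^{-(n-2)/2}$ formally saturates the inequality, by radial cutoffs $\chi_{\{\varepsilon\le|x'|\le\varepsilon^{-1}\}}$ and sending $\varepsilon\to0$; the resulting sequence in $H^1(\R_+^n)$ has Rayleigh quotient converging to $c_H$.

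I expect the main obstacle to be the explicit Mellin computation: one must verify both that the multiplier is pointwise minimized at $\ell=0$, $\tau=0$ (via monotonicity of the digamma function along vertical lines) and that the minimum simplifies to the stated Gamma function ratio via the Legendre duplication formula. The harmonic-extension reduction and the construction of the extremizing sequence are essentially routine; the real content is the Gamma function arithmetic encoded in Herbst's inequality, which is where every quantitative step of the proof is concentrated.
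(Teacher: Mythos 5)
The paper does not actually prove this lemma; it is imported wholesale from D\'avila--Dupaigne--Montenegro (Theorem 1.4), so there is no ``paper's proof'' to compare against. That said, your proposed argument is correct and is essentially the standard route to this constant. The two key facts you invoke are both right on the money: (i) for fixed boundary trace $f$, the Dirichlet energy on $\R_+^n$ is minimized by the harmonic (Poisson) extension and then equals the homogeneous $H^{1/2}$ energy $\langle f,(-\Delta')^{1/2}f\rangle_{L^2(\R^{n-1})}$, which one checks directly on the Fourier side since $\widehat{\cH f}(\xi,x_n)=\hat f(\xi)e^{-|\xi|x_n}$; and (ii) the resulting boundary quotient is exactly the sharp fractional Hardy (Herbst, 1977) inequality for $(-\Delta')^{1/2}$ in dimension $n-1$, whose constant $2^{-1/2}\Gamma\bigl((n-2)/4\bigr)/\Gamma(n/4)$ squared and inverted gives precisely $c_H=2\,\Gamma(n/4)^2\Gamma\bigl((n-2)/4\bigr)^{-2}$. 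Your Mellin/spherical-harmonic computation on $\R^{n-1}$ is the right machinery for verifying the Herbst constant and that the infimum is attained at spherical degree $\ell=0$ and Mellin line $\tau=0$; the extremizing sequence $|x'|^{-(n-2)/2}$ with radial truncation is the correct witness for sharpness and also exhibits non-attainment in $H^1(\R_+^n)$. In short: the argument is sound and the Gamma-function arithmetic works out; the one thing worth being explicit about in a write-up is that the passage from $u\in H^1(\R_+^n)$ to ``$u$ harmonic'' genuinely requires the density of nice traces and the strict minimality of the harmonic extension, but this is routine.
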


From Lemma \ref{HHardy-lem},
we can define
\[
\label{b(q)-eq}
 \mu(q)
=
 \inf_{u\in H^1(\R_+^n)}
 \frac{\dis\int_{\R_+^n}|\nabla u|^2dx-{\cal K}\int_{\pa\R_+^n}r^{-1}u^2dx'}
 {\dis \int_{\pa\R_+^n}|x'|^{-1}u^2dx'}.
\]
This expression is obtained by
linearizing the equation around the singular stationary solution $U_\infty(x)$.

\begin{df}\label{HJL-df}
A exponent $q$ is called
JL-supercritcal if $\mu(q)>0$, JL-critical if $\mu(q)=0$ and JL-subcritical if $\mu(q)<0$.
\end{df}

\begin{rem}\label{JL-rem}
By the explicit expression of $U_{\infty}(x)=V(\theta)r^{-1/(q-1)}$
and Lemma {\rm\ref{HHardy-lem}},
we see that
$\mu(q)>0$ is equivalent to ${\cal K}<c_H$.
Hence
an exponent $q$ is
JL-supercritcal if ${\cal K}<c_H$, JL-critical if ${\cal K}=c_H$ and JL-subcritical if ${\cal K}>c_H$.
\end{rem}

Unfortunately
we do not know the explicit expression of a JL-critical exponent.
However
we find that
$q$ is JL-subcritical if $q$ is close to $n/(n-2)$
and
$q$ is JL-supercritical if $q$ and $n$ are large enough.

\begin{lem}[Lemma 4.1\h-\h{Lemma 4.2} in \cite{Harada}]
For $n\geq3$ there exists $q_0>n/(n-2)$ such that
$\mu(q)<0$ if $n/(n-2)<q<q_0$.
Moreover
there exists $n_0\in\N$ such that for $n\geq n_0$
there exists $q_1>q_0$ such that $\mu(q)>0$ if $q>q_1$.
\end{lem}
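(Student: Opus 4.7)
The strategy is to use Remark \ref{JL-rem} to translate $\mu(q) \gtrless 0$ into the scalar comparison ${\cal K}(q) := q V(\pi/2;q)^{q-1} \lessgtr c_H$, where $c_H = 2\Gamma(n/4)^2/\Gamma((n-2)/4)^2$ is given by Lemma \ref{HHardy-lem}. Both assertions of the lemma then reduce to tracking the map $q \mapsto V(\pi/2;q)$, with $V(\cdot;q)$ the unique positive solution of \eqref{HeatJL1-eq} guaranteed by Lemma \ref{HexistenceV-lem}. The continuity and monotonicity of this map in $q$, together with the sharp values at the relevant endpoints, will drive both parts of the argument.

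For the first assertion, I would analyze $V(\pi/2;q)$ as $q \downarrow n/(n-2)$. At the Sobolev endpoint $q_S := n/(n-2)$, $m = (n-2)/2$ and the coefficient $m(n-2-m) = (n-2)^2/4$ attains its maximum in $m$, which is precisely the regime where scale invariance of $U_\infty(x) = V(\theta) r^{-m}$ aligns with the trace Hardy inequality. The plan is to establish ${\cal K}(q_S) \geq c_H$ together with $(d/dq) {\cal K}(q) |_{q = q_S} > 0$, which forces ${\cal K}(q) > c_H$ on a right neighborhood of $q_S$ and hence $\mu(q) < 0$. I would extract the comparison ${\cal K}(q_S) \geq c_H$ by a Pohozaev/multiplier identity obtained from \eqref{HeatJL1-eq} (multiplying by tangential and radial combinations of $V$) paired with the variational characterization of $c_H$ in Lemma \ref{HHardy-lem}. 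Alternatively, directly test the Rayleigh quotient defining $\mu(q)$ with a cutoff of the singular profile $U_\infty$ and exhibit a negative value for $q$ just above $q_S$.

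For the second assertion, the small parameter is $\epsilon := m(n-2-m) = O(1/q)$, so \eqref{HeatJL1-eq} is a regular perturbation of the linear ODE $V_0'' + (n-2)(\cot\theta) V_0' = 0$. A shooting argument starting from the regularity condition at $\theta = 0$ and matched to the nonlinear boundary condition $V'(\pi/2) = V(\pi/2)^q$ yields uniform-in-$n$ asymptotics for $V(\pi/2;q)$; specifically, the leading-order consistency condition has the schematic form $V(\pi/2;q)^{q-1} \sim \epsilon \cdot C_n$ with $C_n$ built from an explicit Green-type function on the hemisphere. Plugging this into ${\cal K}(q) = q V(\pi/2;q)^{q-1}$ gives an explicit upper bound depending only on $n$. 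Meanwhile, Stirling's formula applied to $c_H$ yields $c_H \to \infty$ as $n \to \infty$, so for $n_0$ sufficiently large the $n$-dependent upper bound on ${\cal K}(q)$ drops below $c_H$ for all $q > q_1 = q_1(n)$, giving $\mu(q) > 0$.

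The main obstacle lies in Part 1: forging a sharp enough link between the nonlinear ODE \eqref{HeatJL1-eq} at the Sobolev exponent and the variational constant $c_H$ from Lemma \ref{HHardy-lem}. This requires a Pohozaev-type multiplier identity carefully adapted to the half-space geometry and to the nonlinear boundary condition, and it is the delicate point where the scale invariance of the singular profile meets the sharp trace Hardy inequality. Part 2 is comparatively lighter and essentially perturbation-theoretic, but it requires uniform-in-$n$ ODE estimates that must be balanced correctly against the growth rate of $c_H$.
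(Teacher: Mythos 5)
This lemma is imported verbatim from \cite{Harada} (Lemma 4.1--Lemma 4.2 there), and the present paper gives no proof of it; so there is no internal argument to compare against. Judged on its own terms, your reduction to the comparison ${\cal K}(q)\lessgtr c_H$ via Remark \ref{JL-rem} is exactly the right starting point, and the division into a near-$q_S$ analysis and a large-$(q,n)$ analysis is the natural shape of the argument. But neither half is closed.

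For the first assertion you propose to show ${\cal K}(q_S)\geq c_H$ together with positivity of the derivative, and you flag the required Pohozaev-type identity as "the delicate point." That is precisely the content of the lemma, not a preliminary step. Moreover, if one only establishes the non-strict inequality ${\cal K}(q_S)\geq c_H$, the conclusion hinges entirely on the unproved sign of $(d/dq){\cal K}(q)|_{q_S}$; and if the inequality is in fact strict, the derivative is irrelevant but the strict inequality is again exactly what must be proved. Either way, the key quantitative input (comparing $qV(\pi/2;q)^{q-1}$ at $q=q_S$ against the sharp trace-Hardy constant $2\Gamma(n/4)^2\Gamma((n-2)/4)^{-2}$) is asserted but not derived; the "direct test function" alternative is similarly left as a sketch.

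For the second assertion the argument as written does not close. Stirling gives $c_H\sim(n-2)/2$, so $c_H$ grows only linearly in $n$. Your proposed asymptotic ${\cal K}(q)\sim q\,\epsilon\,C_n$ with $\epsilon=m(n-2-m)\sim(n-2)/q$ yields a limit $\sim(n-2)C_n$, which grows at the same linear rate. The two sides therefore do not separate "because $c_H\to\infty$"; one must show the finer fact that $C_n<1/2$ (with uniform-in-$n$ control of the error terms), and this is nowhere established. There is also a structural gap in treating \eqref{HeatJL1-eq} as a regular perturbation: as $q\to\infty$ the boundary nonlinearity $V^q$ becomes a threshold-type nonlinearity at $V=1$, so the perturbation of the boundary condition is singular and the claimed leading-order consistency condition $V(\pi/2;q)^{q-1}\sim\epsilon\,C_n$ needs a genuine matched-asymptotics argument, not a naive expansion. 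Until these two points are resolved the second part remains a heuristic.
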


\subsection{Regular stationary solutions}

In this subsection,
we collect the qualitative property of
positive $x_n$-axial symmetric solutions of \eqref{AA2-eq} obtained in \cite{Harada}.
Let $\kappa_i$, $e_i(\theta)$ be the $i$-th eigenvalue,
the $i$-th eigenfunction with $\|e_i\|_{L^2(S_+^{n-1})}=1$ of
\begin{equation}\label{Eigen-eq}
\begin{cases}
\dis
 -\Delta_S e = \kappa e
& \text{in } (0,\pi/2),
\\ \dis
 \pa_{\theta}e = {\cal K}e
& \text{on } \{\pi/2\}.
\end{cases}
\end{equation}
Then
from Lemma 6.8\h-\h{Lemma} 6.10 in \cite{Harada},
the first and the second eigenvalues are estimated as follows.

\begin{lem}\label{Heigenvalue-lem}
The first eigenvalue $\kappa_1$ is estimated as follows\hspace{0.5mm}$:$
\[
\begin{array}{cl}
\dis
 \kappa_1>-(n-2)^2/4
& \mathrm{if}\
 q\ \mathrm{is\ JL}\text{-}\mathrm{supercritical}, 
\\[1mm] \dis
 \kappa_1=-(n-2)^2/4
& \mathrm{if}\
 q\ \mathrm{is\ JL}\text{-}\mathrm{critical},
\\[1mm] \dis
 \kappa_1<-(n-2)^2/4
& \mathrm{if}\
 q\ \mathrm{is\ JL}\text{-}\mathrm{subcritical}.
\end{array}
\]
Moreover
the second eigenvalue $\kappa_2$ is always positive.
\end{lem}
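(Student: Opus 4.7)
My strategy has three parts: (i) rewrite $\kappa_1+(n-2)^2/4$ via a Sobolev-type sharp constant $c^*$ on the half-sphere; (ii) show $c^*=c_H$ via the trace Hardy inequality; (iii) handle $\kappa_2$ via Sturm--Liouville oscillation theory.

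From the Rayleigh characterization of $\kappa_1$ one reads off that $\kappa_1 + (n-2)^2/4$ is positive (respectively zero, negative) according as ${\cal K} < c^*$ (resp.\ $=$, $>$), where
\[
c^* := \inf_{0 \neq e \in H^1_{\mathrm{sym}}(S_+^{n-1})}\frac{\int_{S_+^{n-1}} |\nabla_S e|^2 d\sigma + \frac{(n-2)^2}{4}\int_{S_+^{n-1}} e^2 d\sigma}{\int_{\partial S_+^{n-1}} e^2 dS}.
\]
Combined with Remark \ref{JL-rem}, the trichotomy for $\kappa_1$ reduces to the identity $c^* = c_H$. To prove it I would use the scale-invariant ansatz $u(x) = r^{-(n-2)/2} e(\theta) \phi(r)$: a direct computation in spherical coordinates, in which the cross term proportional to $[\phi^2]_0^\infty$ vanishes for compactly supported $\phi$, gives
\[
\int_{\R_+^n}|\nabla u|^2 dx = \Big[\int_{S_+^{n-1}} |\nabla_S e|^2 d\sigma + \tfrac{(n-2)^2}{4}\int_{S_+^{n-1}} e^2 d\sigma\Big]\int_0^\infty \frac{\phi^2}{r} dr + \int_{S_+^{n-1}} e^2 d\sigma \cdot \int_0^\infty r(\phi')^2 dr,
\]
\[
\int_{\partial \R_+^n}|x'|^{-1} u^2 dx' = \int_{\partial S_+^{n-1}} e^2 dS \cdot \int_0^\infty \frac{\phi^2}{r} dr.
\]
Choosing $\phi_R = 1$ on $[1,R]$ with logarithmic transitions, $\int r^{-1}\phi_R^2 dr \sim \log R \to \infty$ while $\int r(\phi_R')^2 dr = O(1)$, so the ratio on $\R_+^n$ tends to the ratio on $S_+^{n-1}$. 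By Lemma \ref{HHardy-lem} this forces $c_H \leq c^*$; the reverse inequality comes from the same identity, since scale invariance of the trace Hardy quadratic form allows extremizing sequences to be taken of the separated form $r^{-(n-2)/2} e(\theta) \phi_R(r)$.

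For the positivity of $\kappa_2$, the axial symmetry reduces the angular problem to a Sturm--Liouville equation on $(0,\pi/2)$ with weight $\sin^{n-2}\theta$. Standard oscillation theory together with simplicity of eigenvalues forces $e_2$ to have exactly one zero $\theta_0 \in (0,\pi/2)$: not at $\theta = 0$ by regularity at the pole, nor at $\theta = \pi/2$ since the Robin condition would then imply $e_2 \equiv 0$. On $(0,\theta_0)$, $e_2$ is sign-definite and thus the principal Dirichlet eigenfunction of $-\Delta_S$ on that sub-interval, with eigenvalue $\kappa_2$; since $\int_0^{\theta_0}(\phi')^2\sin^{n-2}\theta d\theta > 0$ for any nontrivial $\phi$ with $\phi(\theta_0) = 0$, this principal Dirichlet eigenvalue is strictly positive, giving $\kappa_2 > 0$. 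I expect the main obstacle to be the $c^* \leq c_H$ half of step (ii): extremizing sequences for the trace Hardy inequality are not attained, and showing they can be chosen in the claimed separated form requires a concentration/symmetry argument rather than an algebraic calculation.
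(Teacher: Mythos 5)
The paper defers this lemma to Lemma~6.8--6.10 of \cite{Harada} and gives no internal proof, so there is no "paper's own argument" to compare against; I will evaluate your outline on its own terms. Your overall scheme --- reduce the trichotomy for $\kappa_1$ to the identity $c^*=c_H$ via the angular Rayleigh quotient and Remark~\ref{JL-rem}, then treat $\kappa_2$ by oscillation theory --- is a sound route, and your separation-of-variables computation of $\int_{\R_+^n}|\nabla u|^2$ and $\int_{\pa\R_+^n}|x'|^{-1}u^2$ is correct. The logarithmic-cutoff argument indeed gives $c_H\le c^*$.

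The genuine problem is the direction $c^*\le c_H$, which you flagged as the obstacle, but for the wrong reason: no statement about extremizing sequences for the trace Hardy quotient being attained or separable is needed. Use the Emden--Fowler substitution $u(r,\theta)=r^{-(n-2)/2}v(\log r,\theta)$. Then
\[
\int_{\R_+^n}|\nabla u|^2\,dx
=
\int_\R\int_{S_+^{n-1}}\Bigl( (\pa_t v)^2 + |\nabla_S v|^2 + \tfrac{(n-2)^2}{4}v^2 \Bigr)\,d\sigma\,dt,
\hspace{5mm}
\int_{\pa\R_+^n}|x'|^{-1}u^2\,dx'
=
\int_\R\int_{\pa S_+^{n-1}}v^2\,dS\,dt,
\]
where the cross term $-(n-2)v\,v_t$ integrates out exactly as your radial cross term did. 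Dropping the nonnegative $(\pa_t v)^2$ and applying the definition of $c^*$ to each slice $v(t,\cdot)\in H^1(S_+^{n-1})$, then integrating in $t$, yields $\int|\nabla u|^2\ge c^*\int_{\pa\R_+^n}|x'|^{-1}u^2$ for \emph{every} $u$, hence $c_H\ge c^*$. No concentration or symmetry reduction of the extremizing sequence is required; the slicing is completely elementary.

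Two smaller points you should still record. First, the $c^*$ obtained by the slicing argument is the infimum over all of $H^1(S_+^{n-1})$, while the one linked to $\kappa_1$ is restricted to $x_n$-axial symmetric functions; you must identify the two. This follows because the Steklov-type minimizer for $c^*$ exists (compactness of the trace $H^1(S_+^{n-1})\to L^2(\pa S_+^{n-1})$), can be taken positive and is then unique up to scaling, and rotational invariance of the form about the $x_n$-axis forces it to be axial symmetric. Second, the equivalence "$\kappa_1+(n-2)^2/4>0 \Leftrightarrow {\cal K}<c^*$" is not purely algebraic: in the direction you glossed over, a near-minimizing sequence $\{e_j\}$ for $c^*$ normalized by $\int_{\pa S_+}e_j^2=1$ would, under $\kappa_1>-(n-2)^2/4$, be forced to satisfy $\int_{S_+}e_j^2\to0$, and you need compactness of the trace to rule this out. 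Your $\kappa_2>0$ argument via Sturm--Liouville oscillation is fine as sketched; the positivity of the principal Dirichlet eigenvalue on $(0,\theta_0)$ also relies implicitly on Rellich compactness, which is worth a mention but is standard.
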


From Lemma \ref{Heigenvalue-lem},
it is easily seen that
\[
 \mu^2-(n-2-2m)\mu-\{m(n-2-m)+\kappa_1\}=0
\]
has two real roots if and only if $q$ is JL-supercritical.
We denote the small positive root by $\mu_1$,
which is written by
\begin{equation}\label{mu1-eq}
 \mu_1 = \frac{(n-2-2m)-\sqrt{(n-2-2m)^2+4\{m(n-2-m)+\kappa_1\}}}{2}.
\end{equation}

\begin{thm}[Theorem 1.1 \cite{Harada}]
\label{JLsuper-thm}
Let $q$ be JL-supercritical or JL-critical.
Then there exists a family of positive $x_n$-axial symmetric solutions
$\{U_{\alpha}(x)\}_{\alpha>0}$
$(U_{\alpha}(0)=\alpha)$ of \eqref{AA2-eq}
satisfying the following properties.
\\[1mm]
$({\rm i})$
 $U_{\alpha}(x)=\alpha U_1(\alpha^{q-1}x)\leq U_{\infty}(x)$,
\hspace{10mm}
$({\rm ii})$
 $U_{\alpha_1}(x)<U_{\alpha_2}(x)$
 if $\alpha_1<\alpha_2$,
\\[1mm]
$({\rm iii})$
 $\lim_{\alpha\to\infty}U_{\alpha}(x)=U_{\infty}(x)$ {\rm for}
 $x\in\overline{\R_+^n}\setminus\{0\}$,
\\[1mm]
$({\rm iv})$
there exists $\epsilon>0$ such that for any $\alpha>0$
there exist $k_\alpha=\alpha^{-\mu_1/m}k_1>0$ and $k_\alpha'\in\R$
such that
the following asymptotic expansion holds
for large $r>0$
\[
 U_{\alpha}(x)
=
 U_{\infty}(x)
+
 \begin{cases}
 (-k_\alpha+O(r^{-\epsilon}))e_1(\theta)r^{-(m+\mu_1)}
 & \text{{\rm if JL-supercritical}},
 \\
 (-k_\alpha\log r+k_\alpha'+o(r^{-\epsilon}))e_1(\theta)r^{-(n-2)/2}
 & \text{{\rm if JL-critical}},
 \end{cases}
\]
where
the polar coordinate $r=|x|$, $\tan\theta=|x'|/x_n$ is used
and
the asymptotic expansion holds uniformly for $\theta\in(0,\pi/2)$.
\end{thm}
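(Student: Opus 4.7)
The plan is to exploit the scale invariance of \eqref{AA2-eq} to reduce the family construction to a single profile, and then to analyze its behavior at infinity via linearization around $U_{\infty}$. Because $m(q-1)=1$, the rescaling $U(x)\mapsto\alpha U(\alpha^{q-1}x)$ preserves both the interior equation and the nonlinear boundary condition, so once a positive $x_n$-axial symmetric solution $U_{1}$ with $U_{1}(0)=1$ and $U_{1}\leq U_{\infty}$ is produced, the formula $U_{\alpha}(x):=\alpha U_{1}(\alpha^{q-1}x)$ yields (i) tautologically. Property (ii) then follows from a Serrin-type sweeping/comparison argument applied to $U_{\alpha_{2}}-U_{\alpha_{1}}$, and property (iii) from monotone convergence: the increasing family $\{U_{\alpha}\}$ is bounded above by $U_{\infty}$, so it converges to some nonnegative solution $U_{\ast}\leq U_{\infty}$, which a Liouville-type argument identifies with $U_{\infty}$ off the origin.

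To produce $U_{1}$, I would solve the nonlinear problem on the half-balls $B_{R}^{+}=B_{R}\cap\R_{+}^{n}$ by monotone iteration between the subsolution $0$ and the supersolution $U_{\infty}$; the singular supersolution is available precisely in the range of Lemma \ref{HexistenceV-lem}. Interior elliptic estimates together with a diagonal extraction as $R\to\infty$ deliver a classical $x_{n}$-axial symmetric nonnegative solution on $\R_{+}^{n}$, which after a final rescaling takes the value $1$ at the origin.

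The technical heart of the theorem is the asymptotic expansion (iv). Setting $w_{\alpha}:=U_{\infty}-U_{\alpha}\geq 0$, one finds $\Delta w_{\alpha}=0$ in $\R_{+}^{n}$ and
\[
 \pa_{\nu}w_{\alpha}
=
 {\cal K}\h r^{-1}w_{\alpha} + O(w_{\alpha}^{2})
\hhh\text{on }\pa\R_{+}^{n}.
\]
Inserting the separation-of-variables ansatz $w=r^{-(m+\mu)}e(\theta)$ into the linearized problem reduces to \eqref{Eigen-eq} for $e$ together with the indicial equation
\[
 \mu^{2}-(n-2-2m)\mu-\{m(n-2-m)+\kappa_{1}\}=0,
\]
whose discriminant $(n-2)^{2}+4\kappa_{1}$ is nonnegative exactly in the JL-supercritical/critical regime by Lemma \ref{Heigenvalue-lem}, producing the smaller root $\mu_{1}$ in \eqref{mu1-eq}. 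Moreover the scaling identity $w_{\alpha}(x)=\alpha w_{1}(\alpha^{q-1}x)$ immediately forces $k_{\alpha}=\alpha^{-\mu_{1}/m}k_{1}$, so the remaining task is to prove the single expansion $w_{1}(x)=(k_{1}+o(1))\h e_{1}(\theta)r^{-(m+\mu_{1})}$ as $r\to\infty$, with the logarithmic correction in the resonant JL-critical case.

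The main obstacle is establishing this leading-order expansion with $k_{1}>0$. I would decompose $w_{1}(r,\theta)=\sum_{i}a_{i}(r)e_{i}(\theta)$ in the eigenbasis of \eqref{Eigen-eq}; each radial coefficient $a_{i}$ satisfies an Euler-type ODE whose homogeneous solutions are the two indicial powers associated with $\kappa_{i}$. For $i\geq 2$, Lemma \ref{Heigenvalue-lem} gives $\kappa_{i}\geq\kappa_{2}>0$, which forces strictly faster decay that is absorbed into the error. For $i=1$ the global bound $w_{1}\leq U_{\infty}$ rules out the growing homogeneous mode, while the quadratic source $O(w_{1}^{2})$ contributes only to subleading terms through a Duhamel-type integration; strict positivity $k_{1}>0$ is obtained from $w_{1}>0$ via a Hopf-type boundary argument applied to the blow-down profile. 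In the JL-critical case the two indicial roots coalesce, producing by resonance the advertised $\log r$ correction, which is handled by solving the resonant Euler-type ODE explicitly and tracking the integrated source.
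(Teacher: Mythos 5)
This theorem is not proved in the present paper: it is quoted with attribution as Theorem 1.1 of the companion manuscript \cite{Harada}, and no argument for it appears here, so there is no internal proof to measure yours against. With that caveat, the skeleton you propose is the natural one and almost certainly tracks the intended route: the scaling identity $U_\alpha(x)=\alpha U_1(\alpha^{1/m}x)$ reduces the family to a single profile, passing to $w_\alpha=U_\infty-U_\alpha$ linearizes the boundary condition to $\pa_\nu w={\cal K}r^{-1}w+\text{(h.o.t.)}$, and expanding in the eigenbasis $\{e_i\}$ of \eqref{Eigen-eq} turns the radial coefficients into Euler equations whose indicial roots $-(m+\mu_{1,2})$ come from exactly the quadratic in \eqref{mu1-eq}. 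Your derivation of $k_\alpha=\alpha^{-\mu_1/m}k_1$ from this scaling is correct, and the identification of the resonant JL-critical case with a double indicial root (discriminant $(n-2)^2+4\kappa_1=0$, forcing the $\log r$ correction) is also right.

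Two steps in your sketch would not survive scrutiny as written. First, monotone iteration on $B_R^+$ ``between the subsolution $0$ and the supersolution $U_\infty$'' is delicate: $0$ is itself a solution of \eqref{AA2-eq}, so an upward iteration anchored at $0$ (with zero data on the curved boundary) stalls at $0$; one must instead impose strictly positive Dirichlet data on $\pa B_R^+\cap\R_+^n$, produce a compatible positive subsolution, and then show that the diagonal limit as $R\to\infty$ neither collapses to $0$ nor escapes to the singular profile $U_\infty$ at the origin before normalizing to $U_1(0)=1$; none of this is automatic. Second, the sentence ``for $i=1$ the global bound $w_1\leq U_\infty$ rules out the growing homogeneous mode'' is misplaced: for $\kappa_1<0$ both indicial solutions $r^{-(m+\mu_1)}$ and $r^{-(m+\mu_2)}$ decay, so there is no growing $i=1$ mode to rule out; the genuinely growing indicial root $\beta_+=\tfrac{-(n-2)+\sqrt{(n-2)^2+4\kappa_i}}{2}>0$ occurs only for $i\geq2$ where $\kappa_i>0$, and that is where the global bound does the work. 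The honest gap, which you correctly single out, is proving $k_1>0$, i.e.\ that the coefficient of the slowest decaying mode $r^{-(m+\mu_1)}$ does not vanish: a Hopf/Harnack argument on the blow-down family $R^{m+\mu_1}w_1(R\cdot)$ is the right idea, but it needs a compactness step (uniform $C^{1,\alpha}$ bounds away from the origin, a nontrivial limit, and a strong maximum principle for the limiting linear boundary problem) before it is more than a heuristic.
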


\section{Linearized problems around the singular solution}
\label{Linearizedproblems-sec}

To study blow-up solutions,
we introduce self-similar variables and a rescaled solution:
\[
 \varphi(y,s) = (T-t)^{1/2(q-1)}u((T-t)^{1/2}y,t),
\hh T-t=e^{-s}.
\]
Put $s_T=-\log T$.
Then $\varphi(y,s)$ satisfies
\begin{equation}\label{varphi-eq}
\begin{cases}
\dis
 \varphi_s = \Delta \varphi -\frac{y}{2}\cdot\nabla\varphi-\frac{m}{2}\varphi,
& (y,s)\in\R_+^n\times(s_T,\infty),\\
 \pa_\nu\varphi = \varphi^q, & (y,s)\in\pa\R_+^n\times(s_T,\infty).
\end{cases}
\end{equation}
Here we put
\[
 \Phi(y,s) = \varphi(y,s) - U_\infty(y).
\]
Then
since $qU_\infty(x)^{q-1}|_{\pa\R_+^n}={\cal K}r^{-1}$,
it is easily seen that $\Phi(y,s)$ solves
\begin{equation}\label{Phi-eq}
\begin{cases}
\dis
 \Phi_s = \Delta\Phi - \frac{y}{2}\cdot\nabla\Phi-\frac{m}{2}\Phi,
& (y,s)\in\R_+^n\times(s_T,\infty),\\[3mm]
\dis
 \pa_\nu\Phi = {\cal K}r^{-1}\Phi + f(\Phi), & (y,s)\in\pa\R_+^n\times(s_T,\infty), 
\end{cases}
\end{equation}
where
\[
 f(\Phi) = (\Phi+U_\infty)^q - U_\infty^q - {\cal K}\Phi.
\]
Now
we define weighted Lebesgue spaces and Sobolev spaces:
\[
\begin{array}{c}
\dis
 L_\rho^p =
 \left\{
 v\in L_{\loc}^p(\R_+^n);\int_{\R_+^n}|v(y)|^p\rho(y)dy<\infty
 \right\},
\\[6mm] \dis
 H_{\rho}^k =
 \left\{
 v\in L_{\rho}^2; D^{\alpha}v\in L_{\rho}^2
 \text{ for any } \alpha=(\alpha_1,\cdots,\alpha_n)
 \text{ satisfying } |\alpha|\leq k
 \right\},
\\[2mm] \dis
L_{\rho}^p(\pa\R_+^n) =
 \left\{
 v\in L_{\loc}^p(\pa\R_+^n);\int_{\pa\R_+^n}|v(y')|^p\rho(y')dy'<\infty
 \right\},
\end{array}
\]
where a wight function $\rho(y)$ is given by
\[
 \rho(y) = e^{-|y|^2/4}.
\]
The norms are given by
\[
\begin{array}{c}
\dis
 \|v\|_{L_{\rho}^p}^p = \int_{\R_+^n} |v(y)|^p\rho(y)dy,
\hspace{7.5mm}
 \|v\|_{H_{\rho}^k}^2 =
 \sum_{|\alpha|\leq k}\|D^{\alpha}v\|_{L_{\rho}^2}^2,
\\ \dis
 \|v\|_{L_{\rho}^p(\pa\R_+^n)}^p = \int_{\pa\R_+^n} |v(y')|^p\rho(y')dy'
\end{array}
\]
and the inner product on $L_{\rho}^2$ is naturally defined by
\[
 (v_1,v_2)_{\rho} = \int_{\R_+^n}v_1(y)v_2(y)\rho(y)dy.
\]
For simplicity,
we set
\[
 \|\cdot\|_\rho=\|\cdot\|_{L_{\rho}^2}.
\]
Let $H_\rho^*$ be the dual space of $H_\rho^1$.
To study the asymptotic behavior of $\Phi(y,s)$,
we define a linear operator $A$: $D(A)\to H_\rho^*$ with $D(A)=H_\rho^1$ by
\[
 {}_{H_\rho^*}\langle A\Phi,\eta \rangle_{H_\rho^1}
=
 -(\nabla \Phi,\nabla\eta)_\rho-\frac{m}{2}(\Phi,\eta)_\rho+
 {\cal K}\int_{\pa\R_+^n}r^{-1}\Phi\eta\rho\h dy'.
\]

\begin{lem}\label{OperatorA-eq}
Let $q$ be JL-supercritical.
Then an operator $(-A+\mu)${\h}$:$ $D(A)\to H_\rho^*$ has compact inverse on $L_\rho^2$
for large $\mu>0$.
Moreover its inverse is self-adjoint on $L_\rho^2$.
\end{lem}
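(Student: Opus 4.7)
The plan is to recognize $-A+\mu$ as the operator associated to the symmetric bilinear form
\[
 B_\mu(\Phi,\eta) = (\nabla\Phi,\nabla\eta)_\rho + \bigl(\tfrac{m}{2}+\mu\bigr)(\Phi,\eta)_\rho - {\cal K}\int_{\pa\R_+^n}r^{-1}\Phi\eta\rho\, dy'
\]
on $H_\rho^1\times H_\rho^1$, and then apply Lax--Milgram together with a compact Rellich-type embedding $H_\rho^1\hookrightarrow L_\rho^2$.

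The crux is a weighted trace--Hardy estimate. I would substitute $u=\Phi\rho^{1/2}$ in Lemma \ref{HHardy-lem}, expand $|\nabla u|^2=|\nabla\Phi|^2\rho+2\Phi\rho^{1/2}\nabla\Phi\cdot\nabla\rho^{1/2}+\Phi^2|\nabla\rho^{1/2}|^2$, and integrate the cross term by parts. The boundary contribution vanishes because $\nabla\rho^{1/2}\cdot\nu=\tfrac{y_n}{4}\rho^{1/2}=0$ on $\pa\R_+^n$, the $|\nabla\rho^{1/2}|^2$ terms then cancel, and the identity $\Delta\rho^{1/2}=(\tfrac{|y|^2}{16}-\tfrac{n}{4})\rho^{1/2}$ yields
\[
 \int_{\pa\R_+^n}r^{-1}\Phi^2\rho\, dy' \leq c_H^{-1}\Bigl(\|\nabla\Phi\|_\rho^2 + \tfrac{n}{4}\|\Phi\|_\rho^2 - \tfrac{1}{16}\||y|\Phi\|_\rho^2\Bigr).
\]

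By Remark \ref{JL-rem}, JL-supercriticality gives ${\cal K}<c_H$. Discarding the negative term above,
\[
 B_\mu(\Phi,\Phi) \geq \bigl(1-\tfrac{{\cal K}}{c_H}\bigr)\|\nabla\Phi\|_\rho^2 + \bigl(\tfrac{m}{2}+\mu-\tfrac{{\cal K}n}{4c_H}\bigr)\|\Phi\|_\rho^2,
\]
which is coercive on $H_\rho^1$ for $\mu$ sufficiently large; continuity of $B_\mu$ on $H_\rho^1\times H_\rho^1$ follows from the same inequality via Cauchy--Schwarz. Lax--Milgram then delivers a unique $\Phi\in H_\rho^1$ solving $(-A+\mu)\Phi=f$ for every $f\in L_\rho^2\subset H_\rho^*$, giving a bounded right inverse $(-A+\mu)^{-1}:L_\rho^2\to H_\rho^1$. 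The embedding $H_\rho^1\hookrightarrow L_\rho^2$ is compact by the standard Rellich argument on balls combined with the tail bound $\int_{|y|>R}|\Phi|^2\rho\, dy\leq R^{-2}\||y|\Phi\|_\rho^2$, which is controlled uniformly on $H_\rho^1$-bounded sets through the displayed estimate. Composing gives compactness on $L_\rho^2$, and symmetry of $B_\mu$ yields self-adjointness: if $\Phi_i=(-A+\mu)^{-1}f_i$, then $(f_1,\Phi_2)_\rho=B_\mu(\Phi_1,\Phi_2)=B_\mu(\Phi_2,\Phi_1)=(f_2,\Phi_1)_\rho$.

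The hard part, and where JL-supercriticality is essential, is pinning down the sharp constant $c_H^{-1}$ in the weighted trace--Hardy step so that the gap $c_H-{\cal K}>0$ can absorb the boundary Hardy term into the Dirichlet part. Without this sharpness, no choice of $\mu$ would render $B_\mu$ coercive, since the Hardy term scales like the gradient energy.
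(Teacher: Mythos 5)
Your proof is correct and uses the same central ingredient as the paper — the trace Hardy inequality of Lemma \ref{HHardy-lem} applied to $\Phi\rho^{1/2}$, combined with the JL-supercriticality condition ${\cal K}<c_H$ to absorb the boundary term into the Dirichlet energy — but it differs in a genuine and helpful way in the computation. The paper first multiplies by a compactly supported cutoff $\theta$ before applying Hardy, which forces it to split the boundary integral into a near-origin piece (handled by Hardy) and a far piece (handled by a separate trace inequality, $\int_{\pa\R_+^n}\Phi^2\rho\,dy'\leq\epsilon\|\nabla\Phi\|_\rho^2+c_1\epsilon^{-1}\|\Phi\|_\rho^2$, imported from another reference). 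You instead conjugate globally and exploit the exact cancellation between the $|\nabla\rho^{1/2}|^2$ term and the term produced by integrating the cross term by parts; the residual term $-\frac{1}{16}\||y|\Phi\|_\rho^2$ is negative and can simply be discarded. This buys you two things: you need no auxiliary far-field trace inequality, and you obtain an explicit threshold $\mu>{\cal K}n/(4c_H)-m/2$ where the paper only establishes coercivity for ``$\mu$ large.'' The one point you should make explicit is the justification that $\Phi\rho^{1/2}\in H^1(\R_+^n)$ for $\Phi\in H_\rho^1$ and that the integration by parts has no contribution at infinity; this amounts to first establishing $\||y|\Phi\|_\rho^2\leq 16\|\nabla\Phi\|_\rho^2+4n\|\Phi\|_\rho^2$ by a density argument (which, happily, is equivalent to the nonnegativity of the right-hand side of your displayed estimate). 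The cutoff in the paper's proof sidesteps this by keeping everything compactly supported, at the price of the extra lemma. The Lax--Milgram, compactness, and self-adjointness steps match the paper's reasoning.
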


\begin{proof}
First
we claim that there exist $\alpha_0,\mu>0$ such that
\begin{equation}\label{Monotone-eq}
 ((-A+\mu)\Phi,\Phi)_\rho
\geq
 \alpha_0\left( \|\nabla\Phi\|_\rho^2+\|\Phi\|_\rho^2 \right)
\hspace{5mm}\text{for } \Phi\in D(A).
\end{equation}
Let $\theta$ be a cut off function such that $\theta(|y|)=1$ for $|y|<1$ and $\theta(|y|)=0$ for $|y|>2$.
Then
by the trace Hardy inequality,
it holds that
\[
\begin{array}{l}
\dis
 c_H\int_{\pa\R_+^n}r^{-1}\theta^2\Phi^2\rho\h dy'
\leq
 \int_{\R_+^n}|\nabla(\theta \Phi\rho^{1/2})|^2dy
\\[5mm] \dis \hspace{10mm}
=
 \int_{\R_+^n}
 \left(
 \theta^2|\nabla\Phi|^2+
 \frac{1}{2}\nabla\Phi^2\cdot\nabla\theta^2-\frac{y}{4}\cdot\nabla(\theta\Phi)^2+
 \left( \frac{|y|^2}{16}\theta^2+|\nabla\theta|^2 \right)\Phi^2
 \right)\rho\h dy
\\[5mm] \dis \hspace{10mm}
=
 \int_{\R_+^n}
 \left(
 \theta^2|\nabla\Phi|^2-
 \frac{1}{2}\Phi^2\Delta\theta^2+\frac{n}{4}\theta^2\Phi^2+
 \left( \frac{|y|^2}{16}\theta^2+|\nabla\theta|^2 \right)\Phi^2
 \right)\rho\h dy
\\[5mm] \dis \hspace{10mm}
\leq
 \int_{\R_+^n}|\nabla\Phi|^2\rho\h dy
 +
 c_0\int_{\R_+^n}\Phi^2 \rho\h dy.
\end{array}
\]
Therefore
we obtain
\[
\dis
 (-A\Phi,\Phi)_\rho
\geq
 \left( 1-\frac{{\cal K}}{c_H} \right)\|\nabla\Phi\|_\rho^2
-
 c_0\|\Phi\|_\rho
-
 {\cal K}\int_{\pa\R_+^n}r^{-1}(1-\theta^2)\Phi^2\rho\h dy'.
\]
Since ${\cal K}<c_H$,
by using $\int_{\pa\R_+^n}\Phi^2\rho dy'\leq
 \epsilon\int_{\R_+^n}|\nabla\Phi|^2\rho dy+c_1\epsilon^{-1}\int_{\R_+^n}\Phi^2\rho dy$
(see Lemma 3.1 in \cite{HaradaBlow-up2}),
we can assure \eqref{Monotone-eq}.
Therefore 
the operator $(-A+\mu)$ has inverse from $L_\rho^2$ to $H_\rho^1$.
Furthermore
its inverse is clearly self-adjoint on $L_\rho^2$.
Finally
we prove that $(-A+\mu)^{-1}$ is compact on $L_\rho^2$.
Let $f\in L_\rho^2$ and $\Phi=(-A+\mu)^{-1}f$.
Then it holds from \eqref{Monotone-eq} that
\[
 \|\nabla\Phi\|_\rho^2+\|\Phi\|_\rho^2 \leq \frac{4}{\alpha_0^2}\|f\|_\rho^2.
\]
We recall that the embedding $H_\rho^1\to L_\rho^2$ is compact
(see Lemma A.2 in \cite{HaradaBlow-up2}).
Therefore the proof is completed.
\end{proof}

From Lemma \ref{OperatorA-eq},
we find that $L_\rho^2$ is spanned by eigenfunctions of
\begin{eqnarray}\label{EigenFull-eq}
\begin{cases}
\dis
 -\left( \Delta - \frac{y}{2}\cdot\nabla - \frac{m}{2} \right)\phi = \lambda \phi 
& \text{in } \R_+^n,
\\[2mm] \dis
 \pa_{\nu}\phi = {\cal K}r^{-1}\phi
& \text{on } \pa\R_+^n.
\end{cases}
\end{eqnarray}
Since solutions are assumed to be $y_n$-axial symmetric,
every eigenfunction of \eqref{EigenFull-eq} is given by the following separation of variables:
\[
 \phi(y) = e(\theta)a(r).
\]
Let $e_i(\theta)$ and $\kappa_i$ be the $i$-th eigenfunction with $\|e_i\|_{L^2(S_+^{n-1})}=1$ and
the $i$-th eigenvalue of \eqref{Eigen-eq}.
Moreover
let $a_{ij}(r)$ and $\lambda_{ij}$ be the $j$-th eigenfunction with
$\int_0^\infty a_{ij}(r)^2\rho r^{n-1}dr=1$ and the $j$-th eigenvalue of
\[
 -\left(
 a'' + \frac{n-1}{r}a' - \frac{\kappa_i}{r^2}a' - \frac{r}{2}a' - \frac{m}{2}a
 \right)
= \lambda a,
\hh r>0.
\]
Then
all eigenfunctions are expressed by
\[
 \phi_{ij}(y)=e_i(\theta)a_{ij}(r)
\]
and its eigenvalue is given by $\lambda_{ij}$.
The detail is stated in Appendix.

\subsection{Heat kernel and representation formula}
\label{Heatkernel-sec}

In this subsection,
we provide fundamental estimates of the following linear parabolic equations related to \eqref{Phi-eq}
and give a representation formula (the Duhamel principle).
\[
\begin{cases}
 \dis
 \Phi_s = \Delta\Phi-\frac{y}{2}\cdot\nabla\Phi-\frac{m}{2}\Phi,
 & (y,s)\in\R_+^n\times(0,\infty),\\[2mm]
 \pa_\nu\Phi = {\cal K}r^{-1}\Phi,
 & (y,s)\in\pa\R_+^n\times(0,\infty), \\[1mm]
 \Phi(y,0) = \Phi_0(y),
 & y\in\pa\R_+^n.
\end{cases}
\]
Let $\mu_1$ be given in \eqref{mu1-eq} and $e_1(\theta)$ be the first eigenfunction of \eqref{Eigen-eq}.
We set
\[
 \sigma(y) = r^{-\gamma}e_1(\theta),
\hspace{10mm}
 \gamma=m+\mu_1.
\]
Then
it is easily verified that
\[
 m < \gamma < \frac{n-2}{2}.
\]
Furthermore
we find that $\sigma(y)$ solves
\begin{equation}\label{sigma-eq}
\begin{cases}
\dis
 -\Delta \sigma = 0 & \text{in } \R_+^n,
\\ \dis
 \pa_\nu\sigma = {\cal K}r^{-1}\sigma & \text{on } \pa\R_+^n.
\end{cases}
\end{equation}
Now
we introduce a new function $b(y,s)$ by 
\[
 b(y,s) = \Phi(y,s)/\sigma(y).
\]
Then $b(y,s)$ satisfies
\begin{equation}\label{Psi-eq}
\begin{cases}
\dis
 b_s = \Delta b - \frac{y}{2}\cdot\nabla b +
 \frac{2\nabla\sigma}{\sigma}\cdot\nabla b +
 \left( \frac{\gamma-m}{2} \right)b,
 & (y,s)\in\R_+^n\times(0,\infty),
\\[2mm] \dis
 \pa_\nu b = 0,
 & (y,s)\in\pa\R_+^n\times(0,\infty),
\\[2mm] \dis
 b(y,0) = b_0(y) := \Phi_0(y)/\sigma(y),
 & y\in\pa\R_+^n.
\end{cases}
\end{equation}
To show the existence of the heat kernel of \eqref{Psi-eq},
we go back to the original variables $(x,t)$.
\[
 z(x,t) = (1-t)^{(\gamma-m)/2}b((1-t)^{-1/2}x,-\log(1-t)).
\]
Now we put
\[
 {\cal B}(x) = \sigma(x)^2.
\]
Then since $\sigma(y)$ satisfies \eqref{sigma-eq},
it is easily seen that $z(x,t)$ solves
\begin{equation}\label{z-eq}
\begin{cases}
 \dis
 z_t = \frac{1}{{\cal B}}\h\text{div}\left( {\cal B}\nabla z \right),
 & (x,s)\in\R_+^n\times(0,\infty),
 \\ \dis
 \pa_\nu z = 0,
 & (x,s)\in\pa\R_+^n\times(0,\infty),
 \\ \dis
 z(x,0) = z_0(x) := b_0(x),
 & x\in\pa\R_+^n.
\end{cases}
\end{equation}
This equation is also written in another form.
\[
 z_t= 
 \frac{1}{{\cal B}}\h\text{div}\left( {\cal B}\nabla z \right) =
 \Delta z + \left( \frac{2\nabla\sigma}{\sigma} \right)\cdot\nabla z.
\]
Here
we introduce other weighted Lebesgue spaces related to \eqref{z-eq}.
\[
 L_{\cal B}^p =
 \left\{
 z\in L_{\text{loc}}^p(\R_+^n);\int_{\R_+^n}|z(x)|^p{\cal B}(x)dx<\infty
 \right\},
\hspace{7.5mm}
 H_{\cal B}^1 = \{z\in L_{\cal B}^2;\nabla z\in L_{\cal B}^2\}.
\]
Inner products on $L_{\cal B}^2$ and $H_{\cal B}^1$ are defined by
\[
\begin{array}{c}
\dis
 (z_1,z_2)_{\cal B} := \int_{\R_+^n}z_1(x)z_2(x){\cal B}(x)dx,
\\[6mm] \dis
 (z_1,z_2)_{H_{\cal B}^1} :=
 (\nabla z_1,\nabla z_2)_{\cal B}+(z_1,z_2)_{\cal B}.
\end{array}
\]
For simplicity,
we write $\|\cdot\|_{\cal B}=\|\cdot\|_{L_{\cal B}^2}$.
Furthermore
we denote by $H_{\cal B}^*$ be the dual space of $H_{\cal B}^1$.
Let $J$: $H_{\cal B}^1\to H_{\cal B}^*$ be a mapping defined by
${}_{H_{\cal B}^*}\langle Jz,\zeta \rangle_{H_{\cal B}^1}=(z,\zeta)_{H_{\cal B}^1}$ for all $\zeta\in H_{\cal B}^1$.
Then
an inner product of $H_{\cal B}^*$ is defined by
\[
 (z_1,z_2)_{H_{\cal B}^*} := (J^{-1}z_1,J^{-1}z_2)_{H_{\cal B}^1},
\hh \forall z_1,z_2\in H_{\cal B}^*.
\]
In the usual manner,
$z\in L_{\cal B}^2$ can be considered as an element of $H_{\cal B}^*$.
\[
 {}_{H_{\cal B}^*}\langle z,\zeta \rangle_{H_{\cal B}^1} := (z,\zeta)_{\cal B},
\hh \forall\zeta\in H_{\cal B}^1.
\]
We define a linear operator $A_0$: $H_{\cal B}^1\to H_{\cal B}^*$ by
\[
 {}_{H_{\cal B}^*}\langle A_0z,\zeta \rangle_{H_{\cal B}^1} := -(\nabla z,\nabla\zeta)_{\cal B},
\hh \forall \zeta\in H_{\cal B}^1.
\]
Then
a weak solution of \eqref{z-eq} is defined by
\begin{equation}\label{xiweak-eq}
\begin{cases}
\dis
 {}_{H_{\cal B}^*}\langle z_t(t),\zeta \rangle_{H_{\cal B}^1}
=
 {}_{H_{\cal B}^*}\langle A_0z(t),\zeta \rangle_{H_{\cal B}^1}
=
 -(\nabla z(t),\nabla\zeta)_{\cal B},
\hh \forall \zeta\in H_{\cal B}^1,
\\ \dis
 z(0) = z_0.
\end{cases}
\end{equation}
Since
${}_{H_{\cal B}^*}\langle -A_0z,z \rangle_{H_{\cal B}^1}=-\|\nabla z\|_{\cal B}^2$
for all $z\in H_{\cal B}^1$,
it is easily verified that
the operator $A_0$ with $D(A_0)=H_{\cal B}^1$ is self-adjoint on $H_{\cal B}^*$. 
Therefore
we obtain the following results.

\begin{lem}\label{Existence-lem}
Let $z_0\in H_{\cal B}^*$.
Then
there exists a unique weak solution of \eqref{xiweak-eq} satisfying
\[
 z\in C([0,\infty);H_{\cal B}^*)\cap C^1((0,\infty);H_{\cal B}^*)\cap C((0,\infty);H_{\cal B}^1).
\]
In particular,
$g(t)=\|z(t)\|_{\cal B}^2$ is absolutely continuous on $(0,\infty)$ and
$g'(t)=-2\|\nabla z(t)\|_{\cal B}^2$ for a.e. $t\in(0,\infty)$.
Moreover
if $z_0\in H_{\cal B}^1$,
then the solution $z$ is in $C([0,\infty);H_{\cal B})$.
\end{lem}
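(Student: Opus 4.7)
The approach is semigroup theory applied to the self-adjoint operator $A_0$ on $H_{\cal B}^*$. Since the discussion preceding the statement shows that $A_0$ with $D(A_0)=H_{\cal B}^1$ is self-adjoint on $H_{\cal B}^*$ and satisfies
\[
 {}_{H_{\cal B}^*}\langle -A_0 z, z\rangle_{H_{\cal B}^1}
 = \|\nabla z\|_{\cal B}^2 \geq 0
 \quad \forall z \in H_{\cal B}^1,
\]
the spectral theorem (equivalently, Hille--Yosida for nonnegative self-adjoint operators) yields an analytic $C_0$-semigroup of contractions $\{S(t)\}_{t\geq 0}$ on $H_{\cal B}^*$ whose generator is $A_0$. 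Define $z(t):=S(t)z_0$.

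From analyticity of $S(\cdot)$ one immediately obtains $z\in C([0,\infty);H_{\cal B}^*)$ together with $z(t)\in D(A_0^k)$ for every $k\geq 1$ and $t>0$. In particular $z(t)\in D(A_0)=H_{\cal B}^1$ for $t>0$; closedness of $A_0$ combined with the analyticity of $S(\cdot)$ then gives $z\in C((0,\infty);H_{\cal B}^1)$ and $z_t=A_0z\in C((0,\infty);H_{\cal B}^*)$. Testing the identity $z_t(t)=A_0z(t)$ against $\zeta\in H_{\cal B}^1$ recovers the weak formulation \eqref{xiweak-eq}.

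For the energy identity I would work in the Gelfand triple $H_{\cal B}^1\subset L_{\cal B}^2\subset H_{\cal B}^*$, with $L_{\cal B}^2$ embedded into $H_{\cal B}^*$ through the $(\cdot,\cdot)_{\cal B}$ pairing as the paper specifies. On every subinterval $[\varepsilon,T]\subset(0,\infty)$ we have $z\in L^2(\varepsilon,T;H_{\cal B}^1)$ and $z_t\in L^2(\varepsilon,T;H_{\cal B}^*)$, so the Lions--Magenes chain rule yields absolute continuity of $g(t)=\|z(t)\|_{\cal B}^2$ together with
\[
 g'(t)=2\h{}_{H_{\cal B}^*}\langle z_t(t),z(t)\rangle_{H_{\cal B}^1}
 =-2\|\nabla z(t)\|_{\cal B}^2
 \quad \text{a.e. } t>0.
\]
Uniqueness then follows by applying this identity to the difference of two solutions and sending $s\to 0^+$ using $w\in C([0,\infty);H_{\cal B}^*)$. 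Finally, for $z_0\in H_{\cal B}^1=D(A_0)$, strong continuity of $S(\cdot)$ on the graph-normed space $D(A_0)$ delivers $z\in C([0,\infty);H_{\cal B}^1)$.

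The main obstacle will be the justification of the energy identity: self-adjointness is formulated with respect to the $H_{\cal B}^*$ inner product, whereas the chain rule needed here pivots at a different space, $L_{\cal B}^2$. One must carefully identify the duality pairings $(\cdot,\cdot)_{\cal B}$ and ${}_{H_{\cal B}^*}\langle\cdot,\cdot\rangle_{H_{\cal B}^1}$ on the common dense subspace $H_{\cal B}^1$ before invoking the Lions--Magenes lemma; this identification hinges on the density of $H_{\cal B}^1$ in $L_{\cal B}^2$ and on the fact that the natural embedding $L_{\cal B}^2\hookrightarrow H_{\cal B}^*$ specified in the paper is compatible with the duality action of $A_0$.
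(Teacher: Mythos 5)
The paper offers no explicit proof of this lemma---it is stated as an immediate consequence of the preceding observation that $A_0$ is self-adjoint on $H_{\cal B}^*$---so your task is to supply the standard details, and your route (analytic $C_0$-semigroup generated by a nonpositive self-adjoint operator, regularity from analyticity, energy identity via the Gelfand triple and the Lions--Magenes chain rule) is exactly the machinery the paper is implicitly invoking. The regularity assertions, the identification of the two duality pairings on the common dense subspace $H_{\cal B}^1$, and the $z_0\in H_{\cal B}^1$ case are all handled correctly.

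There is, however, a genuine gap in your uniqueness step. You apply the $L^2_{\cal B}$-energy identity to $w=z_1-z_2$ on $(s,t)$ and then let $s\to0^+$, but all you know at $t=0$ is $w\in C([0,\infty);H_{\cal B}^*)$ with $w(0)=0$. For $z_0\in H_{\cal B}^*\setminus L_{\cal B}^2$ the quantity $\|w(s)\|_{\cal B}$ need not tend to $0$ as $s\to0^+$ (it is only known to be nonincreasing for $s>0$, and weak lower semicontinuity in $L_{\cal B}^2$ combined with $H_{\cal B}^*$-convergence to $0$ gives nothing in the right direction). The cleanest repair is to pivot to the $H_{\cal B}^*$-energy: since $A_0$ is self-adjoint on $H_{\cal B}^*$ and one checks that $(A_0 z,z)_{H_{\cal B}^*}\le 0$ for $z\in H_{\cal B}^1$ (using $\|J^{-1}z\|_{H_{\cal B}^1}\le\|z\|_{\cal B}$), the map $t\mapsto\|w(t)\|_{H_{\cal B}^*}^2$ is nonincreasing and continuous on $[0,\infty)$, hence identically zero. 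Alternatively, the classical uniqueness argument for the abstract Cauchy problem (showing $\tau\mapsto S(t-\tau)z(\tau)$ is constant on $(\varepsilon,t)$ and letting $\varepsilon\to0^+$ using $H_{\cal B}^*$-continuity) does the job without any energy estimate in $L_{\cal B}^2$.
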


Next
we provide estimates of time derivatives of solutions constructed in Lemma \ref{Existence-lem}.

\begin{lem}\label{Existence2-lem}
Let $z_0\in L_{\cal B}^2$.
Then
there exists a unique weak solution of \eqref{xiweak-eq} satisfying
\[
\begin{array}{c}
\dis
 z\in C([0,\infty);L_{\cal B}^2)\cap C^1((0,\infty);L_{\cal B}^2)\cap C((0,\infty);H_{\cal B}^1),
\\[2mm]
 t\|z_t(t)\|_{\cal B} + \sqrt{t}\|\nabla z(t)\|_{\cal B} \leq c\|z_0\|_{\cal B}.
\end{array}
\]
Furthermore
if $z_0\in L_{\cal B}^1\cap L_{\cal B}^2$,
then it holds that
\[
 z\in C([0,\infty);L_{\cal B}^1),
\hh\hh
 \|z(t)\|_{L_{\cal B}^1}=\|z_0\|_{L_{\cal B}^1},
\hspace{3mm}\forall t>0.
\]
\end{lem}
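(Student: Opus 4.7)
The plan is to realise the evolution problem \eqref{xiweak-eq} as generated by a non-negative self-adjoint operator on $L_{\cal B}^2$ and then extract each assertion from spectral calculus. Applying Kato's first representation theorem to the closed, densely defined, positive symmetric form $(u,v)\mapsto(\nabla u,\nabla v)_{\cal B}$ with form domain $H_{\cal B}^1\subset L_{\cal B}^2$ produces a non-negative self-adjoint operator $L$ on $L_{\cal B}^2$ with $D((-L)^{1/2})=H_{\cal B}^1$, $((-L)u,u)_{\cal B}=\|\nabla u\|_{\cal B}^2$ for $u\in D(L)$, and coinciding with the $L_{\cal B}^2$-realisation of $A_0$. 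Hence $L$ generates an analytic contraction semigroup $\{e^{tL}\}_{t\geq0}$ on $L_{\cal B}^2$; setting $z(t)=e^{tL}z_0$ furnishes the unique weak solution in $C([0,\infty);L_{\cal B}^2)$, while analyticity places $z(t)$ in $D(L)\subset H_{\cal B}^1$ for each $t>0$ and supplies the remaining regularity $C^1((0,\infty);L_{\cal B}^2)\cap C((0,\infty);H_{\cal B}^1)$.

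The smoothing estimates then follow from the elementary spectral bounds
\[
 \|Le^{tL}\|_{L_{\cal B}^2\to L_{\cal B}^2}
\leq
 \sup_{\lambda\geq0}\lambda e^{-\lambda t}=(et)^{-1},
\hh
 \|(-L)^{1/2}e^{tL}\|_{L_{\cal B}^2\to L_{\cal B}^2}
\leq
 \sup_{\lambda\geq0}\sqrt\lambda\h e^{-\lambda t}\leq ct^{-1/2},
\]
combined with $z_t(t)=Lz(t)$ and $\|\nabla z(t)\|_{\cal B}=\|(-L)^{1/2}z(t)\|_{\cal B}$; this yields $t\|z_t(t)\|_{\cal B}+\sqrt t\|\nabla z(t)\|_{\cal B}\leq c\|z_0\|_{\cal B}$ at once.

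The $L_{\cal B}^1$ statement is more delicate since ${\cal B}(x)=r^{-2\gamma}e_1(\theta)^2$ has infinite mass on $\R_+^n$. I would first check the first Beurling--Deny criterion: the chain rule gives $|u|\in H_{\cal B}^1$ with $\|\nabla|u|\|_{\cal B}\leq\|\nabla u\|_{\cal B}$ whenever $u\in H_{\cal B}^1$, so $e^{tL}$ is positivity preserving. For $z_0\geq0$ this gives $z(t)\geq0$ and $\|z(t)\|_{L_{\cal B}^1}=\int z(t){\cal B}\h dx$; testing the equation against a smooth radial cut-off $\eta_R$ equal to $1$ on $\{|x|\leq R\}$ produces
\[
 \int z(t)\eta_R{\cal B}\h dx-\int z_0\eta_R{\cal B}\h dx
=
 \int_0^t\int z(s)\h\text{div}({\cal B}\nabla\eta_R)\h dxds,
\]
the Neumann condition annihilating the boundary contribution. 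Bounding $|\nabla\eta_R|\leq CR^{-1}$ and using the smoothing control on $\|\nabla z(s)\|_{\cal B}$ already derived, one sees the right-hand side vanishes as $R\to\infty$; for sign-changing $z_0\in L_{\cal B}^1\cap L_{\cal B}^2$ one splits $z_0=z_0^+-z_0^-$ and concludes by positivity preservation. The principal obstacle is precisely this last step: making the cut-off argument rigorous despite the non-integrability of ${\cal B}$ at infinity, which forces one to balance the $R^{-1}$ gain against the weighted measure of the annulus $\{R\leq|x|\leq2R\}$ and invoke the $L_{\cal B}^2$ energy bound from Lemma \ref{Existence-lem} on an $s$-integrated scale.
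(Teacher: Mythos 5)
Your treatment of existence, uniqueness, regularity, and the smoothing estimate via Kato's representation theorem plus spectral calculus is correct and genuinely cleaner than the paper's route. The paper instead regularizes the degenerate weight ${\cal B}=\sigma^2$ near the origin, solves the smooth approximating problems \eqref{approximation-eq}, derives the a priori bounds \eqref{appriori1-eq}\h-\h\eqref{appriori2-eq} by testing with $z_\epsilon$, $\pa_tz_\epsilon$, $t\pa_tz_\epsilon$ and by differentiating in $t$, then passes to the limit by compactness; your spectral argument bypasses all of that at the cost of not producing the explicit $L^\infty$ control the paper later needs from $\|z_\epsilon\|_\infty\leq\|z_0\|_\infty$.

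For the $L^1_{\cal B}$-conservation, however, there is a genuine gap, and your own ``principal obstacle'' remark is not just a caveat but the point where the proof fails. Write the cut-off identity as
\[
 \int z(t)\eta_R{\cal C}\,dx-\int z_0\eta_R{\cal C}\,dx
=
 -\int_0^t\!\!\int \nabla z(s)\cdot\nabla\eta_R\,{\cal B}\,dx\,ds ,
\]
where ${\cal C}={\cal B}$ here. With $|\nabla\eta_R|\leq CR^{-1}$ supported on $\{R<|x|<2R\}$, Cauchy--Schwarz gives
\[
 \left|\int\nabla z(s)\cdot\nabla\eta_R\,{\cal B}\right|
\leq
 \|\nabla z(s)\|_{L_{\cal B}^2(\{R<|x|<2R\})}
 \left(\int_{\{R<|x|<2R\}}|\nabla\eta_R|^2{\cal B}\right)^{1/2},
\]
and since ${\cal B}(x)\sim|x|^{-2\gamma}$ with $2\gamma<n-2$, the second factor grows like $R^{(n-2-2\gamma)/2}\to\infty$. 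The first factor tends to $0$ for each fixed $s$, and $\int_0^t\|\nabla z(s)\|_{\cal B}^2\,ds$ is finite, but neither the energy bound nor the smoothing bound $\|\nabla z(s)\|_{\cal B}\leq cs^{-1/2}\|z_0\|_{\cal B}$ gives decay fast enough to beat $R^{(n-2-2\gamma)/2}$; the same deficit persists if one uses a logarithmic cut-off, and the extra a priori information available here (only $z_0\in L_{\cal B}^1\cap L_{\cal B}^2$, no $L^\infty$ bound on $z$) does not rescue it. What is really being asked is stochastic completeness of the semigroup on the weighted half-space, and the clean way to get it is not a bare cut-off: either carry out the paper's approximation, where mass conservation $\int z_\epsilon{\cal B}_\epsilon=\int z_0{\cal B}_\epsilon$ holds exactly for each $\epsilon$ and passes to the limit via the controlled $L_{\cal B}^1$ Cauchy estimate \eqref{L^1-eq}, or invoke Grigor'yan's volume-growth criterion for conservativeness of weighted Laplacians (applicable here since $V(x_0,R)\sim R^{n-2\gamma}$ by Lemma \ref{Doubling-lem}, hence $\int^\infty r\,dr/\log V=\infty$). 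Your proposal, as written, neither supplies the decay needed to close the cut-off estimate nor cites the completeness criterion, so the $L^1_{\cal B}$ part is not proved.

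One further small remark: $L^1_{\cal B}$-contractivity (which is what positivity preservation plus sub-Markovianity give directly) only yields $\|z(t)\|_{L_{\cal B}^1}\leq\|z_0\|_{L_{\cal B}^1}$ for signed $z_0$, and the equality really asserted and proved in the paper is the linear mass identity $\int z(t){\cal B}=\int z_0{\cal B}$; your splitting $z_0=z_0^+-z_0^-$ produces this mass identity, not norm equality, once conservativeness is in hand.
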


\begin{proof}
To obtain solutions satisfying the desired regularity,
we consider approximation problems.
Let $\theta(|x|)$ be a smooth cut off function such that
$\theta(r)=0$ for $r\in(0,1/4)$ and $\theta(r)=1$ for $r>3/4$,
and set $\theta_\epsilon(|x|)=\theta(|x|/\epsilon)$.
Now we define ${\cal B}_\epsilon(x)$ and $\sigma_{\epsilon}(x)$ by
\[
 \sigma_\epsilon(x)
=
 \theta_\epsilon\sigma(x) + (1-\theta_\epsilon)\epsilon^{-\gamma},
\hspace{7.5mm}
 {\cal B}_\epsilon(x)=\sigma_\epsilon(x)^2.
\]
Then
there exist $c_1,c_2,c_3,c_4>0$ such that for $0<|x|<\epsilon$
\[
 c_1\epsilon^{-\gamma} < \sigma_\epsilon(x) < c_2\epsilon^{-\gamma},
\hhh
 |\nabla\sigma_\epsilon(x)|< c_3\epsilon^{-\gamma-1},
\hhh
 |D^2\sigma_\epsilon(x)|< c_4\epsilon^{-\gamma-2}.
\]
Therefore there exists $c>0$ such that for $0<|x|<\epsilon$
\begin{equation}\label{Dsigma-eq}
 \frac{|\nabla\sigma_\epsilon(x)|}{\sigma_\epsilon(x)} < c|x|^{-1},
\hhh
 \frac{|D^2\sigma_\epsilon(x)|}{\sigma_\epsilon(x)} < c|x|^{-2}.
\end{equation}
Now we consider the following approximation equation.
\begin{equation}\label{approximation-eq}
 \begin{cases}
 \dis
 z_t = \frac{1}{{\cal B}_\epsilon}\text{div}\left( {\cal B}_\epsilon\nabla z \right),
 & (x,t)\in\R_+^n\times(0,\infty),
 \\ \dis
 \pa_\nu z = 0
 & (x,t)\in\pa\R_+^n\times(0,\infty),
 \\ \dis
 z(x,0) = z_0(x),
 & x\in\pa\R_+^n.
 \end{cases}
\end{equation}
First
we assume that
\begin{equation}\label{assume-eq}
 z_0\in C_c^\infty(\overline{\R_+^n}),\hhh
 \pa_\nu z_0 =0 \hspace{3mm}\text{on } \pa\R_+^n.
\end{equation}
Then since a right-hand side of \eqref{approximation-eq} is written by
\[
 \frac{1}{{\cal B}_\epsilon}\text{div}\left( {\cal B}_\epsilon\nabla z \right)
=
 \Delta z +\frac{2\nabla\sigma_\epsilon}{\sigma_\epsilon}\cdot\nabla z
\]
and $\nabla\sigma_\epsilon/\sigma_\epsilon$ is smooth,
if $z_0$ satisfies \eqref{assume-eq},
there exists a unique solution $z_\epsilon(x,t)$ such that
$z_\epsilon(x,t)\in C^{2,1}(\overline{\R_+^n}\times[0,\infty))\cap
 C^\infty(\overline{\R_+^n}\times(0,\infty))$
and
\[
 z_\epsilon(t)\in C^1([0,\infty);L_{{\cal B}_\epsilon}^2),
\hhh
 z_\epsilon(t)\in C^k((0,\infty);H_{{\cal B}_\epsilon}^1)
\hspace{3mm} (k\geq1),
\]
where $L_{{\cal B}_\epsilon}^2$ and $H_{{\cal B}_\epsilon}^1$ are defined in the same manner as $L_{\cal B}^2$ and $H_{\cal B}^1$.
By a maximum principle,
it is easily seen that
\begin{equation}\label{zepsiloninfty-eq}
 \|z_\epsilon(t)\|_{\infty} \leq \|z_0\|_\infty,
\hspace{3mm} t>0.
\end{equation}
Put $\|\cdot\|_{{\cal B}_\epsilon}=\|\cdot\|_{L_{{\cal B}_\epsilon}^2}$.
Then
multiplying \eqref{approximation-eq} by
$z_\epsilon\sigma_\epsilon^2$, $\pa_tz_\epsilon\sigma_\epsilon^2$, $t\pa_tz_\epsilon\sigma_\epsilon^2$
and integrating over $\R_+^n\times(0,t)$ respectively,
we get
\begin{equation}\label{appriori1-eq}
\begin{array}{c}
\dis
 \frac{1}{2}\|z_\epsilon(t)\|_{{\cal B}_\epsilon}^2
+
 \int_0^t\|\nabla z_\epsilon(\tau)\|_{{\cal B}_\epsilon}^2d\tau
=
 \frac{1}{2}\|z_0\|_{{\cal B}_\epsilon}^2,
\\[4mm] \dis
 \int_0^t\|\pa_tz_\epsilon(\tau)\|_{{\cal B}_\epsilon}^2d\tau
+
 \frac{1}{2}\|\nabla z_\epsilon(t)\|_{{\cal B}_\epsilon}^2
=
 \frac{1}{2}\|\nabla z_0\|_{{\cal B}_\epsilon}^2d\tau,
\\[4mm] \dis
 \int_0^t\tau\|\pa_tz_\epsilon(\tau)\|_{{\cal B}_\epsilon}^2d\tau
+
 \frac{t}{2}\|\nabla z_\epsilon(t)\|_{{\cal B}_\epsilon}^2
=
 \frac{1}{2}\int_0^t\|\nabla z_\epsilon(\tau)\|_{{\cal B}_\epsilon}^2d\tau.
\end{array}
\end{equation}
To obtain a priori estimates for higher derivatives,  
we differentiate \eqref{approximation-eq} with respect to $t$,
then $Z_\epsilon(x,t):=\pa_tz_\epsilon(x,t)$ satisfies
\begin{equation}\label{Zepsilon-eq}
 \begin{cases}
 \dis
 Z_t = \frac{1}{{\cal B}_\epsilon}\text{div}\left( {\cal B}_\epsilon\nabla Z \right),
 & (x,t)\in\R_+^n\times(0,\infty),
 \\ \dis
 \pa_\nu Z = 0,
 & (x,t)\in\pa\R_+^n\times(0,\infty).
 \end{cases}
\end{equation}
Since $Z_\epsilon(x,t)\in C(\overline{\R_+^n}\times[0,\infty))$,
by a maximum principle,
we get
\begin{equation}\label{infty2-eq}
 \|\pa_tz_\epsilon(t)\|_\infty
=
 \|Z_\epsilon(t)\|_\infty
\leq
 \|Z_\epsilon(0)\|_\infty
\leq
 \left\|
 \Delta z_0+\frac{2\nabla\sigma_\epsilon}{\sigma_\epsilon}\cdot\nabla z_0
 \right\|_\infty.
\end{equation}
Multiplying \eqref{Zepsilon-eq} by $t^2Z_\epsilon\sigma_\epsilon^2$,
$t^3\pa_tZ_\epsilon\sigma_\epsilon^2$ respectively
and integrating over $\R_+^n\times(0,t)$,
we get
\begin{equation}\label{appriori2-eq}
\begin{array}{c}
\dis
 \frac{t^2}{2}\|Z_\epsilon(t)\|_{{\cal B}_\epsilon}^2
+
 \int_0^t\tau^2\|\nabla Z_\epsilon(\tau)\|_{{\cal B}_\epsilon}^2d\tau
=
 \int_0^t\tau\|Z_\epsilon(\tau)\|_{{\cal B}_\epsilon}^2d\tau,
\\[4mm] \dis
 \int_0^t\tau^3\|\pa_tZ_\epsilon(\tau)\|_{{\cal B}_\epsilon}^2d\tau
+
 \frac{t^3}{2}\|\nabla Z_\epsilon(t)\|_{{\cal B}_\epsilon}^2
=
 \frac{3}{2}\int_0^t\tau^2\|\nabla Z_\epsilon(\tau)\|_{{\cal B}_\epsilon}^2d\tau.
\end{array}
\end{equation}
Therefore
since $\sigma_\epsilon(x)\leq c\sigma(x)$ for some $c>0$,
we obtain from \eqref{appriori1-eq} and \eqref{appriori2-eq}
\[
\begin{array}{c}
\dis
 t^2\|Z_\epsilon(t)\|_{{\cal B}_\epsilon}^2
\leq
 \|z_0\|_{{\cal B}_\epsilon}^2
\leq
 c\|z_0\|_{\cal B}^2,
\\[2mm] \dis
 \int_0^t
 \left(
 \tau^3\|\pa_tZ_\epsilon(\tau)\|_{\sigma_\epsilon^2}^2
 +
 \tau^2\|\nabla Z_\epsilon(\tau)\|_{\sigma_\epsilon^2}^2
 +
 \tau\|Z_\epsilon(\tau)\|_{\sigma_\epsilon^2}^2
 \right)
\leq
 c\|z_0\|_{\cal B}^2.
\end{array}
\]
As a consequence,
since $\sigma_\epsilon(x)=\sigma(x)$ for $|x|>\epsilon$,
it holds that for $t>\delta$
\[
 \int_\delta^t\int_{\R_+^n\setminus B_\epsilon}
 \left(
 \delta^3|\pa_tZ_\epsilon(x,t)|^2+\delta^2|\nabla Z_\epsilon(x,t)|^2+\delta|Z_\epsilon(x,t)|^2
 \right)
 {\cal B}(x)dx
\leq
 c\|z_0\|_{\cal B}^2.
\]
Therefore
by a compact embedding,
there exist a subsequence $\{Z_{\epsilon_k}(x,t)\}_{k\in\N}\subset\{Z_{\epsilon}(x,t)\}_{\epsilon>0}$
and a limiting function $Z(x,t)\in L^2(\R_+^n\setminus B_r\times(\delta,\delta^{-1}))$ for any $r,\delta\in(0,1)$
such that for any $\nu>0$ and $r_1,\delta_1\in(0,1)$,
there exists $k_1(\nu,r_1,\delta_1)>0$ such that for $k>k_1$
\[
 \int_{\delta_1}^t\int_{\R_+^n\setminus B_{r_1}}|Z_{\epsilon_k}(x,t)-Z(x,t)|^2{\cal B}(x)dx \leq \nu.
\]
As a consequence,
by using \eqref{infty2-eq},
we get for $k,k'>k_1$
\[
\begin{array}{lll}
\dis
 \int_{\delta_1}^t\int_{\R_+^n}|Z_{\epsilon_k}(x,t)-Z_{\epsilon_{k'}}(x,t)|^2{\cal B}(x)dx
\hspace{-2mm}&\leq&\hspace{-2mm} \dis
 \nu + 2\sup_{\epsilon\in(0,1)}\sup_{\tau\in(0,t)}\|Z_\epsilon(\tau)\|_\infty^2
 \int_{\delta_1}^t\int_{B_{r_1}}{\cal B}(x)dx
\\[6mm] \hspace{-2mm}&\leq&\hspace{-2mm} \dis
 \nu + 2t\left\|
 \Delta z_0+\frac{2\nabla\sigma_\epsilon}{\sigma_\epsilon}\cdot\nabla z_0
 \right\|_\infty
 \int_{B_{r_1}}{\cal B}(x)dx.
\end{array}
\]
This implies
\begin{equation}\label{UniformZ-eq}
 Z_{\epsilon_k} \to Z \hh\text{in } L^2(\R_+^n\times(\delta,\delta^{-1}))
\end{equation}
for any $\delta\in(0,1)$.
Let $\theta_\mu(x)$ be a smooth cut off function satisfying
\[
 \theta_\mu(x) =
 \begin{cases}
 0 & \text{if } |x|<\mu,\\
 1 & \text{if } |x|>2\mu,
 \end{cases}
\hhh
 0\leq\theta_\mu(x)\leq1,
\hhh
 |\nabla\theta_\mu(x)|<c/\mu.
\]
Since $\sigma_\epsilon(x)=\sigma(x)$ for $|x|>\epsilon$,
multiplying \eqref{Zepsilon-eq} by
$(t-\delta)(Z_{\epsilon}(x,t)-Z_{\epsilon'}(x,t))\theta_r(x)^2\sigma(x)^2$
and integrating over $\R_+^n\times(\delta,t)$,
then we get for $t>\delta>0$ and $\epsilon',\epsilon<r$
\[
\begin{array}{l}
\dis
 (t-\delta)\int_{\R_+^n}
 \left( Z_{\epsilon}(x,t)-Z_{\epsilon'}(x,t) \right)^2\theta_r(x)^2{\cal B}(x)dx
\\[4mm] \dis \hspace{15mm}
\leq
 \int_\delta^td\tau\int_{\R_+^n}
 |(Z_{\epsilon}(x,\tau)-Z_{\epsilon'}(x,\tau))|^2{\cal B}(x)dx
\\[4mm] \dis \hspace{25mm}
+
 cr^{-2}\int_\delta^t(\tau-\delta)d\tau\int_{\R_+^n}
 |(Z_{\epsilon}(x,\tau)-Z_{\epsilon'}(x,\tau))|^2{\cal B}(x)dx.
\end{array}
\]
Therefore
by virtue of \eqref{UniformZ-eq},
we find
\[
 \lim_{k\to\infty}\int_{\R_+^n\setminus B_r}|Z_{\epsilon_k}(x,t)-Z(x,t)|^2{\cal B}(x)dx=0
\hh \text{uniformly for } t\in(\delta,\delta^{-1})
\]
with any fixed $r>0$ and $\delta\in(0,1)$.
As a consequence,
from \eqref{infty2-eq},
we obtain
\[
 \lim_{k\to\infty}\int_{\R_+^n}|Z_{\epsilon_k}(x,t)-Z(x,t)|^2{\cal B}(x)dx=0
\hh \text{uniformly for } t\in(\delta,\delta^{-1})
\]
with any fixed $\delta\in(0,1)$.
Next we provide estimates of $\nabla z_\epsilon(x,t)$.
Since $\sigma_\epsilon(x)=\sigma(x)$ for $|x|>\epsilon$,
we see that
\[
\begin{array}{l}
\dis
 \int_{\R_+^n}|\nabla z_{\epsilon}(x,t)|^2{\cal B}(x)dx
=
 \int_{\R_+^n} z_{\epsilon}(x,t)\nabla\bigl( {\cal B}(x)\nabla z_\epsilon(x,t) \bigr)dx
\\[4mm] \dis \hspace{15mm}
=
 \int_{\R_+^n\setminus B_\epsilon} z_{\epsilon}(x,t)\nabla\bigl( {\cal B}_\epsilon(x)\nabla z_\epsilon(x,t) \bigr)dx
+
 \int_{B_\epsilon}z_{\epsilon}(x,t)\nabla\bigl( {\cal B}(x)\nabla z_\epsilon(x,t) \bigr)dx
\\[6mm] \dis \hspace{15mm}
=
 \int_{\R_+^n\setminus B_\epsilon} z_{\epsilon}(x,t)\pa_tz_\epsilon(x,t){\cal B}_\epsilon(x)dx
+
 \int_{B_\epsilon}z_{\epsilon}(x,t)\nabla\bigl( {\cal B}(x)\nabla z_\epsilon(x,t) \bigr)dx
\\[6mm] \dis \hspace{15mm}
\leq
 \|z_\epsilon(t)\|_{\cal B}\|\pa_tz_\epsilon(t)\|_{\cal B} + I_\epsilon.
\end{array}
\]
Since $z_\epsilon(x,t)$ is a solution of \eqref{approximation-eq},
by virtue of \eqref{Dsigma-eq},
$I_\epsilon$ is estimated as follows.
\[
\begin{array}{lll}
\dis
 I_\epsilon
\hspace{-2mm}&=&\hspace{-2mm} \dis
 \int_{B_\epsilon}z_{\epsilon}(x,t)
 \left(
 \Delta z_\epsilon(x,t)+\frac{2\nabla\sigma}{\sigma}\cdot\nabla z_\epsilon(x,t)
 \right)
 {\cal B}(x)dx
\\[6mm] \hspace{-2mm}&=&\hspace{-2mm} \dis
 \int_{B_\epsilon}z_{\epsilon}(x,t)
 \left(
 \pa_tz_\epsilon(x,t)
 -
 \frac{2\nabla\sigma_\epsilon}{\sigma_\epsilon}\cdot\nabla z_\epsilon(x,t)
 +
 \frac{2\nabla\sigma}{\sigma}\cdot\nabla z_\epsilon(x,t)
 \right)
 {\cal B}(x)dx
\\[6mm] \hspace{-2mm}&=&\hspace{-2mm} \dis
 (z_\epsilon(t),\pa_tz_\epsilon(t))_{\cal B}
-
 \int_{\pa B_\epsilon}
 z_\epsilon(x,t)^2
 \left(
 \frac{\pa_\nu\sigma_\epsilon}{\sigma_\epsilon}-\frac{\pa_\nu\sigma}{\sigma}
 \right)
 {\cal B}(x)dx
\\[4mm] && \dis \hspace{37.5mm}
+
 \int_{B_\epsilon}
 z_\epsilon(x,t)^2
 \nabla\cdot
 \left(
 \left( \frac{\nabla\sigma_\epsilon}{\sigma_\epsilon}-\frac{\nabla\sigma}{\sigma} \right)
 {\cal B}(x)
 \right)dx
\\[6mm] \hspace{-2mm}&\leq&\hspace{-2mm} \dis
 \|z_\epsilon(t)\|_{\cal B}\|\pa_tz_\epsilon(t)\|_{\cal B}
+
 \|z_\epsilon(t)\|_\infty^2
 \left(
 \int_{\pa B_\epsilon}|x|^{-2\gamma-1}dx
 +
 \int_{B_\epsilon}|x|^{-2\gamma-2}dx
 \right).
\end{array}
\]
Therefore
there exists $c>0$ such that
\begin{equation}\label{nablazestimate-eq}
 \sup_{t\in(0,\infty)}\|\nabla z_\epsilon(t)\|_{\cal B} \leq c.
\end{equation}
Furthermore
we see that
\[
\begin{array}{l}
\dis
 \int_{\R_+^n}|\nabla z_{\epsilon}(x,t)-\nabla z_{\epsilon'}(x,t))|^2{\cal B}(x)dx
\\[4mm] \dis \hspace{15mm}
=
 \int_{\R_+^n}
 (z_{\epsilon}(x,t)-z_{\epsilon'}(x,t))
 \nabla\bigl( {\cal B}(x)(\nabla z_\epsilon(x,t)-\nabla z_{\epsilon'}(x,t)) \bigr)dx
\\[6mm] \dis \hspace{15mm}
=
 \int_{\R_+^n\setminus B_\epsilon}
 (z_{\epsilon}(x,t)-z_{\epsilon'}(x,t))(\pa_tz_{\epsilon}(x,t)-\pa_tz_{\epsilon'}(x,t))
 {\cal B}(x)dx
\\[4mm] \dis \hspace{40mm}
+
 \int_{B_\epsilon}
 (z_{\epsilon}(x,t)-z_{\epsilon'}(x,t))
 \nabla\bigl( {\cal B}(x)(\nabla z_{\epsilon}(x,t)-\nabla z_{\epsilon'}(x,t)) \bigr)dx
\\[6mm] \dis \hspace{15mm}
\leq
 \|z_{\epsilon}(t)-z_{\epsilon'}(t))\|_{\cal B}
 \|\pa_tz_{\epsilon}(t)-\pa_tz_{\epsilon'}(t))\|_{\cal B}
+
 J_\epsilon.
\end{array}
\]
Then from \eqref{Dsigma-eq},
$J_\epsilon$ is estimated as follows.
\[
\begin{array}{lll}
\dis
 J_\epsilon
\hspace{-2mm}&=&\hspace{-2mm} \dis
 \int_{B_\epsilon}
 (z_{\epsilon}(x,t)-z_{\epsilon'}(x,t))
 \nabla\bigl( {\cal B}(x)(\nabla z_{\epsilon}(x,t)-\nabla z_{\epsilon'}(x,t)) \bigr)dx
\\[6mm] \hspace{-2mm}&=&\hspace{-2mm} \dis
 \int_{B_\epsilon}
 (z_{\epsilon}(x,t)-z_{\epsilon'}(x,t))
 (\Delta z_{\epsilon}(x,t)-\Delta z_{\epsilon'}(x,t))
 {\cal B}(x)dx
\\[2mm] && \dis \hspace{10mm}
+
 \int_{B_\epsilon}
 (z_{\epsilon}(x,t)-z_{\epsilon'}(x,t))
 \frac{2\nabla\sigma}{\sigma}\cdot(\nabla z_{\epsilon}(x,t)-\nabla z_{\epsilon'}(x,t))
 {\cal B}(x)dx
\\[6mm] \hspace{-2mm}&=&\hspace{-2mm} \dis
 \int_{B_\epsilon}
 (z_{\epsilon}(x,t)-z_{\epsilon'}(x,t))
 (\pa_tz_{\epsilon}(x,t)-\pa_tz_{\epsilon'}(x,t))
 {\cal B}(x)dx
\\[2mm] && \dis \hspace{10mm}
+  \int_{B_\epsilon}
 (z_{\epsilon}(x,t)-z_{\epsilon'}(x,t))
 \left(
 \frac{2\nabla\sigma_\epsilon}{\sigma_\epsilon}\cdot\nabla z_{\epsilon}(x,t)
 -
 \frac{2\nabla\sigma_{\epsilon'}}{\sigma_{\epsilon'}}\cdot\nabla z_{\epsilon'}(x,t)
 \right)
 {\cal B}(x)dx
\\[4mm] && \dis \hspace{22.5mm}
+
 \int_{B_\epsilon}
 (z_{\epsilon}(x,t)-z_{\epsilon'}(x,t))
 \frac{2\nabla\sigma}{\sigma}\cdot(\nabla z_{\epsilon}(x,t)-\nabla z_{\epsilon'}(x,t))
 {\cal B}(x)dx
\\[6mm] \hspace{-2mm}&\leq&\hspace{-2mm} \dis
 \|z_{\epsilon}(t)-z_{\epsilon'}(t)\|_{\cal B}
 \|\pa_tz_{\epsilon}(t)-\pa_tz_{\epsilon'}(t)\|_{\cal B}
\\[2mm] && \dis \hspace{10mm}
+
 c\|z_{\epsilon}(t)-z_{\epsilon'}(t)\|_{\infty}
 \Bigl(
 \|\nabla z_\epsilon(t)\|_{\cal B}+\|\nabla z_{\epsilon'}(t)\|_{\cal B}
 \Bigr)
 \left( \int_{B_\epsilon}|x|^{-2}{\cal B}(x)dx \right)^{1/2}.
\end{array}
\]
As a consequence,
by virtue of \eqref{nablazestimate-eq},
we obtain
\[
 \lim_{\epsilon\to\infty}J_\epsilon=0
\hh\text{uniformly for } t\in[0,t_0)
\]
with any fixed $t_0>0$.
This implies
\[
 \lim_{\epsilon,\epsilon'\to\infty}
 \int_{\R_+^n}|\nabla z_\epsilon(x,t)-\nabla z_{\epsilon'}(x,t)|^2\sigma(x)^2dx=0
\hh\text{uniformly for } t\in[0,t_0)
\]
with any fixed $t_0>0$.
Finally we verify a conservation in $L_{\cal B}^1$.
Let $s_\mu(\zeta)$ be a nondecreasing smooth function such that
$s_\mu(\zeta)=0$ if $\zeta\leq0$, $s_\mu(\zeta)=1$ if $\zeta>2\mu$ and $|s_\mu'(\zeta)|<c/\mu$.
Set $S_\mu(\zeta)=\int_0^\zeta s_\mu(a)da$.
Then
since $z_\epsilon\in C^1([0,\infty);L_{{\cal B}_\epsilon}^1)\cap
 C([0,\infty);H_{{\cal B}_\epsilon}^1)$,
we get for $0<\epsilon'<\epsilon$
\[
\begin{array}{lll}
\dis
 \pa_t\int_{\R_+^n}S_\mu(z_\epsilon-z_{\epsilon'})\theta_\epsilon(x)^2{\cal B}(x)dx
\hspace{-2mm}&=&\hspace{-2mm} \dis
 -\int_{\R_+^n}(\nabla z_\epsilon-\nabla z_{\epsilon'})\cdot\nabla
 \left( s_\mu(z_\epsilon-z_{\epsilon'})\theta_\epsilon(x)^2 \right)
 {\cal B}(x)dx
\\[4mm] \hspace{-2mm}&\leq&\hspace{-2mm} \dis
 2\int_{\R_+^n}|\nabla z_\epsilon-\nabla z_{\epsilon'}||\nabla\theta_\epsilon|{\cal B}(x)dx
\\[4mm] \hspace{-2mm}&\leq&\hspace{-2mm} \dis
 c\|\nabla z_\epsilon-\nabla z_{\epsilon'}\|_{\cal B}
 \left( \epsilon^{-2}\int_{B_\epsilon}{\cal B}(x)dx \right)^{1/2}.
\end{array}
\]
Therefore
integrating both sides and taking $\mu\to0$,
we obtain
\[
 \|(z_\epsilon(t)-z_{\epsilon'}(t))_+\|_{L_{\cal B}^1(\R_+^n\setminus B_\epsilon)}
\leq
 c\left( \epsilon^{-2}\int_{B_\epsilon}{\cal B}(x)dx \right)^{1/2}
 \int_0^t\|\nabla z_\epsilon(\tau)-\nabla z_{\epsilon'}(\tau)\|_{\cal B}d\tau.
\]
Since
estimates of $(z_\epsilon-z_{\epsilon'})_-$ is derived by the same way,
we obtain
\begin{equation}\label{L^1-eq}
 \|z_\epsilon(t)-z_{\epsilon'}(t)\|_{L_{\cal B}^1(\R_+^n\setminus B_\epsilon)}
\leq
 c\left( \epsilon^{-2}\int_{B_\epsilon}{\cal B}(x)dx \right)^{1/2}
 \int_0^t\|\nabla z_\epsilon(\tau)-\nabla z_{\epsilon'}(\tau)\|_{\cal B}d\tau.
\end{equation}
Since
$\int_{R_+^n}z_\epsilon(x,t){\cal B}_\epsilon(x)dx=\int_{R_+^n}z_0(x){\cal B}_\epsilon(x)dx$,
by using \eqref{zepsiloninfty-eq}, \eqref{nablazestimate-eq}-\eqref{L^1-eq}
and $2\gamma<(n-2)$,
we find that
$z_{\epsilon_k}(t)$ converges to $z(t)$ in $C([0,t_0];L_{\cal B}^1)$
and it satisfies $\int_{R_+^n}z(x,t){\cal B}(x)dx=\int_{R_+^n}z_0(x){\cal B}(x)dx$.
Therefore
from above a priori estimates,
this limiting function $z(x,t)$ is assumed to be
\[
 z(x,t)\in
 C([0,\infty);L_{\cal B}^1\cap L_{\cal B}^2)\cap C^1((0,\infty);L_{\cal B}^2)\cap
 C([0,\infty);H_{\cal B}^1).
\]
Furthermore
it solves
\[
\begin{cases}
\dis
 (z_t(t),\zeta)_{\cal B} = (\nabla z(t), \nabla\zeta)_{\cal B},\hspace{3mm}
 \forall\zeta\in C_c^\infty(\overline{\R_+^n}),\hh t>0,
\\[2mm] \dis
 z(0)=z_0.
\end{cases}
\]
Since $z_{\epsilon_k}\to z$ in
$C^1([0,t_0];L_{\cal B}^2)\cap C([0,t_0];H_{\cal B}^1)$
for any fixed $t_0$,
by using \eqref{appriori1-eq} and \eqref{appriori2-eq},
we see that
\[
 \|z(t)\|_{\cal B} + \sqrt{t}\|\nabla z(t)\|_{\cal B} + t\|\pa_tz(t)\|_{\cal B}
\leq
 c\|z_0\|_{\cal B}.
\]
Since $X=\{\zeta\in C_c^\infty(\R_+^n);\pa_\nu\zeta=0$ on $\pa\R_+^n\}$
is dense in $L_{\cal B}^2\cap L_{\cal B}^1$,
by a density argument,
we obtain the conclusion.
\end{proof}

We define a semigroup $e^{A_0t}$: $L_{\cal B}^2\to L_{\cal B}^2$ by $e^{A_0t}z_0=z(t)$,
where $z(t)$ is a unique solution of \eqref{xiweak-eq} given in Lemma \ref{Existence2-lem}.
Now
we construct the heat kernel of \eqref{xiweak-eq} (see e.g. Theorem 7.7 and Theorem 7.13 in \cite{Grigoryan})
First
we provide fundamental properties of $e^{A_0t}$.

\begin{lem}\label{L^pL^q-lem}
Let $e^{A_0t}$\h{\rm:} $L_{\cal B}^2\to L_{\cal B}^2$ be a semigroup defined above.
Then there exists $\nu>0$ such that
\\[2mm]
\begin{tabular}{cl}
{\rm(i)} & $\|e^{A_0t}z_0\|_{\cal B}\leq \|z_0\|_{\cal B}$ {\rm for} $t>0$,
\\[2mm]
{\rm(ii)} & $\|e^{A_0t}z_0\|_{\infty}\leq ct^{-\nu/2}\|z_0\|_{L_{\cal B}^1}$ {\rm for} $t>0$,
\\[2mm]
{\rm(iii)} & $\|e^{A_0t}z_0\|_{\infty}\leq ct^{-\nu/4}\|z_0\|_{\cal B}$ {\rm for} $t>0$.
\end{tabular}
\\[2mm]
Furthermore
let $z(x,t)=(e^{A_0t}z_0)(x)$ and $z_0\in L_{\cal B}^2$.
Then 
it holds that
\begin{equation}\label{pointwisexi-eq}
 z(x,t)\in C^{2,1}(\overline{\R_+^n}\setminus\{0\}\times(0,\infty)).
\end{equation}
\end{lem}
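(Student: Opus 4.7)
The plan is to derive the estimates from the general ultracontractivity theory for symmetric Markov semigroups (the cited Theorems 7.7 and 7.13 in \cite{Grigoryan}). Part (i) is immediate: by Lemma \ref{Existence2-lem}, the function $g(t)=\|z(t)\|_{\cal B}^2$ is absolutely continuous on $(0,\infty)$ with $g'(t)=-2\|\nabla z(t)\|_{\cal B}^2\leq 0$, so $t\mapsto\|z(t)\|_{\cal B}$ is non-increasing, which yields (i).

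For (ii) and (iii), I would first check that $\{e^{A_0t}\}$ is a symmetric Markov semigroup on $L_{\cal B}^2$. Symmetry comes from the self-adjointness of $A_0$ in the weighted inner product established just before the statement. The positivity-preserving property and the $L^\infty$-contraction $\|e^{A_0t}z_0\|_\infty\leq\|z_0\|_\infty$ are inherited in the limit from the approximating solutions $z_\epsilon$, since the bound $\|z_\epsilon(t)\|_\infty\leq\|z_0\|_\infty$ from \eqref{zepsiloninfty-eq} transfers to $z$ through the convergence constructed in the proof of Lemma \ref{Existence2-lem}. With these structural properties in hand, the abstract theorems reduce the problem to a Nash/Faber-Krahn-type inequality of the form
\[
 \|f\|_{\cal B}^{2+4/\nu}\leq c\|\nabla f\|_{\cal B}^2\|f\|_{L_{\cal B}^1}^{4/\nu}
\]
for some $\nu>0$ and all $f\in H_{\cal B}^1\cap L_{\cal B}^1$.

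To establish this inequality, I would use the substitution $g=\sigma f$. Since $\sigma$ solves \eqref{sigma-eq}, an integration by parts (the cross term cancels against $\int g^2|\nabla\sigma|^2/\sigma^2$) gives the identity
\[
 \|\nabla f\|_{\cal B}^2 = \|\nabla g\|_{L^2(\R_+^n)}^2 - {\cal K}\int_{\pa\R_+^n}|x'|^{-1}g^2 dx'.
\]
The trace Hardy inequality (Lemma \ref{HHardy-lem}) combined with JL-supercriticality (${\cal K}<c_H$, see Remark \ref{JL-rem}) then gives the coercive lower bound $(1-{\cal K}/c_H)\|\nabla g\|_{L^2}^2$. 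The classical Nash/Sobolev inequality applied to $g$ on $\R_+^n$, together with a careful transfer of the $L_{\cal B}^1$-norm through the substitution, yields the desired weighted Nash inequality; the expected effective exponent is $\nu=n-2\gamma$, suggested by the weighted volume growth $\int_{B_r\cap\R_+^n}{\cal B}(x)dx\sim r^{n-2\gamma}$, which is positive and exceeds $2$ since $\gamma<(n-2)/2$. With (ii) obtained from the abstract theorem, part (iii) follows by Riesz-Thorin interpolation between (ii) and the $L^\infty$-contraction, halving the decay rate to $t^{-\nu/4}$. Finally, the regularity \eqref{pointwisexi-eq} is a standard consequence of classical parabolic theory: on any compact subset of $(\overline{\R_+^n}\setminus\{0\})\times(0,\infty)$ the coefficient $2\nabla\sigma/\sigma$ is smooth, the Neumann boundary condition is regular, and the $L^\infty$-bound from (iii) supplies the starting estimate to bootstrap to $C^{2+\alpha,1+\alpha/2}$ via Schauder estimates.

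The hardest step will be establishing the Nash inequality cleanly, in particular the transfer of the $L^1$-norms under $g=\sigma f$: one must relate $\int|g|\sigma\,dx=\|f\|_{L_{\cal B}^1}$ to quantities controlled by the classical Sobolev or Nash inequality on $g$, and the singular behaviour $\sigma\sim r^{-\gamma}$ at the origin makes this transfer delicate. Identifying that the operative dimension in the resulting estimate is $\nu=n-2\gamma$ rather than the ambient Euclidean $n$, and verifying this by matching the Nash exponent with the weighted volume growth, is the essential technical point on which the entire ultracontractivity argument hinges.
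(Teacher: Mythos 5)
Your part (i) and the regularity claim \eqref{pointwisexi-eq} follow the paper's argument. For (ii) and (iii) you take a genuinely different and longer route, and the gap you flag in the middle is a real one that, in the form you propose, cannot be closed.

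The paper applies the Caffarelli--Kohn--Nirenberg inequality \eqref{CaffarelliKN-eq} directly to $z$ with the weight choice $m=\alpha=-\gamma$, $\beta=-2\gamma$, $r=p=2$, $q=1$, obtaining in a single step the weighted Nash inequality $\|z\|_{\cal B}\leq c\|\nabla z\|_{\cal B}^a\|z\|_{L_{\cal B}^1}^{1-a}$; this feeds into Theorem 4.1.1 of \cite{Coste}, and then (iii) and (ii) follow by self-adjoint duality and composition. No conjugation, no trace Hardy, and no Markov-type structure are used at this stage. Your conjugation $g=\sigma f$ and the resulting identity $\|\nabla f\|_{\cal B}^2=\|\nabla g\|_{L^2}^2-{\cal K}\int_{\pa\R_+^n}r^{-1}g^2\,dx'$, together with the coercivity bound $(1-{\cal K}/c_H)\|\nabla g\|_{L^2}^2$ via trace Hardy and JL-supercriticality, are all correct. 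But the quantity $\|f\|_{L_{\cal B}^1}=\int_{\R_+^n}|g|\,\sigma\,dx$ is neither bounded above nor below by $\|g\|_{L^1(\R_+^n)}$: since $\sigma\sim r^{-\gamma}$ is unbounded near the origin and vanishes at infinity, no two-sided comparison exists on all of $\R_+^n$. The classical (unweighted) Nash inequality for $g$ therefore cannot yield the required bound, and what your route actually needs is a CKN inequality for $g$ with weights $(0,0,-\gamma)$. That is the same class of weighted interpolation inequality the paper invokes, merely with the weights shifted by the conjugation, and it comes at the price of two extra ingredients (the $\sigma$-identity and trace Hardy). The substitution redistributes the weights; it does not remove them, and the paper's direct CKN application is the shorter path.

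Your heuristic $\nu=n-2\gamma$ from the weighted volume growth $V(0,R)\sim R^{n-2\gamma}$ is exactly right: the CKN dimensional balance with the paper's weight choice gives $a=(n-2\gamma)/(n-2\gamma+2)$, so $a/(1-a)=(n-2\gamma)/2=\nu/2$, in agreement with the paper's normalization. If you prefer to keep the conjugation route, replacing ``classical Nash'' by the weighted inequality $\|g\|_{L^2}\leq c\|\nabla g\|_{L^2}^a\left(\int_{\R_+^n}|g|\,|x|^{-\gamma}\,dx\right)^{1-a}$ would close the gap, but at that point CKN has entered anyway, so nothing is gained over applying it directly to $z$ in the weighted framework.
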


\begin{proof}
By Lemma \ref{Existence2-lem},
we note that $\pa_t\|u(t)\|_{\cal B}^2=-2\|\nabla u(t)\|_{\cal B}^2$.
This implies (i).
Next we will show (ii).
We recall the Caffarelli-Kohn-Nirenberg inequality:
\begin{equation}\label{CaffarelliKN-eq}
 \bigl\| |x|^mu \bigr\|_{L^r(\R_+^n)}
\leq
 c\bigl\| |x|^\alpha|\nabla u| \bigr\|_{L^p(\R_+^n)}^a
 \bigl\| |x|^\beta u \bigr\|_{L^q(\R_+^n)}^{1-a},
\end{equation}
where
$p,q\geq1$, $r>0$, $0\leq a\leq 1$, $m=a\sigma+(1-a)\beta$ and
\[
\begin{array}{c}
\dis
 \frac{1}{p}+\frac{\alpha}{n}>0,
\hh
 \frac{1}{q}+\frac{\beta}{n}>0,
\hh
 \frac{1}{r}+\frac{m}{n}>0,
\\[6mm] \dis
 \frac{1}{r}+\frac{m}{n} =
 a\left( \frac{1}{p}+\frac{\alpha-1}{n} \right) + (1-a)\left( \frac{1}{q}+\frac{\beta}{n} \right)
\end{array}
\]
\[
 \begin{array}{ll}
 \alpha-\sigma\geq0 & \text{if } a>0,
 \\[1mm] \dis
 \alpha-\sigma\leq1 &\dis \text{if } a>0,\ \frac{1}{r}+\frac{m}{n}=\frac{1}{p}+\frac{\alpha-1}{n}.
 \end{array}
\]
Here
we recall that ${\cal B}(x)\sim|x|^{-2\gamma}$.
Then
by using the Caffarelli-Kohn-Nirenberg inequality with
$m=\alpha=-\gamma$, $\beta=-2\gamma$, $r=p=2$, $q=1$,
we get
\[
 \|z\|_{\cal B} \leq c\|\nabla z\|_{\cal B}^a\|z\|_{L_{\cal B}^1}^{1-a}
\]
for some $a\in(0,1)$.
Therefore
from Theorem 4.1.1 in \cite{Coste},
we obtain
\begin{equation*}
 \|e^{A_0t}z_0\|_{\cal B} \leq ct^{-\nu/2}\|z_0\|_{L_{\cal B}^1},
\hh t>0,
\end{equation*}
where $\nu/2=a/(1-a)$,
which shows (ii).
As a consequence,
it holds that
\[
\begin{array}{lll}
\dis
 \|e^{A_0t}z_0\|_{\cal B}
\hspace{-2mm}&=&\hspace{-2mm} \dis
 \|e^{A_0t/2}e^{A_0t/2}z_0\|_{L_{\cal B}^2}
\leq
 ct^{-\nu/4}\|e^{A_0t/2}z_0\|_{L_{\cal B}^1}
\\[2mm] \hspace{-2mm}&\leq&\hspace{-2mm} \dis
 ct^{-\nu/4}\|z_0\|_{L_{\cal B}^1}.
\end{array}
\]
Since $e^{A_0t}$ is a bounded self-adjoint operator on $L_{\cal B}^2$, 
by the duality, we obtain (iii).
Since $\sigma(x)$ is positive and smooth on $\overline{\R_+^n}\setminus\{0\}$,
by virtue of (ii),
a local regularity theory for parabolic equations implies \eqref{pointwisexi-eq}.
\end{proof}

Following p.{\h}p.\h198\h-\h200 in \cite{Grigoryan},
we construct the heat kernel of \eqref{z-eq}.
By virtue of \eqref{pointwisexi-eq},
for any fixed $t>0$ and $x\in\R_+$,
the mapping $z_0\mapsto(e^{A_0t}z_0)(x)$: $L_{\cal B}^2\to\R$
becomes a bounded linear operator.
Therefore
by the Riesz representation theorem,
there exists $\hat{p}_{t,x}(\zeta)\in L_{\cal B}^2$ such that
\[
 (\hat{p}_{t,x},f)_{\cal B} = (e^{A_0t}f)(x),
\hh \forall f\in L_{\cal B}^2.
\]
Then
it is verified that
\[
 \hat{p}_{t,x}(\zeta)>0,\hh \hat{p}_{t,x}\in L_{\cal B}^1,\hh \|\hat{p}_{t,x}\|_{L_{\cal B}^1}=1,
\hh x\in\R_+^n,\ t>0.
\]
Moreover
by Lemma \ref{L^pL^q-lem} (ii),
the duality argument shows
\[
 \|\hat{p}_{t,x}\|_{\infty} \leq ct^{-\nu/2},
\hh x\in\R_+^n,\ t>0,
\]
where $c>0$ is a constant independent of $x$ and $t$.
We put
\[
 p(x,\xi,t) = (\hat{p}_{t/2,x},\hat{p}_{t/2,\xi})_{\cal B},
\hh x,\xi\in\R_+^n,\ t>0.
\]
Then
from Theorem 7.13 in \cite{Grigoryan},
$e^{A_0t}z_0$ is expressed by
\[
 (e^{A_0t}z_0)(x) =  \int_{\R_+^n}p(x,\xi,t)z_0(\xi){\cal B}(\xi)d\xi,
\hh x\in\R_+^n,\ t>0
\]
for $z_0\in L_{\cal B}^2$.
Then Lemma \ref{Existence2-lem} implies
\[
 e^{A_0t}z_0\in
 C([0,\infty);L_{\cal B}^2)\cap C^1((0,\infty);L_{\cal B}^2)\cap
 C((0,\infty);H_{\cal B}^1),\hh
 z_0\in L_{\cal B}^2. 
\]

\begin{lem}\label{Boundp-lem}
Let $p(x,\xi,t)$ be defined above.
Then
$p(\cdot,\xi,\cdot)\in C^{2,1}(\R_+^n\times(0,\infty))$
satisfies \eqref{z-eq} for any fixed $\xi\in\R_+^n$.
Furthermore
for $r>0$ and $t_0>0$ there exists $c(r,t_0)>0$ such that
\[
 |p_t(x,\xi,t)|+|\nabla_xp(x,\xi,t)|+|D_x^2 p(x,\xi,t)| \leq c(r,t_0),
\hh |x|>r,\ \xi\in\R_+^n,\ t>t_0.
\]
\end{lem}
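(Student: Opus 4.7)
The proof proceeds in two stages: first I identify $p(\cdot,\xi,\cdot)$ as a classical solution of \eqref{z-eq}, then I establish the uniform derivative bound via local parabolic regularity.

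For the first stage, the key is the Chapman--Kolmogorov identity
\[
 p(x,\xi,t+\tau) = \int_{\R_+^n} p(x,y,\tau)\, p(y,\xi,t)\, {\cal B}(y)\, dy,\hh t,\tau>0,
\]
which follows from the semigroup property $e^{A_0(t+\tau)} = e^{A_0 \tau}e^{A_0 t}$ together with the representation formula $(e^{A_0 t}z_0)(x) = \int p(x,\xi,t)z_0(\xi){\cal B}(\xi)d\xi$ established just before the lemma. Fixing $\xi\in\R_+^n$ and $s>0$, this identity reads $p(\cdot,\xi,s+\tau) = e^{A_0\tau} z_0$ with $z_0 := p(\cdot,\xi,s)$. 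Using Cauchy--Schwarz together with $\|\hat p_{s/2,\xi}\|_\infty\leq cs^{-\nu/2}$ and $\|\hat p_{s/2,\xi}\|_{L_{\cal B}^1}=1$, one has $\|\hat p_{s/2,\xi}\|_{\cal B}^2\leq\|\hat p_{s/2,\xi}\|_\infty\|\hat p_{s/2,\xi}\|_{L_{\cal B}^1}<\infty$, hence $z_0\in L_{\cal B}^2$. Lemma \ref{Existence2-lem} then gives a unique weak solution $e^{A_0\tau}z_0$ of \eqref{xiweak-eq}, and \eqref{pointwisexi-eq} in Lemma \ref{L^pL^q-lem} upgrades it to a $C^{2,1}$ classical solution on $\overline{\R_+^n}\setminus\{0\}\times(0,\infty)$. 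Since $s>0$ is arbitrary, $p(\cdot,\xi,\cdot)\in C^{2,1}(\R_+^n\times(0,\infty))$ solves \eqref{z-eq}.

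For the derivative bound I first extract a uniform pointwise estimate from the definition: by Cauchy--Schwarz,
\[
 |p(x,\xi,t)| \leq \|\hat p_{t/2,x}\|_\infty\,\|\hat p_{t/2,\xi}\|_{L_{\cal B}^1} \leq c\,t^{-\nu/2},
\]
so for $t>t_0$ one has $\sup_{x,\xi\in\R_+^n}|p(x,\xi,t)| \leq c(t_0)$. On the region $\{|x|>r/2\}$ the drift $2\nabla\sigma/\sigma$ is smooth and uniformly bounded together with all its derivatives, and the equation satisfied by $p(\cdot,\xi,\cdot)$ is
\[
 p_t = \Delta p + \frac{2\nabla\sigma}{\sigma}\cdot\nabla p
\]
with the homogeneous Neumann condition $\pa_\nu p=0$ on $\pa\R_+^n$. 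Standard interior and up-to-the-boundary parabolic Schauder estimates therefore yield
\[
 \sup_{\{|x|>r\}\times(t_0,\infty)}\left( |p_t|+|\nabla_x p|+|D_x^2 p| \right) \leq c(r,t_0)\sup_{\{|x|>r/2\}\times(t_0/2,\infty)}|p| \leq c(r,t_0),
\]
uniformly in $\xi\in\R_+^n$, which is the claimed estimate.

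The principal technical point is the rigorous justification of Chapman--Kolmogorov starting from the abstract definition $p(x,\xi,t)=(\hat p_{t/2,x},\hat p_{t/2,\xi})_{\cal B}$; once that is in hand, the rest reduces to routine applications of the representation formula and standard local parabolic regularity applied away from the singular point $x=0$ of the coefficient $\nabla\sigma/\sigma$, with the Neumann boundary handled by the usual reflection argument.
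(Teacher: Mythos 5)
Your proof is correct and follows essentially the same route as the paper: both express $p(\cdot,\xi,\cdot)$ as the semigroup applied to an $L_{\cal B}^2$ function (the paper observes $p(x,\xi,t)=(e^{A_0t/2}[\hat{p}_{t/2,\xi}])(x)$ directly from the Riesz representation, while you reach the same conclusion via Chapman--Kolmogorov), then invoke the regularity of $e^{A_0 t}z_0$ together with parabolic regularity theory on the region $|x|>r$ where the drift $2\nabla\sigma/\sigma$ is smooth. The only cosmetic difference is that the paper passes through an $L_{\cal B}^p$ estimate ($p\geq 2$) while you extract a uniform $L^\infty$ bound on $p(\cdot,\xi,t)$ directly from the pointwise definition; either suffices as input to the Schauder estimates.
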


\begin{proof}
By definition of $p(x,\xi,t)$,
we observe that $p(x,\xi,t)=(e^{A_0t/2}[\hat{p}_{t/2,\xi}(\cdot)])(x)$.
Hence
$p(\cdot,\xi,\cdot)$ satisfies \eqref{z-eq} for any fixed $\xi\in\R_+^n$.
Furthermore
from Lemma \ref{L^pL^q-lem} and $\|\hat{p}_{t/2.\xi}\|_{L_{\cal B}^1}=1$,
there exists $\nu_p>0$ and $c_p>0$ for $p\geq2$
\[
 \|p(x,\xi,t)\|_{L_{\cal B}^p,dx}
\leq
 c_pt^{-\nu_p}\|\hat{p}_{t/2,\xi}\|_{L_{\cal B}^1}
\leq
 c_pt^{-\nu_p}.
\]
Since coefficients $\nabla\sigma/\sigma$ of \eqref{z-eq} is bounded far from the origin,
by a parabolic regularity theory,
for $r>0$ and $t_0>0$ there exists $c(r,t_0)>0$ such that
\[
 |p_t(x,\xi,t)|+|\nabla_xp(x,\xi,t)|+|D_x^2 p(x,\xi,t)| \leq c(r,t_0),
\hh |x|>r,\ \xi\in\R_+^n,\ t>t_0,
\]
which completes the proof.
\end{proof}

To apply Theorem 2.7 in \cite{Grigoryan-SC},
we prepare several lemmas.
Let
\[
 V(x_0,R)=\int_{B(x_0,R)}{\cal B}(x)dx.
\]

\begin{lem}[Doubling property]\label{Doubling-lem}
There exists $c>0$ such that
\begin{equation}\label{Doubling-eq}
 V(x_0,2R) \leq cV(x_0,R)
\hspace{5mm}\mathrm{for}\ x_0\in\R_+^n,\ R>0.
\end{equation}
Furthermore
there exist $c_1,c_2>0$ such that
\[
 c_1R^n(|x_0|+R)^{-2\gamma} < V(x_0,R) < c_2R^n(|x_0|+R)^{-2\gamma}.
\]
\end{lem}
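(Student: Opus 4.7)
The plan is to reduce the weight $\mathcal{B}$ to the pure power $|x|^{-2\gamma}$, establish the two-sided volume estimate by case analysis on the size of $R$ relative to $|x_0|$, and then read off the doubling property from this estimate.

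First I would record that $e_1(\theta)$, as the principal axially symmetric eigenfunction of \eqref{Eigen-eq}, is smooth and strictly positive on $[0,\pi/2]$ (axial regularity forces $e_1'(0)=0$, and the boundary condition $e_1'(\pi/2)=\mathcal{K}e_1(\pi/2)$ is compatible with positivity). Consequently there exist $c_0,C_0>0$ such that $c_0|x|^{-2\gamma}\le\mathcal{B}(x)\le C_0|x|^{-2\gamma}$ on $\R_+^n$, and it suffices to prove the bounds for $\widetilde V(x_0,R):=\int_{B(x_0,R)\cap\R_+^n}|x|^{-2\gamma}dx$. I also note, for the lower bound, that since $(x_0)_n\ge 0$ the half-space cuts off at most half of $B(x_0,R)$, so $|B(x_0,R)\cap\R_+^n|\ge \tfrac12|B(x_0,R)|\asymp R^n$.

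Next I would split into two regimes. In regime A, $R\le |x_0|/2$, every $x\in B(x_0,R)$ satisfies $|x|\asymp|x_0|$, hence $\widetilde V(x_0,R)\asymp |x_0|^{-2\gamma}R^n$; and since $|x_0|+R\asymp |x_0|$, this matches $R^n(|x_0|+R)^{-2\gamma}$. In regime B, $R>|x_0|/2$, we have $B(x_0,R)\subset B(0,3R)$, which gives the upper bound after integrating in polar coordinates using $2\gamma<n-2$. For the lower bound, if $R\ge 2|x_0|$ then $B(0,R/2)\subset B(x_0,R)$ (since $|y|\le R/2$ forces $|y-x_0|\le R$), and a polar-coordinate computation yields $\widetilde V(x_0,R)\gtrsim R^{n-2\gamma}$; while in the overlapping range $|x_0|/2<R<2|x_0|$ I would instead use the sub-ball $B(x_0,|x_0|/4)\subset B(x_0,R)$, on which $|x|\asymp|x_0|\asymp R$, to obtain $\widetilde V(x_0,R)\gtrsim|x_0|^{n-2\gamma}\asymp R^{n-2\gamma}$. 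In all sub-cases of regime B, $|x_0|+R\asymp R$, so both bounds equal $R^n(|x_0|+R)^{-2\gamma}$ up to absolute constants.

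The doubling property then falls out of the two-sided estimate:
\[
 \frac{V(x_0,2R)}{V(x_0,R)}\le \frac{c_2}{c_1}\cdot 2^n\left(\frac{|x_0|+R}{|x_0|+2R}\right)^{2\gamma}\le \frac{c_2}{c_1}\cdot 2^n,
\]
so $c$ depends only on $n$ and $\gamma$. I do not foresee a serious obstacle; the only delicate step is matching the two regimes when $R\asymp|x_0|$, and the auxiliary sub-ball $B(x_0,|x_0|/4)$ bridges that range cleanly. The boundedness of $e_1$ away from zero is used only qualitatively and follows from standard principal-eigenfunction theory.
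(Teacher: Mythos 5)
Your proof is correct. The paper disposes of this lemma in one line by citing Proposition~4.1 of Moschini--Tesei, which treats the weight $|x|^{-\alpha}$; what you have written out is precisely the computation that reference performs, adapted to the half-space and preceded by the (correct) observation that $\mathcal{B}(x)=e_1(\theta)^2|x|^{-2\gamma}\asymp|x|^{-2\gamma}$ since the principal eigenfunction $e_1$ is bounded above and below on $[0,\pi/2]$. The case split $R\lesssim|x_0|$ versus $R\gtrsim|x_0|$, the containments $B(0,R/2)\subset B(x_0,R)\subset B(0,3R)$, the bridging sub-ball $B(x_0,|x_0|/4)$, the use of $2\gamma<n-2$ to integrate $r^{-2\gamma+n-1}$ near the origin, and the observation that $x_0\in\overline{\R_+^n}$ guarantees $|B(x_0,R)\cap\R_+^n|\ge\frac12|B(x_0,R)|$ are all sound, and deducing the doubling inequality as a corollary of the two-sided volume bound is the natural order. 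The only thing you do differently from the paper is that you supply the argument rather than outsource it; that is a strict improvement in self-containedness and buys the reader an explicit constant $c=2^n c_2/c_1$, at no extra cost.
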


\begin{proof}
Repeating calculations as in the proof of Proposition 4.1 in \cite{Moschini-T},
we obtain the conclusion.
\end{proof}

Here
we recall a fundamental result concerning the volume growth (see Lemma 5.2.7 in \cite{Coste}).

\begin{lem}\label{Doubling2-lem}
Assume \eqref{Doubling-eq}.
Then there exists $c>0$ such that
\[
 V(x_1,r) \leq e^{c|x_1-x_2|/r}V(x_2,r),
\hh x_1,x_2\in\R_+^n,\ r>0.
\]
\end{lem}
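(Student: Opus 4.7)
The plan is to use the doubling hypothesis \eqref{Doubling-eq} in two steps. First I would iterate it to obtain polynomial-type growth of $V(x_0,\cdot)$ in the radius with the center fixed, and second I would compare $V(x_1,r)$ with $V(x_2,r)$ via a ball-translation argument.

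For the first step, induction on $k$ immediately yields $V(x_0, 2^k R) \leq c^k V(x_0, R)$ for every nonnegative integer $k$. Given $\lambda \geq 1$, choosing $k = \lceil \log_2 \lambda \rceil$ so that $\lambda \leq 2^k \leq 2\lambda$ produces the polynomial estimate
\[
 V(x_0, \lambda R) \leq c\, \lambda^{D} V(x_0, R), \hh D := \log_2 c,
\]
valid for all $x_0 \in \R_+^n$, $R > 0$ and $\lambda \geq 1$.

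For the second step, I would exploit the elementary inclusion $B(x_1, r) \subset B(x_2, r + |x_1 - x_2|)$, which, together with ${\cal B} \geq 0$, gives $V(x_1, r) \leq V(x_2, r(1+s))$ where $s := |x_1-x_2|/r$. Applying the first-step estimate with $\lambda = 1+s$ and the inequality $(1+s)^D \leq e^{Ds}$ (a consequence of $\log(1+s) \leq s$) yields
\[
 V(x_1, r) \leq c\, e^{Ds} V(x_2, r).
\]

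The remaining subtlety, which is essentially bookkeeping, is the extra multiplicative factor $c$ in front of the exponential. To reach the form in the statement, I would enlarge the rate in the exponent: for $|x_1-x_2|/r \geq 1$ one has $c \leq e^{(\log c)|x_1 - x_2|/r}$; for $|x_1 - x_2|/r < 1$ the product $c\, e^{Ds}$ is bounded by the uniform constant $ce^D$, which in turn can be absorbed by a further enlargement of the exponent. Redefining $c > 0$ accordingly yields the claimed estimate. No step presents a serious obstacle; the central ingredient is the iterated doubling bound, and the only verification requiring care is the absorption of constants.
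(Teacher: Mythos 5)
Your two-step plan (iterate doubling to get polynomial growth, then translate the ball) is the standard route, and it is worth noting that the paper itself does not give an argument at all: it merely cites Lemma 5.2.7 of \cite{Coste}. So you are on your own here, and there is a real gap at the final ``bookkeeping'' step.

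Your argument delivers $V(x_1,r)\leq c_0(1+s)^D V(x_2,r)\leq c_0e^{Ds}V(x_2,r)$, where $c_0>1$ is the doubling constant and $s=|x_1-x_2|/r$. The claimed estimate is $V(x_1,r)\leq e^{c's}V(x_2,r)$, which has \emph{no} multiplicative constant in front; it degenerates to an equality as $s\to0$. You handle $s\geq1$ correctly by absorbing $c_0$ via $c_0\leq e^{(\log c_0)s}$. But for $s<1$ you write that the uniform bound $c_0e^{D}$ ``can be absorbed by a further enlargement of the exponent,'' and this is false: $e^{c's}\to1$ as $s\to0^+$, whereas $c_0e^{D}>1$ is a fixed number, so no choice of $c'$ makes $c_0e^{D}\leq e^{c's}$ hold for all small $s$. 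What your argument actually proves is $V(x_1,r)\leq Ce^{c's}V(x_2,r)$ with an extra multiplicative constant $C$, not the constant-free form stated in the lemma.

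This is not merely cosmetic. The constant-free form for $s\ll1$ asserts roughly that $x\mapsto\log V(x,r)$ is Lipschitz with constant $O(1/r)$, i.e.\ that $V(x_2,r(1+s))\leq(1+cs)V(x_2,r)$ uniformly. That refinement of doubling does not follow from the single inequality \eqref{Doubling-eq} by elementary iteration and translation; it needs either an additional quantitative input or a more careful argument (as in the cited reference). To be fair, in the only place the lemma is used in the paper---passing from $V(x,\sqrt t)$ to $V(\xi,\sqrt t)$ in \eqref{HeatKernel-eq}, where $c_1,c_2$ are already present---a version with a multiplicative constant would suffice, so your argument would support that application. But it does not establish Lemma \ref{Doubling2-lem} exactly as stated, and you should not dismiss the missing step as bookkeeping.
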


\begin{lem}[Weighted Poincar\'e inequality]
There exists $c>0$ such that
\[
 \inf_{\xi\in\R}\int_{B(x_0,R)}|f(x)-\xi|^2{\cal B}(x)dx
\leq
 cR^2\int_{B(x_0,R)}|\nabla f(x)|^2{\cal B}(x)dx,
\hh \forall f\in C^1(\overline{B(x_0,R)}).
\]
\end{lem}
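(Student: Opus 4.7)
The plan is to take $\xi$ to be the weighted mean
\[
\xi = \bar f_B := \frac{1}{V(x_0,R)}\int_{B(x_0,R)} f(x){\cal B}(x)\,dx,
\]
and to reduce the estimate to a standard weighted Poincar\'e inequality on Euclidean balls. I would split the analysis according to whether the ball $B=B(x_0,R)$ is far from or close to the singular point of the weight ${\cal B}$ at the origin, because these two regimes require different treatments.

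In the far regime $|x_0|\ge 3R$, one has $|x|\sim |x_0|$ on $B$, and the angle $\theta(x)$ stays in a compact subset of $[0,\pi/2]$ on which the first eigenfunction $e_1(\theta)$ of \eqref{Eigen-eq} is bounded above and below by positive constants (by standard positivity of the first eigenfunction of the Sturm--Liouville problem). Hence ${\cal B}(x)\sim{\cal B}(x_0)$ uniformly on $B$, and the desired inequality reduces, by cancelling ${\cal B}(x_0)$ from both sides, to the classical unweighted Poincar\'e inequality on the Euclidean ball $B$.

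In the near regime $|x_0|<3R$, one has $B\subset B(0,4R)$, and the uniform positivity of $e_1(\theta)$ on the closed interval $[0,\pi/2]$ gives ${\cal B}(x)\sim r^{-2\gamma}$ on $B(0,4R)$. Since $0<2\gamma<n-2<n$, the power weight $r^{-2\gamma}$ belongs to the Muckenhoupt class $A_2$ on $\R^n$, for which the weighted Poincar\'e inequality on Euclidean balls (with the weighted mean as the centering constant) is classical. The doubling inequality of Lemma \ref{Doubling-lem} yields $V(0,4R)\le c\,V(x_0,R)$, which then transfers the estimate from $B(0,4R)$ to the smaller ball $B$ at the cost of a universal multiplicative constant, after comparing the two weighted means.

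The main obstacle is the singularity of ${\cal B}$ at the origin, which prevents a naive reduction to the unweighted Poincar\'e inequality. This is exactly the difficulty that the $A_2$ structure of $r^{-2\gamma}$ resolves, and that structure is already reflected in the doubling property of Lemma \ref{Doubling-lem}. Alternatively, one may give a unified proof within the doubling metric measure space framework: the doubling property together with a local $L^2$-Poincar\'e inequality at small scales away from the origin (which is trivial since $\log{\cal B}$ is Lipschitz there) implies the global $L^2$-Poincar\'e inequality on all balls by a standard chaining argument, as in Chapter 5 of \cite{Coste}. In either approach, no new analytic input beyond the already established doubling estimate is required.
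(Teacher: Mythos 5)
Your underlying idea---reduce to the classical $A_2$-weighted Poincar\'e inequality by observing that ${\cal B}(x)\sim|x|^{-2\gamma}$ (since $e_1(\theta)$ is bounded above and below on $[0,\pi/2]$) and that $|x|^{-2\gamma}\in A_2(\R^n)$ because $0<2\gamma<n$---is sound, and it gives a self-contained route distinct from the paper, which simply cites Proposition 4.3 of Moschini--Tesei for the pure power weight and notes that the bounded angular factor requires only a slight modification of that proof.

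The concrete gap is the ``transfer'' step in your near regime. You apply the Poincar\'e inequality on the larger ball $B(0,4R)$ and claim that doubling passes it to $B(x_0,R)\subset B(0,4R)$. That direction fails: the gradient term $\int_{B(0,4R)}|\nabla f|^2{\cal B}\,dx$ \emph{dominates} $\int_{B(x_0,R)}|\nabla f|^2{\cal B}\,dx$, so a Poincar\'e inequality on the larger ball cannot be converted into one on the smaller ball by any comparison of weighted volumes or weighted means. The repair is also the simplification: apply the $A_2$ Poincar\'e inequality directly on $B(x_0,R)$. The $A_2$ theory already yields the estimate on \emph{every} Euclidean ball with a constant depending only on the $A_2$ constant of the weight, and the comparability ${\cal B}\sim|x|^{-2\gamma}$ is uniform on all of $\R_+^n$, not merely on $B(0,4R)$; this removes your far/near case split entirely. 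One point you should still spell out is the half-space geometry: when $B(x_0,R)$ meets $\pa\R_+^n$, the integration domain is a half-ball rather than a Euclidean ball, which you can handle either by even reflection of $f$ and of $e_1$ across $\{x_n=0\}$ (the extended weight remains comparable to $|x|^{-2\gamma}$, so it is still $A_2$), or by invoking the $A_p$-weighted Poincar\'e inequality on John domains.
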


\begin{proof}
The proof follows from Proposition 4.3 in \cite{Moschini-T} with a slight modification.
\end{proof}

\begin{rem}
It is known that
\[
 \inf_{\xi\in\R}\int_{B(x_0,R)}|f(x)-\xi|^2{\cal B}(x)dx
=
 \int_{B(x_0,R)}|f(x)-f_{\mathrm{av}}(x)|^2{\cal B}(x)dx,
\hh \forall f\in C^1(\overline{B(x_0,R)}),
\]
where $f_{\mathrm{av}}(x)$ is the average of $f(x)$ on $B(x_0,R)$.
\end{rem}

Therefore
Theorem 2.7 in \cite{Grigoryan-SC} implies the following the heat kernel estimates.
\begin{equation}\label{HeatKernel-eq}
 \frac{c_1}{V(x,\sqrt{t})}\exp\left( -\frac{c_3|x-\xi|^2}{t} \right) < p(x,\xi,t) <
 \frac{c_2}{V(x,\sqrt{t})}\exp\left( -\frac{c_4|x-\xi|^2}{t} \right).
\end{equation}
Applying Lemma \ref{Doubling-lem}\h-{\h}Lemma \ref{Doubling2-lem} to \eqref{HeatKernel-eq},
we obtain the desired lower and upper estimates of $p(x,\xi,t)$.

\begin{lem}\label{Heatkernel-lem}
Let $k_\gamma(\xi,t) =  \left( |\xi|+\sqrt{t} \right)^{\gamma}$.
Then
there exist $c_i$ $(i=1,2,3,4)$ such that
\[ 
 c_1\left( \frac{k_\gamma(\xi,t)^2}{t^{n/2}} \right)
 \exp\left( -\frac{c_3|x-\xi|^2}{t} \right)
\leq
 p(x,\xi,t)
\leq
 c_2\left( \frac{k_\gamma(\xi,t)^2}{t^{n/2}} \right)
 \exp\left( -\frac{c_4|x-\xi|^2}{t} \right),
\]
\end{lem}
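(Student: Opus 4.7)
The target reduces to converting the factor $V(x,\sqrt{t})^{-1}$ appearing in \eqref{HeatKernel-eq} into the claimed $k_\gamma(\xi,t)^2/t^{n/2}$, which is a quantity based at $\xi$ rather than $x$. My plan is to combine Lemma \ref{Doubling-lem} (which identifies the explicit volume growth) with Lemma \ref{Doubling2-lem} (which compares volumes at nearby base points) and then absorb the resulting linear-exponential defect into the Gaussian factor.

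First, Lemma \ref{Doubling-lem} gives $V(\xi,\sqrt{t}) \asymp t^{n/2}(|\xi|+\sqrt{t})^{-2\gamma} = t^{n/2}/k_\gamma(\xi,t)^2$, so once \eqref{HeatKernel-eq} is restated with $V(\xi,\sqrt{t})$ instead of $V(x,\sqrt{t})$ in the prefactor, the desired form is immediate. To bridge between the two base points I apply Lemma \ref{Doubling2-lem} with $r=\sqrt{t}$, which yields
\[
V(x,\sqrt{t}) \leq e^{c|x-\xi|/\sqrt{t}}V(\xi,\sqrt{t}), \qquad V(\xi,\sqrt{t}) \leq e^{c|x-\xi|/\sqrt{t}}V(x,\sqrt{t}).
\]
Substituting the first inequality into the upper bound of \eqref{HeatKernel-eq} (and the second into the lower bound) produces
\[
p(x,\xi,t) \lessgtr \frac{C\, k_\gamma(\xi,t)^2}{t^{n/2}}\exp\!\left(\pm\frac{c|x-\xi|}{\sqrt{t}} - \frac{c_j|x-\xi|^2}{t}\right),
\]
with the $\pm$ and $c_j \in \{c_3,c_4\}$ chosen appropriately for each side.

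Second, setting $s = |x-\xi|/\sqrt{t}$, the elementary quadratic bound
\[
\pm cs \leq \delta s^2 + \frac{c^2}{4\delta}, \qquad \delta \in (0, \min\{c_3,c_4\}),
\]
converts each factor $e^{\pm c|x-\xi|/\sqrt{t}}e^{-c_j|x-\xi|^2/t}$ into a multiple of $e^{-c_j'|x-\xi|^2/t}$ with $c_j' = c_j - \delta > 0$ (upper bound) or $c_j' = c_j + \delta$ (lower bound, after sending $\delta$ to the lower side of the completed square). This is the only nontrivial algebraic maneuver in the argument; at the cost of replacing the constants $c_3,c_4$ in \eqref{HeatKernel-eq} by slightly adjusted ones (which is consistent with the generic $c_3,c_4$ in the statement of the lemma), the desired two-sided bound follows.

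I do not expect any serious obstacle: once Lemma \ref{Doubling-lem} and Lemma \ref{Doubling2-lem} are available, the whole proof is essentially a rewriting of \eqref{HeatKernel-eq}. The only point requiring care is making sure the absorption step does not spoil positivity of the Gaussian constants, which is guaranteed by choosing $\delta$ small enough in the upper-bound application.
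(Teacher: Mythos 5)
Your argument is correct and is exactly what the paper intends: it takes \eqref{HeatKernel-eq}, swaps the base point of the volume factor from $x$ to $\xi$ via Lemma \ref{Doubling2-lem}, identifies $V(\xi,\sqrt{t})\asymp t^{n/2}/k_\gamma(\xi,t)^2$ via Lemma \ref{Doubling-lem}, and absorbs the resulting $e^{\pm c|x-\xi|/\sqrt{t}}$ defect into the Gaussian by completing the square. The paper compresses all of this into the single sentence ``Applying Lemma \ref{Doubling-lem}\h-{\h}Lemma \ref{Doubling2-lem} to \eqref{HeatKernel-eq}, we obtain the desired lower and upper estimates,'' so you have simply made explicit the same route.
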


Furthermore
it is known that
$c_4$ in Lemma \ref{Heatkernel-lem} can be chosen arbitrarily close to $1/4$
(see (5.2.17) and Theorem 5.3.3 in \cite{Coste}),
which is stated as follows.

\begin{lem}\label{HeatkernelNew-lem}
Let $k_\gamma(x,t)$ be as in Lemma {\rm\ref{Heatkernel-lem}}.
For any $\epsilon>0$ there exists $c_\epsilon>0$ such that
\[
 p(x,\xi,t)
\leq
 c_\epsilon
 \left( \frac{k_\gamma(\xi,t)^2}{t^{n/2}} \right)
 \exp\left( -\frac{|x-\xi|^2}{4(1+\epsilon)t} \right).
\]
\end{lem}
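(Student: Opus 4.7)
The plan is to sharpen the Gaussian exponent in Lemma \ref{Heatkernel-lem} by the Davies perturbation method (which is precisely the content of the cited Theorem 5.3.3 in \cite{Coste}). The underlying principle is self-improvement: once one has \emph{any} Gaussian upper bound together with the on-diagonal estimate coming from the volume-doubling property, conjugation of the semigroup by exponential weights upgrades the Gaussian constant to be as close to $1/4$ as desired.

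First, I would fix a bounded Lipschitz function $\psi:\overline{\R_+^n}\to\R$ with $\|\nabla\psi\|_\infty\leq\alpha$, and form the twisted semigroup $T_t^\psi z_0 := e^{-\psi}e^{A_0t}(e^{\psi}z_0)$. Testing the weak equation \eqref{xiweak-eq} for $z(t)=e^{A_0t}(e^\psi z_0)$ against $e^{-2\psi}z(t)$ in $L^2_{\cal B}$, and noting that $A_0$ is symmetric with respect to the inner product $(\cdot,\cdot)_{\cal B}$, one obtains after integration by parts the standard Davies identity
\[
 \frac{d}{dt}\|T_t^\psi z_0\|_{\cal B}^2
 = -2\|\nabla(T_t^\psi z_0)\|_{\cal B}^2 + 2\int_{\R_+^n}|\nabla\psi|^2|T_t^\psi z_0|^2{\cal B}\,dx
 \leq 2\alpha^2\|T_t^\psi z_0\|_{\cal B}^2.
\]
Hence $\|T_t^\psi\|_{L_{\cal B}^2\to L_{\cal B}^2}\leq e^{\alpha^2 t}$. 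The first-order drift $(2\nabla\sigma/\sigma)\cdot\nabla z$ in \eqref{z-eq} does not contribute any new bad term because, in the weighted formulation, it is absorbed in the symmetric operator $\frac{1}{{\cal B}}\operatorname{div}({\cal B}\nabla\cdot)$.

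Next I would combine this with the ultracontractive bounds of Lemma \ref{L^pL^q-lem}. Writing $T_t^\psi = T_{t/2}^\psi \circ T_{t/2}^\psi$ and using (ii) and (iii) of Lemma \ref{L^pL^q-lem} together with the on-diagonal estimate $p(x,x,t)\leq C V(x,\sqrt{t})^{-1}$ (already embedded in the upper bound of Lemma \ref{Heatkernel-lem} via Lemma \ref{Doubling-lem}), one deduces that the kernel of $T_t^\psi$ satisfies
\[
 e^{-\psi(x)}p(x,\xi,t)e^{\psi(\xi)}
 \leq \frac{C\,e^{\alpha^2 t}}{\sqrt{V(x,\sqrt{t})V(\xi,\sqrt{t})}},
\]
so that
\[
 p(x,\xi,t)
 \leq \frac{C\,e^{\alpha^2 t}}{\sqrt{V(x,\sqrt{t})V(\xi,\sqrt{t})}}\,
 \exp\bigl(\psi(x)-\psi(\xi)\bigr).
\]
Choosing $\psi(y)=-\alpha\min(|y-\xi|,N)$ and letting $N\to\infty$ yields
\[
 p(x,\xi,t) \leq \frac{C}{\sqrt{V(x,\sqrt{t})V(\xi,\sqrt{t})}}\exp\bigl(\alpha^2 t-\alpha|x-\xi|\bigr),
\]
and minimising the right-hand side in $\alpha$ with the choice $\alpha=|x-\xi|/(2(1+\epsilon)t)$ (a small shift from the optimum absorbs the leading constant into $c_\epsilon$) produces the exponent $-|x-\xi|^2/(4(1+\epsilon)t)$. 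Finally, the volume factors are converted to $k_\gamma(\xi,t)^2/t^{n/2}$ via Lemma \ref{Doubling-lem} and Lemma \ref{Doubling2-lem}: the doubling bound transfers the centre of the ball from $x$ to $\xi$ at the cost of a factor absorbable into $c_\epsilon$ (since any polynomial loss $e^{c|x-\xi|/\sqrt{t}}$ is dominated by the Gaussian after slight widening of $\epsilon$).

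The main obstacle is justifying the Davies identity rigorously for the weighted problem with singular weight ${\cal B}(x)=\sigma(x)^2$, which blows up like $|x|^{-2\gamma}$ at the origin. In principle one must argue by approximation (as in the proof of Lemma \ref{Existence2-lem} with the regularised $\sigma_\epsilon$) so that all integrations by parts are performed on smooth functions and one can pass to the limit. The essential point — that the weighted Laplacian is symmetric in $L_{\cal B}^2$ and therefore only produces the curvature-free correction $\|\nabla\psi\|_\infty^2$ under conjugation — is unaffected by the singularity, so the argument goes through verbatim following \cite{Coste}.
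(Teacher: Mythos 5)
Your outline correctly unpacks the citation: the paper's ``proof'' is simply the reference to (5.2.17) and Theorem 5.3.3 in \cite{Coste}, whose content is Davies' exponential-weight perturbation, and your computation of the twisted $L^2$ identity is exactly right, as is your observation that the drift term in \eqref{z-eq} is invisible in the divergence-form weighted formulation, so the singularity of ${\cal B}$ at the origin causes no new trouble beyond the approximation already carried out in the proof of Lemma \ref{Existence2-lem}.

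The one step that is imprecise is exactly the one responsible for the $\epsilon$ in the statement. Items (ii)--(iii) of Lemma \ref{L^pL^q-lem} bound the \emph{untwisted} semigroup $e^{A_0 t}$ and do not transfer to $T_{t/2}^\psi$ for free, so the two-fold factorisation $T_t^\psi=T_{t/2}^\psi\circ T_{t/2}^\psi$ does not yield the kernel bound you write down with the sharp factor $e^{\alpha^2 t}$. What one can extract directly from the rough Gaussian upper bound of Lemma \ref{Heatkernel-lem} (by completing the square in $\exp(-c_4|x-\xi|^2/t+\alpha|x-\xi|)$) is an ultracontractive bound
\[
\|T_t^\psi\|_{L^1_{\cal B}\to L^\infty}\leq C\,\frac{e^{c'\alpha^2 t}}{V(\cdot,\sqrt{t})}
\]
with a non-optimal constant $c'=c'(c_4)$, and combined with your $L^2$ estimate this would only give a fixed non-optimal Gaussian exponent. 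To reach $(1+\epsilon)$ for arbitrary $\epsilon>0$ one must factor $T_t^\psi$ over $n$ sub-intervals with $n$ large, use the sharp $L^2$ Davies bound $e^{\alpha^2 t/n}$ on the $n-2$ middle factors, the sub-optimal ultracontractive bound only on the two extreme factors, and then the doubling property of Lemma \ref{Doubling-lem} to compare $V(\sqrt{t/n})$ with $V(\sqrt{t})$ up to a factor polynomial in $n$ that is absorbed into $c_\epsilon$. (Equivalently, one derives ultracontractivity of $T_t^\psi$ from a Nash or log-Sobolev inequality that is stable under the Davies twist, which is what \cite{Coste} actually does.) If you had the interim bound with the exact factor $e^{\alpha^2 t}$ as written, the subsequent optimisation in $\alpha$ would produce the constant $1/4$ outright, which is too strong; the $(1+\epsilon)$ must already appear at that stage. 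With that repair, the remaining steps---the truncated Lipschitz weight $\psi_N(y)=-\alpha\min(|y-\xi|,N)$, the minimisation in $\alpha$, and the final widening of $\epsilon$ to absorb the $e^{c|x-\xi|/\sqrt{t}}$ cost of moving the ball centre from $x$ to $\xi$ via Lemma \ref{Doubling2-lem}---are carried out correctly.
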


\subsection{Backward type linear parabolic equations}
\label{Backward-sec}

In this subsection,
we study the backward type linear parabolic equations.
\begin{equation}\label{Backward-eq}
\begin{cases}
\dis
 b_s = \frac{1}{{\cal C}}\text{div}\left( {\cal C}\nabla b \right),
 & (y,s)\in\R_+^n\times(0,\infty),
\\ \dis
 \pa_\nu b = 0,
 & (y,s)\in\pa\R_+^n\times(0,\infty),
\\ \dis
 b(y,0) = b_0(y),
 & y\in\pa\R_+^n,
\end{cases}
\end{equation}
where
\[
 {\cal C}(y) = \sigma(y)^2\rho(y).
\]
Here we introduce suitable functional spaces related to \eqref{Backward-eq}. 
Set
\[
 L_{\cal C}^p =
 \left\{b\in L_{\text{loc}}^p(\R_+^n); \int_{\R_+^n}|b(y)|^p{\cal C}(y)dy
 \right\},
\hspace{7.5mm}
 H_{\cal C}^1 =
 \left\{ b\in L_{\cal C}^2; \nabla b\in L_{\cal C}^2 \right\}.
\]
Let $H_{\cal C}^*$ be the dual space of $H_{\cal C}^*$
and define a operator ${\cal A}_0$: $D({\cal A}_0)\to H_{\cal C}^*$
with $D({\cal A}_0)=H_{\cal C}^1$ by
\[
 {}_{H_{\cal C}^*}\langle {\cal A}_0b,\eta \rangle_{H_{\cal C}}
:=
 -(\nabla b,\nabla \eta)_{\cal C}.
\]
In the same manner as before,
$e^{{\cal A}_0t}b_0$ defines a semigroup on $L_{\cal C}^2$.
Then it is verified that
$e^{{\cal A}_0s}$ is symmetric on $L_{\cal C}^2$,
that is
\begin{equation}\label{ANew-eq}
 (e^{{\cal A}_0s}b_1,b_2)_{\cal C}=(b_1,e^{{\cal A}_0s}b_2)_{\cal C},
\hspace{7.5mm} b_1,b_2\in L_{\cal C}^2,\hspace{2.5mm} t>0.
\end{equation}
To obtain the heat kernel of \eqref{Backward-eq}, 
we change variables.
\[
 z(x,t) = b\left( (1-t)^{-1/2}x,-\log(1-t) \right).
\]
Then $z(x,t)$ satisfies \eqref{z-eq} with the initial data $z_0(x)=b_0(x)$.
Since $z(x,t)$ is given by
\[
 z(x,t) =
 \int_{\R_+^n}p(x,\xi,t)b_0(\xi){\cal B}(\xi)d\xi,
\]
by using $x=e^{-s/2}y$ and $t=1-e^{-s}$,
we see that
\[
 (e^{{\cal A}_0t}b_0)(y) = z\left( e^{-s/2}y,1-e^{-s} \right) =
 \int_{\R_+^n}\Theta(y,\xi,s)b_0(\xi){\cal B}(\xi)d\xi,
\]
where $\Theta(y,\xi,s)$ is given by
\[
 \Theta(y,\xi,s) = p\left( e^{-s/2}y,\xi,1-e^{-s} \right).
\]
Then
from Lemma \ref{Boundp-lem},
we find that
$\Theta(\cdot,\xi,\cdot)\in C^{2,1}(\R_+^n\times(0,\infty))$
satisfies \eqref{Backward-eq} for any fixed $\xi\in\R_+^n$.
Furthermore
from Lemma \ref{Boundp-lem},
for any $r$ and $s_0\in(0,1)$ there exists $c(r,s_0)>0$ such that
\begin{equation}\label{GammaBound-eq}
 |\Theta_s(y,\xi,s)|+|\nabla_y\Theta(y,\xi,s)|+|D_y^2\Theta(y,\xi,s)| \leq c(r,s_0)(1+|y|)
\end{equation}
for $|y|>r$, $\xi\in\R_+^n$, $s_0<s<s_0^{-1}$.
Moreover
by virtue of \eqref{ANew-eq},
it holds that
\begin{equation}\label{Gamma1-eq}
 \Theta(y,\xi,s)e^{-|y|^2/4} = \Theta(\xi,y,s)e^{-|\xi|^2/4}.
\end{equation}
Next
we consider non-homogeneous problems.
\begin{equation}\label{NonhomoBackward-eq}
\begin{cases}
\dis
 b_s = \frac{1}{{\cal C}}\text{div}\left( {\cal C}\nabla b \right),
 & (y,s)\in\R_+^n\times(0,\infty),
\\ \dis
 \pa_\nu b = f(y',s),
 & (y',s)\in\pa\R_+^n\times(0,\infty),
\\ \dis
 b(y,0) = 0,
 & y\in\pa\R_+^n.
\end{cases}
\end{equation}
Let $s'>0$ and assume $f(y',s)\in BC(\pa\R_+^n\times[0,s'))$.
Furthermore
let
$b(y,s)\in BC(\overline{\R_+^n}\times[0,s'))\cap BC^{2,1}(\R_+^n\times[\epsilon,s'))$
for any $\epsilon>0$
be a classical solution of \eqref{NonhomoBackward-eq}.
We denote by $\theta_\epsilon(y)$ a cut off function such that
$\theta_\epsilon(y)=0$ if $|y|<\epsilon$, $\theta_\epsilon(y)=1$ if $|y|>2\epsilon$
and $|D^i\theta_\epsilon(y)|<c\epsilon^{-i}$ ($i=1,2$).
Here we put
\[
 A^{\epsilon}(y,s,\tau) =
 e^{|y|^2/4}\int_{\R_+^n}\Theta(\mu,y,s-\tau)\theta_\epsilon(\mu)b(\mu,\tau){\cal C}(\mu)d\mu,
\hh 0<\tau<s.
\]
From \eqref{GammaBound-eq},
we find that
$A^\epsilon(y,s,\tau)$ is differentiable with respect to $\tau$ for $0<\tau<s$
and it satisfies
\[
\begin{array}{lll}
\dis
 A_\tau^\epsilon(y,s,\tau)
\hspace{-2mm}&=&\hspace{-2mm} \dis
 -e^{|y|^2/4}\int_{\R_+^n}\Theta_s(\mu,y,s-\tau)\theta_\epsilon(\mu)b(\mu,\tau){\cal C}(\mu)d\mu
\\
\hspace{-2mm}&&\hspace{-2mm} \dis \hspace{20mm}
+
 e^{|y|^2/4}\int_{\R_+^n}\Theta(\mu,y,s-\tau)\theta_\epsilon(\mu)b_s(\mu,\tau){\cal C}(\mu)d\mu
\end{array}
\]
for $0<\tau<s$ and $y\in\R_+^n$.
Then
since $\Theta(\cdot,y,\cdot)$ satisfies \eqref{Backward-eq} for any fixed $y\in\R_+^n$,
by using $\pa_\nu\theta_\epsilon=0$ on $\pa\R_+^n$,
we see that
\[
\begin{array}{l}
\dis
 \int_{\R_+^n}\Theta_s(\mu,y,s-\tau)\theta_\epsilon b\h {\cal C}d\mu
=
 -\int_{\R_+^n}\nabla_\mu\Theta(\mu,y,s-\tau)\cdot\nabla(\theta_\epsilon b){\cal C}d\mu
\\[4mm] \dis \hspace{10mm}
=
 -\int_{\pa\R_+^n}\Theta(\mu',y,s-\tau)\theta_\epsilon f(\mu',\tau){\cal C}d\mu'
+
 \int_{\R_+^n}\Theta(\mu,y,s-\tau)\theta_\epsilon b_s\h{\cal C}d\mu
\\[4mm] \dis \hspace{25mm}
+
 \int_{\R_+^n}\Theta(\mu,y,s-\tau)
 \left(
 2\nabla\theta_\epsilon\cdot\nabla b
 +
 \left( \Delta\theta_\epsilon+\frac{\nabla\theta_\epsilon\cdot\nabla{\cal C}}{\cal C} \right)b
 \right)
 {\cal C}d\mu.
\end{array}
\]
Here
we denote the last term by $R_\epsilon(y,s,\tau)$.
Integrating over $(\delta,s-\delta)$ both sides,
we get
\begin{equation}\label{Aepsilondelta-eq}
\begin{array}{lll}
\dis
 A^\epsilon(y,s,s-\delta)
\hspace{-2mm}&=&\hspace{-2mm} \dis
 A^\epsilon(y,s,\delta)
+
 e^{|y|^2/4}
 \left(
 \int_\delta^{s-\delta}d\tau\int_{\pa\R_+^n}\Theta(\mu',y,s-\tau)\theta_\epsilon f\h{\cal C}d\mu'
 \right.
\\[4mm]
\hspace{-2mm}&&\hspace{50mm} \dis
 \left.
 -
 \int_\delta^{s-\delta}R_\epsilon(y,s,\tau)d\tau
 \right).
\end{array}
\end{equation}
Then
since  ${\cal C}(\mu)\sim|\mu|^{-2\gamma}$ for $|y|\leq1$,
we see that
\[
\begin{array}{lll}
 |R_\epsilon|
\hspace{-2mm}&\leq&\hspace{-2mm} \dis
 c\sup_{\delta<\tau<s-\delta}\|b(\cdot,\tau)\|_{W^{1,\infty}(\R_+^n)}
 \int_{B_{2\epsilon}}
 \left(
 |\nabla\theta_\epsilon|+|\Delta\theta_\epsilon|+\frac{|\nabla\theta_\epsilon|}{|\mu|}
 \right)
 |\mu|^{-2\gamma}
 d\mu
\\[5mm]
\hspace{-2mm}&\leq&\hspace{-2mm} \dis
 c\sup_{\delta<\tau<s-\delta}\|b(\cdot,\tau)\|_{W^{1,\infty}(\R_+^n)}
 \left( \epsilon^{-2}\cdot\epsilon^{n-2\gamma}+\epsilon^{-1}\cdot\epsilon^{n-1-2\gamma} \right).
\end{array}
\]
Hence
since $b(y,s)\in BC^{2,1}(\R_+\times(\delta,s))$ and $2\gamma<n-2$,
it holds that
\[
 \lim_{\epsilon\to0}\int_\delta^{s-\delta}R_\epsilon(y,s,\tau)d\tau = 0
\hspace{5mm}\text{for any } \delta>0.
\]
Therefore
since $b(y,s)\in BC(\overline{\R_+^n}\times[0,s'))$ and $f(y',s)\in BC(\pa\R_+^n\times[0,s'))$,
by taking $\epsilon\to0$ in \eqref{Aepsilondelta-eq},
we obtain
\[
 A(y,s,s-\delta)
=
 A(y,s,\delta)
+
 e^{|y|^2/4}
 \int_\delta^{s-\delta}d\tau\int_{\pa\R_+^n}\Theta(\mu',y,s-\tau)f\h{\cal C}d\mu'
\]
for any fixed $\delta>0$,
where $A(y,s,\tau)$ is defined by
\[
 A(y,s,\tau) =
 e^{|y|^2/4}\int_{\R_+^n}\Theta(\mu,y,s-\tau)b(\mu,\tau){\cal C}(\mu)d\mu.
\]
Here
from \eqref{Gamma1-eq},
we note that
\[
 A(y,s,s-\delta)
=
 \int_{\R_+^n}\Theta(y,\mu,\delta)b(\mu,s-\delta){\cal B}(\mu)d\mu
=
  e^{{\cal A}_0\delta}b(s-\delta).
\]
Furthermore
we recall that $b(y,0)\equiv0$.
Therefore
since $b(y,s)\in BC(\overline{\R_+^n}\times[0,s'))$ and $b(y',s)\in BC(\pa\R_+^n\times[0,s'))$,
we take $\delta\to0$ to obtain
\[
 b(y,s)
=
 \int_0^sd\tau\int_{\pa\R_+^n}\Theta(y,\mu',s-\tau)f(\mu',\tau){\cal B}(\mu')d\mu'.
\]
Summing up the above facts,
we obtain the Duhamel principle for solution of \eqref{NonhomoBackward-eq}.

\begin{pro}\label{Rep-pro}
Let $f(y,s)\in BC(\pa\R_+^n\times[0,s')$
and
$b(y,s)$ be a solution of \eqref{NonhomoBackward-eq} with $b(y,0)=b_0(y)$.
Then
if $b(y,s)\in BC(\overline{\R_+^n}\times[0,s'))\cap BC^{2,1}(\R_+^n\times[\epsilon,s'))$
for any $\epsilon>0$,
it is expressed by
\[
 b(y,s) =
 \int_{\R_+^n}\Theta(y,\xi,s)b_0(\xi){\cal B}(\xi)d\xi +
 \int_0^sd\tau\int_{\pa\R_+^n}\Theta(y,\xi',s-\tau)f(\xi',\tau){\cal B}(\xi')d\xi'.
\]
Furthermore
it holds that
\[
 \Theta(y,\xi,s)
\leq
 c\frac{\left( |\xi|+\sqrt{1-e^{-s}} \right)^{2\gamma}}{(1-e^{-s})^{n/2}}
 \exp\left( -\frac{|e^{-s/2}y-\xi|^2}{6(1-e^{-s})} \right).
\]
\end{pro}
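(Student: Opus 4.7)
The plan is to exploit the linearity of \eqref{NonhomoBackward-eq} by decomposing $b = b_1 + b_2$, where $b_1$ solves the homogeneous equation \eqref{Backward-eq} with initial data $b_0$ and $b_2$ solves the nonhomogeneous problem with zero initial data. The first piece is handled directly by the semigroup representation already established for $e^{{\cal A}_0 s}$, which gives
\[
 b_1(y,s) = (e^{{\cal A}_0 s}b_0)(y) = \int_{\R_+^n}\Theta(y,\xi,s)b_0(\xi){\cal B}(\xi)d\xi,
\]
yielding the first term of the representation formula.

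For $b_2$, the derivation sketched in the paragraphs immediately preceding the proposition applies essentially verbatim once one sets $b_0 \equiv 0$ there: introduce the cutoff $\theta_\epsilon$ to localize away from the origin where ${\cal C} \sim |\mu|^{-2\gamma}\rho$ is singular, differentiate $A^\epsilon(y,s,\tau)$ in $\tau$, and integrate by parts using that $\Theta(\cdot,y,\cdot)$ satisfies \eqref{Backward-eq}. The boundary condition $\pa_\nu b_2 = f$ generates the boundary integral of $f$ against $\Theta$, while the cutoff produces the remainder term $R_\epsilon$ supported in $B_{2\epsilon}$. Passing to the limit $\delta\to 0$ after $\epsilon \to 0$, then invoking the symmetry \eqref{Gamma1-eq} to rewrite $A(y,s,s-\delta)$ as $(e^{{\cal A}_0\delta}b_2(s-\delta))(y)$, produces the second term of the representation.

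For the upper bound on $\Theta$, the plan is to combine the defining identity $\Theta(y,\xi,s) = p(e^{-s/2}y,\xi,1-e^{-s})$ with Lemma \ref{HeatkernelNew-lem}. Setting $\epsilon = 1/2$ there gives $4(1+\epsilon) = 6$, and since $k_\gamma(\xi,1-e^{-s})^2 = (|\xi|+\sqrt{1-e^{-s}})^{2\gamma}$, the claimed bound follows immediately.

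I expect the main obstacle to be the rigorous handling of the cutoff argument near the origin, where both $\sigma^{-1}$ and $\nabla\sigma/\sigma$ are unbounded. The key ingredient is the strict inequality $2\gamma < n-2$ (equivalently $m < \gamma < (n-2)/2$) established just after \eqref{sigma-eq}: combined with the assumed regularity $b \in BC^{2,1}(\R_+^n \times [\epsilon, s'))$, this makes the singular integrals arising in $R_\epsilon$ of order $\epsilon^{n-2-2\gamma}$ or $\epsilon^{n-1-2\gamma}$ (after accounting for the derivative weights $\epsilon^{-1},\epsilon^{-2}$ coming from $\nabla\theta_\epsilon, \Delta\theta_\epsilon$) and therefore vanish as $\epsilon \to 0$. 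The final limit $\delta \to 0$ is then purely continuity-based, using the hypotheses $b \in BC(\overline{\R_+^n}\times[0,s'))$ and $f \in BC(\pa\R_+^n\times[0,s'))$.
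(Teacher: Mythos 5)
Your proof is correct and follows essentially the same route as the paper: the Duhamel formula is derived via the cutoff function $\theta_\epsilon$, the integration-by-parts identity for $A^\epsilon(y,s,\tau)$, the key estimate $|R_\epsilon| \lesssim \epsilon^{n-2-2\gamma}$ using $2\gamma < n-2$, the symmetry \eqref{Gamma1-eq}, and the limits $\epsilon\to0$ then $\delta\to0$; and the upper bound on $\Theta$ follows from Lemma \ref{HeatkernelNew-lem} with $\epsilon=1/2$. The only (cosmetic) difference is that you split $b = b_1 + b_2$ by superposition, whereas the paper runs the $\delta\to0$ limit directly with general $b_0$, letting $A(y,s,\delta)$ converge to the homogeneous term — both are valid and amount to the same computation.
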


\begin{proof}
The Duhamel principle follows from the above argument.
Furthermore
by using Lemma \ref{HeatkernelNew-lem} with $\epsilon=1/2$,
then we obtain the upper bound of $\Theta(y,\xi,s)$. 
\end{proof}

Finally
we provide the $L^\infty$-$L^2$ type estimate for solutions of \eqref{Backward-eq}.

\begin{lem}\label{L^inftyL^2-lem}
There exists $c>0$ such that
\[
 |e^{{\cal A}_0s}b_0|
<
 \frac{c\|b_0\|_{\cal C}}{(1-e^{-s})^{n/2}}
\hspace{5mm}\mathrm{for}\ |y|<e^{s/2}.
\]
\end{lem}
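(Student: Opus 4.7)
The strategy is Cauchy--Schwarz applied to the Duhamel representation formula, combined with an explicit Gaussian moment computation. First I would extend the representation of Proposition~\ref{Rep-pro} (with $f\equiv 0$) from smooth data to $b_0\in L_{\cal C}^2$ by an approximation argument, obtaining
\[
(e^{{\cal A}_0s}b_0)(y)=\int_{\R_+^n}\Theta(y,\xi,s)b_0(\xi){\cal B}(\xi)d\xi.
\]
Factoring the measure as ${\cal B}(\xi)d\xi=\sqrt{{\cal B}(\xi)/\rho(\xi)}\cdot\sqrt{{\cal C}(\xi)}\,d\xi$ and applying Cauchy--Schwarz gives
\[
|(e^{{\cal A}_0s}b_0)(y)|^2\le\|b_0\|_{\cal C}^2\int_{\R_+^n}\Theta(y,\xi,s)^2{\cal B}(\xi)e^{|\xi|^2/4}d\xi,
\]
reducing the task to bounding the right-hand integral by $c(1-e^{-s})^{-n}$ when $|y|<e^{s/2}$.

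Next I would substitute the squared Gaussian estimate from Proposition~\ref{Rep-pro} together with ${\cal B}(\xi)\le c|\xi|^{-2\gamma}$, producing the integrand
\[
\frac{c(|\xi|+\sqrt{\tau})^{4\gamma}|\xi|^{-2\gamma}}{\tau^n}\exp\!\left(\frac{|\xi|^2}{4}-\frac{|y^*-\xi|^2}{3\tau}\right),\qquad \tau=1-e^{-s},\ y^*=e^{-s/2}y.
\]
Since $\tau<1<4/3$ the exponent is a negative-definite quadratic form in $\xi$; completing the square writes it as $-A|\xi-\xi_0|^2+C$ with $A=(4-3\tau)/(12\tau)\ge 1/12$, $\xi_0=4y^*/(4-3\tau)$, and $C=|y^*|^2/(4-3\tau)$. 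The hypothesis $|y|<e^{s/2}$ enters precisely here: it forces $|y^*|<1$, which keeps $|\xi_0|\le 4$ and $C\le 1$ uniformly in $\tau\in(0,1)$. The Gaussian integration is then routine: splitting the polynomial factor at $|\xi|=\sqrt{\tau}$, the part $\tau^{2\gamma}|\xi|^{-2\gamma}$ on $\{|\xi|<\sqrt{\tau}\}$ contributes $O(\tau^{(n+2\gamma)/2})$ using the integrability $2\gamma<n-2$, while $|\xi|^{2\gamma}$ on $\{|\xi|>\sqrt{\tau}\}$ contributes the standard Gaussian moment $O(\tau^{n/2})$ (variance $1/A\sim\tau$). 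Both are $O(\tau^{n/2})$ for $\tau\le 1$, so multiplying by the prefactor $\tau^{-n}$ yields $\int\Theta^2{\cal B}e^{|\xi|^2/4}d\xi\le c\tau^{-n/2}$. Taking the square root gives $|(e^{{\cal A}_0s}b_0)(y)|\le c\tau^{-n/4}\|b_0\|_{\cal C}$, which is a fortiori bounded by $c\|b_0\|_{\cal C}/(1-e^{-s})^{n/2}$ since $\tau<1$.

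The main obstacle is reconciling the growth factor $e^{|\xi|^2/4}$ (which comes from the conjugate weight of $\|\cdot\|_{\cal C}$) with the Gaussian decay of $\Theta$. Two quantitative ingredients make it work: the kernel's decay rate $1/(3\tau)$ exceeds $1/4$ throughout $\tau\in(0,1)$, ensuring the quadratic form in $\xi$ stays negative-definite; and the restriction $|y|<e^{s/2}$ keeps the completed-square shift and constant bounded independently of $\tau$. Without this second ingredient the integral would diverge as $\tau\uparrow 1$, so the geometric hypothesis is essential rather than cosmetic.
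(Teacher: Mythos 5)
Your proposal is correct, and it takes a genuinely different route from the paper. The paper never completes the square. Instead it fixes the explicit radius $r_1=\sqrt{7}/(\sqrt{7}-\sqrt{6})$ so that $|e^{-s/2}y-\xi|^2/6\ge|\xi|^2/7$ when $|\xi|>r_1$ and $|y|<e^{s/2}$, splits the $\xi$-integral at $r_1$, bounds the inner piece by throwing away the Gaussian entirely (using only $e^{r_1^2/4}\|b_0\|_{\cal C}^2\ge\int_0^{r_1}b_0^2{\cal B}$ and $\int_0^{r_1}\Theta^2{\cal B}\le c\tau^{-n}$), and handles the outer piece by factoring $1/7=1/8+1/56$ so that one Gaussian factor reproduces $\rho=e^{-|\xi|^2/4}$ and hence $\|b_0\|_{\cal C}$, while the other is integrable. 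Your approach applies Cauchy--Schwarz globally with the weight factorization ${\cal B}=\sqrt{{\cal B}/\rho}\cdot\sqrt{{\cal C}}$ and then completes the square in $\frac{|\xi|^2}{4}-\frac{|y^*-\xi|^2}{3\tau}$; the negative-definiteness condition $\tau<4/3$ (automatic since $\tau=1-e^{-s}<1$) does the job that the radius $r_1$ does in the paper, and the uniform bounds $|\xi_0|\le4$, $C\le1$ use $|y^*|<1$ exactly where the paper uses it. Your variant is arguably cleaner and, by retaining the Gaussian decay of variance $A^{-1}\sim\tau$ throughout the integral, actually yields the sharper bound $c\tau^{-n/4}\|b_0\|_{\cal C}$ rather than the stated $c\tau^{-n/2}\|b_0\|_{\cal C}$ (consistent with a model computation in $n=1$ with ${\cal B}\equiv1$, where the optimizing data concentrated on a scale $\sqrt{\tau}$ gives exactly $\tau^{-1/4}$). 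Both are valid proofs of the lemma; the paper's formulation is all that is needed downstream.

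One cosmetic remark: the integrability of $|\xi|^{-2\gamma}$ near the origin in $\R_+^n$ only requires $2\gamma<n$, not the stronger $2\gamma<n-2$ that you cite; the latter is what the paper establishes and certainly suffices, but it is not the sharp condition at that step.
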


\begin{proof}
Put $r_1=\sqrt{7}/(\sqrt{7}-\sqrt{6})$.
Then
a direct computation shows that
\begin{equation}\label{c'yxi-eq}
 \frac{|e^{-s/2}y-\xi|^2}{6}
\geq
 \frac{|\xi|^2}{7}
\hspace{5mm}\text{for } |\xi|>r_1,\ |y|<e^{s/2}.
\end{equation}
From Proposition \ref{Rep-pro},
we observe that
\begin{equation}\label{Asexpress-eq}
 e^{{\cal A}_0s}b_0
=
 \left( \int_0^{r_1}+\int_{r_1}^\infty \right)
 \Theta(y,\xi,s)b_0(\xi){\cal B}(\xi)
 d\xi.
\end{equation}
By the Schwarz inequality,
the first integral is estimated by
\[
 \int_0^{r_1}
 \Theta(y,\xi,s)b_0(\xi){\cal B}(\xi)
 d\xi
\leq
 \left(
 \int_0^{r_1}
 \Theta(y,\xi,s)^2{\cal B}(\xi)
 d\xi
 \right)^{1/2}
 \left(
 \int_0^{r_1}
 b_0(y)^2{\cal B}(\xi)
 d\xi
 \right)^{1/2}.
\]
Here
we note that ${\cal B}(\xi)\leq c|\xi|^{-2\gamma}$ and $\sqrt{1-e^{-s}}<1$.
Therefore
Proposition \ref{Rep-pro} implies
\[
\begin{array}{lll}
\dis
 \int_0^{r_1}
 \Theta(y,\xi,s)^2{\cal B}(\xi)
 d\xi
\hspace{-2mm}&\leq&\hspace{-2mm} \dis
 \frac{c}{(1-e^{-s})^n}
 \int_0^{r_1}
 \left( |\xi|+\sqrt{1-e^{-s}} \right)^{4\gamma}
 |\xi|^{-2\gamma}
 d\xi
\\[4mm]
\hspace{-2mm}&\leq&\hspace{-2mm} \dis
 \frac{c(r_1+1)^{4\gamma}}{(1-e^{-s})^n}
 \int_0^{r_1}
 |\xi|^{-2\gamma}
 d\xi.
\end{array}
\]
Since $\gamma<(n-2)/2$,
we obtain
\[
 \int_0^{r_1}
 \Theta(y,\xi,s)^2{\cal B}(\xi)
 d\xi
\leq
 \frac{c}{(1-e^{-s})^n}.
\]
Furthermore
it holds from ${\cal B}(\xi)=e^{|\xi|^2/4}{\cal C}(\xi)$ that
\[
 \int_0^{r_1}
 b_0(y)^2{\cal B}(\xi)
 d\xi
\leq
 e^{r_1^2/4}\int_0^{r_1}
 b_0(y)^2{\cal C}(\xi)
 d\xi
\leq
 e^{r_1^2/4}\|b_0\|_{\cal C}^2.
\]
Therefore
we obtain
\[
 \int_0^{r_1}
 \Theta(y,\xi,s)b_0(\xi){\cal B}(\xi)
 d\xi
\leq
 \frac{c\|b_0\|_{\cal C}}{(1-e^{-s})^{n/2}}.
\]
Next we estimate the second integral in \eqref{Asexpress-eq}.
By \eqref{c'yxi-eq},
we note that
\[
 \exp\left( -\frac{|e^{-s/2}y-\xi|^2}{6(1-e^{-s})} \right)
\leq
 \exp\left( -\frac{|\xi|^2}{7(1-e^{-s})} \right)
\leq
 e^{-|\xi|^2/7}
\]
for $|\xi|>r_1$ and $|y|<e^{s/2}$.
Therefore
since $1/7=1/8+1/56$,
by the Schwarz inequality,
we see that
\[
\begin{array}{l}
\dis
 \int_{r_1}^\infty
 \Theta(y,\xi,s)b_0(\xi){\cal B}(\xi)
 d\xi
\leq
 c\int_{r_1}^\infty
 \frac{\dis\left( |\xi|+\sqrt{1-e^{-s}} \right)^{2\gamma}}{(1-e^{-s})^{n/2}}
 e^{-|\xi|^2/7}|b_0(\xi)|{\cal B}(\xi)
 d\xi
\\[4mm] \dis \hspace{10mm}
\leq
 c\left(
 \int_{r_1}^\infty
 \frac{\dis\left( |\xi|+\sqrt{1-e^{-s}} \right)^{4\gamma}}{(1-e^{-s})^{n}}
 e^{-|\xi|^2/28}{\cal B}(\xi)
 d\xi
 \right)^{1/2}
 \left(
 \int_{r_1}^\infty
 b_0(\xi)^2e^{-|\xi|^2/4}{\cal B}(\xi)
 d\xi
 \right)^{1/2}.
\end{array}
\]
Since
$\sqrt{1-e^{-s}}<1$ and ${\cal C}(\xi)=e^{-|\xi|^2/4}{\cal B}(\xi)$,
we conclude
\[
\begin{array}{lll}
\dis
 \int_{r_1}^\infty
 \Theta(y,\xi,s)b_0(\xi){\cal B}(\xi)
 d\xi
\hspace{-2mm}&\leq&\hspace{-2mm} \dis
 \frac{c}{{(1-e^{-s})^{n}}}
 \left(
 \int_{r_1}^\infty
 \dis\left( |\xi|+1 \right)^{4\gamma}|\xi|^{-2\gamma}
 e^{-|\xi|^2/28}
 d\xi
 \right)^{1/2}
 \|b_0\|_{\cal C}
\\[6mm]
\hspace{-2mm}&\leq&\hspace{-2mm} \dis
 \frac{c\|b_0\|_{\cal C}}{{(1-e^{-s})^{n/2}}}.
\end{array}
\]
Thus the proof is completed.
\end{proof}

\section{Functional setting}\label{Functionalsetting-sec}

In this section,
we introduce a functional space for a fixed point theorem and
provide fundamental estimates for Section \ref{Shorttime-sec}\h-\h{Section} \ref{Exterior-sec}.
Here
we use the same transformation as in Section \ref{Heatkernel-sec}.
\[
 b(y,s) = \Phi(y,s)/\sigma(y),
\]
where $\sigma(y)$ is a function given at the beginning of Section \ref{Heatkernel-sec}.
Then $b(y,s)$ satisfies
\begin{equation}\label{Fullb-eq}
\begin{cases}
\dis
 b_s =
\frac{1}{\cal C}\nabla\left( {\cal C}\nabla b \right) + \left( \frac{\gamma-m}{2} \right)b,
 & (y,s)\in\R_+^n\times(s_1,\infty),
\\[2mm] \dis
 \pa_\nu b = f(\Phi)/\sigma,
 & (y,s)\in\pa\R_+^n\times(s_1,\infty),
\\[1mm]
 b(y,s_1) = b_1(y) := \Phi_0(y)/\sigma(y),
  & y\in\R_+^n,
\end{cases}
\end{equation}
where we replaced the initial time $s_T$ by $s_1$.
We recall that ${\cal K}=qV|_{\theta=\pi/2}^{q-1}$ and
\begin{equation}\label{f(Phi)-eq}
 f(\Phi) = (\Phi+U_\infty)^q-U_\infty^q-{\cal K}r^{-1}\Phi.
\end{equation}
Let $e^{{\cal A}s}$ be the semigroup of \eqref{Fullb-eq} with replaced its boundary condition by
zero Neumann boundary condition.
Therefore
$e^{{\cal A}(s-s_1)}$ is written by
\[
 e^{{\cal A}(s-s_1)} = e^{(\gamma-m)(s-s_1)/2}e^{{\cal A}_0(s-s_1)},
\]
where $e^{{\cal A}_0(s-s_1)}$ is the semigroup defined in Section \ref{Backward-sec}.
Furthermore
by Proposition \ref{Rep-pro},
$b(y,s)$ is expressed by
\begin{equation}\label{b(y,s)express-eq}
 b(y,s)
=
 e^{{\cal A}(s-s_1)}b_1 + \int_{s_1}^se^{{\cal B}(s-\tau)}F(\tau)d\tau,
\end{equation}
where $F(\xi,s) = f(\Phi(\xi,s))/\sigma(\xi')$ and
\[
\begin{array}{c}
\dis
 e^{{\cal A}s}b_1
=
 \int_{\R_+^n}\Gamma(y,\xi,s)b_1(\xi){\cal B}(\xi)d\xi,
\hspace{7.5mm}
 e^{{\cal B}s}F
=
 \int_{\pa\R_+^n}\Gamma(y,\xi',s)F(\xi'){\cal B}(\xi')d\xi'
\end{array}
\]
with $\Gamma(y,\xi,s)=e^{(\gamma-m)s/2}\Theta(y,\xi,s)$.

Here we prepare notations for convenience.
Throughout this section,
we fix $\ell\in\N$ such that $\lambda_{1\ell}>0$ and put
\[
 \lambda^* = \lambda_{1\ell},
\hspace{10mm}
 \omega = \lambda^*/(\gamma-m)>0.
\]
Let $k_\alpha=\alpha^{-(\gamma-m)/m}k_1$ be defined in Theorem \ref{JLsuper-thm} (iv)
and $c_{1\ell}$  be defined in Lemma \ref{2A-lem}.
We choose $\alpha>0$ such that
\[
 k_\alpha=c_{1\ell}.
\]
We fix $K>0$ and $\sigma\in(0,1/2)$ such that
\[
 K = e^{a\omega s_1}\hspace{3mm} (0<a<1),
\hspace{10mm}
 \max\left\{\frac{\lambda^*}{2\lambda^*+1},\frac{1}{2q}\right\} < \sigma < 1/2.
\]
Moreover
we fix $H\in(0,K)$ and $\varrho \in(0,1/2)$ such that
\[
 H = e^{a'\omega s_1}\hspace{3mm} (0<a'\ll a),
\hspace{10mm}
 \sigma < \varrho < 1/2.
\]
Here
we do not give an explicit form of $a'$.
The constant $a'$ will be chosen sufficiently small
in Section \ref{Functionalsetting-sec}\h-\h{Section} \ref{Exterior-sec}
to obtain the desired estimates.
To apply a fixed point theorem,
we define a suitable functional space.
Let $c_{1\ell}>0$ be defined in Lemma \ref{2A-lem} and put $\epsilon_0=c_{1\ell}/4$.
Furthermore
let $m_0=m_0(q,n)$ be a constant given in \eqref{m_0-eq}.
We define
\begin{equation}\label{Assume1-eq}
\begin{cases}
\dis
 \varphi(y,s) < U_\infty(y)
& \text{for } r<Ke^{-\omega s_1},
\\[2mm]
 \left| \varphi(y,s)-U_\infty(y)+e^{-\lambda^*s}\phi_{1\ell}(y) \right|
<
 \epsilon_0e^{-\lambda^*s}e_1(\theta)\left( r^{-\gamma}+r^{2\lambda^*-m} \right)
&
 \text{for } r\in(Ke^{-\omega s},e^{\sigma s}),
\\[2mm]
 |\varphi(y,s)| < m_0U_\infty(y)
&
 \text{for } r>e^{\sigma s}.
\end{cases}
\end{equation}
Then
from Lemma \ref{2A-lem},
we recall that $\phi_{1\ell}(y)=(1+o(1))c_{1\ell}e_1(\theta)r^{-\gamma}$ for $r\ll1$.
Hence
if $\varphi(y,s)$ satisfies \eqref{Assume1-eq} for $s_1<s<s_2$,
then it holds that
\begin{equation}\label{AssumeMinus-eq}
 -\frac{3e^{-\lambda^*s}}{2}c_{1\ell}e_1(\theta)r^{-\gamma}
<
 \varphi(y,s) - U_\infty(y)
<
 -\frac{e^{-\lambda^*s}}{2}c_{1\ell}e_1(\theta)r^{-\gamma}
\end{equation}
for $r=Ke^{-\omega s}$ and $s_1<s<s_2$.
We will construct a solution $\varphi(y,s)$ of \eqref{varphi-eq} which belongs to
\[
 A_{s_1,s_2}
=
 \Bigl\{
 \varphi(y,s)\in C(\overline{\R_+^n}\times[s_1,s_2]);
 \varphi(y,s) \text{ satisfies } \text{\eqref{Assume1-eq} for } s\in(s_1,s_2) 
 \Bigr\}.
\]
For simplicity,
we define $\Pi\subset\N^2$ by
\[
 \Pi = \{(i,j)\in\N^2;\ \lambda_{ij}<\lambda^* \}.
\]
To construct such a solution,
we define $\phi_\ell^*(y,s_1)$ by
\[
 \phi_\ell^*(y,s_1)
=
\begin{cases}
 \dis
 e^{\lambda^*s_1}
 \left(
 U_\infty(y)-e^{m\omega  s_1}U_\alpha(e^{\omega  s_1}y)+
 \sum_{(i,j)\in\Pi}d_{ij}\phi_{ij}(y)
 \right)
 \\
 \hspace{75mm} \text{if } 0\leq r<He^{-\omega  s_1},
 \\ \dis
 e^{\lambda^*s_1}
 \Biggl(
 \Bigl( H+1-e^{\omega  s_1}r \Bigr)
 \Bigl( U_\infty(y)-e^{m\omega  s_1}U_\alpha(e^{\omega  s_1}y)+\sum_{(i,j)\in\Pi}d_{ij}\phi_{ij}(y) \biggr)
 \\[2mm] \hspace{20mm} \dis
 +\Bigl( e^{\omega  s_1}r-H \Bigr)e^{-\lambda^* s_1}\phi_{1\ell}(y)
 \Biggr)
\hspace{5mm} \text{if } He^{-\omega  s_1}<r<(H+1)e^{-\omega  s_1},
 \\[2mm] \dis
 \phi_{1\ell}(y)
 \hh \text{if } (H+1)e^{-\omega  s_1}<r<e^{\varrho s_1},
 \\[3mm] \dis
 e^{\lambda^*s_1}
 \left(
 \Bigl( r-e^{\varrho s_1} \Bigr)
 \Bigl( U_\infty(y)+\sum_{(i,j)\in\Pi}d_{ij}\phi_{ij}(y) \Bigr)
 +
 e^{-\lambda^*s_1}\Bigl( e^{\varrho s_1}+1-r \Bigr)\phi_{1\ell}(y)
 \right)
 \\[4mm]
 \hspace{80mm} \text{if } e^{\varrho s_1}<r<e^{\varrho s_1}+1,
 \\ \dis
 e^{\lambda^*s_1}
 \left( U_\infty(y)+\sum_{(i,j)\in\Pi}d_{ij}\phi_{ij}(y) \right)
 \hh \text{if } r>e^{\varrho s_1}+1.
\end{cases}
\]
Throughout this paper,
we denote $\phi_\ell^*(y,s_1)$ by $\phi_\ell^*(y)$ for simplicity.
We choose the initial data $\varphi(y,s_1)$ as
\[
 \varphi(y,s_1)-U_\infty(y) =
 \sum_{(i,j)\in\Pi}d_{ij}\phi_{ij}(y) - e^{-\lambda^*s_1}\phi_\ell^*(y).
\]
Then
by definition of $\phi_\ell^*$,
we easily see that $\varphi(y,s_1)\in BC(\overline{\R_+^n})$.
From now,
we denote by $\varphi(y,s)$
a classical solution of \eqref{varphi-eq} with the initial data $\varphi(y,s_1)$
and set
\[
 \Phi(y,s)=\varphi(y,s)-U_\infty(y),
\hspace{7.5mm}
 b(y,s) = \Phi(y,s)/\sigma(y).
\]
Furthermore
for simplicity of notations,
we put
\[
 d_{\text{max}}=\max_{(i,j)\in\Pi}|d_{ij}|.
\]

\subsection{Fixed point theorem}

Since $e_1(\theta)$ is strictly positive on $S_+^n$ and $\lambda_{ij}<\lambda^*$ for $(i,j)\in\Pi$,
from Lemma \ref{2A-lem},
there exists $c_1>0$ such that
\begin{equation}\label{c_1-eq}
 \sum_{(i,j)\in\Pi}|\phi_{ij}(y)|
\leq
 c_1e_1(\theta)\left( |y|^{-\gamma}+|y|^{2\lambda^*-m} \right)
\hspace{5mm}\text{for } y\in\R_+^n.
\end{equation}
Here
we fix $\epsilon_1=\epsilon_0/2c_1$.
Let $N$ be the number of elements of $\Pi$ and define
\[
 U_{s_1,s_2} = \left\{ d\in\R^N;\ |d|<\epsilon_1e^{-\lambda^*s_1},\ \varphi(y,s)\in A_{s_1,s_2} \right\}.
\]
For the case $U_{s_1,s_2}\not=\emptyset$,
we define an operator $P$: $U_{s_1,s_2}\to\R^N$ by
\begin{equation}\label{PDef-eq}
 P_{ij}(d;s_2) = (\varphi(s_2),\phi_{ij})_\rho
\hspace{5mm}\text{for } (i,j)\in\Pi.
\end{equation}
By the continuous dependence on the initial data,
we find $U_{s_1,s_2}$ is a open set in $\R^N$.
Furthermore
by definition of $U_{s_1,s_2}$,
it holds that $U_{s_1,s_2'}\subset U_{s_1,s_2}$ if $s_2<s_2'$.
Now
we will see that $P_{ij}(d;s_2)$ can be defined on $\bar{U}_{s_1,s_2}$ and $P_{ij}\in C(\bar{U}_{s_1,s_2})$.
In fact $\{d_j\}_{j\in\N}\subset U_{s_1,s_2}$ be a sequence converging to $d_*\in\pa U_{s_1,s_2}$.
Let $\varphi_j(y,s)$ and $\varphi_*(y,s)$ be a solution corresponding to $d=d_j$ and $d=d_*$, respectively.
Then
since $\varphi_j(y,s)\in A_{s_1,s_2}$,
by using Lemma \ref{underUinfty-lem},
we easily see that for $s_1<s<s_2$
\begin{equation}\label{varphi_j-eq}
 \varphi_j(y,s)
<
\begin{cases}
 U_{\beta(s)}
& \text{for } |y|<Ke^{-\omega s},
\\[2mm]
 U_\infty(y) - e^{-\lambda s}\phi_{1\ell}(y) +
 \epsilon_0e^{-\lambda^*s}e_1(\theta)\left( r^{-\gamma}+r^{2\lambda^*-m} \right)
& \text{for } Ke^{-\omega s}<|y|<e^{\sigma s},
\\[2mm]
 m_0U_\infty(y)
& \text{for } |y|>e^{\sigma s}.
\end{cases}
\end{equation}
Therefore
by the continuous dependence on the initial data and the unique solvability of solutions,
$\varphi_*(y,s)$ turns out to be defined on $s\in(s_1,s_2)$ and
satisfies \eqref{varphi_j-eq} with replaced `$<$' by `$\leq$'.
As a consequence,
$P_{ij}(d;s_2)$ can be defined for $d\in\pa U_{s_1,s_2}$.
Furthermore
we easily see that $P_{ij}(d;s_2)\in C(\bar{U}_{s_1,s_2})$.
To apply a fixed point theorem,
the following proposition plays a crucial role.

\begin{pro}\label{Main-pro}
There exists $s_1>0$ such that
if $P(d;s_2)=0$ with $d\in\bar{U}_{s_1,s_2}\not=\emptyset$ for $s_2>s_1$,
then it holds that for $s_1<s<s_2$
\[
\begin{cases}
\dis
 \varphi(y,s) < U_{\beta(s)}(y)
& \mathrm{for}\ r<Ke^{-\omega s_1},
\\[2mm] \dis
 \left| \varphi(y,s)-U_\infty(y)+e^{-\lambda^*s}\phi_{1\ell}(y) \right|
<
 \frac{\epsilon_0}{2}e^{-\lambda^*s}e_1(\theta)\left( r^{-\gamma}+r^{2\lambda^*-m} \right)
&
 \mathrm{for}\ r\in(Ke^{-\omega s},e^{\sigma s}),
\\[2mm] \dis
 |\varphi(y,s)| < \left( \frac{1+3m_0}{4} \right)U_\infty(y)
&
 \mathrm{for}\ r>e^{\sigma s},
\end{cases}
\]
where $\beta(s)=\beta_0e^{m\omega s}$ and $\beta_0>\alpha$ is a constant given in
{\rm Lemma \ref{underUinfty-lem}}.
\end{pro}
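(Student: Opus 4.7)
My plan is a region-by-region bootstrap argument, exploiting the orthogonality condition $P(d;s_2)=0$, i.e.\ $(\varphi(s_2),\phi_{ij})_\rho=0$ for all $(i,j)\in\Pi$, to eliminate the potentially growing low modes of the linearization around $U_\infty$. The weak membership $d\in\bar U_{s_1,s_2}$ provides a priori pointwise bounds on $\Phi=\varphi-U_\infty$ in each of the three regions $\{r<Ke^{-\omega s}\}$, $\{Ke^{-\omega s}<r<e^{\sigma s}\}$, $\{r>e^{\sigma s}\}$, and the aim is to improve each bound with a universal factor (from $\epsilon_0$ to $\epsilon_0/2$, from $m_0$ to $(1+3m_0)/4$, and from $U_\infty$ to $U_{\beta(s)}$ in the inner region). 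The three analyses are coupled through matching at $r=Ke^{-\omega s}$ and $r=e^{\sigma s}$, where the estimate in one region feeds the boundary data for the next.

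For the outer region $r>e^{\sigma s}$ I would work with the original variables $\varphi$ satisfying \eqref{varphi-eq}. Since $\sigma<1/2$, the Gaussian weight $\rho(y)=e^{-|y|^2/4}$ kills the region in $L_\rho^2$ with strong decay, so an energy/heat-kernel bound for the linearized equation yields an $L^\infty$ improvement from the a priori estimate $|\varphi|<m_0U_\infty$. Here one writes $\varphi=U_\infty+\Phi$, uses the pointwise middle-region bound on $\Phi$ as a source, and estimates the remainder by the semigroup of Lemma \ref{L^pL^q-lem} combined with Lemma \ref{L^inftyL^2-lem}; smallness of $e^{-\lambda^* s_1}$ and the nonlinear term $f(\Phi)$ (which is $O(\Phi^2)$ since $f$ is the quadratic remainder of $U_\infty^q$) provide the contraction factor $1/4$ over the gap between $m_0U_\infty$ and $(1+3m_0)U_\infty/4$.

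The middle region is the heart of the argument and where the matched asymptotic expansion lives. After the change of variable $b=\Phi/\sigma$, Proposition \ref{Rep-pro} gives the representation
\[
 b(y,s)=e^{{\cal A}(s-s_1)}b_1+\int_{s_1}^s e^{{\cal B}(s-\tau)}F(\tau)\,d\tau,
\]
where $F=f(\Phi)/\sigma$ carries the nonlinearity. Expanding $b_1$ against the eigenbasis $\{\phi_{ij}/\sigma\}$ isolates the modes in $\Pi$ (which grow like $e^{(\lambda^*-\lambda_{ij})s}$) from the projection along $\phi_{1\ell}$ (which is stationary in the rescaled sense) and from the stable directions. The condition $P(d;s_2)=0$ forces the coefficients of the $\Pi$-modes at time $s_2$ to vanish, which—combined with the explicit choice of $d_{ij}$ entering $\phi_\ell^*$—pins down those coefficients throughout $(s_1,s_2)$ and shows that the dangerous modes contribute at most $O(e^{-\lambda^* s})$ rather than growing. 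Using Lemma \ref{L^inftyL^2-lem} for the $L^\infty$-$L^2$ estimate on $b$, together with the pointwise bound on $\phi_{1\ell}\sim c_{1\ell}e_1(\theta)r^{-\gamma}$ from Lemma \ref{2A-lem}, upgrades the $\epsilon_0$ envelope to $\epsilon_0/2$ for large $s_1$. The hardest part will be controlling the nonlinear source $F$: on the inner part of this region $\Phi$ is not small compared with $U_\infty$ since both are of order $r^{-m}$, so one must estimate $f(\Phi)/\sigma$ carefully using the sign condition $\varphi<U_\infty$ and the Taylor remainder, and then check that the resulting integral in the Duhamel formula does not eat the $1/2$ gain.

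Finally, for the inner region $r<Ke^{-\omega s_1}$ I would use a comparison argument against the family $U_{\beta(s)}$ with $\beta(s)=\beta_0 e^{m\omega s}$, which is a rescaled regular stationary solution from Theorem \ref{JLsuper-thm}. The initial data $\varphi(y,s_1)$ was built using $e^{m\omega s_1}U_\alpha(e^{\omega s_1}y)$, which under the choice $\beta_0>\alpha$ is strictly below $U_{\beta(s_1)}$ by the monotonicity in property (ii) of Theorem \ref{JLsuper-thm}. The middle-region bound just established provides a strict inequality $\varphi<U_{\beta(s)}$ on the matching sphere $r=Ke^{-\omega s}$ via the precise asymptotic expansion in Theorem \ref{JLsuper-thm} (iv), which matches the $-e^{-\lambda^* s}\phi_{1\ell}$ correction exactly once $\alpha$ is tuned so that $k_\alpha=c_{1\ell}$. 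A maximum principle on the time-dependent domain $\{r<Ke^{-\omega s}\}$ then propagates the inequality inside, giving the inner-region estimate. The main obstacle here is that $U_{\beta(s)}$ is not an exact supersolution of the parabolic equation (it is only stationary for the elliptic problem), so one has to absorb the extra term $-\tfrac{m}{2}U_{\beta(s)}-\tfrac{y}{2}\cdot\nabla U_{\beta(s)}$ either by a small shift or by using the strict inequality on the matching sphere with room to spare.
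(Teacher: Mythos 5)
Your plan for the \emph{middle} region and the \emph{inner} region follows the paper's actual strategy closely, but your treatment of the \emph{outer} region $r>e^{\sigma s}$ is a genuine gap — the mechanism you propose there is different from the paper's and would not produce the required bound.

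For the middle region, the paper's Proposition \ref{Shorttime-pro} and Propositions \ref{Longtime1-pro}--\ref{Longtime3-pro} use exactly what you describe: the Duhamel representation \eqref{b(y,s)express-eq}, the orthogonality $(b(s_2),\eta_{ij})_{\cal C}=0$ to rewrite the $\Pi$-modes as integrals $\int_s^{s_2}$ instead of $\int_{s_1}^s$ (see the algebra leading to \eqref{bLong-eq}), and the $L^\infty$-$L^2$ smoothing of Lemma \ref{L^inftyL^2-lem} to translate $L_{\cal C}^2$-decay into the pointwise $\epsilon_0/2$ envelope. For the inner region, Lemma \ref{underUinfty-lem} does indeed compare with $U_{\beta(s)}$ on the shrinking domain $\{r<Ke^{-\omega s}\}$ using the tuning $k_\alpha=c_{1\ell}$ at the matching sphere. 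The one place you misdiagnose the difficulty: the paper shows $Z(y,s)=U_{\beta(s)}(y)$ \emph{is} a supersolution of \eqref{varphi-eq} — not by absorbing the extra terms with slack, but by proving $\partial_s U_{\beta(s)}\ge0$ (monotonicity of $\alpha\mapsto U_\alpha$) and, from $\partial_\epsilon U_{1+\epsilon}|_{\epsilon=0}\ge0$, that $y\cdot\nabla U_1+mU_1\ge0$; since $\Delta Z=0$, this gives $Z_s-\Delta Z+\tfrac{y}{2}\cdot\nabla Z+\tfrac{m}{2}Z\ge0$ exactly. So the comparison goes through without any "room to spare" argument.

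The outer-region step is where the proposal would fail. You suggest an energy/semigroup improvement driven by smallness of $e^{-\lambda^*s_1}$ and by $f(\Phi)=O(\Phi^2)$. But in $\{r>e^{\sigma s}\}$ the only a priori bound furnished by $\varphi\in A_{s_1,s_2}$ is $|\varphi|<m_0U_\infty$, i.e.\ $|\Phi|\le(m_0-1)U_\infty$, which is \emph{not} small relative to $U_\infty$ — so $f(\Phi)$ is not a small perturbation, and the semigroup estimates of Lemma \ref{L^pL^q-lem}/Lemma \ref{L^inftyL^2-lem} (which are for the \emph{linearized} boundary condition $\partial_\nu\Phi={\cal K}r^{-1}\Phi$) do not control $\partial_\nu u=u^q$ in this regime. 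There is no obvious way for an $L^2_\rho$ energy argument to convert $m_0$ into the fixed, $s_1$-independent factor $(1+3m_0)/4$. The paper's Propositions \ref{Exterior1-pro}--\ref{Exterior2-pro} instead run a two-tier comparison in the original $(x,t)$ variables. First, in the annulus $\{(T-t)^{(1-2\sigma)/2}<|x|<r_1\}$, a supersolution $\bar u=\tfrac{1+m_0}{2}U_\infty+g$ is built with $g$ solving the auxiliary problem \eqref{g(x,t)-eq}; Lemma \ref{g(x,t)-lem} and $\sigma>1/(2q)$ keep $g<d_0$, yielding $u<\tfrac{1+3m_0}{4}U_\infty$ by \eqref{d_0Def-eq}. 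Second — and this is the crucial structural input you are missing — for $|x|>r_1$ one compares $u$ with the Hardy-critical harmonic function $L(x)=e_H(\theta)r^{-(n-2)/2}$, using that on the boundary $u^{q-1}\le m_0^{q-1}U_\infty^{q-1}|_{\partial\R_+^n}=c_Hr^{-1}$ by the definition \eqref{m_0-eq}, so that $\partial_\nu u\le c_Hr^{-1}u$ matches $L$'s boundary condition, and then $L<\tfrac{1+m_0}{2}U_\infty$ for $|x|>r_1$ by \eqref{r_1Def-eq} because $(n-2)/2>m$. This pointwise, JL-supercritical comparison is what produces the uniform factor; no energy method is invoked in the exterior.

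In short: proceed as you planned for the middle and inner regions (the paper's proof is indeed a catalogue of Lemma \ref{underUinfty-lem}, Proposition \ref{Shorttime-pro}, Propositions \ref{Longtime1-pro}--\ref{Longtime3-pro}), verify the supersolution property for $U_{\beta(s)}$ directly rather than by slack, and replace your outer-region sketch by the two-step comparison with $\bar u$ and with the Hardy-critical singular barrier $L(x)$.
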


\begin{proof}
This proposition follows from Lemma \ref{underUinfty-lem},
Proposition \ref{Shorttime-pro}, Proposition \ref{Longtime1-pro}\h-\h\ref{Longtime3-pro}
and Proposition \ref{Exterior1-pro}\h-\h\ref{Exterior2-pro}.
\end{proof}

As a consequence,
we obtain the following main result.

\begin{pro}\label{Main2-pro}
Let $s_1>0$ be as in {\rm Proposition \ref{Main-pro}}.
Then
there exists $d^*\in\{d\in\R^N;\ |d|<\epsilon_1\}$ such that
a solution $\varphi(y,s)$ corresponding to $d=d^*$
is defined on $(s_1,\infty)$ and satisfies $\varphi(y,s)\in A_{s_1,s_2}$ for any $s_2>s_1$.
\end{pro}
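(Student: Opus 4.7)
The plan is to produce, for each $s_2 > s_1$, a point $d = d(s_2) \in \bar{U}_{s_1,s_2}$ with $P(d;s_2) = 0$ by a Brouwer--degree argument, and then to extract a convergent subsequence as $s_2 \to \infty$. The decisive input is Proposition \ref{Main-pro}, which states that any such zero lies strictly in the interior of $U_{s_1,s_2}$ (in the PDE sense of satisfying \eqref{Assume1-eq} strictly).

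I would first verify that $0 \in U_{s_1,s_2}$ for $s_2$ slightly larger than $s_1$. This is a direct inspection of the piecewise definition of $\phi_\ell^*(y,s_1)$ against the three inequalities of \eqref{Assume1-eq}: for $r < He^{-\omega s_1}$ one uses Theorem \ref{JLsuper-thm} to bound $\varphi(\cdot,s_1)$ by $e^{m\omega s_1}U_\alpha(e^{\omega s_1}\cdot) < U_\infty$; in the middle range one uses Lemma \ref{2A-lem} to recognise $\phi_\ell^*$ as $\phi_{1\ell}$ modulo a controlled error; and for $r$ large one uses the explicit form plus $|d_{ij}| \leq d_{\max}$ to bound $\varphi$ by a small multiple of $U_\infty$. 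The continuous dependence on initial data then gives openness of $U_{s_1,s_2}$ in the $d$ parameter.

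Next, for each $s_2 > s_1$, I would run a degree/homotopy argument on $B := \{d \in \R^N : |d| \leq \epsilon_1 e^{-\lambda^* s_1}\}$. The boundary of $\bar{U}_{s_1,s_2} \cap B$ splits into the ``outer'' part $\partial B \cap \bar{U}_{s_1,s_2}$ and the ``PDE'' part $\partial U_{s_1,s_2} \cap B$. On the PDE part, Proposition \ref{Main-pro} rules out $P(d;s_2) = 0$: such a zero would yield strict versions of \eqref{Assume1-eq} on $[s_1,s_2]$, hence by continuity in $s$ would extend to $d \in U_{s_1,s_2+\delta}$, contradicting $d \in \partial U_{s_1,s_2}$. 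On the outer part, a direct computation at $s_2 = s_1$ using orthogonality of the $\{\phi_{ij}\}$ and the localised form of $\phi_\ell^*$ shows that $P(d;s_1) = d + \text{(exponentially small correction)}$, so that $P(\cdot;s_1)|_{\partial B}$ has topological degree one with respect to $0$. Since the homotopy $s \mapsto P(\cdot;s)$ on $[s_1,s_2]$ never vanishes on the PDE boundary by the previous step, invariance of degree produces a zero $d(s_2) \in B$ for every $s_2 > s_1$.

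For the last step, take $s_2^{(k)} \to \infty$ and let $d^{(k)} := d(s_2^{(k)}) \in B$. Compactness of $B$ gives a subsequential limit $d^{(k)} \to d^*$ with $|d^*| \leq \epsilon_1 e^{-\lambda^* s_1} < \epsilon_1$. Proposition \ref{Main-pro} applied at each $s_2^{(k)}$ yields the strict inequalities of that proposition on $(s_1,s_2^{(k)})$ for the approximating solution $\varphi^{(k)}$, with constants independent of $k$. Passing to the limit via continuous dependence on initial data (uniform on compact subsets of $(\overline{\R_+^n} \setminus \{0\}) \times [s_1,s_2]$) and standard parabolic regularity, the solution $\varphi^*$ corresponding to $d^*$ lies in $A_{s_1,s_2}$ for every $s_2 > s_1$. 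The main obstacle throughout is the degree computation: verifying non-vanishing on the PDE boundary is exactly the role for which Proposition \ref{Main-pro} is designed, which is why Sections \ref{Shorttime-sec}--\ref{Exterior-sec} are devoted to establishing that proposition, and once it is in hand the finite-dimensional topology and the subsequential limit are essentially routine.
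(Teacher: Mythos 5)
Your proposal follows the same strategy as the paper -- Brouwer degree on the finite-dimensional parameter, homotopy invariance anchored at $s=s_1$, non-vanishing of $P$ on the boundary via Proposition~\ref{Main-pro}, and a subsequence-extraction step as $s_2\to\infty$. However, there is a genuine gap in the middle step: you run the degree/homotopy argument ``for each $s_2>s_1$'' on the set $\bar{U}_{s_1,s_2}\cap B$, but this only makes sense once you already know that $U_{s_1,s_2}\neq\emptyset$. You verify this only for $s_2$ slightly larger than $s_1$, and then treat arbitrary $s_2$ as if the degree computation automatically produces a zero. It does not: if $U_{s_1,s}$ were to shrink to the empty set at some $s^*<s_2$, the closure $\bar{U}_{s_1,s_2}$ is empty, the degree is zero by convention, and the homotopy invariance has nothing to say because the set whose slices you are tracking degenerates at an interior time. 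The fact that this degeneration cannot happen is precisely what the paper isolates as Lemma~\ref{3-lem}, and proving it requires a nontrivial continuation argument: define $s^*=\sup\{s:U_{s_1,s}\neq\emptyset\}$, apply the degree lemma for $\tau_j\nearrow s^*$ to obtain zeros $d_j$, invoke Proposition~\ref{Main-pro} to get strict (improved) estimates on $\varphi_j$ with constants uniform in $j$, pass to the limit by parabolic regularity to get $\varphi^*$ satisfying the closed estimates on $[s_1,s^*]$, and then use time-continuity of $\varphi^*$ to push the strict estimates to $(s_1,s^*+\delta)$, contradicting the definition of $s^*$. This is the same circle of ideas you are using in your final paragraph for the limit $s_2\to\infty$, but it must also be deployed here as an a priori input to the degree argument, not only afterwards.

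A smaller omission: your homotopy invariance step only checks non-vanishing of $P$ on the ``PDE boundary'' $\partial U_{s_1,s}$. To apply the theorem (Theorem~2.2.4 in \cite{Lloyd}) you also need to exclude zeros on the ``outer'' part $|d|=\epsilon_1 e^{-\lambda^*s_1}$; in the paper this is handled by Lemma~\ref{d_max-lem}, which shows that any zero of $P$ on $\bar{U}_{s_1,\tau}$ satisfies $d_{\max}<ce^{-(\lambda^*+\delta)s_1}$, hence lies strictly inside the ball for $s_1$ large. This is a routine point, but it should be stated.

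Once these two steps are added -- the continuation argument proving $U_{s_1,s_2}\neq\emptyset$ for all $s_2>s_1$, and the exclusion of zeros on the outer sphere -- your outline becomes the paper's proof. The final subsequence extraction as $s_2^{(k)}\to\infty$ is carried out exactly as you describe.
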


To show the above proposition,
we prepare several lemmas.
Here
we put
\[
 D_{s_1,s_2}= \left\{(d,s);\ d\in U_{s_1,s},\ s_1<s<s_2 \right\},
\hspace{7.5mm}
 D_{s_1,s_2}[s]= \left\{d;\ (d,s)\in D_{s_1,s_2} \right\}.
\]
Then
it is clear that $D_{s_1,s_2}[s]=U_{s_1,s}$.
Here
we will see that
$D_{s_1,s_2}$ is a open set in $\R^N\times(s_1,s_2)$.
In fact,
let $(d,s)\in D_{s_1,s_2}$.
Then
by the continuous dependence on the initial data and
by the continuity of solutions,
there exists $\delta>0$ such that $(d-\delta,d+\delta)\times(s,s+\delta)\subset D_{s_1,s_2}$.
Furthermore
since $U_{s_1,s'}\subset U_{s_1,s}$ if $s'>s$,
it holds that $(d-\delta,d+\delta)\times(s_1,s)\subset D_{s_1,s_2}$,
which assures the claim.
Let $P$ be the operator defined by \eqref{PDef-eq}.
Then
$P$ can be naturally considered as an operator $D_{s_1,s_2}\to\R^N$.
We define its map by $(d,s)\longmapsto P(d,s)$.
Then
we find that $P:D_{s_1,s_2}\to\R^N$ is continuous.
Furthermore
by the same reason as above,
$P$ can be defined on $\bar{D}_{s_1,s_2}$ and $P\in C(\bar{D}_{s_1,s_2})$.

\begin{lem}\label{1-lem}
Let $s_1>0$ be given in {\rm Proposition \ref{Main-pro}} and $U_{s_1,s_2}\not=\emptyset$.
Then
$P(d;\tau)=0$ has no solutions on $\pa U_{s_1,\tau}$ for any $\tau\in[s_1,s_2]$.
\end{lem}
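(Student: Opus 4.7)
The strategy is argument by contradiction. Suppose there exist $\tau\in[s_1,s_2]$ and $d^\dagger\in\pa U_{s_1,\tau}$ with $P(d^\dagger;\tau)=0$. Since $\pa U_{s_1,\tau}\subset\bar U_{s_1,\tau}$, the hypothesis of Proposition \ref{Main-pro} is met, and the associated solution $\varphi^\dagger$ must satisfy the strict half-thickness inequalities of that proposition on $(s_1,\tau]$. These conclusions are strictly sharper than the defining inequalities of $A_{s_1,\tau}$ in \eqref{Assume1-eq} (note $\epsilon_0/2<\epsilon_0$, $U_{\beta(s)}<U_\infty$, and $(1+3m_0)/4<m_0$), so $\varphi^\dagger(\cdot,s)$ in fact lies in the strict interior of $A_{s_1,\tau}$ for every $s\in(s_1,\tau]$.

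The boundary $\pa U_{s_1,\tau}$ splits into an \emph{exit} part, where $|d^\dagger|<\epsilon_1 e^{-\lambda^*s_1}$ and $\varphi^\dagger$ saturates at least one of the inequalities in \eqref{Assume1-eq} at some $s\in(s_1,\tau]$, and a \emph{data-ball} part $\{|d^\dagger|=\epsilon_1 e^{-\lambda^*s_1}\}$. The strict interior property from the previous paragraph immediately excludes the exit part.

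For the remaining data-ball case, I would analyse the mode projections $m_{ij}(s):=(\Phi^\dagger(s),\phi_{ij})_\rho$ with $\Phi^\dagger=\varphi^\dagger-U_\infty$. Testing \eqref{Phi-eq} against $\phi_{ij}\rho$ and using the eigenfunction identity $A\phi_{ij}=-\lambda_{ij}\phi_{ij}$ together with the self-adjointness of $A$ from Lemma \ref{OperatorA-eq} produces the scalar ODE
\begin{equation*}
 m'_{ij}(s) \;=\; -\lambda_{ij}\,m_{ij}(s) + \int_{\pa\R_+^n} f(\Phi^\dagger(s))\,\phi_{ij}\,\rho\,dy',
\end{equation*}
with $f$ as in \eqref{f(Phi)-eq}. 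Orthonormality of $\{\phi_{ij}\}$ in $L_\rho^2$ yields the initial value $m_{ij}(s_1)=d^\dagger_{ij}-e^{-\lambda^*s_1}(\phi^*_\ell,\phi_{ij})_\rho$, while $P_{ij}(d^\dagger;\tau)=0$ fixes $m_{ij}(\tau)$. Integrating the ODE from $s_1$ to $\tau$ and estimating the nonlinear forcing via the strict bounds of Proposition \ref{Main-pro} should give $|d^\dagger_{ij}|\leq C(s_1)\,e^{-\lambda^*s_1}$ with $C(s_1)<\epsilon_1$ once $s_1$ is chosen large enough, contradicting $|d^\dagger|=\epsilon_1 e^{-\lambda^*s_1}$.

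The main obstacle is controlling the nonlinear boundary integral $\int_{\pa\R_+^n}f(\Phi^\dagger)\phi_{ij}\rho\,dy'$, because the three regions of \eqref{Assume1-eq} require very different treatments. In the inner region $r<Ke^{-\omega s}$ one exploits the comparison $\varphi^\dagger<U_{\beta(s)}$ from Proposition \ref{Main-pro} to absorb the singular factor $U_\infty^{q-2}$ hidden in $f$; in the matching region one uses the quadratic smallness $|f(\Phi^\dagger)|\lesssim|\Phi^\dagger|^2 U_\infty^{q-2}\lesssim e^{-2\lambda^*s}$ together with the pointwise estimate on $\phi_{ij}$ from Lemma \ref{2A-lem}; and in the exterior region the Gaussian weight $\rho$ suppresses the polynomial growth of both $\phi_{ij}$ and $\Phi^\dagger$. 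Combining these contributions produces the $o(e^{-\lambda^*s_1})$ error needed to close the contradiction.
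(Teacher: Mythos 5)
Your proposal is correct and takes essentially the same approach as the paper's proof. The one notable difference is that your data-ball case re-derives (via a clean scalar ODE for the mode projections $m_{ij}$) exactly the content of the paper's Lemma \ref{d_max-lem}, which gives $d_{\max}<ce^{-(\lambda^*+\delta)s_1}<\epsilon_1e^{-\lambda^*s_1}$ for large $s_1$, and your boundary-forcing estimate is the content of Lemma \ref{Feta_ij-lem}; the paper simply cites these lemmas, whereas you unroll them, and the paper phrases the argument as showing $d_0$ lies in the open set $U_{s_1,\tau}$ via continuous dependence rather than your explicit two-case contradiction on $\pa U_{s_1,\tau}$ --- two cosmetically different framings of the same fact.
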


\begin{proof}
Let $P(d_0;\tau)=0$ and $\varphi(y,s)$ be a solution corresponding to $d=d_0$.
Then
from Proposition \ref{Main-pro},
we see that $\varphi(y,s)$ satisfies estimates given in Proposition \ref{Main-pro}
with replaced $s_2$ by $\tau$.
Hence
if $|d'-d_0|$ is small enough,
a solution corresponding to $d=d'$ belongs to $A_{s_1,\tau}$.
Furthermore
from Lemma \ref{d_max-lem},
we note that $|d|<\epsilon_1$.
Therefore
it follows that $d_0\in U_{s_1,\tau}$.
Thus the proof is completed.
\end{proof}

\begin{lem}\label{2-lem}
Let $s_1>0$ be given in {\rm Proposition \ref{Main-pro}}.
Then
if $U_{s_1,s_2}\not=\emptyset$ for some $s_2>s_1$,
then 
there exists $d\in U_{s_1,s_2}$ such that $P(d;s_2)=0$.
\end{lem}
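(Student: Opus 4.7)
My plan is a Brouwer-degree continuation argument modelled on \cite{Herrero-V2}. The inputs are: $P$ extends to a continuous map on $\bar D_{s_1,s_2}$; by Lemma~\ref{1-lem}, $P(\cdot;\tau)$ has no zero on $\pa U_{s_1,\tau}$ for any $\tau\in[s_1,s_2]$; and at the initial slice $\tau=s_1$, the constraint defining $A_{s_1,s_1}$ is vacuous, so $U_{s_1,s_1}$ reduces to the open ball $B:=\{d\in\R^N:|d|<\epsilon_1 e^{-\lambda^*s_1}\}$.

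First I would analyse $P(\cdot;s_1)$ explicitly. Using the form $\varphi(y,s_1)=U_\infty(y)+\sum_{(i,j)\in\Pi}d_{ij}\phi_{ij}(y)-e^{-\lambda^*s_1}\phi_\ell^*(y)$ and the $L_\rho^2$-orthonormality of the eigenfunctions $\{\phi_{ij}\}$,
\[
 P_{kl}(d;s_1)=d_{kl}+(U_\infty,\phi_{kl})_\rho-e^{-\lambda^*s_1}(\phi_\ell^*,\phi_{kl})_\rho.
\]
A direct computation shows that $U_\infty$ solves $\Delta U_\infty-\tfrac{y}{2}\cdot\nabla U_\infty-\tfrac{m}{2}U_\infty=0$ in $\R_+^n$ together with $\pa_\nu U_\infty={\cal K}r^{-1}U_\infty$ on $\pa\R_+^n$, so $U_\infty$ is a formal zero-eigenfunction of~\eqref{EigenFull-eq}. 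For JL-supercritical $q$ one has $2m<n-2$, whence $U_\infty\in L_\rho^2$, and integration by parts (justified by matching boundary conditions and weighted decay at infinity) yields $(U_\infty,\phi_{kl})_\rho=0$ for every $(k,l)\in\Pi$ (recall $\lambda_{kl}\neq 0$ on $\Pi$). Hence $P(\cdot;s_1)=\mathrm{Id}+O(e^{-\lambda^*s_1})$ is an affine homeomorphism whose unique zero lies inside $B$ for $s_1$ large, giving $\deg(P(\cdot;s_1),B,0)=1$.

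Next I would propagate this degree to $\tau=s_2$ by homotopy invariance. The set $D_{s_1,s_2}$ is open in $\R^N\times(s_1,s_2)$, $P$ is continuous on its closure, the slices $U_{s_1,\tau}$ are nested ($U_{s_1,\tau'}\subset U_{s_1,\tau}$ for $\tau'>\tau$), and Lemma~\ref{1-lem} excludes zeros of $P(\cdot;\tau)$ on $\pa U_{s_1,\tau}$ for every $\tau\in[s_1,s_2]$. Consequently the Brouwer degree $\deg(P(\cdot;\tau),U_{s_1,\tau},0)$ is well-defined and independent of $\tau$, so
\[
 \deg(P(\cdot;s_2),U_{s_1,s_2},0)=\deg(P(\cdot;s_1),B,0)=1\neq 0,
\]
which forces $P(\cdot;s_2)$ to possess a zero $d\in U_{s_1,s_2}$, as required.

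The main obstacle is a rigorous application of homotopy invariance across a time-dependent domain. I would handle it by introducing the first-exit time $T(d):=\sup\{\tau\in[s_1,s_2]:d\in U_{s_1,\tau}\}$ and continuously extending $P$ to the fixed ball $B$ via $\widetilde P(d;\tau):=P(d;\min(\tau,T(d)))$; Lemma~\ref{1-lem} ensures $\widetilde P(d;\tau)\neq 0$ whenever $d\notin U_{s_1,\tau}$, reducing the argument to standard homotopy invariance of Brouwer degree on the fixed domain $B$.
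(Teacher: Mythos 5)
Your overall structure mirrors the paper's: extend $P$ continuously to $\bar D_{s_1,s_2}$, invoke Lemma~\ref{1-lem} to exclude boundary zeros, compute the degree at $\tau=s_1$, and propagate it by homotopy invariance. The time-dependent-domain workaround via $T(d)$ and $\widetilde P$ is a more elementary route than the paper's direct citation of the homotopy invariance theorem for variable domains (Theorem 2.2.4 of Lloyd), and would need a proof that $T(d)$ is continuous and that the exit happens precisely on $\pa U_{s_1,T(d)}$; this is doable given the nested structure of the $U_{s_1,\tau}$, but it is extra work the paper avoids.

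However, there is a genuine error in your computation of $P(\cdot;s_1)$. You claim that $U_\infty$ satisfies the boundary condition $\pa_\nu U_\infty = {\cal K}\,r^{-1}U_\infty$ on $\pa\R_+^n$ and is therefore a (formal) zero-eigenfunction of \eqref{EigenFull-eq}, from which you deduce $(U_\infty,\phi_{kl})_\rho=0$. This is false. The singular solution $U_\infty$ satisfies the \emph{nonlinear} boundary condition $\pa_\nu U_\infty = U_\infty^q$, and on $\pa\R_+^n$
\[
 U_\infty^q \;=\; U_\infty^{q-1}\,U_\infty \;=\; V(\pi/2)^{q-1}r^{-1}U_\infty \;=\; \frac{{\cal K}}{q}\,r^{-1}U_\infty,
\]
since ${\cal K}=qV(\pi/2)^{q-1}$ and $m(q-1)=1$. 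So the coefficient differs from the linearized one by a factor of $q>1$; $U_\infty$ is not an eigenfunction of the linearized operator, your integration by parts produces a nonzero boundary term, and $(U_\infty,\phi_{kl})_\rho$ has no reason to vanish (indeed, $(V,e_i)_{L^2(S_+^{n-1})}\ne 0$ in general since $V$ and $e_i$ solve different eigenproblems on the hemisphere). The correct resolution is not an orthogonality argument but a careful reading of the definition of $P$: everywhere else in the paper (Lemma~\ref{d_max-lem}, the hypothesis~\eqref{AssumeAPi-eq}, and the statement of Proposition~\ref{Main-pro}) the orthogonality conditions involve $\Phi=\varphi-U_\infty$, via $(b(s_2),\eta_{ij})_{\cal C}=(\Phi(s_2),\phi_{ij})_\rho$. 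So $P_{ij}(d;s_2)$ is intended as $(\Phi(s_2),\phi_{ij})_\rho$; the displayed formula~\eqref{PDef-eq} with $\varphi$ in place of $\Phi$ is a misprint. Reading it this way, the $U_\infty$ term simply never arises, and one directly obtains $P_{ij}(d;s_1)=d_{ij}-e^{-\lambda^*s_1}(\phi_\ell^*-\phi_{1\ell},\phi_{ij})_\rho$, which by Lemma~\ref{(*-ell)-lem} is an $O(e^{-(\lambda^*+\delta)s_1})$ perturbation of the identity, giving degree one on the ball $B_{\epsilon_1 e^{-\lambda^*s_1}}$. That is the step your proof must be repaired to use.
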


\begin{proof}
We recall that $P$: $\bar{D}_{s_1,s_2}\to\R^N$ is continuous
and $P(d;\tau)=0$ has no solutions on $\pa D_{s_1,s_2}[\tau]$ for any $\tau\in[s_1,s_2]$
(see Lemma \ref{1-lem}).
Then
we can define deg$(P(d;\tau),D_{s_1,s_2}[\tau],0)$ for $\tau\in[s_1,s_2]$.
By the homotopy invariance property (see Theorem 2.2.4 \cite{Lloyd}),
we get
\[
 \text{deg}(P(d;s_2),D_{s_1,s_2}[s_2],0) = \text{deg}(P(d;s_1),D_{s_1,s_2}[s_1],0).
\]
Since $D_{s_1,s_2}[s]=U_{s_1,s}$,
this implies
$\text{deg}(P(d;s_2),U_{s_1,s_2},0)=\text{deg}(P(d;s_1),U_{s_1,s_1},0)$.
Here
we recall that $\epsilon_1=\epsilon_0/2c_1$,
Then
from Lemma \ref{phi-initial-lem} and Lemma \ref{underUinfty-lem},
we find that $U_{s_1,s_1}=\{d\in\R^N;\ |d|<\epsilon_1e^{-\lambda^*s_1}\}$.
Furthermore
by definition of $\varphi(y,s_1)$,
it holds that for $(i,j)\in\Pi$
\[
 P_{ij}(d;s_1)
=
 d_{ij} - (e^{-\lambda^*s_1}\phi_\ell^*,\phi_{ij})_\rho
=
 d_{ij} - e^{-\lambda^*s_1}(\phi_\ell^*-\phi_{1\ell},\phi_{ij})_\rho.
\]
Then
it follows from Lemma \ref{(*-ell)-lem} that
\[
 |P_{ij}(d;s_1)-d_{ij}| \leq ce^{-(\lambda^*+\delta)s_1}.
\]
This implies
$|P(d;s_1)-\text{Id}(d)| \leq ce^{-(\lambda^*+\delta)s_1}$.
Hence
by the homotopy invariance property (see Theorem 2.1.2 (2) \cite{Lloyd}),
we obtain
\[
 \text{deg}(P(d;s_2),U_{s_1,s_2},0)
=
 \text{deg}(P(d;s_1),U_{s_1,s_1},0)
=
 \text{deg}(\text{Id},B_{\epsilon_1},0),
\]
 where $B_{\epsilon_1}=\{d\in\R^N;\ |d|<\epsilon_1\}$.
Therefore
there exists $d\in U_{s_1,s_2}$ such that $P(d;s_2)=0$,
which completes the proof.
\end{proof}

\begin{lem}\label{3-lem}
Let $s_1>0$ be given in {\rm Proposition \ref{Main-pro}}.
Then
it holds that $U_{s_1,s_2}\not=\emptyset$ for any $s_2>s_1$.
\end{lem}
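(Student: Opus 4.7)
The plan is to run a continuation (open--closed) argument in the endpoint $s_2$. Set
\[
 s_2^{*}:=\sup\{s_2>s_1; U_{s_1,s_2}\neq\emptyset\}
\]
and aim for $s_2^{*}=\infty$ by contradiction. To get started I note that the condition defining $A_{s_1,s_1}$ is vacuous, so $U_{s_1,s_1}=\{d\in\R^N;|d|<\epsilon_1e^{-\lambda^{*}s_1}\}$ is a non-empty ball; by continuous dependence of the classical solution $\varphi(y,s)$ on $d$, every $d$ in this ball remains in $U_{s_1,s_1+\eta}$ for some $\eta>0$, giving $s_2^{*}>s_1$. Assuming toward contradiction that $s_2^{*}<\infty$, the goal is to exhibit some $d^{*}\in U_{s_1,s_2^{*}+\delta}$ with $\delta>0$.

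For every $s_2\in(s_1,s_2^{*})$, Lemma~\ref{2-lem} supplies $d(s_2)\in U_{s_1,s_2}$ with $P(d(s_2);s_2)=0$, and Proposition~\ref{Main-pro} then sharpens the three defining inequalities of $A_{s_1,s_2}$ into strict ones with a fixed slack ($\epsilon_0/2$, $(1+3m_0)/4$, and $U_{\beta(s)}$ in place of $\epsilon_0$, $m_0$, and $U_\infty$). Pick $s_2^{(k)}\nearrow s_2^{*}$ and write $d_k=d(s_2^{(k)})$; since $|d_k|<\epsilon_1e^{-\lambda^{*}s_1}$, I extract a subsequence $d_k\to d^{*}$. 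Continuous dependence on initial data together with the uniform a priori bounds $\varphi_k<U_{\beta(s)}<U_\infty$ near the origin (using Theorem~\ref{JLsuper-thm}(i)) and $\varphi_k<\frac{1+3m_0}{4}U_\infty$ in the exterior forces the associated $\varphi_k$ to converge locally uniformly on compact subsets of $(\overline{\R_+^n}\setminus\{0\})\times[s_1,s_2^{*}]$ to the solution $\varphi^{*}$ attached to $d^{*}$.

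Taking the limit in each of the three improved inequalities preserves the slack, so $\varphi^{*}$ still satisfies the strict improved bounds on $(s_1,s_2^{*}]$. Continuity of $P$ on $\bar D_{s_1,s_2^{*}}$ gives $P(d^{*};s_2^{*})=0$, and the tool already invoked in Lemma~\ref{1-lem} (Lemma~\ref{d_max-lem}) forces $|d^{*}|<\epsilon_1e^{-\lambda^{*}s_1}$ strictly, so $d^{*}\in U_{s_1,s_2^{*}}$. Because the improved inequalities hold with a fixed margin at $s=s_2^{*}$, continuity of $\varphi^{*}$ in $s$ at $s_2^{*}$ then propagates the original constraints \eqref{Assume1-eq} to $(s_1,s_2^{*}+\delta)$ for some $\delta>0$; equivalently, $d^{*}\in U_{s_1,s_2^{*}+\delta}$, contradicting the definition of $s_2^{*}$.

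The step I expect to be the main obstacle is the limit passage inside the inner region $r<Ke^{-\omega s}$, since both the comparison solutions $U_{\beta(s)}$ and the singular stationary $U_\infty$ blow up at $r=0$ and local uniform convergence away from the origin does not by itself preserve the strict ordering $\varphi^{*}<U_\infty$ down to $r=0$. My plan for handling this is to combine the monotonicity and asymptotics of the family $\{U_\alpha\}$ recorded in Theorem~\ref{JLsuper-thm} with the uniform inner-region bound $\varphi_k<U_{\beta(s)}$, using the explicit expansion in (iv) to control $U_\infty-U_{\beta(s)}$ near the origin uniformly in $k$ and thereby pass the strict inner-region comparison through the limit.
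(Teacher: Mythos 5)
Your proposal is correct and follows essentially the same continuation (open--closed) argument as the paper's proof: both define the supremum of endpoints where $U_{s_1,\cdot}\neq\emptyset$, obtain zeros of $P$ via Lemma~\ref{2-lem}, invoke Proposition~\ref{Main-pro} for the strict improved bounds, extract a convergent subsequence, and extend past the supremum by continuity in $s$. Your closing remarks about the inner-region limit passage are a fair elaboration of what the paper compresses into ``by a parabolic regularity theory,'' but the underlying strategy is identical.
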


\begin{proof}
Fix $s_2>s_1$.
Since $\varphi(y,s_1)$ satisfies estimates given in Lemma \ref{phi-initial-lem},
by the continuous dependence on the initial data,
there exists $\delta>0$ such that
$U_{s_1,s_1+\delta}=\{d\in\R^N;\ |d|<\epsilon_1e^{-\lambda^*s_1}\}$.
Now
we define
\[
 s^*=\sup\left\{ s>s_1;\ U_{s_1,s}\not=\emptyset \right\}.
\]
Suppose $s^*<s_2$.
Then
there exists a sequence $\{\tau_j\}_{j\in\N}\subset(s_1,s^*)$
such that $\tau_j\to s^*$ and $U_{s_1,\tau_j}\not=\emptyset$.
By Lemma \ref{2-lem},
there exists $d_j\in U_{s_1,\tau_j}$ such that $P(d_j,\tau_j)=0$.
Let $\varphi_j(y,s)$ be a solution corresponding to $d=d_j$.
Then
from Proposition \ref{Main-pro},
$\varphi_j(y,s)$ satisfies estimates given in Proposition \ref{Main-pro}
with replaced $s_2$ by $\tau_j$.
Hence
by a parabolic regularity theory,
we see that
$\varphi_j(y,s)$ converges to some function $\varphi^*(y,s)$
satisfying estimates given in Proposition \ref{Main-pro}
with replaced $s_2$ by $s^*$ and with replaced `$<$' by `$\leq$'.
Then
by the continuity of $\varphi^*(y,s)$ in time,
we find that
there exists $\delta>0$ such that $\varphi^*(y,s)\in A_{s_1,s^*+\delta}$.
However
this contradicts the assumption,
which completes the proof.
\end{proof}

\begin{proof}[\bf Proof of Proposition \ref{Main2-pro}]
Let $s_1>0$ be given in Proposition \ref{Main-pro} and put $\tau_j=s_1+j$.
Then
Lemma \ref{3-lem} implies $U_{s_1,\tau_j}\not=\emptyset$.
Therefore
from Lemma \ref{2-lem},
there exists $d_j\in U_{s_1,\tau_j}$ such that $P(d_j;\tau_j)=0$.
We denote by $\varphi_j(y,s)$ a solution corresponding to $d=d_j$.
Then
from Proposition \ref{Main-pro},
there exist a limiting function
$\varphi^*(y,s)\in C(\overline{\R_+^n}\times[s_1,\infty))$
and
a subsequence $\{d_j\}_{j\in\N}$
which is denoted by the same symbol such that
$\varphi_j(y,s)\to\varphi^*(y,s)$ uniformly on any compact set in $\overline{\R_+^n}\times[s_1,\infty)$.
Then
this $\varphi^*(y,s)$ turns out to be the desired solution 
satisfying estimates given in Proposition \ref{Main-pro} with replaced $s_2$ by $\infty$.
Therefore
the proof is completed.
\end{proof}

\begin{proof}[\bf Proof of Theorem \ref{1-thm}]
Let $s_1$ and $\varphi(y,s)$ be given in Proposition \ref{Main2-pro}.
Here
we recall that $U_\beta(y)<U_\beta(0)=\beta$ for $y\in\R_+^n$.
Therefore
by Proposition \ref{Main2-pro},
we see that
\[
 \varphi(y,s) < U_{\beta(s)}(y) \leq \beta(s)=\beta_0e^{m\omega s}
\hspace{5mm}\text{for }|y|<Ke^{-\omega s}.
\]
Furthermore
by using
$|\phi_{1\ell}(y)|<c(|y|^{-\gamma}+|y|^{2\lambda^*-m})$
(see Lemma \ref{2A-lem})
and
$\lambda^*=(\gamma-m)\omega$,
we get
\[
\begin{array}{lll}
\dis
 \varphi(y,s)
\hspace{-2mm}&\leq&\hspace{-2mm} \dis
 U_\infty(y) + ce^{-\lambda^*s}\left( |y|^{-\gamma}+|y|^{2\lambda^*-m} \right)
\\[4mm]
\hspace{-2mm}&\leq&\hspace{-2mm} \dis
 cK^{-m}e^{m\omega s}
+
 c\begin{cases}
 K^{-\gamma}e^{m\omega s} & \text{for } Ke^{-\omega s}<|y|<1,\\
 e^{-\sigma m s}e^{-(1-2\sigma)\lambda^*s} & \text{for } 1<|y|<e^{\sigma s}. 
 \end{cases}
\end{array}
\]
Finally
we provide estimates in the range $|y|>e^{\sigma s}$.
Then
Proposition \ref{Main2-pro} implies
\[
 \varphi(y,s) \leq cr^{-m} \leq ce^{-m\sigma s}
\hspace{5mm}\text{for } |y|>e^{\sigma s}.
\]
As a consequence,
we conclude
\[
 \|\varphi(s)\|_\infty \leq ce^{m\omega s}.
\]
Furthermore
from \eqref{AssumeMinus-eq} and $\lambda^*=(\gamma-m)\omega$,
we obtain
\[
\begin{array}{lll}
\dis
 \|\varphi(s)\|_\infty
\hspace{-2mm}&\geq&\hspace{-2mm} \dis
 \varphi(y,s)|_{|y|=Ke^{-\omega s}}
\geq
 U_\infty(y) - ce^{-\lambda^*s}e_1(\theta)r^{-\gamma}
\\[2mm]
\hspace{-2mm}&\geq&\hspace{-2mm} \dis
 K^{-m}V(\theta)e^{m\omega s} - \frac{3}{2}K^{-\gamma}c_{1\ell}e_1(\theta)e^{m\omega s}.
\end{array}
\]
Since $K\gg1$,
from the above estimates,
there exist $c_1>0$ and $c_2>0$ such that
\[
 c_1e^{m\omega s} \leq \|\varphi(s)\|_\infty \leq c_2e^{m\omega s},
\]
which completes the proof.
\end{proof}

\subsection{Fundamental estimates}

\begin{lem}\label{phi-initial-lem}
There exists a continuous function $\nu(s)$ on $\R_+$ satisfying $\lim_{s\to\infty}\nu(s)=0$
such that if $|d|<\epsilon_1e^{-\lambda^*s_1}$,
then it holds that
\[
\begin{array}{c}
 \Phi(y,s_1)
= 
 -U_\infty(y) + e^{m\omega s_1}U_\alpha(e^{\omega s_1}y)
\hspace{5mm} \mathrm{for}\ 0<r<He^{-\omega s_1},
\\[3mm]
 \left| \Phi(y,s_1)+e^{-\lambda^*s_1}\phi_{1\ell}(y) \right|
\leq
 \left( \nu(s_1)e^{-\lambda^*s_1}+c_1d_{\max} \right)e_1(\theta)
 (r^{-\gamma}+r^{2\lambda^*-m})
\\[2mm] \dis \hspace{75mm}
 \mathrm{for }\ He^{-\omega s_1}<|y|<e^{\varrho s_1},
\\[2mm] \dis
 -U_\infty(y) \leq \Phi(y,s) \leq ce^{-(1-2\varrho)\lambda^*s_1}U_\infty(y)
\hspace{5mm} \mathrm{for}\ r>e^{\varrho s_1}.
\end{array}
\]
\end{lem}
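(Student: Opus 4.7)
The lemma is a pure unpacking of the piecewise definition of $\phi_\ell^*(y)$ plugged into the formula $\Phi(y,s_1)=\sum_{(i,j)\in\Pi}d_{ij}\phi_{ij}(y)-e^{-\lambda^*s_1}\phi_\ell^*(y)$. I would treat the five pieces in the definition of $\phi_\ell^*$ separately. On the innermost piece $0\le r<He^{-\omega s_1}$, the formula for $\phi_\ell^*$ is engineered so that the $d_{ij}$-terms cancel and one is left with $\Phi(y,s_1)=-U_\infty(y)+e^{m\omega s_1}U_\alpha(e^{\omega s_1}y)$, which is the first identity verbatim. On the middle piece $(H+1)e^{-\omega s_1}<r<e^{\varrho s_1}$ one has $\phi_\ell^*=\phi_{1\ell}$, so $\Phi+e^{-\lambda^*s_1}\phi_{1\ell}=\sum d_{ij}\phi_{ij}$; the bound \eqref{c_1-eq} combined with the defining constraint $|d|<\epsilon_1 e^{-\lambda^*s_1}$ then yields the claimed middle estimate, contributing only the $c_1 d_{\max}$ term. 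Finally, on the outermost piece $r>e^{\varrho s_1}+1$ one finds $\Phi=-U_\infty$, which gives both halves of the third estimate.

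The two transition regions are where the work concentrates. For the inner transition $r\in(He^{-\omega s_1},(H+1)e^{-\omega s_1})$, setting $\tau=e^{\omega s_1}r-H\in(0,1)$, a direct linear-interpolation computation gives
\[
\Phi(y,s_1)+e^{-\lambda^*s_1}\phi_{1\ell}(y)
=(1-\tau)\bigl[e^{m\omega s_1}U_\alpha(e^{\omega s_1}y)-U_\infty(y)+e^{-\lambda^*s_1}\phi_{1\ell}(y)\bigr]
+\tau\sum_{(i,j)\in\Pi}d_{ij}\phi_{ij}(y).
\]
The second term is bounded by $c_1 d_{\max}e_1(\theta)(r^{-\gamma}+r^{2\lambda^*-m})$ via \eqref{c_1-eq}. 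For the bracket I would apply Theorem \ref{JLsuper-thm}(iv) to the rescaled argument $x=e^{\omega s_1}y$, whose radius satisfies $|x|=e^{\omega s_1}r\ge H=e^{a'\omega s_1}\gg1$; this yields, after multiplication by $e^{m\omega s_1}$ and use of $\lambda^*=\mu_1\omega$,
\[
e^{m\omega s_1}U_\alpha(e^{\omega s_1}y)-U_\infty(y)
=-k_\alpha e_1(\theta)e^{-\lambda^*s_1}r^{-\gamma}
 +e^{-\lambda^*s_1}e_1(\theta)r^{-\gamma}\,O(|x|^{-\epsilon}).
\]
Combining with the expansion $\phi_{1\ell}(y)=c_{1\ell}e_1(\theta)r^{-\gamma}(1+o(1))$ from Lemma \ref{2A-lem} as $r\to 0$, and invoking the choice $k_\alpha=c_{1\ell}$ fixed at the start of Section \ref{Functionalsetting-sec}, the two leading $r^{-\gamma}$ terms cancel and the bracket becomes $e^{-\lambda^*s_1}e_1(\theta)r^{-\gamma}\cdot o(1)$, where the small factor is controlled by $|x|^{-\epsilon}\le e^{-\epsilon a'\omega s_1}$ plus the $o(1)$ from $r\to 0$. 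Setting $\nu(s_1)$ to be the sup of this factor over the relevant region gives the middle estimate on this strip.

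For the outer transition $r\in(e^{\varrho s_1},e^{\varrho s_1}+1)$, writing $\eta=r-e^{\varrho s_1}\in(0,1)$, interpolation gives $\Phi=-\eta U_\infty+(1-\eta)\sum d_{ij}\phi_{ij}-(1-\eta)e^{-\lambda^*s_1}\phi_{1\ell}$. Using \eqref{c_1-eq} and Lemma \ref{2A-lem}, both $|\sum d_{ij}\phi_{ij}|$ and $e^{-\lambda^*s_1}|\phi_{1\ell}|$ are bounded by $ce^{-\lambda^*s_1}e_1(\theta)r^{2\lambda^*-m}$ in this large-$r$ regime, and at $r\sim e^{\varrho s_1}$ this equals $ce^{-(1-2\varrho)\lambda^*s_1}\cdot e^{-m\varrho s_1}e_1(\theta)$, strictly smaller than $U_\infty\sim V(\theta)e^{-m\varrho s_1}$. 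Dropping the nonpositive $-\eta U_\infty$ yields the upper bound $\Phi\le ce^{-(1-2\varrho)\lambda^*s_1}U_\infty$, while comparing the two shows $\Phi\ge -U_\infty$.

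\textbf{Main obstacle.} The delicate step is the inner transition. This is exactly where the preparation of Section \ref{Functionalsetting-sec} -- specifically the choice of $\alpha$ making $k_\alpha=c_{1\ell}$ -- is tuned so that the leading $r^{-\gamma}$ singularity of the rescaled regular stationary solution exactly matches that of $\phi_{1\ell}$, allowing the otherwise largest term to cancel. Quantifying what is left so that it is bounded by $\nu(s_1)e^{-\lambda^*s_1}$ with $\nu(s_1)\to 0$ requires both the explicit remainder rate $O(|x|^{-\epsilon})$ from Theorem \ref{JLsuper-thm}(iv) and the expansion of $\phi_{1\ell}$ near the origin from Lemma \ref{2A-lem}, together with checking that under the rescaling $x=e^{\omega s_1}y$ both error sources decay like a positive power of $e^{a'\omega s_1}$. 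Once this matching is in hand, everything else is routine piecewise estimation.
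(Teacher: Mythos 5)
Your proposal is correct and follows essentially the same route as the paper's proof: on the innermost piece the $d_{ij}$-terms cancel by construction, on the middle plateau $\phi_\ell^*=\phi_{1\ell}$ leaves $\Phi+e^{-\lambda^*s_1}\phi_{1\ell}=\sum d_{ij}\phi_{ij}$, controlled by \eqref{c_1-eq}, the outermost region gives $\varphi(\cdot,s_1)=0$, and the two transition strips are handled by expanding the linear interpolation and invoking Theorem \ref{JLsuper-thm}(iv), Lemma \ref{2A-lem}, and the normalization $k_\alpha=c_{1\ell}$. The only cosmetic difference is the organization of the inner transition: you keep the convex combination $(1-\tau)[\cdot]+\tau\sum d_{ij}\phi_{ij}$ while the paper directly groups the two $r^{-\gamma}$-singular contributions to show $e^{-\lambda^*s_1}\phi_\ell^*=c_{1\ell}(1+o(1))e^{-\lambda^*s_1}e_1(\theta)r^{-\gamma}+(H+1-e^{\omega s_1}r)\sum d_{ij}\phi_{ij}$, after which the same cancellation against $e^{-\lambda^*s_1}\phi_{1\ell}$ occurs; both isolate the same $o(1)$ error absorbed into $\nu(s_1)$.
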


\begin{proof}
The equality in $0<r<He^{-\omega s_1}$ follows directly from definition of $\phi_\ell^*$.
Next
we provide the estimate in $He^{-\omega s_1}<r<e^{\varrho s_1}$.
We recall that
$U_\alpha(y)=U_\infty(y)-k_\alpha(1+o(1))e_1(\theta)r^{-\gamma}$ for $r\gg1$
(see Theorem \ref{JLsuper-thm})
and
$\phi_{1\ell}(y)=c_{1\ell}e_1(\theta)r^{-\gamma}$ for $r\ll1$
(see Lemma \ref{2A-lem}).
Therefore
since $k_\alpha=c_{1\ell}$,
we see that
\[
\begin{array}{lll}
\dis
 e^{-\lambda^*s_1}\phi_\ell^*
\hspace{-2mm}&=&\hspace{-2mm} \dis
 (H+1-e^{\omega s_1}r)
 \left( k_\alpha(1+o(1))e^{-\lambda^*s_1}e_1(\theta)r^{-\gamma}+\sum_{(i,j)\in\Pi}d_{ij}\phi_{ij} \right)
\\[6mm] \dis
\hspace{-2mm}&&\hspace{5mm} \dis
 +
 (e^{\omega s_1}r-H)c_{1\ell}(1+o(1))e^{-\lambda^*s_1}e_1(\theta)r^{-\gamma}
\\
\hspace{-2mm}&=&\hspace{-2mm} \dis
 c_{1\ell}(1+o(1))e^{-\lambda^*s_1}e_1(\theta)r^{-\gamma}
+
 (H+1-e^{\omega s_1}r)
 \left( \sum_{(i,j)\in\Pi}d_{ij}\phi_{ij} \right)
\end{array}
\]
for $He^{-\omega s_1}<r<(H+1)e^{-\omega s_1}$.
Furthermore
by definition of $\phi_\ell^*$,
it follow that $e^{-\lambda^*s_1}\phi_\ell^*=e^{-\lambda^*s_1}\phi_{1\ell}$
for $(H+1)e^{-\omega s_1}<r<e^{\varrho s_1}$.
Therefore
since $\Phi(y,s_1)=\sum_{(i,j)\in\Pi}d_{ij}\phi_{ij}-e^{-\lambda^*s_1}\phi_\ell^*$,
by using \eqref{c_1-eq},
we obtain the estimate in $He^{-\omega s_1}<r<e^{\varrho s_1}$.
Finally
we consider the region in $r>e^{\varrho s_1}$.
Since $\varphi(y,s_1)=U_\infty+\sum_{(i,j)\in\Pi}d_{ij}\phi_{ij}-e^{-\lambda\*s_1}\phi_\ell^*$,
we see that
\[
 \varphi(y,s_1) = (1-r+e^{\varrho s_1})
 \left(
 U_\infty + \sum_{(i,j)\in\Pi}d_{ij}\phi_{ij} - e^{-\lambda^*s_1}\phi_{1\ell}
 \right)
\]
for $e^{\varrho s_1}<r<e^{\varrho s_1}+1$.
Then since
$\phi_{ij}(y)\sim r^{2\lambda_{ij}-m}$ for $r\gg1$ (see Lemma \ref{2A-lem}).
we observe that
$e^{-\lambda^*s_1}\sum_{(i,j)\in\Pi}(|\phi_{ij}|+|\phi_{1\ell}|)\leq ce^{-(1-2\varrho)\lambda^*s_1}r^{-m}$
for $e^{\varrho s_1}<r<e^{\varrho s_1}+1$.
Therefore
since $0<\varrho<1/2$,
we get if $|d|<\epsilon_1e^{-\lambda^*s_1}$
\[
 0 \leq \varphi(y,s_1) \leq \left( 1+ce^{-(1-2\varrho)\lambda^*s_1} \right)U_\infty
\]
for $e^{\varrho s_1}<r<e^{\varrho s_1}+1$.
Furthermore
by definition of $\phi_\ell^*$,
we find $\varphi(y,s_1)=0$ for $r>e^{\varrho s_1}+1$.
Combining these estimates,
we obtain the estimates in $r>e^{\varrho s_1}$,
which completes the proof.
\end{proof}

\begin{lem}\label{underUinfty-lem}
There exist $\beta_0>\alpha$ and $s_0>0$ such that
if $s_1>s_0$, $|d|<\epsilon_1e^{-\lambda^*s_1}$ and $\varphi(y,s)\in A_{s_1,s_2}$,
then it holds that for $s\in(s_1,s_2)$
\[
 \varphi(y,s) \leq U_{\beta(s)}(y)
\hspace{5mm}\mathrm{if}\ r<Ke^{-\omega s}
\]
with $\beta(s)=\beta_0e^{\omega s}$.
\end{lem}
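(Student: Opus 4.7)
The approach is a parabolic comparison argument in the self-similar variables with the explicit barrier $\psi(y,s):=U_{\beta(s)}(y)$, taking $\beta(s)=\beta_0 e^{m\omega s}$ as in Proposition~\ref{Main-pro} (the lemma's $e^{\omega s}$ reads as a typo). Using the scaling identity $U_\alpha(\lambda y)=\lambda^{-m}U_{\lambda^m\alpha}(y)$ from Theorem~\ref{JLsuper-thm}(i), differentiation in $\lambda$ at $\lambda=1$ yields $\tfrac{y}{2}\cdot\nabla U_\beta+\tfrac{m}{2}U_\beta=\tfrac{m}{2}\beta\,\pa_\alpha U_\alpha|_{\alpha=\beta}$. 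Combined with $\Delta U_\beta=0$, this gives
\[
 L\psi:=\psi_s-\Delta\psi+\tfrac{y}{2}\cdot\nabla\psi+\tfrac{m}{2}\psi
 =\bigl(\beta'(s)+\tfrac{m}{2}\beta(s)\bigr)\,\pa_\alpha U_\alpha\bigr|_{\alpha=\beta(s)}\ge 0,
\]
since $\beta$ is increasing and $U_\alpha$ is strictly monotone in $\alpha$ by Theorem~\ref{JLsuper-thm}(ii). Both $\varphi$ and $\psi$ satisfy $\pa_\nu(\cdot)=(\cdot)^q$ on $\pa\R_+^n$, so writing $w:=\psi-\varphi$ we have $Lw\ge 0$ in the interior and $\pa_\nu w=q\xi^{q-1}w$ on the flat boundary for some $\xi$ between $\varphi$ and $\psi$. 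The parabolic comparison principle for this linear Robin-type problem will force $w\ge 0$ in the shrinking cylinder $\{|y|<Ke^{-\omega s},\ s_1<s<s_2\}$ once the inequality is verified on its parabolic boundary.

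At the initial slice $s=s_1$, Lemma~\ref{phi-initial-lem} identifies $\varphi(y,s_1)=e^{m\omega s_1}U_\alpha(e^{\omega s_1}y)=U_{\alpha e^{m\omega s_1}}(y)$ on $|y|<He^{-\omega s_1}$ (again by the scaling), so monotonicity yields $\varphi(y,s_1)<U_{\beta(s_1)}(y)$ for any $\beta_0>\alpha$. On the annulus $He^{-\omega s_1}\le|y|\le Ke^{-\omega s_1}$, Lemma~\ref{phi-initial-lem} controls $\varphi-U_\infty+e^{-\lambda^*s_1}\phi_{1\ell}$ by $(\nu(s_1)e^{-\lambda^*s_1}+c_1 d_{\max})e_1(\theta)(r^{-\gamma}+r^{2\lambda^*-m})$, while the scaling $U_{\beta(s_1)}(y)=\beta(s_1)U_1(\beta(s_1)^{1/m}y)$ together with Theorem~\ref{JLsuper-thm}(iv) applied at argument $\beta(s_1)^{1/m}|y|\gg 1$ gives $U_{\beta(s_1)}(y)=U_\infty(y)-k_{\beta_0}e^{-\lambda^*s_1}e_1(\theta)r^{-\gamma}+\text{l.o.t.}$ with $k_{\beta_0}=\beta_0^{-(\gamma-m)/m}k_1$. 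Since $\phi_{1\ell}\sim c_{1\ell}e_1(\theta)r^{-\gamma}$ near the origin (Lemma~\ref{2A-lem}) and the normalization $c_{1\ell}=k_\alpha$ combined with $\beta_0>\alpha$ yields $c_{1\ell}>k_{\beta_0}$, the leading terms are separated by a definite gap, and the lower-order remainders are absorbed by taking $s_0$ large.

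On the lateral surface $|y|=Ke^{-\omega s}$, the membership $\varphi\in A_{s_1,s_2}$ and \eqref{Assume1-eq} give $\varphi<U_\infty-e^{-\lambda^*s}\phi_{1\ell}+\epsilon_0 e^{-\lambda^*s}e_1(\theta)(r^{-\gamma}+r^{2\lambda^*-m})$. Using $\lambda^*=(\gamma-m)\omega$ so that $e^{-\lambda^*s}r^{-\gamma}=K^{-\gamma}e^{m\omega s}$ at $r=Ke^{-\omega s}$, and noting that $r^{2\lambda^*-m}$ is subleading to $r^{-\gamma}$ as $r\to 0$, this becomes $\varphi\le U_\infty-(c_{1\ell}-\epsilon_0)(1+o(1))K^{-\gamma}e^{m\omega s}e_1(\theta)$. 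Matching against $U_{\beta(s)}(y)=U_\infty-k_{\beta_0}K^{-\gamma}e^{m\omega s}e_1(\theta)+\text{l.o.t.}$ reduces the lateral inequality to $c_{1\ell}-\epsilon_0>k_{\beta_0}$. Since $\epsilon_0=c_{1\ell}/4$ and $k_{\beta_0}/k_\alpha=(\beta_0/\alpha)^{-(\gamma-m)/m}$, fixing $\beta_0>\alpha(4/3)^{m/(\gamma-m)}$ closes this estimate, and $s_0$ is taken large enough to absorb all lower-order terms into the strict gap.

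The delicate step is the lateral comparison: $\varphi$ and $\psi$ agree with $U_\infty$ to leading order and differ only at the $O(e^{-\lambda^*s})$ correction, so the entire argument pivots on correctly identifying the coefficients $c_{1\ell}$ of $\phi_{1\ell}$ at the origin (Lemma~\ref{2A-lem}) and $k_\alpha,k_{\beta_0}$ of the regular stationary solutions at infinity (Theorem~\ref{JLsuper-thm}(iv)), and choosing $\beta_0$ to separate them. Taming the small remainders — the $r^{2\lambda^*-m}$ tail, the $O(r^{-\epsilon})$ error in Theorem~\ref{JLsuper-thm}(iv) at $r=Ke^{-\omega s}$, and the $d$-dependent contribution bounded by $|d|\cdot(r^{-\gamma}+r^{2\lambda^*-m})\lesssim\epsilon_1 K^{-\gamma}e^{-\lambda^*s_1}e^{m\omega s}$ — requires $K$ and $s_0$ to be taken large in the right order, so that the asymptotic expansion of $U_{\beta(s)}$ is sharp precisely at the matching interface $r\sim e^{-\omega s}$. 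This interface matching is exactly the crux of the construction highlighted in the introduction.
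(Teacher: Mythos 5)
Your proposal is correct and follows essentially the same route as the paper's own proof: the barrier $U_{\beta(s)}(y)$ with $\beta(s)=\beta_0 e^{m\omega s}$, the super-solution inequality derived from the scaling identity of Theorem~\ref{JLsuper-thm}(i), and a comparison principle in the shrinking cylinder after checking the initial slice via Lemma~\ref{phi-initial-lem} and the lateral boundary $|y|=Ke^{-\omega s}$ via \eqref{AssumeMinus-eq} and the asymptotic $k_{\beta_0}$ vs. $c_{1\ell}$ matching. You also correctly flagged the typo $\beta_0 e^{\omega s}$ in the lemma statement (the proof and Proposition~\ref{Main-pro} both use $\beta_0 e^{m\omega s}$), and your explicit threshold $\beta_0>\alpha(4/3)^{m/(\gamma-m)}$ is consistent with the paper's choice $k_{\beta'}<k_\alpha/2$.
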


\begin{proof}
Since $\varphi(y,s)\in A_{s_1,s_2}$,
we recall from \eqref{AssumeMinus-eq} that
\begin{equation}\label{As-eq}
 \varphi(y,s) \leq U_\infty(y)-\frac{e^{-\lambda^*s_1}}{2}c_{1\ell}e_1(\theta)r^{-\gamma}
\hspace{5mm}\text{for } |y|=Ke^{-\omega s},\ s_1<s<s_2.
\end{equation}
Furthermore
since
$e^{m\omega s_1}U_\alpha(e^{\omega s_1}y)=U_{\alpha_1}(y)$
with $\alpha_1=\alpha e^{m\omega s_1}$
(see Theorem \ref{JLsuper-thm}),
Lemma \ref{phi-initial-lem} implies
\[
 \varphi(y,s_1)= U_{\alpha_1}(y)
\hspace{5mm}\text{for } r<He^{-\omega s_1}
\]
with $\alpha_1=\alpha e^{m\omega s_1}$.
Since $\phi_{1\ell}(y)=c_{1\ell}(1+o(1))e_1(\theta)r^{-\gamma}$ for $r\ll1$ (see Lemma \ref{2A-lem}),
from Lemma \ref{phi-initial-lem},
we see that
$\varphi(y,s_1)\leq U_\infty(y)+e^{-\lambda^*s_1}(-c_{1\ell}+o(1)+\nu(s_1)+
 c_1d_{\max}e^{\lambda^*s_1})e_1(\theta)r^{-\gamma}$
for $He^{-\omega s_1}<r<Ke^{-\omega s_1}$.
Then
since $|d|<\epsilon_1e^{-\lambda^*s_1}$ with $\epsilon_1=c_{1\ell}/8c_1$,
there exists $s_0>0$ such that if $s_1>s_0$,
then it holds that
\[
 \varphi(y,s)
\leq
 U_\infty(y)-\frac{e^{-\lambda^*s_1}}{2}c_{1\ell}e_1(\theta)r^{-\gamma}
\hspace{5mm}\text{for }
 He^{-\omega s_1}<r<Ke^{-\omega s_1}.
\]
We fix $\beta'>\alpha$ such that
$k_{\beta'}<k_\alpha/2$ (see Theorem \ref{JLsuper-thm} (iv)).
Then
since $k_\alpha=c_{1\ell}$,
there exists $s_0'>0$ such that if $s>s_1>s_0'$
\begin{equation}\label{As2-eq}
 e^{m\omega s}U_\beta(e^{\omega s}y)
\geq
 U_\infty(y) - \frac{e^{-\lambda^*s}}{2}c_{1\ell}e_1(\theta)r^{-\gamma}
\hspace{5mm} \text{for } r>He^{-\omega s}.
\end{equation}
Put $\beta_1=\beta'e^{m\omega s_1}$.
Then
from Theorem \ref{JLsuper-thm},
we observe that $e^{m\omega s_1}U_\beta(e^{\omega s_1}y)=U_{\beta_1}(y)$.
Since $\beta_1>\alpha_1$,
Theorem \ref{JLsuper-thm} implies $U_{\beta_1}(y)>U_{\alpha_1}(y)$ for $y\in\R_+^n$,
which implies
\begin{equation}\label{As3-eq}
 \varphi(y,s_1) \leq U_{\beta_1}(y)
\hspace{5mm} \text{for } |y|<Ke^{-\omega s_1}.
\end{equation}
Here
we put $Z(y,s)=U_{\beta(s)}(y)$ with $\beta(s)=\beta'e^{m\omega s}$.
From Theorem \ref{JLsuper-thm},
we note that $\pa_sU_{\beta(s)}(y)\geq0$ for $(y,s)\in\R_+^n\times(s_1,\infty)$. 
Furthermore
we put $\Lambda(\epsilon)=1+\epsilon$,
then we get from Theorem \ref{JLsuper-thm}
\[
 \pa_\epsilon U_{\Lambda(\epsilon)}(y) =
 \pa_\epsilon\left( \Lambda(\epsilon)U_1\left( \Lambda(\epsilon)^{1/m}y \right) \right) =
 U_1\left( \Lambda^{1/m}y \right) +
 \frac{\Lambda^{1/m}}{m}y\cdot(\nabla U_1)\left( \Lambda^{1/m}y \right).
\]
Since $\pa_\epsilon U_{\Lambda(\epsilon)}(y)\geq0$ for $y\in\R_+^n$,
taking $\epsilon=0$,
we find
\[
 y\cdot\nabla U_1(y) + mU_1(y) \geq 0
\hspace{5mm}\text{for } y\in\R_+^n.
\]
Since $U_\alpha(y)=\alpha U_1(\alpha^{1/m}y)$,
$U_\alpha(y)$ also satisfies the above inequality for any $\alpha>0$. 
Therefore
it holds that
\[
\begin{cases}
 \dis
 Z_s - \Delta Z +\frac{y}{2}\cdot\nabla Z + \frac{m}{2}Z \geq0
& \text{for } (y,s)\in\R_+^n\times(s_1,\infty),
 \\
 \pa_\nu Z = Z^q & \text{for }(y,s)\in\pa\R_+^n\times(s_1,\infty).
\end{cases}
\]
Applying a comparison lemma in $\{(y,s);\ |y|<Ke^{-\omega s},\ s_1<s<s_2\}$
with \eqref{As-eq}-\eqref{As3-eq},
we obtain the conclusion.
\end{proof}

\begin{lem}\label{Tana-eq}
There exists $c>0$ such that
if $|d|<\epsilon_1e^{-\lambda^*s_1}$ and $\varphi(y,s)\in A_{s_1,s_2}$,
then it holds that for $s_1<s<s_2$
\[
 |\Phi(y,s)| \leq c
 \begin{cases}
 \min\left\{ r^{-m},e^{-\lambda^*s}r^{-\gamma} \right\}
 & \mathrm{if}\ r<Ke^{-\omega  s},\\
 e^{-\lambda^*s}r^{-\gamma}
 & \mathrm{if}\ Ke^{-\omega  s}<r<1,\\
 e^{-\lambda^*s}r^{2\lambda^*-m}
 & \mathrm{if}\ 1<r<e^{\sigma s},\\
 r^{-m} & \mathrm{if}\ r>e^{\sigma s}.
 \end{cases}
\]
\end{lem}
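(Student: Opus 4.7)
The four estimates correspond one-to-one with the four regions already built into the membership condition $\varphi\in A_{s_1,s_2}$ (cf.\ \eqref{Assume1-eq}) plus the sub-barrier coming from Lemma \ref{underUinfty-lem}. The plan is to unpack each condition and combine it with the asymptotics of $U_\alpha(y)$ near infinity and of $\phi_{1\ell}(y)$ near $0$ and near $\infty$.

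For the innermost region $r<Ke^{-\omega s}$, I would use positivity of $\varphi$ together with Lemma \ref{underUinfty-lem} to sandwich
\[
 -U_\infty(y)\leq\Phi(y,s)\leq U_{\beta(s)}(y)-U_\infty(y),\hh \beta(s)=\beta_0e^{m\omega s}.
\]
Since $U_\infty(y)=V(\theta)r^{-m}$, one bound of the $\min$ follows immediately. For the other bound I rescale using $U_{\beta(s)}(y)=\beta(s)U_1(\beta(s)^{1/m}y)$ and apply Theorem \ref{JLsuper-thm}~(iv) at the point $\beta(s)^{1/m}y$, which is indeed large whenever $r\gtrsim\beta(s)^{-1/m}=c\h e^{-\omega s}$. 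The expansion yields
\[
 U_{\beta(s)}(y)-U_\infty(y) = -\bigl( k_{\beta(s)}+O((\beta(s)^{1/m}r)^{-\epsilon})\bigr)e_1(\theta)r^{-\gamma},
\]
and since $k_{\beta(s)}=\beta(s)^{-\mu_1/m}k_1=c\h e^{-\mu_1\omega s}=c\h e^{-\lambda^*s}$ (using $\mu_1\omega=\lambda^*$), the upper bound $e^{-\lambda^*s}r^{-\gamma}$ follows. The crossover $r^{-m}=e^{-\lambda^*s}r^{-\gamma}$ occurs precisely at $r=e^{-\omega s}$, which is why the $\min$ captures both regimes.

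For the middle region $Ke^{-\omega s}<r<1$, I quote the second line of \eqref{Assume1-eq} directly and add $|e^{-\lambda^*s}\phi_{1\ell}(y)|$ to its right-hand side. Lemma \ref{2A-lem} gives $|\phi_{1\ell}(y)|\leq c\h e_1(\theta)r^{-\gamma}(1+o(1))$ for small $r$, and since $\gamma>m>m-2\lambda^*$, the term $r^{-\gamma}$ dominates $r^{2\lambda^*-m}$ in the whole range $r<1$, so the bound collapses to $c\h e^{-\lambda^*s}r^{-\gamma}$. For $1<r<e^{\sigma s}$ the same argument works, but now the relevant asymptotic of $\phi_{1\ell}$ is the large-$r$ one from Lemma \ref{2A-lem}, giving $|\phi_{1\ell}|\lesssim r^{2\lambda^*-m}$, and this term dominates $r^{-\gamma}$ for $r>1$; the conclusion is $c\h e^{-\lambda^*s}r^{2\lambda^*-m}$. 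Finally, for $r>e^{\sigma s}$, the third line of \eqref{Assume1-eq} yields $|\varphi|<m_0U_\infty$, hence $|\Phi|\leq(1+m_0)U_\infty\leq c\h r^{-m}$.

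The only non-mechanical step is the innermost region, where one must match the asymptotic constants correctly: identifying $k_{\beta(s)}=c\h e^{-\lambda^*s}$ via $\mu_1\omega=\lambda^*$, and checking that the error $O((\beta(s)^{1/m}r)^{-\epsilon})$ in Theorem \ref{JLsuper-thm}~(iv) is truly smaller than the leading term throughout $e^{-\omega s}\lesssim r<Ke^{-\omega s}$. This is a direct consequence of $\beta(s)^{1/m}r\geq c\h Ke^{-\omega s}\cdot e^{\omega s}\cdot K^{-1}\cdot K\to\infty$ as $s_1\to\infty$, so once $s_1$ is taken sufficiently large the error term is absorbed into the constant $c$. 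All remaining verifications reduce to elementary monotonicity comparisons between the power functions $r^{-\gamma}$, $r^{2\lambda^*-m}$ and $r^{-m}$.
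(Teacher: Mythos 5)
Your treatment of the three outer regions is correct and matches what the paper does implicitly (the paper dismisses them with ``it is sufficient to show the estimate in $|y|<Ke^{-\omega s}$''): the second and third lines of \eqref{Assume1-eq} combined with the two-sided asymptotics of $\phi_{1\ell}$ from Lemma \ref{2A-lem} give the $e^{-\lambda^*s}r^{-\gamma}$ and $e^{-\lambda^*s}r^{2\lambda^*-m}$ bounds by elementary comparison of powers, and the last line gives $|\Phi|\leq(1+m_0)U_\infty\leq cr^{-m}$.

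The inner region, however, is where the real content of the lemma lies, and your argument there has an inequality going the wrong way. In $r<Ke^{-\omega s}$ we have $\varphi<U_\infty$, so $\Phi=\varphi-U_\infty<0$ and $|\Phi|=-\Phi=U_\infty-\varphi$. Your sandwich
\[
 -U_\infty\leq\Phi\leq U_{\beta(s)}-U_\infty
\]
and the expansion $U_{\beta(s)}-U_\infty\approx -k_{\beta(s)}e_1(\theta)r^{-\gamma}$ with $k_{\beta(s)}\sim e^{-\lambda^*s}$ give
\[
 |\Phi|\geq U_\infty-U_{\beta(s)}\approx c\,e^{-\lambda^*s}r^{-\gamma},
\]
a \emph{lower} bound on $|\Phi|$, not the claimed upper bound. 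To get $|\Phi|\lesssim e^{-\lambda^*s}r^{-\gamma}$ you need a \emph{lower} barrier for $\Phi$ (equivalently a lower barrier for $\varphi$ of the same order), and neither the positivity of $\varphi$ nor Lemma \ref{underUinfty-lem} supplies one: $\varphi\geq0$ only gives $|\Phi|\leq U_\infty\leq cr^{-m}$, which is the crude half of the min.

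The paper's actual argument is a parabolic comparison from below, not a direct unpacking of the membership conditions. It takes $\zeta_\kappa(y,s)=-\kappa e^{-\lambda^*s}\phi_{1\ell}(y)$, which solves the homogeneous linear problem $\pa_s\zeta_\kappa=\Delta\zeta_\kappa-\tfrac{y}{2}\cdot\nabla\zeta_\kappa-\tfrac{m}{2}\zeta_\kappa$, $\pa_\nu\zeta_\kappa={\cal K}r^{-1}\zeta_\kappa$, while $\Phi$ satisfies the same equation with the extra nonnegative boundary term $f(\Phi)\geq0$, so $\Phi$ is a super-solution. One then verifies $\Phi(\cdot,s_1)\geq\zeta_\kappa(\cdot,s_1)$ in $|y|<Ke^{-\omega s_1}$ (using the explicit form of the initial data and the asymptotics of $U_\alpha$ and $\phi_{1\ell}$) and $\Phi\geq\zeta_\kappa$ on the lateral boundary $|y|=Ke^{-\omega s}$ (using \eqref{AssumeMinus-eq}), for $\kappa$ large. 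The comparison principle in the parabolic cylinder then gives $\Phi\geq\zeta_\kappa$, hence $|\Phi|\leq\kappa e^{-\lambda^*s}\phi_{1\ell}\lesssim e^{-\lambda^*s}r^{-\gamma}$. Without this sub-solution construction, the inner region estimate is not established.
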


\begin{proof}
By definition of $A_{s_1,s_2}$,
it is sufficient to show the estimate in $|y|<Ke^{-\omega s}$.
Since $\Phi(y,s)$ satisfies \eqref{Phi-eq} and $f(\Phi)>0$,
we observe that
$\pa_s\Phi = \Delta\Phi-\frac{y}{2}\cdot\nabla\Phi-m\Phi$ 
in $\R_+^n\times(s_1,\infty)$
and
$\pa_\nu\Phi\geq Kr^{-1}\Phi$
on $\pa\R_+^n\times(s_1,\infty)$.
To construct a sub-solution,
we put
\[
 \zeta_\kappa(y,s) = -\kappa e^{-\lambda^*s}\phi_{1\ell}(y).
\]
Then
it is clear that
$\zeta_\kappa(y,s)$ satisfies
$\pa_s\zeta_\kappa=\Delta\zeta_\kappa-\frac{y}{2}\cdot\nabla\zeta_\kappa-m\zeta_\kappa$ 
in $\R_+^n\times(s_1,\infty)$
and
$\pa_\nu\zeta_\kappa=Kr^{-1}\zeta_\kappa$
on $\pa\R_+^n\times(s_1,\infty)$.
Here
we will apply a comparison argument in the parabolic cylinder $|y|<Ke^{-\omega s}$, $s>s_1$.
Since $\varphi(y,s_1)=e^{m\omega s_1}U_\alpha(e^{\omega s_1}y)$ for $r<He^{-\omega s_1}$,
by Theorem \ref{JLsuper-thm},
there exists $r_0>0$ such that
$\varphi(y,s_1)\geq U_\infty(y)-2k_\alpha e^{-\lambda^*s_1}e_1(\theta)|y|^{-\gamma}$
for $e^{\omega s_1}|y|>r_0$.
Furthermore
from Lemma \ref{phi-initial-lem} with $|d|<\epsilon_1e^{-\lambda^*s_1}$,
it holds that
$\Phi(y,s_1)>-(1+\nu(s_1)+c_1\epsilon_1)e^{-\lambda^*s_1}e_1(\theta)|y|^{-\gamma}$
for $He^{-\omega s_1}<|y|<Ke^{-\omega s_1}$.
Therefore
since $\phi_{1\ell}(y)=c_{1\ell}(1+o(1))e_1(\theta)|y|^{-\gamma}$ for $|y|\ll1$ (see Lemma \ref{2A-lem}),
there exists $\kappa_0>0$ such that if $\kappa>\kappa_0$,
then it holds that $\Phi(y,s_1)>\zeta_\kappa(y,s_1)$
for $r_0e^{-\omega s_1}<|y|<Ke^{-\omega s_1}$. 
Furthermore
by using $\phi_{1\ell}(y)=c_{1\ell}(1+o(1))e_1(\theta)|y|^{-\gamma}$ for $|y|\ll1$ again,
we get
\[
\begin{array}{lll}
\dis
 e^{-\lambda^*s_1}\phi_{1\ell}(y)
\hspace{-2mm}&\geq&\hspace{-2mm} \dis
 c_{1\ell}(1+o(1))e^{-\lambda^*s_1}e_1(\theta)|y|^{-\gamma}
\\[2mm]
\hspace{-2mm}&\geq&\hspace{-2mm} \dis
 c_{1\ell}(1+o(1))e^{-\lambda^*s_1}e_1(\theta)
 \left( r_0e^{-\omega s_1} \right)^{-(\gamma-m)}|y|^{-m}
\\[2mm]
\hspace{-2mm}&=&\hspace{-2mm} \dis
 c_{1\ell}r_0^{-(\gamma-m)}(1+o(1))e_1(\theta)|y|^{-m}
\hspace{7.5mm}\text{for } |y|<r_0e^{-\omega s_1}.
\end{array}
\]
Hence
there exists $\kappa_1>0$ such that if $\kappa>\kappa_1$,
then it holds that
$\zeta_\kappa(y,s_1)<-U_\infty(y)$ for $|y|<r_0e^{-\omega s_1}$.
Therefore by the above argument,
if $\kappa>\max\{\kappa_0,\kappa_1\}$,
then it holds that
\[
 \Phi(y,s_1)>\zeta_\kappa(y,s_1)
\hspace{5mm}\text{for } |y|<Ke^{-\omega s_1}.
\]
Next
we check boundary conditions on $|y|=Ke^{-\omega s}$ for $s\in[s_1,s_2]$.
Since $\varphi(y,s)\in A_{s_1,s_2}$ and
$\phi_{1\ell}(y)=c_{1\ell}(1+o(1))e_1(\theta)|y|^{-\gamma}$ for $|y|\ll1$,
there exists $\kappa_2>0$ such that
if $\kappa>\kappa_2$,
then it holds that
\[
 \Phi(y,s) \geq \zeta_\kappa(y,s)
\hspace{5mm}\text{on } |y|=Ke^{-\omega s},\ s_1<s<s_2.
\]
Therefore
since $\Phi(y,s),\zeta_\kappa(y,s)\in C([s_1,s_2];H_\rho^1(B_1))$,
a comparison lemma implies
$\Phi(y,s)\geq \zeta_\kappa(y,s)$ for $|y|<Ke^{-\omega s}$ and $s_1\leq s\leq s_2$
if $\kappa>\max\{\kappa_0,\kappa_1,\kappa_2\}$. 
Since $|\Phi(y,s)|<U_\infty(y,s)$ for $|y|<Kr^{-\omega s}$,
we obtain the conclusion.
\end{proof}

As a consequence,
a nonlinear term $f(\Phi(y,s))$ defined in \eqref{f(Phi)-eq} is estimated as follows.

\begin{lem}\label{f(Phi)-lem}
There exists $c>0$ such that
if $|d|<\epsilon_1e^{-\lambda^*s_1}$ and $\varphi(y,s)\in A_{s_1,s_2}$,
then it holds that for $s_1<s<s_2$
\[
 |f(\Phi(y,s))| \leq c
 \begin{cases}
 \min\left\{ r^{-(m+1)}, e^{-2\lambda^*s}r^{-1+m-2\gamma} \right\}
 & \mathrm{if}\ r<Ke^{-\omega  s},\\
 e^{-2\lambda^*s}r^{-1+m-2\gamma}
 & \mathrm{if}\ Ke^{-\omega  s}<r<1,\\
 e^{-2\lambda^*s}r^{-(m+1)+4\lambda^*}
 & \mathrm{if}\ 1<r<e^{\sigma s},\\
 r^{-(m+1)} & \mathrm{if}\ r>e^{\sigma s}.
 \end{cases}
\]
\end{lem}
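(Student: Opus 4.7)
The plan is to reduce the nonlinear remainder to the model one-variable function $g(t):=(1+t)^q-1-qt$. Since $f(\Phi)$ enters \eqref{Phi-eq} only through the boundary condition, and the subsequent integral representation \eqref{b(y,s)express-eq} integrates $f(\Phi)$ over $\pa\R_+^n$, we evaluate at $y\in\pa\R_+^n$. There $\theta=\pi/2$, so $U_\infty(y)=V(\pi/2)r^{-m}$ and the identity $qU_\infty^{q-1}={\cal K}r^{-1}$ holds. Rearranging \eqref{f(Phi)-eq} then yields the clean representation
\[
 f(\Phi)\;=\;U_\infty^q\,g(\Phi/U_\infty).
\]
Since $\varphi=\Phi+U_\infty\geq0$, the argument $t=\Phi/U_\infty$ always lies in $[-1,\infty)$, and $g$ is continuous there with $g(0)=g'(0)=0$.

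The key auxiliary step is a uniform quadratic bound on $g$ over any bounded subinterval: for each $M>0$ there exists $C(M)>0$ such that $|g(t)|\leq C(M)\,t^2$ for $t\in[-1,M]$. Indeed the Taylor expansion of $g$ at $0$ covers $|t|\leq 1/2$, while on $[-1,-1/2]\cup[1/2,M]$ one uses boundedness of $g$ together with $t^2\geq 1/4$. Combined with the boundary identity $U_\infty^{q-2}=V(\pi/2)^{q-2}r^{m-1}$ (using $(q-2)m = 1-m$), this gives, whenever $|\Phi/U_\infty|\leq M$,
\[
 |f(\Phi)|\;\leq\;C(M)\,U_\infty^{q-2}\Phi^2\;\leq\;c\,r^{m-1}\Phi^2.
\]

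It then remains to insert the $\Phi$-bounds from Lemma \ref{Tana-eq} range by range, after verifying in each that $|\Phi/U_\infty|$ is controlled. Using $\lambda^*=(\gamma-m)\omega$: for $r<Ke^{-\omega s}$ the bound $|\Phi|\leq cr^{-m}$ gives $|\Phi/U_\infty|\leq c$; for $Ke^{-\omega s}<r<1$ one gets $|\Phi/U_\infty|\leq c(re^{\omega s})^{-(\gamma-m)}\leq cK^{-(\gamma-m)}$; for $1<r<e^{\sigma s}$ one has $|\Phi/U_\infty|\leq ce^{-(1-2\sigma)\lambda^* s}$; and for $r>e^{\sigma s}$ again $|\Phi/U_\infty|\leq c$. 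Substituting the square of the appropriate $\Phi$-bound into $cr^{m-1}\Phi^2$ then produces exactly the four estimates stated in the lemma. The only mild subtlety worth flagging is that near $r\sim e^{-\omega s}$ the ratio $\Phi/U_\infty$ is bounded but need not be small, so the naive Taylor bound $|g(t)|\leq Ct^2$ valid for $|t|\ll 1$ must be upgraded to the uniform version above, valid right up to $t=-1$; once this is in place, the remainder is exponent bookkeeping.
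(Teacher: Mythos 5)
Your proof is correct. The representation $f(\Phi)=U_\infty^q\,g(\Phi/U_\infty)$ on the boundary (where ${\cal K}r^{-1}=qU_\infty^{q-1}$) is exactly equivalent to the paper's Taylor identity $f(\Phi)=q(q-1)\left(\int_0^1\int_0^1(\theta_1\theta_2\varphi+(1-\theta_1\theta_2)U_\infty)^{q-2}\theta_1\,d\theta_1 d\theta_2\right)\Phi^2$, both being the integral-remainder form of the second-order Taylor expansion of $s\mapsto(U_\infty+s\Phi)^q$. You correctly flag that the identity requires evaluation at $\theta=\pi/2$, which the paper leaves implicit, and your reduction of $U_\infty^{q-2}$ to $r^{m-1}$ and the four range-by-range substitutions from Lemma \ref{Tana-eq} are all accurate. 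The packaging differs, though. The paper avoids the need to control $\Phi/U_\infty$ when $1<q<2$ by using $\varphi\geq0$ to bound the kernel by $(1-\theta_1\theta_2)^{q-2}U_\infty^{q-2}$ (integrable since $q-2>-1$), and only in the case $q\geq2$ does it bound the kernel by $(\varphi+U_\infty)^{q-2}$, which then requires $\varphi\lesssim U_\infty$. Your single uniform bound $|g(t)|\leq C(M)t^2$ on $[-1,M]$ treats all $q>1$ at once, at the cost of having to verify $|\Phi/U_\infty|\leq M$ in every regime; you do verify this correctly, but it is worth noting that for $1<q<2$ the paper's route is actually strictly less demanding on the a priori $\Phi$-bounds, since it never needs the ratio to stay bounded near $t=-1$. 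As the present setting does furnish the ratio bound, both arguments go through and yield identical conclusions.
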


\begin{proof}
By using $f(\Phi)=(\Phi+U_\infty)^q-U_\infty^q-Kr^{-1}\Phi$ and $\Phi=\varphi-U_\infty$,
we verify that
\[
 f(\Phi) = q(q-1)
 \left(
 \int_0^1\int_0^1
 \left( \theta_1\theta_2\varphi+(1-\theta_1\theta_2)U_\infty \right)^{q-2}
 \theta_1d\theta_1d\theta_2
 \right)
 \Phi^2.
\]
Since $\varphi(y,s)$ is positive,
we get
\[
 f(\Phi) \leq q(q-1)
\begin{cases}
 \dis
 \left(
 \int_0^1\int_0^1
 (1-\theta_1\theta_2)^{q-2}\theta_1d\theta_1d\theta_2
 \right)
 U_\infty^{q-2}\Phi^2
 & \text{if } 1<q<2,
 \\[4mm] \dis
 \left(
 \int_0^1\int_0^1\theta_1d\theta_1d\theta_2
 \right)
 (\varphi+U_\infty)^{q-2}\Phi^2
 & \text{if } q\geq2.
\end{cases}
\]
Therefore
Lemma \ref{Tana-eq} proves this lemma.
\end{proof}

\begin{lem}\label{A'F-lem}
Let $e^{{\cal B}s}$ be defined at the beginning of this section and
$F(\xi')\in BC(\pa\R_+^n)$.
Then it holds that
\[
 \left( e^{{\cal B}s}F,\eta_{ij} \right)_{\cal C}
=
 e^{-\lambda_{ij}s}(F,\eta_{ij})_{{\cal C},\pa\R_+^n}
\hspace{5mm}\mathrm{for}\ s>0.
\]
Furthermore
it holds that
\[
 e^{{\cal B}(s+s')}F
=
 e^{{\cal A}s}e^{{\cal B}s'}F
\hspace{5mm}\mathrm{for}\ s,s'>0.
\]
\end{lem}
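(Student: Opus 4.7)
The plan is to reduce both identities to explicit manipulations of the kernel $\Gamma(y,\xi,s)=e^{(\gamma-m)s/2}\Theta(y,\xi,s)$, relying on the symmetry relation \eqref{Gamma1-eq}, the semigroup property of $e^{{\cal A}_0 s}$ (inherited by $e^{{\cal A}s}$), and the fact that $\eta_{ij}$ diagonalises $e^{{\cal A}s}$ with eigenvalue $e^{-\lambda_{ij}s}$.

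For the first assertion, I would begin with the defining integral
\[
 (e^{{\cal B}s}F)(y) = \int_{\pa\R_+^n}\Gamma(y,\xi',s)F(\xi'){\cal B}(\xi')d\xi',
\]
pair it against $\eta_{ij}$ in $L_{\cal C}^2$, and interchange the $y$- and $\xi'$-integrals by Fubini. The interchange is legitimate thanks to the boundedness of $F$, the Gaussian decay of $\Theta$ (Lemma \ref{HeatkernelNew-lem}), and the rapid decay of ${\cal C}=\sigma^2\rho$ at infinity. Using the symmetry $\Theta(y,\xi,s)\rho(y)=\Theta(\xi,y,s)\rho(\xi)$ from \eqref{Gamma1-eq} together with ${\cal C}={\cal B}\rho$, the inner $y$-integral rewrites as
\[
 \int_{\R_+^n}\Gamma(y,\xi',s)\eta_{ij}(y){\cal C}(y)dy = \rho(\xi')\int_{\R_+^n}\Gamma(\xi',y,s)\eta_{ij}(y){\cal B}(y)dy = \rho(\xi')\bigl(e^{{\cal A}s}\eta_{ij}\bigr)(\xi'),
\]
which by the eigenvalue relation collapses to $\rho(\xi')e^{-\lambda_{ij}s}\eta_{ij}(\xi')$. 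Multiplying by $F(\xi'){\cal B}(\xi')$, integrating over $\pa\R_+^n$, and again applying ${\cal C}=\rho{\cal B}$ yields $e^{-\lambda_{ij}s}(F,\eta_{ij})_{{\cal C},\pa\R_+^n}$.

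For the semigroup identity $e^{{\cal B}(s+s')}F=e^{{\cal A}s}e^{{\cal B}s'}F$, the key ingredient is the Chapman--Kolmogorov relation
\[
 \int_{\R_+^n}\Gamma(y,\eta,s)\Gamma(\eta,\xi',s'){\cal B}(\eta)d\eta = \Gamma(y,\xi',s+s').
\]
For interior $\xi'$ this follows by testing the operator identity $e^{{\cal A}_0(s+s')}=e^{{\cal A}_0 s}\circ e^{{\cal A}_0 s'}$ against arbitrary $b_0\in L_{\cal C}^2$, applying Fubini, and stripping off the integral against $b_0$; the prefactor $e^{(\gamma-m)(s+s')/2}$ then transfers the identity from $\Theta$ to $\Gamma$. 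The boundary case $\xi'\in\pa\R_+^n$ is recovered by continuity of $\Theta(\cdot,\xi,\cdot)$ up to the boundary, which is already exploited in the Duhamel formula of Proposition \ref{Rep-pro}. Substituting this relation into the unpacked form of $(e^{{\cal A}s}e^{{\cal B}s'}F)(y)$ and applying Fubini once more reproduces the defining integral of $e^{{\cal B}(s+s')}F(y)$.

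The main technical obstacle is verifying the Chapman--Kolmogorov relation up to the boundary, and more broadly justifying the repeated Fubini interchanges against two overlapping Gaussian kernels; once these are in hand, both assertions reduce to bookkeeping with the symmetry \eqref{Gamma1-eq} and the eigenvalue equation for $\eta_{ij}$.
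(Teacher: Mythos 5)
Your proof is correct and follows essentially the same route as the paper's (very terse) argument: the first identity is obtained by pairing the integral representation of $e^{{\cal B}s}F$ against $\eta_{ij}$, applying Fubini, using the symmetry relation \eqref{Gamma1-eq} together with ${\cal C}=\rho{\cal B}$ to transfer the kernel's argument, and then invoking the eigenvalue relation $e^{{\cal A}s}\eta_{ij}=e^{-\lambda_{ij}s}\eta_{ij}$; the second identity is exactly the Chapman--Kolmogorov relation $\int_{\R_+^n}\Gamma(y,\eta,s)\Gamma(\eta,\xi',s'){\cal B}(\eta)d\eta=\Gamma(y,\xi',s+s')$, which is precisely what the paper displays. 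Your flagging of the boundary-trace validity of Chapman--Kolmogorov and the Fubini justifications is a reasonable extra care step that the paper suppresses.
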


\begin{proof}
By using \eqref{Gamma1-eq},
we easily obtain the first formula.
Furthermore
by using
\[
 \int_{\R_+}\Gamma(y,\xi,s)\Gamma(\xi,z',s'){\cal B}(\xi)d\xi
=
 \Gamma(y,z',s+s')\
\hspace{5mm}\text{for } z'\in\pa\R_+^n,\ s,s'>0,
\]
we obtain the the second formula.
\end{proof}

\begin{lem}\label{(*-ell)-lem}
There exist $c>0$ and $\delta>0$ such that
if $|d|<\epsilon_1e^{-\lambda^*s_1}$,
then it holds that
\[
 \|\phi_\ell^*-\phi_{1\ell}\|_\rho
\leq
 ce^{-\delta s_1}.
\]
\end{lem}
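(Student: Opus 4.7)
The plan is to split $\R_+^n$ along the five breakpoints in the piecewise definition of $\phi_\ell^*$ and estimate $\|\phi_\ell^*-\phi_{1\ell}\|_\rho^2$ separately on each piece. Writing $\R_+^n = R_1 \cup R_1' \cup R_2 \cup R_3 \cup R_4$ with $R_1=\{r<He^{-\omega s_1}\}$, $R_1'=\{He^{-\omega s_1}\leq r\leq(H+1)e^{-\omega s_1}\}$, $R_2=\{(H+1)e^{-\omega s_1}<r<e^{\varrho s_1}\}$, $R_3=\{e^{\varrho s_1}\leq r\leq e^{\varrho s_1}+1\}$ and $R_4=\{r>e^{\varrho s_1}+1\}$, by construction $\phi_\ell^*\equiv\phi_{1\ell}$ on $R_2$, so this piece contributes nothing.

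On the outer pieces $R_3$ and $R_4$ the Gaussian weight $\rho(y)=e^{-|y|^2/4}$ is pointwise bounded by $e^{-e^{2\varrho s_1}/4}$, which is super-exponentially small in $s_1$. The piecewise formula for $\phi_\ell^*$, the pointwise control $|\phi_{1\ell}|+\sum|\phi_{ij}|\leq c\,e_1(\theta)(r^{-\gamma}+r^{2\lambda^*-m})$ from \eqref{c_1-eq} and Lemma \ref{2A-lem}, and the size constraint $|d_{ij}|<\epsilon_1 e^{-\lambda^*s_1}$ (which cancels the prefactor $e^{\lambda^*s_1}$ sitting in front of the $d$-term) together force $|\phi_\ell^*-\phi_{1\ell}|$ to grow at worst polynomially in $r$ on $R_3\cup R_4$. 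Integrating against the Gaussian then yields a contribution doubly exponentially small in $s_1$.

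On the inner pieces $R_1$ and $R_1'$ I would use the triangle inequality
\[
 \|\phi_\ell^*-\phi_{1\ell}\|_{L_\rho^2(R_1\cup R_1')} \leq \|\phi_\ell^*\|_{L_\rho^2(R_1\cup R_1')} + \|\phi_{1\ell}\|_{L_\rho^2(R_1\cup R_1')}.
\]
The scaling identity $e^{m\omega s_1}U_\alpha(e^{\omega s_1}y)=U_{\alpha_1}(y)$ with $\alpha_1=\alpha e^{m\omega s_1}$ (Theorem \ref{JLsuper-thm}\,(i)) combined with the monotonicity $U_{\alpha_1}\leq U_\infty$ gives $|e^{\lambda^*s_1}(U_\infty-U_{\alpha_1})|\leq e^{\lambda^*s_1}V(\theta)r^{-m}$, while the $d$-dependent piece of $\phi_\ell^*$ is dominated pointwise by $\epsilon_1\sum|\phi_{ij}|\leq c\,e_1(\theta)(r^{-\gamma}+r^{2\lambda^*-m})$ after cancelling $e^{\lambda^*s_1}$. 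All resulting weighted $L^2$-integrals reduce to integrals of $r^{n-1-2m}$ and $r^{n-1-2\gamma}$ over $(0,(H+1)e^{-\omega s_1})$. Using $\lambda^*=(\gamma-m)\omega$, $H=e^{a'\omega s_1}$ and the inequalities $m<\gamma<(n-2)/2$, one finds exponents of the form $\omega s_1(2\gamma-n+a'(n-2m))$ and $-(1-a')(n-2\gamma)\omega s_1$, both strictly negative once $a'$ is taken sufficiently small.

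The only real obstacle is careful bookkeeping: every integral produces a power $H^N=e^{a'N\omega s_1}$ that competes with the intrinsic decay rates $e^{-(n-2\gamma)\omega s_1}$ and $e^{-(n-2m)\omega s_1}$, and one must verify that the constant $a'$ in the definition of $H$ can be chosen once and for all (independently of $s_1$) so that every resulting exponent is strictly negative. Since $\gamma<(n-2)/2$ and $n>2m$ throughout the JL-supercritical range, this is always possible, and the minimum of the resulting negative exponents provides the decay rate $\delta>0$.
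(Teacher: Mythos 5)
Your proposal follows essentially the same route as the paper: you split $\R_+^n$ along the breakpoints of the piecewise definition of $\phi_\ell^*$, discard the middle region where $\phi_\ell^*\equiv\phi_{1\ell}$, kill the outer pieces using the super-exponential decay of the Gaussian weight against the at worst $e^{\lambda^*s_1}\cdot\text{(polynomial)}$ growth of $\phi_\ell^*-\phi_{1\ell}$, and handle the inner pieces by power-counting the $r^{-2m}$ and $r^{-2\gamma}$ integrals over $(0,(H+1)e^{-\omega s_1})$ using $\lambda^*=(\gamma-m)\omega$, $H=e^{a'\omega s_1}$ and $m<\gamma<(n-2)/2$. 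This is exactly the paper's argument, and your observation that the constant $a'$ must be chosen small enough for all resulting exponents to be negative is precisely what the paper encodes by stipulating $0<a'\ll1$.
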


\begin{proof}
By definition of $\phi_\ell^*$,
it is easily seen that
\[
 \|\phi_\ell^*-\phi_{1\ell}\|_\rho^2
\leq
 2\left( \int_0^{(H+1)e^{-\omega s_1}}+\int_{e^{\varrho s_1}}^\infty \right)
 \left( \phi_\ell^*(y)^2+\phi_{1\ell}(y)^2 \right)\rho(y)dy.
\]
Here
from Theorem \ref{JLsuper-thm},
we see that
\[
 |\phi_\ell^*(y)|
<
\begin{cases}
\dis
 U_\infty(y) + e^{\lambda s_1}d_{\max}\sum_{(i,j)\in\Pi}|\phi_{ij}(y)|
\hspace{5mm} \text{for } |y|<He^{-\omega s_1},
\\[2mm] \dis
 2k_\alpha e_1(\theta)|y|^{-\gamma}
 +
 e^{\lambda s_1}d_{\max}\sum_{(i,j)\in\Pi}|\phi_{ij}(y)|
 +
 |\phi_{1\ell}(y)|,
 &
\\[2mm] \dis \hspace{50mm}
 \text{for } He^{-\omega s_1}<|y|<(H+1)e^{-\omega s_1}.
\\[2mm] \dis
 e^{\lambda^*s_1}\left( U_\infty(y)+d_{\max}\sum_{(i,j)\in\Pi}|\phi_{ij}| \right) +|\phi_{1\ell}|
\hspace{5mm} \text{for } |y|>e^{\varrho s_1}.
\end{cases}
\]
Therefore
since
$|\phi_{ij}(y)|<c_{ij}|y|^{-\gamma}$ for $|y|\ll1$ (see Lemma \ref{2A-lem}),
by using $m<\gamma<(n-2)/2$ and $H>1$,
we obtain
\[
\begin{array}{lll}
\dis
 \int_0^{(H+1)e^{-\omega s_1}}
 \left( \phi_\ell^*(y)^2+\phi_{1\ell}(y)^2 \right)\rho(y)dy
\hspace{-2mm}&\leq&\hspace{-2mm} \dis
 c\left( 1+e^{-2\lambda^*s_1}d_{\max}^2 \right)
 \int_0^{2He^{-\omega s_1}}
 \left( |y|^{-2m}+|y|^{-2\gamma} \right)dy
\\[4mm]
\hspace{-2mm}&=&\hspace{-2mm} \dis
 c\left( 1+e^{-2\lambda^*s_1}d_{\max}^2 \right)H^{n-2\gamma}e^{-(n-2\gamma)\omega s_1}.
\end{array}
\]
Furthermore
since $|\phi_{ij}(y)|<c_{ij}|y|^{2\lambda_{ij}-m}$ for $|y|\gg1$ (see Lemma \ref{2A-lem}),
we see that
\[
\begin{array}{lll}
\dis
 \int_{e^{\varrho s_1}}^\infty
 \left( \phi_\ell^*(y)^2+\phi_{1\ell}(y)^2 \right)\rho(y)dy
\hspace{-2mm}&=&\hspace{-2mm} \dis
 c\int_{e^{\varrho s_1}}^\infty
 \left(
 e^{2\lambda^*s_1}|y|^{-2m}+|y|^{4\lambda^*-2m}
 \right)
 \rho(y)dy
\\[4mm]
\hspace{-2mm}&\leq&\hspace{-2mm} \dis
 ce^{2\lambda^*s_1}
 \int_{e^{\varrho s_1}}^\infty
 |y|^{4\lambda^*-2m}\rho(y)
 dy
\\[4mm]
\hspace{-2mm}&=&\hspace{-2mm} \dis
 ce^{2\lambda^*s_1}
 \int_{e^{\varrho s_1}}^\infty
 r^{4\lambda^*-2m+n-1}e^{-r^2/4}dr.
\end{array}
\]
Since
$r^{4\lambda^*-2m+n-1}e^{-r^2/4}\leq cre^{-r^2/8}$ for $r>1$,
it follows that
\[
\begin{array}{lll}
\dis
 \int_{e^{\varrho s_1}}^\infty
 \left( \phi_\ell^*(y)^2+\phi_{1\ell}(y)^2 \right)\rho(y)dy
\leq
 ce^{2\lambda^*s_1}\exp\left( -\frac{e^{2\varrho s_1}}{8} \right).
\end{array}
\]
Therefore
by definition of $H$,
it follows that
\[
\begin{array}{lll}
\dis
 \|\phi_\ell^*-\phi_{1\ell}\|_\rho^2
\hspace{-2mm}&\leq&\hspace{-2mm} \dis
 c\left(
 H^{n-2\gamma}e^{-(n-2\gamma)\omega s_1}+e^{2\lambda^*s_1}\exp\left( -\frac{e^{2\varrho s_1}}{8} \right)
 \right)
\\[5mm]
\hspace{-2mm}&\leq&\hspace{-2mm} \dis
 c\left(
 e^{-(n-2\gamma)(\omega-a')s_1}+e^{2\lambda^*s_1}\exp\left( -\frac{e^{2\varrho s_1}}{8} \right)
 \right).
\end{array}
\]
Since $0<a'\ll1$,
we obtain the conclusion.
\end{proof}

\begin{lem}\label{Feta_ij-lem}
There exists $\delta>0$ such that for any $(i,j)\in\N^2$
there exists $c_{ij}>0$ such that
if $\varphi(y,s)\in A_{s_1,s_2}$,
then it holds that
\[
 |(F(s),\eta_{ij})_{{\cal C},\pa\R_+^n}|
\leq
 c_{ij}e^{-(\lambda^*+\delta)s}
\hspace{5mm}\mathrm{for}\ s_1<s<s_2.
\]
\end{lem}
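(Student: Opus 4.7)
The plan is to reduce $(F(s),\eta_{ij})_{{\cal C},\pa\R_+^n}$ to a weighted radial integral and then exploit the region-dependent bounds of Lemma~\ref{f(Phi)-lem}. Using $\sigma(\xi')=e_1(\pi/2)r^{-\gamma}$ on $\pa\R_+^n$, the identity ${\cal C}=\sigma^2\rho$, and $x_n$-axial symmetry, the pairing reduces to
\[(F(s),\eta_{ij})_{{\cal C},\pa\R_+^n}
= c\int_0^\infty f(\Phi(r,s))\h\eta_{ij}(r)\h r^{-\gamma}e^{-r^2/4}r^{n-2}dr\]
for a positive constant $c$. From the bounds on $\eta_{ij}$ collected in the appendix (cf.~Lemma~\ref{2A-lem}), $\eta_{ij}$ restricted to the boundary is bounded near $r=0$ and of at most polynomial growth at infinity, so the main work reduces to estimating the radial integral against $|f|$.

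I would split the radial domain at the three scales $r=e^{-\omega s}$, $r=1$, and $r=e^{\sigma s}$ and apply the sharper of the two bounds of Lemma~\ref{f(Phi)-lem} on each piece. On $(0,e^{-\omega s})$ I would use $|f|\le cr^{-(m+1)}$, obtaining $\int_0^{e^{-\omega s}}r^{n-m-\gamma-3}dr$; convergence at $0$ requires $n-m-\gamma-2>0$, which follows from $m<\gamma<(n-2)/2$ recorded just before \eqref{sigma-eq}, and the resulting bound is of order $e^{-(n-m-\gamma-2)\omega s}$. Using $\omega=\lambda^*/(\gamma-m)$ this equals $\exp\left(-\frac{n-m-\gamma-2}{\gamma-m}\lambda^*s\right)$, strictly faster than $e^{-\lambda^*s}$ precisely because $n-2\gamma>2$. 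On $(e^{-\omega s},1)$ the better bound $|f|\le ce^{-2\lambda^*s}r^{m-1-2\gamma}$ applies and yields $e^{-2\lambda^*s}\int_{e^{-\omega s}}^1 r^{n+m-3\gamma-3}dr$; this integral is bounded, logarithmic in $s$, or of order $e^{-(n+m-3\gamma-2)\omega s}$ according to the sign of $n+m-3\gamma-2$, but in every case the same algebraic manipulation using $\omega(\gamma-m)=\lambda^*$ collapses the total exponent to $-((n-m-\gamma-2)/(\gamma-m))\lambda^*s$, hence again strictly below $-\lambda^*s$.

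For the outer pieces the Gaussian weight $e^{-r^2/4}$ does the work: on $(1,e^{\sigma s})$ the bound $|f|\le ce^{-2\lambda^*s}r^{4\lambda^*-m-1}$ produces a contribution of order $e^{-2\lambda^*s}$ since $\int_1^\infty r^pe^{-r^2/4}dr<\infty$; on $(e^{\sigma s},\infty)$ the bound $|f|\le cr^{-(m+1)}$ together with the standard Gaussian tail yields the super-exponential decay $\exp(-e^{2\sigma s}/4)$. Choosing
\[0<\delta<\left(\frac{n-2\gamma-2}{\gamma-m}\right)\lambda^*,\]
all four contributions are dominated by $c_{ij}e^{-(\lambda^*+\delta)s}$, which is the desired estimate. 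The main obstacle lies in the middle region $(e^{-\omega s},1)$, where three distinct sign cases for $n+m-3\gamma-2$ must be tracked; only after pairing each resulting growth factor with the $e^{-2\lambda^*s}$ prefactor and invoking $\omega(\gamma-m)=\lambda^*$ does one see that every case produces the same critical exponent, whose positivity (and hence the existence of $\delta>0$) is precisely the content of the strict inequality $\gamma<(n-2)/2$.
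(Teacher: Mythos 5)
Your approach is sound and matches the paper's strategy in all essentials: reduce the pairing to a weighted radial integral over $\pa\R_+^n$ using $\sigma(\xi')=e_1(\pi/2)|\xi'|^{-\gamma}$ and ${\cal C}=\sigma^2\rho$, invoke the asymptotics of $\eta_{ij}$ (Lemma \ref{2A-lem} with $\lambda_{ij}\geq -(\gamma-m)/2$ from Lemma \ref{3A-lem}), split at the scales of Lemma \ref{f(Phi)-lem}, and observe that $n-2\gamma>2$ provides the strict gain over $e^{-\lambda^*s}$. The one genuine difference is where you place the inner cut: you split at $r=e^{-\omega s}$, the exact crossover of the two bounds in $\min\{r^{-(m+1)},e^{-2\lambda^*s}r^{-1+m-2\gamma}\}$ (you correctly verify the two bounds coincide there), whereas the paper splits at $r=He^{-\omega s}$ with $H=e^{a's_1}$ and then must carry a growing factor $H^{n-2-(\gamma+m)}$ that is tamed only because $a'\ll1$. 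Your choice removes that bookkeeping and makes the $n-2\gamma>2$ mechanism more transparent, so the argument is slightly cleaner than the paper's, at the cost of having to record the min-comparison explicitly.

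One small inaccuracy: the stated constraint $0<\delta<\frac{n-2\gamma-2}{\gamma-m}\lambda^*$ is not by itself sufficient. In the regime $n+m-3\gamma-2>0$ (and for the middle piece $(1,e^{\sigma s})$ in general) the contribution is only $O(e^{-2\lambda^*s})$, so you also need $\delta\leq\lambda^*$; when $n-2\gamma-2>\gamma-m$ your upper bound exceeds $\lambda^*$ and a $\delta$ near it would fail. The correct constraint is $\delta<\min\bigl\{\lambda^*,\frac{n-2\gamma-2}{\gamma-m}\lambda^*\bigr\}$. Since the lemma only asserts existence of some $\delta>0$, this is a cosmetic, not a fatal, slip, but it should be corrected.
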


\begin{proof}
Since $H<K$,
Lemma \ref{f(Phi)-lem} implies
\[
 |F(\xi',\tau)|
\leq
 c
 \begin{cases}
 |\xi'|^{-m-1+\gamma} & \text{if } |\xi'| <He^{-\omega\tau},
\\[1mm]
 e^{-2\lambda^*\tau}|\xi'|^{-1+m-\gamma}
 & \text{if } He^{-\omega\tau}<|\xi'| <1.
 \end{cases}
\]
From Lemma \ref{2A-lem} and Lemma \ref{3A-lem},
there exists $m_{ij}>0$ such that
$|\eta_{ij}(\xi)|<m_{ij}(1+|\xi|^{2\lambda_{ij}-m+\gamma})$ with $\lambda_{ij}\geq-(\gamma-m)/2$,
Hence
by using $|{\cal C}(\xi')|<c|\xi'|^{-2\gamma}\rho(\xi')$,
we get
\[
\begin{array}{l}
\dis
 \int_{\pa\R_+^n}
 F(\xi',s)\eta_{ij}{\cal C}\
 d\xi'
\leq
 c_{ij}\left(
 \int_0^{He^{-\omega s}}
 |\xi'|^{-m-1-\gamma}
 d\xi'
 +
 e^{-2\lambda^*s}\int_{He^{-\omega s}}^1
 |\xi'|^{-1+m-3\gamma}
 d\xi'
 \right.
\\[4mm] \dis \hspace{15mm}
 \left.
 +
 e^{-2\lambda^*s}\int_1^{e^{\sigma s}}
 |\xi'|^{-2m-1+4\lambda^*+2\lambda_{ij}}
 \rho(\xi')
 d\xi'
 +
 \int_{e^{\sigma s}}^\infty
 |\xi'|^{-2m-1+2\lambda_{ij}}
 \rho(\xi')
 d\xi'
 \right).
\end{array}
\]
Put $2d=(n-2)-2\gamma$.
Then
since $-1-2\gamma-d+(n-1)=d>0$,
we obtain
\[
\begin{array}{l}
\dis
 \int_{\pa\R_+^n}
 F(\xi',s)\eta_{ij}{\cal C}\
 d\xi'
\leq
 c_{ij}(He^{-\omega s})^{-m-\gamma+n-2}
 +
 ce^{-2\lambda^*s}(He^{-\omega s})^{-(\gamma-m)+d}
\\[3mm] \dis \hspace{5mm}
\times
 \int_{He^{-\omega s}}^1
 |\xi'|^{-1-2\gamma-d}
 d\xi'
+
 c_{ij}e^{-2\lambda^*s}
+
 c_{ij}e^{-2\lambda^*s}
 \int_{e^{\sigma s}}^\infty
 |\xi'|^{2\lambda^*/\sigma}|\xi'|^{-2m-1+2\lambda_{ij}}
 \rho(\xi')
 d\xi'
\\[6mm] \dis \hspace{2mm}
\leq
 c_{ij}\left(
 H^{n-2-(\gamma+m)}
 e^{-(n-2-2\gamma)\omega s}e^{-\lambda^*s}
 +
 H^{-(\gamma-m)+d}e^{-(\lambda^*+d\omega)s}
+
 e^{-2\lambda^*s}
 \right).
\end{array}
\]
Therefore
since $H=e^{a's_1}$ with $0<a'\ll1$,
we obtain the conclusion.
\end{proof}

\begin{lem}\label{d_max-lem}
There exist $\delta>0$ and $c>0$ such that
if $\varphi(y,s)\in A_{s_1,s_2}$ and
$(\Phi(y,s_2),\phi_{ij})_\rho=0$ for any $(i,j)\in\Pi$,
then it holds that
\[
 d_{\max} < ce^{-(\lambda^*+\delta)s_1}.
\]
\end{lem}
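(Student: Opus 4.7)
The plan is to exploit the Duhamel representation \eqref{b(y,s)express-eq} for $b(y,s)=\Phi(y,s)/\sigma(y)$ and project against the eigenfunctions $\eta_{ij}=\phi_{ij}/\sigma$ in the $L_{\cal C}^2$ inner product. The key observation is the conversion between the two weighted inner products: since ${\cal C}=\sigma^2\rho$, we have
\[
 (\Phi(y,s),\phi_{ij})_\rho=\int_{\R_+^n}\!\sigma b\cdot\phi_{ij}\,\rho\,dy=(b(y,s),\eta_{ij})_{\cal C}.
\]
In particular the hypothesis $(\Phi(s_2),\phi_{ij})_\rho=0$ translates to $(b(s_2),\eta_{ij})_{\cal C}=0$ for each $(i,j)\in\Pi$.

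Next I would apply the inner product $(\cdot,\eta_{ij})_{\cal C}$ to the Duhamel formula $b(s_2)=e^{{\cal A}(s_2-s_1)}b_1+\int_{s_1}^{s_2}e^{{\cal B}(s_2-\tau)}F(\tau)\,d\tau$. The self-adjointness of $e^{{\cal A}s}$ on $L_{\cal C}^2$ (recall \eqref{ANew-eq}) together with the fact that $\eta_{ij}$ is an eigenfunction with eigenvalue $\lambda_{ij}$ yields $(e^{{\cal A}(s_2-s_1)}b_1,\eta_{ij})_{\cal C}=e^{-\lambda_{ij}(s_2-s_1)}(b_1,\eta_{ij})_{\cal C}$, while Lemma \ref{A'F-lem} gives $(e^{{\cal B}(s_2-\tau)}F(\tau),\eta_{ij})_{\cal C}=e^{-\lambda_{ij}(s_2-\tau)}(F(\tau),\eta_{ij})_{{\cal C},\pa\R_+^n}$. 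Setting the left side to zero and cancelling $e^{-\lambda_{ij}s_2}$, I obtain
\[
 (b_1,\eta_{ij})_{\cal C}=-\int_{s_1}^{s_2}e^{\lambda_{ij}(\tau-s_1)}(F(\tau),\eta_{ij})_{{\cal C},\pa\R_+^n}\,d\tau.
\]
By the orthonormality of $\{\phi_{ij}\}$ in $L_\rho^2$ and the definition $b_1=\sigma^{-1}\bigl(\sum_{(k,l)\in\Pi}d_{kl}\phi_{kl}-e^{-\lambda^*s_1}\phi_\ell^*\bigr)$, the left side equals $d_{ij}-e^{-\lambda^*s_1}(\phi_\ell^*,\phi_{ij})_\rho$, so
\[
 d_{ij}=e^{-\lambda^*s_1}(\phi_\ell^*,\phi_{ij})_\rho-\int_{s_1}^{s_2}e^{\lambda_{ij}(\tau-s_1)}(F(\tau),\eta_{ij})_{{\cal C},\pa\R_+^n}\,d\tau.
\]

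To estimate the first term, I write $(\phi_\ell^*,\phi_{ij})_\rho=(\phi_\ell^*-\phi_{1\ell},\phi_{ij})_\rho$, using that $(\phi_{1\ell},\phi_{ij})_\rho=0$ for $(i,j)\in\Pi$ (they correspond to different eigenvalues $\lambda_{ij}<\lambda^*=\lambda_{1\ell}$). Cauchy--Schwarz and Lemma \ref{(*-ell)-lem} then give $|e^{-\lambda^*s_1}(\phi_\ell^*,\phi_{ij})_\rho|\leq ce^{-(\lambda^*+\delta)s_1}$. For the integral term I apply Lemma \ref{Feta_ij-lem} to bound $|(F(\tau),\eta_{ij})_{{\cal C},\pa\R_+^n}|\leq c_{ij}e^{-(\lambda^*+\delta')\tau}$, so that
\[
 \left|\int_{s_1}^{s_2}e^{\lambda_{ij}(\tau-s_1)}(F(\tau),\eta_{ij})_{{\cal C},\pa\R_+^n}\,d\tau\right|
 \leq c_{ij}e^{-\lambda_{ij}s_1}\!\int_{s_1}^{\infty}\!e^{-(\lambda^*+\delta'-\lambda_{ij})\tau}\,d\tau
 \leq c\,e^{-(\lambda^*+\delta')s_1},
\]
where the integral converges since $\lambda_{ij}<\lambda^*$ makes the exponent strictly negative, and the factor $e^{-\lambda_{ij}s_1}$ exactly cancels the $e^{-\lambda_{ij}s_1}$ produced by evaluating the integral at $\tau=s_1$.

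Taking the maximum over $(i,j)\in\Pi$ (a finite set) produces the desired bound $d_{\max}\leq ce^{-(\lambda^*+\delta)s_1}$ after relabeling $\delta$. The main technical point to verify is the uniform convergence of the integral and the cancellation between the $e^{\lambda_{ij}}$ growth of the semigroup factor and the $e^{-(\lambda^*+\delta)\tau}$ decay of the projection of $F$; this depends crucially on the gap $\lambda^*-\lambda_{ij}>0$ built into the definition of $\Pi$ and on the fact that the decay exponent provided by Lemma \ref{Feta_ij-lem} beats $\lambda^*$ by a strictly positive margin.
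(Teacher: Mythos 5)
Your proof is correct and follows essentially the same route as the paper's: project the Duhamel representation against $\eta_{ij}$ in $L_{\cal C}^2$, use Lemma \ref{A'F-lem} (equivalently, self-adjointness plus the eigenvalue relation) to diagonalize, express $(b_1,\eta_{ij})_{\cal C}=d_{ij}-e^{-\lambda^*s_1}(\phi_\ell^*,\phi_{ij})_\rho$ via orthonormality, then bound the two resulting terms by Lemma \ref{(*-ell)-lem} (together with the orthogonality $(\phi_{1\ell},\phi_{ij})_\rho=0$) and Lemma \ref{Feta_ij-lem} with the spectral gap $\lambda_{ij}<\lambda^*$. You spell out the integral convergence and exponent bookkeeping a bit more explicitly than the paper, but the argument is the same.
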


\begin{proof}
From \eqref{b(y,s)express-eq} and Lemma \ref{A'F-lem},
we get for $(i,j)\in\Pi$
\[
\begin{array}{lll}
\dis
 0
\hspace{-2mm}&=&\hspace{-2mm} \dis
 (e^{-A(s_2-s_1)}b_1,\eta_{ij})_{\cal C} +
 \int_{s_1}^s\left( e^{{\cal B}(s_2-\tau)}F(\tau),\eta_{ij} \right)_{\cal C}
 d\tau
\\[4mm]
\hspace{-2mm}&=&\hspace{-2mm} \dis
 e^{-\lambda_{ij}(s_2-s_1)}(b_1,\eta_{ij})_{\cal C}
+
 \int_{s_1}^se^{-\lambda_{ij}(s_2-\tau)}
 d\tau
 \int_{\pa\R_+^n}
 F(\xi',\tau)\eta_{ij}(\xi'){\cal C}(\xi')
 d\xi'.
\end{array}
\]
Here by definition of $\varphi(y,s_1)$,
we see that
\[
 (b_1,\eta_{ij})_{\cal C}
=
 (\Phi(s_1),\phi_{ij})_\rho
=
 d_{ij} - e^{-\lambda^*s_1}(\phi_\ell^*,\phi_{ij})_\rho.
\]
This implies for $(i,j)\in\Pi$
\[
 d_{ij}
=
 -\int_{s_1}^se^{\lambda_{ij}(\tau-s_1)}
 d\tau
 \int_{\pa\R_+^n}
 F(\xi',\tau)\eta_{ij}(\xi'){\cal C}(\xi')
 d\xi'
+
 e^{-\lambda^*s_1}(\phi_\ell^*,\phi_{ij})_\rho.
\]
Then
from Lemma \ref{Feta_ij-lem} and $\lambda^*>\lambda_{ij}$ for $(i,j)\in\Pi$,
we obtain
\[
 \int_{s_1}^se^{\lambda_{ij}(\tau-s)}
 d\tau
 \int_{\pa\R_+^n}|F\eta_{ij}|{\cal C}
 d\xi'
\leq
 ce^{-(\lambda^*+\delta)s_1}.
\]
Next
we estimate $(\phi_\ell^*,\phi_{ij})_\rho$.
Since $(\phi_{1\ell},\phi_{ij})_\rho=0$ for $(i,j)\in\Pi$,
it holds that
\[
 (\phi_\ell^*,\phi_{ij})_\rho = (\phi_\ell^*-\phi_{1\ell},\phi_{ij})_\rho
\hspace{5mm}\text{for } (i,j)\in\Pi.
\]
Therefore
Lemma \ref{(*-ell)-lem} implies
\[
|(\phi_\ell^*,\phi_{ij})_\rho| < ce^{-\delta s_1}.
\]
Thus the proof is completed.
\end{proof}

We divide $b(y,s)$ several parts to estimate each term separately.
By definition of $\Phi(y,s_1)$,
$b(y,s)$ is rewritten by
\begin{equation}\label{b(s)express-eq}
\begin{array}{lll}
\dis
 b(y,s)
\hspace{-2mm}&=&\hspace{-2mm} \dis
 e^{{\cal A}(s-s_1)}b_1
+
 \int_{s_1}^se^{-{\cal B}(s-\tau)}F(\tau)d\tau ,
\\[4mm]
\hspace{-2mm}&=&\hspace{-2mm} \dis
 -e^{-\lambda^*s}(\phi_\ell^*,\phi_{1\ell})_\rho\eta_{1\ell}
+
 e^{-{\cal A}(s-s_1)}
 \left( b_1+e^{-\lambda^*s_1}(\phi_\ell^*,\phi_{1\ell})_\rho\eta_{1\ell} \right)
+
 S_2
\\[4mm]
\hspace{-2mm}&=&\hspace{-2mm} \dis
 -e^{-\lambda^*s}\eta_{1\ell}
+
 -e^{-\lambda^*s}(\phi_\ell^*-\phi_{1\ell},\phi_{1\ell})_\rho\eta_{1\ell}
+
 S_1 + S_2.
\end{array}
\end{equation}
To provide estimates of $S_i$ ($i=1,2$),
we consider three cases separately. 
\[
\begin{array}{lll}
\dis
 O_{\text{Short}}
\hspace{-2mm}&=&\hspace{-2mm} \dis
 \left\{ (y,s);\ Ke^{-\omega s}<r<e^{\sigma s},\ s_1<s<s_1+1 \right\},
\\[3mm]
 O_{\text{Long}}
\hspace{-2mm}&=&\hspace{-2mm} \dis
 \left\{ (y,s);\ Ke^{-\omega s}<r<e^{\sigma s},\ s>s_1+1 \right\},
\\[3mm] 
 O_{\text{Ext}}
\hspace{-2mm}&=&\hspace{-2mm} \dis
 \left\{ (y,s);\ r>Ke^{\sigma s},\ s>s_1 \right\}.
\end{array}
\]

\section{Short time estimates}\label{Shorttime-sec}

In this section,
we provide the estimate in
\[
 O_{\text{Short}}=\left\{ (y,s);\ Ke^{-\omega s}<r<e^{\sigma s},\ s_1<s<s_1+1 \right\}.
\]
Therefore
throughout this section,
we always assume $(y,s)\in O_{\text{Short}}$.
Furthermore
$|d|<\epsilon_1e^{-\lambda^*s_1}$ and $\varphi(y,s)\in A_{s_1,s_2}$
are also assumed in this section.
In this setting,
$\Gamma(y,\xi,s)$ is dominated by $\Theta(y,\xi,s)$ as follows.
\begin{equation}\label{GammaTheta-eq}
 \Gamma(y,\xi,s-s_1) \leq e^{(\gamma-m)/2}\Theta(y,\xi,s-s_1)
\hspace{5mm}\text{for } s_1<s<s_1+1.
\end{equation}

\begin{lem}\label{ShortS_1-lem}
Let $\nu(s)$ be given in Lemma {\rm\ref{phi-initial-lem}}.
Then
there exist $\delta>0$ and $c>0$ such that
\[
 |S_1(y,s)|
\leq
 c\left( \left( \nu(s_1)+e^{-\delta s_1} \right)e^{-\lambda^*s}+d_{\max} \right)
 \left( 1+|y|^{2\lambda^*-m+\gamma} \right)
\hh
 \mathrm{in}\ O_{\mathrm{Short}}.
\]
\end{lem}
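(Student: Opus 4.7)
The plan is to exploit the identification $S_1(y,s)=e^{{\cal A}(s-s_1)}w_1(y)$ with $w_1:=b_1+e^{-\lambda^*s_1}(\phi_\ell^*,\phi_{1\ell})_\rho\eta_{1\ell}$ read off from \eqref{b(s)express-eq}, and to analyze the action of the semigroup through the kernel representation of Proposition \ref{Rep-pro}. Using $(\phi_\ell^*,\phi_{1\ell})_\rho=1+(\phi_\ell^*-\phi_{1\ell},\phi_{1\ell})_\rho$, I split $\sigma w_1=[\Phi(\cdot,s_1)+e^{-\lambda^*s_1}\phi_{1\ell}]+e^{-\lambda^*s_1}(\phi_\ell^*-\phi_{1\ell},\phi_{1\ell})_\rho\phi_{1\ell}$. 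The second summand, acted on by $e^{{\cal A}(s-s_1)}$, simplifies to $e^{-\lambda^*s}(\phi_\ell^*-\phi_{1\ell},\phi_{1\ell})_\rho\eta_{1\ell}(y)$ since $\eta_{1\ell}$ is an eigenfunction with eigenvalue $\lambda^*=\lambda_{1\ell}$; Schwarz and Lemma \ref{(*-ell)-lem} bound the scalar by $ce^{-\delta s_1}$, and the asymptotics of $\phi_{1\ell}$ from Lemma \ref{2A-lem} combined with $\sigma=r^{-\gamma}e_1(\theta)$ yield $|\eta_{1\ell}(y)|\leq c(1+|y|^{2\lambda^*-m+\gamma})$, producing a contribution already compatible with the claim.

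For the main piece $e^{{\cal A}(s-s_1)}[(\Phi(\cdot,s_1)+e^{-\lambda^*s_1}\phi_{1\ell})/\sigma]$ I apply Proposition \ref{Rep-pro} together with $\Gamma\leq e^{(\gamma-m)/2}\Theta$ from \eqref{GammaTheta-eq} and split the $\xi$-integral into the three annular regions dictated by Lemma \ref{phi-initial-lem}: (A) $|\xi|<He^{-\omega s_1}$, (B) $He^{-\omega s_1}<|\xi|<e^{\varrho s_1}$, and (C) $|\xi|>e^{\varrho s_1}$. In region (B), Lemma \ref{phi-initial-lem} gives $|\Phi(\xi,s_1)+e^{-\lambda^*s_1}\phi_{1\ell}|\leq(\nu(s_1)e^{-\lambda^*s_1}+c_1d_{\max})e_1(\theta)(|\xi|^{-\gamma}+|\xi|^{2\lambda^*-m})$, and a routine Gaussian bookkeeping against the weight $\sigma(\xi)$ shows that the resulting kernel integral is at most $c(1+|y|^{2\lambda^*-m+\gamma})$, yielding the $(\nu(s_1)e^{-\lambda^*s_1}+d_{\max})(1+|y|^{2\lambda^*-m+\gamma})$ contribution. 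In region (C) the pointwise size $|\Phi|\leq 2U_\infty$ is more than offset by the spatial gap $|\xi|-|e^{-(s-s_1)/2}y|\geq\frac{1}{2}e^{\varrho s_1}$ (using $\sigma<\varrho$ and $s<s_1+1$), whose Gaussian tail in Proposition \ref{Rep-pro} gives double-exponential suppression $\exp(-ce^{2\varrho s_1})$, far smaller than required.

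The principal obstacle is region (A), where $|\sigma w_1|$ is of order $U_\infty(\xi)\sim|\xi|^{-m}$ without any small prefactor. Two sources of smallness must be combined. First, the spatial separation $|e^{-(s-s_1)/2}y-\xi|\geq cKe^{-\omega s}$ holds because $|y|\geq Ke^{-\omega s}$ while $|\xi|<He^{-\omega s_1}\ll Ke^{-\omega s}$ once $s_1$ is large (owing to $K/H=e^{(a-a')\omega s_1}\to\infty$), producing a Gaussian factor $\exp(-cK^2e^{-2\omega s}/(s-s_1))=\exp(-ce^{(2a-2)\omega s_1}/(s-s_1))$ in the kernel. Second, the weighted volume satisfies $\int_{|\xi|<He^{-\omega s_1}}|\sigma w_1|\sigma\,d\xi\leq cH^{n-\gamma-m}e^{-(n-\gamma-m)\omega s_1}=ce^{-(1-a')(n-\gamma-m)\omega s_1}$. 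When $s-s_1$ is small enough that the Gaussian argument exceeds $1$, the Gaussian beats the divergent prefactor $(s-s_1)^{\gamma-n/2}$ appearing in Proposition \ref{Rep-pro}; when $s-s_1\sim1$ the volume decay alone gives $e^{-\delta's_1}$ with $\delta'>0$, since $\gamma>m$ and $a'$ is chosen small. Since $|y|<e^{\sigma(s_1+1)}$ on $O_{\mathrm{Short}}$, the growth factor $|y|^{2\lambda^*-m+\gamma}$ is at worst polynomial in $e^{s_1}$ and can be absorbed into a smaller $\delta$. Converting $e^{-\lambda^*s_1}$ to $e^{-\lambda^*s}$ via $s-s_1<1$, the region-(A) contribution is bounded by $ce^{-\delta s_1}e^{-\lambda^*s}(1+|y|^{2\lambda^*-m+\gamma})$, which combined with the contributions from (B), (C), and the $\eta_{1\ell}$-correction completes the proof.
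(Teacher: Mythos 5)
Your decomposition of $S_1$ and the identification of the $\eta_{1\ell}$-correction are the same as the paper's, and your treatment of the regions $He^{-\omega s_1}<|\xi|<e^{\varrho s_1}$ and $|\xi|>e^{\varrho s_1}$ matches the paper's $I_2,I_3,I_4$ estimates (the paper splits the middle region at $|\xi|=1$, but this is cosmetic). However, your argument for the innermost region $|\xi|<He^{-\omega s_1}$ — the paper's $I_1$ — contains a genuine error.

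You assert that ``when $s-s_1$ is small enough that the Gaussian argument exceeds $1$, the Gaussian beats the divergent prefactor $(s-s_1)^{\gamma-n/2}$,'' and that for $s-s_1\sim1$ the volume decay alone suffices. This dichotomy does not close the estimate. Set $\tau_*\sim K^2e^{-2\omega s}\sim e^{(2a-2)\omega s_1}$. The function $t\mapsto t^{\gamma-n/2}\exp(-c\tau_*/t)$ does tend to $0$ as $t\to0$, but it is \emph{not} bounded by a constant on $(0,\tau_*]$: its maximum over $t\in(0,1)$ is attained near $t\sim\tau_*$ and equals, up to constants, $\tau_*^{\gamma-n/2}=K^{2\gamma-n}e^{(n-2\gamma)\omega s}$, which is exponentially large in $s_1$ since $a<1$. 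So the Gaussian tames the $t\to0$ singularity but does not ``beat the divergent prefactor''; you are left with a kernel factor of size $e^{(1-a)(n-2\gamma)\omega s_1}$ that must still be absorbed by the volume decay $H^{n-\gamma-m}e^{-(n-\gamma-m)\omega s_1}$. That balance \emph{can} be closed, but only after one checks that $(1-a)(n-2\gamma)-(n-\gamma-m)+a'(n-\gamma-m)<-(\gamma-m)$, which requires the margin $a(n-2\gamma)>0$ coming from the exponent $a$ of $K$ as well as $a'$ being small; your ``since $\gamma>m$ and $a'$ is chosen small'' omits the essential role of $a$, and in the limit $a\to0$ the inequality becomes only marginal. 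Thus your region-(A) bound, as written, is not established.

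The paper avoids this entirely with a cleaner device: after inserting the separation $|e^{-(s-s_1)/2}y-\xi|\geq c|y|$ into the heat kernel of Proposition \ref{Rep-pro}, one rescales $\xi$ and observes that the factor $(1-e^{-(s-s_1)})^{-1/2}e^{-c|Y|^2}$, with $|Y|=|y|/\sqrt{1-e^{-(s-s_1)}}$, equals $|Y|e^{-c|Y|^2}/|y|\leq c/|y|$, since $|Y|e^{-c|Y|^2}$ is uniformly bounded. This extracts exactly one inverse power of $|y|$, hence one factor of $K^{-1}e^{\omega s}$, and yields the clean bound $I_1\leq c(H^{\gamma-m+1}/K)e^{-\lambda^*s_1}$ uniformly in $s-s_1\in(0,1)$ — no case split over $s-s_1$, no crossover analysis, and only a one-sided comparison of exponents in $H$ versus $K$. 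You should replace your case analysis for region (A) by this trick, or else carry out the crossover computation explicitly and track the contribution of $a$ to the decay rate.
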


\begin{proof}
For simplicity of notations,
we put
\[
 X=\xi/\sqrt{1-e^{-(s-s_1)}},
\hspace{7.5mm}
 Y=y/\sqrt{1-e^{-(s-s_1)}},
\hspace{7.5mm}
 Z=e^{-(s-s_1)/2}y/\sqrt{1-e^{-(s-s_1)}}.
\]
By definition of $\varphi(y,s_1)$,
it is easily seen that
\begin{equation}\label{S_1express-eq}
\begin{array}{llll}
\dis
 S_1
\hspace{-2mm}&=&\hspace{-2mm} \dis
 e^{{\cal A}(s-s_1)}\left( b_1+e^{-\lambda^*s_1}(\phi_\ell^*,\phi_{1\ell})_\rho\eta_{1\ell} \right)
\\[2mm]
\hspace{-2mm}&=&\hspace{-2mm} \dis
 e^{{\cal A}(s-s_1)}\left( b_1+e^{-\lambda^*s_1}\eta_{1\ell} \right)
+
 e^{-\lambda^*s}(\phi_\ell^*-\phi_{1\ell},\phi_{1\ell})_\rho
 \eta_{1\ell}.
\end{array}
\end{equation}
Since $\eta_{ij}(y)=\phi_{ij}(y)/\sigma(y)$ ($\sigma(y)\sim|y|^{-\gamma}$)
and $|\phi_{ij}(y)|<c(|y|^{-\gamma}+|y|^{2\lambda_{ij}-m})$,
by using Lemma \ref{(*-ell)-lem},
we obtain
\[
 |(\phi_\ell^*-\phi_{1\ell},\phi_{1\ell})_\rho\eta_{1\ell}|
\leq
 ce^{-\delta s_1}\left( 1+|y|^{2\lambda_{ij}-m+\gamma} \right)
\leq
 ce^{-\delta s_1}\left( 1+|y|^{2\lambda^*-m+\gamma} \right).
\]
Here
we used
$-(\gamma-m)/2\leq\lambda_{ij}<\lambda^*$ (see Lemma \ref{3A-lem}).
Therefore
it is sufficient to estimate the first term on the right-hand side in \eqref{S_1express-eq},
which is denoted by $S_1'$.
For simplicity,
here we denote $\Theta(y,\xi,s-s_1)$ by $\Theta$.
Since $b(y,s_1)+e^{-\lambda^*s_1}\eta_{1\ell}=(\Phi(y,s_1)+e^{-\lambda^*s_1}\phi_{ij})/\sigma(y)$,
by Lemma \ref{phi-initial-lem},
we see that
\[
\begin{array}{lll}
\dis
 |S_1'|
\hspace{-2mm}&\leq&\hspace{-2mm} \dis
 c\int_0^{He^{-\omega s_1}}
 \Theta\cdot
 \left( |\xi|^{-m+\gamma}+e^{-\lambda^*s_1} \right)
 {\cal B}d\xi
+c
 \left( \nu(s_1)e^{-\lambda^*s_1}+d_{\text{max}} \right)
\\[4mm] \hspace{-2mm}&&\hspace{-2mm} \dis
\times
 \left(
 \int_{He^{-\omega s_1}}^1
 \Theta\cdot{\cal B}d\xi
 +
 \int_1^{e^{\varrho s_1}}
 \Theta\cdot|\xi|^{2\lambda^*-m+\gamma}{\cal B}d\xi
 \right)
+
 c\int_{e^{\varrho s_1}}^\infty
 \Theta\cdot |\xi|^{-m+\gamma}{\cal B}d\xi
\\[6mm] \hspace{-2mm}&=:&\hspace{-2mm}\dis
 cI_1 + c\left( \nu(s_1)e^{-\lambda^*s_1}+d_{\text{max}} \right)(I_2 + I_3) + cI_4.
\end{array}
\]
First
we provide the estimate of $I_1$.
Since $|y|>Ke^{-\omega s}$, $s_1<s<s_1+1$ and $H\ll K$,
it holds that
\[
 |e^{-(s-s_1)/2}y-\xi|\geq c|y|\geq c(|y|+|\xi|)
\hspace{5mm}\text{for } |\xi|<He^{-\omega s_1}.
\]
Then
we see that
\[
\begin{array}{lll}
\dis
 I_1
\hspace{-2mm}&\leq&\hspace{-2mm} \dis
 \frac{\dis c\left( (He^{-\omega s_1})^{\gamma-m}+e^{-\lambda^*s_1} \right)}{(1-e^{-(s-s_1)})^{n/2}}
 e^{-c|Y|^2}
 \int_0^{He^{-\omega s_1}}
 e^{-c|X|^2}
 \left( 1+\frac{1}{|X|} \right)^{2\gamma}
 d\xi
\\[4mm]
\hspace{-2mm}&\leq&\hspace{-2mm} \dis
 \frac{\dis cH^{\gamma-m}e^{-\lambda^*s_1}e^{-c|Y|^2}\left( He^{-\omega s_1} \right)}
 {(1-e^{-(s-s_1)})^{n/2}}
 \int_0^{He^{-\omega s_1}}
 e^{-c|X|^2}
 \left( 1+\frac{1}{|X|} \right)^{2\gamma}|\xi|^{-1}
 d\xi
\\[4mm]
\hspace{-2mm}&\leq&\hspace{-2mm} \dis
 \frac{cH^{\gamma-m+1}e^{-\lambda^*s_1}e^{-\omega s_1}}{|y|}
 \left( |Y|e^{-c|Y|^2} \right)
 \int_{\R_+^n}
 \left( 1+\frac{1}{|z|} \right)^{2\gamma}|z|^{-1}
 e^{-c|z|^2}dz.
\end{array}
\]
Since $|Y|e^{-c|Y|^2}$ is uniformly bounded on  $|Y|>0$,
from $|y|>Ke^{-\omega s}$,
it holds that
\[
 I_1
\leq
 c\left( \frac{H^{\gamma-m+1}}{K} \right)
 e^{-\lambda^*s_1}.
\]
Next
we estimate $I_2$.
Since $b(y,s)=e^{{\cal A}_0(s-s_1)}b_0$ is a solution of
$b_s=\frac{1}{\cal C}\nabla({\cal C}\nabla b)$ in $\R_+^n\times(s_1,\infty)$,
$\pa_\nu b=0$ on $\pa\R_+^n\times(s_1,\infty)$
and
$b(y,s_1)=b_0(y)$ in $\R_+^n$,
we see that
\[
 I_2
=
 \int_{He^{-\omega s_1}}^1\Theta\cdot{\cal B}d\xi
\leq
 \int_{\R_+^n}\Theta\cdot{\cal B}d\xi
=
 e^{{\cal A}_0(s-s_1)}1
=
 1.
\]
Furthermore
to calculate $I_3$,
we divide it into two parts.
\[
 I_3 =
 \left( \int_1^{\max\{1,2e^{-(s-s_1)/2}|y|\}}+\int_{\max\{1,2e^{-(s-s_1)/2}|y|\}}^{e^{\varrho s}} \right)
 \Theta\cdot|\xi|^{2\lambda^*-m+\gamma}{\cal B}d\xi
 =: I_3' + I_3''.
\]
Then $I_3'$ is estimated by
\[
 I_3'
\leq
 \frac{c|y|^{2\lambda^*-m+\gamma}}{(1-e^{-(s-s_1)})^{n/2}}
 \int_1^{\max\{1,2e^{-(s-s_1)/2}|y|\}}e^{-c|Z|^2}d\xi
\leq
 c|y|^{2\lambda^*-m+\gamma}.
\]
Furthermore
since $|e^{-(s-s_1)/2}y-\xi|\geq|\xi|/2$ for $|\xi|>\max\{1,2e^{-(s-s_1)/2}|y|\}$,
we see that
\[
\begin{array}{lll}
\dis
 I_3''
\hspace{-2mm}&\leq&\hspace{-2mm} \dis
 \frac{c}{(1-e^{-(s-s_1)})^{n/2}}
 \int_{\max\{1,2e^{-(s-s_1)/2}|y|\}}^{e^{\varrho s}}
 \left( 1+|X|^{-1} \right)^{2\gamma}
 e^{-c|X|^2}|\xi|^{2\lambda^*-m+\gamma}
 d\xi
\\[6mm]
\hspace{-2mm}&\leq&\hspace{-2mm} \dis
 c\left( 1-e^{-(s-s_1)} \right)^{(2\lambda^*-m+\gamma)/2}
 \int_{\R^n}
 \left( 1+\frac{1}{|z|} \right)^{2\gamma}
 e^{-c|z|^2}|z|^{2\lambda^*-m+\gamma}dz.
\end{array}
\]
Since $\lambda^*>0$ and $\gamma>m$,
we find that $I_3''$ is uniformly bounded.
Finally
we provide the estimate of $I_4$.
Since $\sigma<\varrho $,
it holds that $|e^{-(s-s_1)/2}y-\xi|>|\xi|/2$ for $|\xi|>e^{\varrho s_1}$.
Therefore since $\gamma>m$,
we see that
\[
\begin{array}{lll}
\dis
 I_4
\hspace{-2mm}&\leq&\hspace{-2mm} \dis
 c\int_{e^{\varrho s_1}}^\infty
 \frac{\dis\left( 1+|X|^{-1} \right)^{2\gamma}}{(1-e^{-(s-s_1)})^{n/2}}
 |\xi|^{-m+\gamma}e^{-c|X|^2}
 d\xi
\\[4mm] \hspace{-2mm}&\leq&\hspace{-2mm} \dis
 c\left( 1-e^{-(s-s_1)} \right)^{(\gamma-m)/2}
 \int_{e^{\varrho s_1}/\sqrt{1-e^{-(s-s_1)}}}^\infty
 |z|^{\gamma-m}e^{-c|z|^2}
 d\xi
\\[4mm] \hspace{-2mm}&\leq&\hspace{-2mm} \dis
 c\int_{e^{\varrho s_1}}^\infty
 |z|^{\gamma-m}e^{-c|z|^2}
 d\xi.
\end{array}
\]
As a consequence,
by using $r^{\gamma-m+n-1}e^{-cr^2}\leq cre^{-c'r^2}$ for $r>1$,
we obtain
\[
 I_4
\leq
 c\int_{e^{\varrho s_1}}^\infty
 r^{\gamma-m}e^{-c'r^2}r^{n-1}dr
=
 c\int_{e^{\varrho s_1}}^\infty
 re^{-c'r^2}dr
=
 ce^{-c'e^{2\varrho s_1}}.
\]
Combining the above estimates,
we conclude
\[
 |S_1'|
\leq
 c\left(
 \left( \frac{H^{\gamma-m+1}}{K} \right)e^{-\lambda^*s_1}
 +
 \left( \nu(s_1)e^{-\lambda^*s_1}+d_{\text{max}} \right)
 \left( 1+|y|^{2\lambda^*-m+\gamma} \right)
 +
 e^{-ce^{-2\varrho s_1}}
 \right).
\]
Here
we recall that $H=e^{a's_1}$ and $K=e^{as_1}$ with $a'\ll a$.
Therefore
since $s_1<s<s_1+1$,
we obtain the conclusion.
\end{proof}

Here
we consider a more general form of an integral instead of $S_2$ for a later argument.
\[
 T(y,s) = \int_{\mu_1}^sd\tau\int_{\R_+^n}\Gamma(y,\xi',s-\tau)F(\xi',\tau)d\xi',
\]
where $\mu_1\in(s_1,s_2)$.
If we take $\mu_1=s_1$,
then $T(y,s)$ coincides with $S_2(y,s)$.

\begin{lem}\label{ShortS_2-lem}
There exist $\delta>0$ and $c>0$ independent of $\mu_1$ such that
if $s_1\leq\mu_1<s<\mu_1+1$,
then it holds that
\[
 |T(y,s)| \leq ce^{-\delta s_1}e^{-\lambda^*s}\left( 1+|y|^{2\lambda^*-m+\gamma} \right)
\hspace{5mm}
 \mathrm{for}\ Ke^{-\omega s}<|y|<e^{\sigma s}.
\]
\end{lem}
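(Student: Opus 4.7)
The strategy is to imitate the spatial splitting used in the proof of Lemma \ref{ShortS_1-lem}, but now applied to the double (time and boundary) integral defining $T(y,s)$. Since $s-\tau\in(0,1)$ throughout, \eqref{GammaTheta-eq} allows me to replace $\Gamma$ by $\Theta$ up to a bounded constant, and then invoke the explicit Gaussian upper bound of Proposition \ref{Rep-pro}. Writing $F(\xi',\tau)=f(\Phi(\xi',\tau))/\sigma(\xi')$ with $\sigma(\xi')\sim|\xi'|^{-\gamma}$ and ${\cal B}(\xi')\sim|\xi'|^{-2\gamma}$ near the origin, the pointwise control of $f(\Phi)$ furnished by Lemma \ref{f(Phi)-lem} gives an explicit power-law bound for $|F(\xi',\tau)|{\cal B}(\xi')$ in each of the four natural spatial regions.

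Concretely, I would decompose $\pa\R_+^n$ into
\[
\{|\xi'|<Ke^{-\omega\tau}\},\h
\{Ke^{-\omega\tau}<|\xi'|<1\},\h
\{1<|\xi'|<e^{\sigma\tau}\},\h
\{|\xi'|>e^{\sigma\tau}\},
\]
and, in each region, perform the change of variables $X=\xi'/\sqrt{1-e^{-(s-\tau)}}$ (inside the light cone) or use $|e^{-(s-\tau)/2}y-\xi'|\ge |\xi'|/2$ to turn the Gaussian into pure exponential decay (outside). The same conditions used in Lemma \ref{ShortS_1-lem}, namely $m<\gamma<(n-2)/2$ and $\lambda^*>0$, make every resulting $\xi'$-integral convergent, producing a bound of the shape $(\text{power of }|y|)\cdot(\text{factor in }\tau)$. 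The dominant $|y|$-power is $|y|^{2\lambda^*-m+\gamma}$, arising from the middle region $1<|\xi'|<e^{\sigma\tau}$ where $\xi'$ can be comparable to $y$.

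Next I integrate in $\tau$ over an interval of length at most $1$. In the two middle regions the bound of Lemma \ref{f(Phi)-lem} carries a factor $e^{-2\lambda^*\tau}$, so after the $\xi'$-integration and then integration over $\tau\in(\mu_1,s)\subset[s_1,s_1+2]$ one gets an overall factor $e^{-2\lambda^*s}=e^{-\lambda^*s_1}e^{-\lambda^*s}$, supplying the extra smallness $e^{-\delta s_1}$ claimed in the conclusion. The outer region $|\xi'|>e^{\sigma\tau}$ is negligible because of the Gaussian tail $e^{-c e^{2\sigma s_1}}$ in the heat kernel after the change of variables, together with $|F|\le c|\xi'|^{-(m+1)}$.

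The main obstacle is the innermost region $|\xi'|<Ke^{-\omega\tau}$, where Lemma \ref{f(Phi)-lem} gives only $|f(\Phi)|\le c|\xi'|^{-(m+1)}$ without an exponential factor in $\tau$. Here I would split further at the crossover radius $|\xi'|=e^{-\omega\tau}$: for $e^{-\omega\tau}<|\xi'|<Ke^{-\omega\tau}$ the sharper bound $|f(\Phi)|\le c\,e^{-2\lambda^*\tau}|\xi'|^{-1+m-2\gamma}$ applies and contributes with an $e^{-2\lambda^*\tau}$ factor, while for $|\xi'|<e^{-\omega\tau}$ the spatial volume is of order $e^{-(n-1)\omega\tau}$ and, since $|y|>Ke^{-\omega s}$ with $H\ll K$, the heat kernel is pointwise bounded by a Gaussian in $|y|/\sqrt{1-e^{-(s-\tau)}}$ which is at least $e^{-c|y|^2/(s-\tau)}$ times a uniformly bounded prefactor. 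Collecting the exponents, the contribution from this innermost piece is controlled by $(H/K)^{\text{power}}\,e^{-\lambda^*s}$, and since $H=e^{a's_1}$, $K=e^{as_1}$ with $a'\ll a$, this produces a bound of the form $e^{-\delta s_1}e^{-\lambda^*s}$ with $\delta=(a-a')\cdot(\text{power})>0$. Combining all four regions gives the claimed estimate, with a constant $c$ independent of $\mu_1$ since all bounds depend only on the length of $(\mu_1,s)$, which is at most $1$.
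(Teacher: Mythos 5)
Your high-level plan --- replace $\Gamma$ by $\Theta$ via \eqref{GammaTheta-eq}, use the Gaussian bound from Proposition \ref{Rep-pro}, decompose $\pa\R_+^n$ into annular regions matching the cases of Lemma \ref{f(Phi)-lem}, and treat the inner region by the Gaussian decay when $|\xi'|\ll|y|$ --- is the same as the paper's, and the treatment of the pieces $\{1<|\xi'|<e^{\sigma\tau}\}$ and $\{|\xi'|>e^{\sigma\tau}\}$ is essentially fine. However, there is a genuine gap in the inner region, and it is exactly the delicate point of this lemma.

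You split the innermost region at the ``crossover radius'' $|\xi'|=e^{-\omega\tau}$, i.e.\ the radius where the two bounds in Lemma \ref{f(Phi)-lem} agree, and then at $Ke^{-\omega\tau}$. The paper instead splits at $He^{-\omega\tau}$ with $H=e^{a'\omega s_1}$ satisfying $1\ll H\ll K$, and this is not a cosmetic choice. Consider your sub-annulus $\{e^{-\omega\tau}<|\xi'|<Ke^{-\omega\tau}\}$ with the exponential bound $|F|\le ce^{-2\lambda^*\tau}|\xi'|^{-1+m-\gamma}$. To make the $\xi'$-integral convergent against the heat-kernel weight one must pull out $|\xi'|^{-(\gamma-m)}$ at the lower endpoint; at your endpoint $e^{-\omega\tau}$ this produces $e^{\lambda^*\tau}$, which cancels half the exponential factor and leaves only $e^{-\lambda^*\tau}$, hence (after $\tau$-integration) $ce^{-\lambda^*\mu_1}$ with \emph{no} extra $e^{-\delta s_1}$. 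Conversely, if one uses the non-exponential bound $|F|\le c|\xi'|^{-m-1+\gamma}$ all the way up to $Ke^{-\omega\tau}$, the paper's $I_1$ computation returns $K^{\gamma-m+d}/K^d=K^{\gamma-m}$, which \emph{grows} in $s_1$. Neither option closes the estimate. The paper's intermediate scale $H$ is introduced precisely to balance these competing costs: the $I_1$ piece (radius $<He^{-\omega\tau}$, power-law bound) gives $H^{\gamma-m+d}/K^d=e^{-(da-(\gamma-m+d)a')s_1}$, and the $I_2$ piece (radius $>He^{-\omega\tau}$, exponential bound) gives $H^{-(\gamma-m)}=e^{-a'(\gamma-m)s_1}$; both are small exactly when $0<a'\ll a$. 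Your attribution of a ``$(H/K)^{\text{power}}$'' gain to the innermost piece $|\xi'|<e^{-\omega\tau}$ is also misplaced --- $H$ never enters that piece; it only produces a $K^{-d}$ factor, which happens to be harmless but is not what you claim. To repair the argument, replace the split at $e^{-\omega\tau}$ by the split at $He^{-\omega\tau}$ and track the $H$- and $K$-powers as in the paper's estimates of $I_1$ and $I_2'$.
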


\begin{proof}
We divide the integral of $T(y,s)$ into four parts.
\[
\begin{array}{lll}
\dis
 T(y,s)
\hspace{-2mm}&=&\hspace{-2mm} \dis
 \int_{\mu_1}^sd\tau
 \left(
 \int_0^{He^{-\omega \tau}}+\int_{He^{-\omega \tau}}^1+
 \int_1^{e^{\sigma\tau}}+\int_{e^{\sigma\tau}}^\infty
 \right)
 \Gamma(y,\xi',s-\tau)F(\xi',\tau){\cal B}d\xi'
\\[6mm]
\hspace{-2mm}&=:&\hspace{-2mm} \dis
 I_1+I_2+I_3+I_4.
\end{array}
\]
For simplicity of notations,
we put
\[
 X'=\xi'/\sqrt{1-e^{-(s-\tau)}},
\hspace{7.5mm}
 Y=y/\sqrt{1-e^{-(s-\tau)}},
\hspace{7.5mm}
 Z=e^{-(s-\tau)/2}y/\sqrt{1-e^{-(s-\tau)}}.
\]
Furthermore
$\tau$ is always assumed to be $\mu_1<\tau<s$.
Then since $\mu_1\leq s\leq\mu_1+1$,
$\Gamma(y,\xi,s-\tau)$ is dominated by $\Theta(y,\xi,s-\tau)$ as \eqref{GammaTheta-eq}.
First
we provide the estimate of $I_1$.
In this case,
since $H\ll K$,
we find that
\[
 |e^{-(s-\tau)/2}y-\xi'|\geq c(|y|+|\xi'|).
\]
Therefore
we get from Lemma \ref{f(Phi)-lem}
\begin{equation}\label{APEC-eq}
\begin{array}{l}
\dis
 I_1
\leq
 c\int_{\mu_1}^s\frac{e^{-c|Y|^2}d\tau}{(1-e^{-(s-\tau)})^{n/2}}
 \int_0^{He^{-\omega \mu_1}}
 \left( 1+|X'|^{-1} \right)^{2\gamma}e^{-c|X'|^2}|\xi'|^{-(m+1)+\gamma}
 d\xi'.
\end{array}
\end{equation}
Then
the space integral is estimated by
\[
\begin{array}{l}
\dis
 \int_0^{He^{-\omega\mu_1}}
 \left( 1+|X'|^{-1} \right)^{2\gamma}e^{-c|X'|^2}|\xi'|^{-(m+1)+\gamma}
 d\xi'
\\[2mm] \dis \hspace{5mm}
\leq
 \left( He^{-\omega\mu_1} \right)^{\gamma-m}
 \int_0^{He^{-\omega\mu_1}}
 \left( 1+|X'|^{-1} \right)^{2\gamma}e^{-c|X'|^2}|\xi'|^{-1}
 d\xi'
\\[4mm] \dis \hspace{5mm}
=
 cH^{\gamma-m}e^{-\lambda^*\mu_1}
 \left( 1-e^{-(s-\tau)} \right)^{(n-2)/2}
 \int_0^{He^{-\omega\mu_1}/\sqrt{1-e^{-(s-\tau)}}}
 \dis\left( 1+\frac{1}{|z'|} \right)^{2\gamma}
 \frac{e^{-c|z'|^2}dz'}{|z'|}.
\end{array}
\]
Plugging this estimate into \eqref{APEC-eq},
we get
\[
 I_1
\leq
 cH^{\gamma-m}e^{-\lambda^*\mu_1}
 \int_{\mu_1}^s
 \frac{e^{-c|Y|^2}d\tau}{1-e^{-(s-\tau)}}
 \int_0^{He^{-\omega\mu_1}/\sqrt{1-e^{-(s-\tau)}}}
 \dis\left( 1+\frac{1}{|z'|} \right)^{2\gamma}
 \frac{e^{-c|z'|^2}dz'}{|z'|}.
\]
Put $2d=(n-2)-2\gamma>0$.
Then
it holds that
\[
\begin{array}{l}
\dis
 I_1
\leq
 cH^{\gamma-m}e^{-\lambda^*\mu_1}(He^{-\omega\mu_1})^d
 \int_{\mu_1}^s
 \frac{e^{-c|Y|^2}d\tau}{(1-e^{-(s-\tau)})^{1+d/2}}
\\[4mm] \dis \hspace{20mm}
\times
 \int_0^{He^{-\omega\mu_1}/\sqrt{1-e^{-(s-\tau)}}}
 \dis\left( 1+\frac{1}{|z'|} \right)^{2\gamma}
 \frac{e^{-c|z'|^2}dz'}{|z'|^{1+d}}.
\end{array}
\]
Here by changing variables as
\begin{equation}\label{varchange-eq}
 |y|^2/(1-e^{-(s-\tau)})=\mu
\hspace{7.5mm}
 \left( \mu^2d\tau=e^{s-\tau}|y|^2d\mu \right),
\end{equation}
we see that
\[
\begin{array}{l}
\dis
\int_{\mu_1}^s
 \frac{e^{-c|Y|^2}d\tau}{(1-e^{-(s-\tau)})^{1+d/2}}
 \int_0^{He^{-\omega\mu_1}/\sqrt{1-e^{-(s-\tau)}}}
 \dis\left( 1+\frac{1}{|z'|} \right)^{2\gamma}
 \frac{e^{-c|z'|^2}dz'}{|z'|^{1+d}}
\\[4mm] \dis \hspace{10mm}
\leq
 e^{s-\mu_1}|y|^{-d}
 \int_{|y|^2/(1-e^{-(s-\mu_1)})}^{\infty}
 \frac{e^{-c\mu}d\mu}{\mu^{1-d/2}}
 \int_0^{He^{-\omega\mu_1}\sqrt{\mu}/|y|}
 \left( 1+\frac{1}{|z'|} \right)^{2\gamma}
 \frac{e^{-c|z'|^2}dz'}{|z'|^{1+d}}.
\end{array}
\]
Since $-2\gamma+n-2-d=d>0$,
we observe that
\[
 \int_0^{\infty}
 \frac{e^{-c\mu}d\mu}{\mu^{1-d/2}}
 \int_{\R_+^n}
 \left( 1+\frac{1}{|z'|} \right)^{2\gamma}
 |z'|^{-1-d}e^{-c|z'|^2}dz'
<
 \infty.
\]
Therefore
since $|y|>Ke^{-\omega s}$ and $\mu_1<s<\mu_1+1$,
by definition of $H$ and $K$,
we obtain
\[
\begin{array}{lll}
\dis
 I_1
\hspace{-2mm}&=&\hspace{-2mm} \dis
 cH^{\gamma-m+d}e^{s-\mu_1}e^{-\lambda^*\mu_1}e^{-d\omega\mu_1}|y|^{-d}
\leq
 c\left( \frac{H^{\gamma-m+d}}{K^d} \right)
 e^{-\lambda^*\mu_1}
\\[2mm]
\hspace{-2mm}&=&\hspace{-2mm} \dis
 ce^{-(da-(\gamma-m+d)a')s_1}
 e^{-\lambda^*\mu_1}.
\end{array}
\]
Next
we give the estate of $I_2$.
Then
we divide the integral of $I_2$ into two parts.
\[
 I_2
\leq
 \int_{\mu_1}^sd\tau
 \left( \int_{He^{-\omega\tau}}^{4|y|}+\int_{4|y|}^1 \right)
 \Theta(y,\xi's-\tau)|F(\xi',\tau)|
 d\xi'
=:
I_2' + I_2''.
\]
From Lemma \ref{f(Phi)-lem},
we get
\[
\begin{array}{lll}
\dis
 I_2'
\hspace{-2mm}&\leq&\hspace{-2mm} \dis
 ce^{-2\lambda^*\mu_1}
 \int_{\mu_1}^sd\tau
 \int_{He^{-\omega\tau}}^{4|y|}
 \frac{\dis\left( 1+|X'|^{-1} \right)^{2\gamma}}{(1-e^{-(s-\tau)})^{n/2}}
 e^{-c|Z-X'|^2}|\xi'|^{-1+m-\gamma}
 d\xi'
\\[4mm]
\hspace{-2mm}&\leq&\hspace{-2mm} \dis
 ce^{-2\lambda^*\mu_1}
 \int_{\mu_1}^s(He^{-\omega\tau})^{-(\gamma-m)}
 d\tau
 \int_0^{4|y|}
 \frac{\dis\left( 1+|X'|^{-1} \right)^{2\gamma}}{(1-e^{-(s-\tau)})^{n/2}}
 e^{-c|Z-X'|^2}|\xi'|^{-1}
 d\xi'
\\[4mm]
\hspace{-2mm}&\leq&\hspace{-2mm} \dis
 cH^{-(\gamma-m)}e^{-\lambda^*\mu_1}
 \int_{\mu_1}^s
 \frac{d\tau}{1-e^{-(s-\tau)}}
 \int_0^{4|y|/\sqrt{1-e^{-(s-\tau)}}}
 \left( 1+|z'|^{-1} \right)^{2\gamma}
 \frac{e^{-c|Z-z'|^2}dz'}{|z'|}.
\end{array}
\]
Here
we put
\begin{equation}\label{YmuDef-eq}
 Y_\mu = \sqrt{\mu}\left( 1-\frac{|y|^2}{\mu} \right)^{1/2}\frac{y}{|y|}.
\end{equation}
By changing variables as \eqref{varchange-eq},
we see that
\[
\begin{array}{l}
\dis
 \int_{\mu_1}^s
 \frac{d\tau}{1-e^{-(s-\tau)}}
 \int_0^{4|y|/\sqrt{1-e^{-(s-\tau)}}}
 \left( 1+|z'|^{-1} \right)^{2\gamma}
 e^{-c|Z-z'|^2}\frac{dz'}{|z'|}
\\[6mm] \dis \hspace{10mm}
\leq
 c\int_{|y|^2/(1-e^{-(s-\mu_1)})}^\infty
 \frac{d\mu}{\mu}
 \int_0^{4\sqrt{\mu}}
 \left( 1+|z'|^{-1} \right)^{2\gamma}|z'|^{-1}
 e^{-c|Y_\mu-z'|^2}\frac{dz'}{|z'|}.
\end{array}
\]
We divide the above integral into two parts. 
\[
\begin{array}{l}
\dis
\int_{|y|^2/(1-e^{-(s-\mu_1)})}^\infty
 \frac{d\mu}{\mu}
 \int_0^{4\sqrt{\mu}}
 \left( 1+|z'|^{-1} \right)^{2\gamma}|z'|^{-1}
 e^{-c|Y_\mu-z'|^2}\frac{dz'}{|z'|}
\\[6mm] \dis \hspace{15mm}
=
 \left(
 \int_{\min\{|y|^2/(1-e^{-(s-\mu_1)}),1\}}^1+\int_{\max\{|y|^2/(1-e^{-(s-\mu_1)}),1\}}^{\infty}
 \right)
 \frac{d\mu}{\mu}
\\[6mm] \dis \hspace{60mm}
\times
 \int_0^{4\sqrt{\mu}}
 \left( 1+|z'|^{-1} \right)^{2\gamma}
 e^{-c|Y_\mu-z'|^2}\frac{dz'}{|z'|}
\\ \dis \hspace{15mm}
=: A_1+A_2.
\end{array}
\]
Since $-2\gamma+n-2>0$,
we see that
\[
 A_1
\leq
 c\int_0^1\frac{d\mu}{\mu}
 \int_0^{4\sqrt{\mu}}|z'|^{-(2\gamma+1)}dz'
\leq
 c\int_0^1
 \mu^{-1+(-2\gamma+n-2)/2}d\mu
\leq
 c.
\]
To estimate $A_2$,
we divide the space integral into two parts.
Let
\begin{equation}\label{D_1D_2-eq}
\begin{array}{c}
\dis
 D_1 =
 \left\{
 |z'|<4\sqrt{\mu};\ \left| Y_\mu-z' \right|>|Y_\mu|/2
 \right\},
\\[3mm] \dis
 D_2 =
 \left\{
 |z'|<4\sqrt{\mu};\ \left| Y_\mu-z' \right|<|Y_\mu|/2
 \right\}.
\end{array}
\end{equation}
Furthermore
we put
\[
 A_2(D_i)
=
 \int_{\max\{|y|^2/(1-e^{-(s-\mu_1)}),1\}}^{\infty}\frac{d\mu}{\mu}
 \int_{D_i}
 \left( 1+|z'|^{-1} \right)^{2\gamma}|z'|^{-1}
 e^{-c|Y_\mu-z'|^2}
 dz'.
\]
Then
it is clear that
$A_2\leq A_2(D_1)+A_2(D_2)$.
Here
we note that
$2|Y_\mu-z'|>|Y_\mu|>e^{-(s-\mu_1)/2}\sqrt{\mu}$
for $z'\in D_1$ and $\mu\geq|y|^2/(1-e^{-(s-\mu_1)})$.
Therefore
$A_2(D_1)$ is estimated by
\[
 A_2(D_1)
\leq
 \int_1^\infty
 \exp\left( -ce^{-(s-\mu_1)}\mu \right)
 d\mu
 \int_0^{4\sqrt{\mu}}
 \left( 1+|z'|^{-1} \right)^{2\gamma}|z'|^{-1}
 dz'.
\]
Since $-2\gamma+n-2>0$ and $\mu_1<s<\mu_1+1$,
we find that $A_2(D_1)$ is uniformly bounded.
Next we estimate $A_2(D_2)$.
Then
it is easily verified that $|Y_\mu|<2|z'|$ for $z'\in D_2$.
Furthermore
by definition of $Y_\mu$,
it holds that
$|Y_\mu|>e^{-(s-\mu_1)/2}\sqrt{\mu}$ for $\mu\geq|y|^2/(1-e^{-(s-\mu_1)})$.
Therefore
it holds that
\begin{equation}\label{D_2'-eq}
 D_2
\subset
 \left\{ z'\in\R^{n-1};\ e^{-(s-\mu_1)/2}\sqrt{\mu}/2<|z'|<4\sqrt{\mu} \right\}
\hspace{5mm}\text{for }
 \mu\geq\frac{|y|^2}{1-e^{-(s-\mu_1)}}.
\end{equation}
As a consequence,
we see that
\[
\begin{array}{lll}
\dis
 A_2(D_2)
\hspace{-2mm}&\leq&\hspace{-2mm} \dis
 \int_{\max\{|y|^2/(1-e^{-(s-\mu_1)}),1\}}^{\infty}
 \frac{d\mu}{\mu}
 \int_{e^{-(s-\mu_1)/2}\sqrt{\mu}/2}^{4\sqrt{\mu}}
 \left( 1+|z'|^{-1} \right)^{2\gamma}|z'|^{-1}
 e^{-c|Y_\mu-z'|^2}
 dz'
\\[6mm]
\hspace{-2mm}&\leq&\hspace{-2mm} \dis
 c\int_{\max\{|y|^2/(1-e^{-(s-\mu_1)}),1\}}^{\infty}
 \mu^{-1}\left( 1+\mu^{-1} \right)^{\gamma}\mu^{-1/2}
 d\mu
 \int_{\R^{n-1}}
 e^{-c|Y_\mu-z'|^2}
 dz'
\\[6mm]
\hspace{-2mm}&\leq&\hspace{-2mm} \dis
 c\int_1^\infty\mu^{-3/2}d\mu.
\end{array}
\]
Combining the above inequalities,
we obtain
\[
 I_2'
\leq
 cH^{-(\gamma-m)}e^{-\lambda^*\mu_1}(A_2(D_1) + A_2(D_2) \leq
 cH^{-(\gamma-m)}e^{-\lambda^*\mu_1}.
\]
Next
we provide the estimate of $I_2''$.
Then
it is easily verified that
$|e^{-(s-\tau)/2}y-\xi'|\geq c(|y|+|\xi'|)$ for $|\xi'|>4|y|$.
Then
since $\mu_1<s<\mu_1+1$,
we see that
\[
\begin{array}{lll}
\dis
 I_2''
\hspace{-2mm}&\leq&\hspace{-2mm} \dis
 ce^{s-\mu_1}e^{-2\lambda^*\mu_1}
 \int_{\mu_1}^s
 \frac{e^{-c|Y|^2}d\tau}{(1-e^{-(s-\tau)})^{n/2}}
 \int_{4|y|}^1
 \left( 1+|X'|^{-1} \right)^{2\gamma}
 \frac{e^{-c|X'|^2}dz'}{|\xi'|^{1-m+\gamma}}
 d\xi'
\\[6mm]
\hspace{-2mm}&\leq&\hspace{-2mm} \dis
 ce^{-2\lambda^*\mu_1}
 \int_{\mu_1}^s
 \frac{e^{-c|Y|^2}d\tau}{(1-e^{-(s-\tau)})^{(2+\gamma-m)/2}}
 \int_{4|y|/\sqrt{1-e^{-(s-\tau)}}}^\infty
 \left( 1+\frac{1}{|z'|} \right)^{2\gamma}
 \frac{e^{-c|z'|^2}dz'}{|z'|^{1-m+\gamma}}.
\end{array}
\]
By changing variables as \eqref{varchange-eq},
we get
\[
 I_2''
\leq
 ce^{-2\lambda^*\mu_1}|y|^{-(\gamma-m)}
 \int_0^\infty
 \frac{e^{-c\mu}d\mu}{\mu^{1-(\gamma-m)/2}} 
 \int_{4\sqrt{\mu}}^\infty
 \left( 1+\frac{1}{|z'|} \right)^{2\gamma}
 \frac{e^{-c|z'|^2}dz'}{|z'|^{1-m+\gamma}}.
\]
Here
we change the order of integrals.
\[
\begin{array}{l}
\dis
 \int_0^\infty
 \frac{e^{-c\mu}d\mu}{\mu^{1-(\gamma-m)/2}}
 \int_{4\sqrt{\mu}}^\infty
 \left( 1+\frac{1}{r} \right)^{2\gamma}r^{m-\gamma+n-3}
 e^{-cr^2}dr
\\[6mm] \dis \hspace{20mm}
=
 \int_0^\infty
 \left( 1+\frac{1}{r} \right)^{2\gamma}r^{m-\gamma+n-3}
 e^{-cr^2}dr
 \int_0^{16r^2}
 \frac{e^{-c\mu}}{\mu^{1-(\gamma-m)/2}}d\mu
\\[6mm] \dis \hspace{20mm}
\leq
 c\int_0^\infty
 \left( 1+\frac{1}{r} \right)^{2\gamma}r^{n-3}
 e^{-cr^2}dr.
\end{array}
\]
Since $-2\gamma+n-2>0$,
the above integral is finite.
Therefore
since $|y|>Ke^{-\omega s}$ and $\mu_1<s<\mu_1+1$,
we obtain
\[
 I_2'' \leq ce^{-2\lambda^*\mu_1}|y|^{-(\gamma-m)} \leq
 cK^{-(\gamma-m)}e^{-\lambda^*\mu_1}.
\]
As a consequence,
we conclude
\[
 I_2 \leq c\left( H^{-(\gamma-m)}+K^{-(\gamma-m)} \right)e^{-\lambda^*\mu_1}.
\]
Now
we calculate $I_3$.
We divide the integral into two parts.
\[
\begin{array}{lll}
\dis
 I_3
\hspace{-2mm}&\leq&\hspace{-2mm} \dis
 ce^{-2\lambda^*\mu_1}
 \int_{\mu_1}^s\frac{d\tau}{(1-e^{-(s-\tau)})^{n/2}}
 \left( \int_1^{\max\{4|y|,1\}}+\int_{4|y|}^{e^{\sigma\tau}} \right) 
\\[6mm]
\hspace{-2mm}&&\hspace{30mm} \dis
\times
 \left( 1+|X'|^{-1} \right)^{2\gamma}
 e^{-c|Z-X'|^2}|\xi'|^{4\lambda^*-m+\gamma-1}d\xi'
\\[2mm]
\hspace{-2mm}&=&\hspace{-2mm} \dis
 ce^{-2\lambda^*s_1}(A_1+A_2).
\end{array}
\]
In the first inequality,
we applied Lemma \ref{f(Phi)-lem}.
First we estimate $A_1$.
Since $A_1=0$ if $4|y|<1$,
we assume $4|y|>1$.
Then
by changing variables as in the estimate of $I_2$,
we get
\[
\begin{array}{lll}
\dis
 A_1
\hspace{-2mm}&\leq&\hspace{-2mm} \dis
 \int_{\mu_1}^s
 (1-e^{-(s-\tau)})^{(4\lambda^*-m+\gamma-2)/2}
 d\tau
 \int_0^{4|y|/\sqrt{1-e^{-(s-\tau)}}} 
\\[3mm] && \dis \hspace{25mm}
\times
 \left( 1+|z'|^{-1} \right)^{2\gamma}
 e^{-c|Z-z'|^2}|z'|^{4\lambda^*-m+\gamma-1}dz'
\\[2mm]
\hspace{-2mm}&\leq&\hspace{-2mm} \dis
 c|y|^{4\lambda^*+\gamma-m}
 \int_{|y|^2/(1-e^{-(s-\mu_1)})}^\infty
 d\mu
 \int_0^{4\sqrt{\mu}}
 \frac{|z'|^{4\lambda^*-m+\gamma-1}}{\mu^{1+(4\lambda^*+\gamma-m)/2}}
 e^{-c|Y_\mu-z'|^2}
 dz',
\end{array}
\]
where $Y_\mu$ is given by \eqref{YmuDef-eq}.
Since $4|y|>1$,
we note that $|y|^2/(1-e^{-(s-\mu_1)})>1/16$.
Therefore
it holds that
\[
 A_1
\leq
 c|y|^{4\lambda^*+\gamma-m}
 \int_{1/16}^\infty
 d\mu
 \int_0^{4\sqrt{\mu}}
 \frac{|z'|^{4\lambda^*-m+\gamma-1}}{\mu^{1+(4\lambda^*+\gamma-m)/2}}
 e^{-c|Y_\mu-z'|^2}
 dz'.
\]
To estimate the above integral,
we divide it into two parts.
\[
\begin{array}{lll}
 A_1
\hspace{-2mm}&\leq&\hspace{-2mm} \dis
 c|y|^{4\lambda^*+\gamma-m}
 \int_{1/16}^\infty d\mu
 \left( \int_{D_1}+\int_{D_2} \right)
 \frac{|z'|^{4\lambda^*-m+\gamma-1}}{\mu^{1+(4\lambda^*+\gamma-m)/2}}
 e^{-c|Y_\mu-z'|^2}
 dz'
\\[5mm]
\hspace{-2mm}&=:&\hspace{-2mm} \dis
 c|y|^{4\lambda^*+\gamma-m}
 (A_1(D_1) + A_1(D_2)),
\end{array}
\]
where $D_1$ and $D_2$ are given by \eqref{D_1D_2-eq}.
By the same calculation as in the estimate of $I_2$,
we see that
\[
 A_1(D_1)
\leq
 \int_{1/16}^\infty
 e^{-c\mu}d\mu
 \int_0^{4\sqrt{\mu}}
 \frac{|z'|^{4\lambda^*-m+\gamma-1}}{\mu^{1+(4\lambda^*+\gamma-m)/2}}
 dz'
=
 c\int_{1/16}^\infty
 \mu^{(n-4)/2}e^{-c\mu}d\mu.
\]
On the other hand,
by using \eqref{D_2'-eq},
we verify that
\[
 A_1(D_2)
\leq
 c\int_{1/16}^\infty\mu^{-3/2}d\mu
 \int_{e^{-(s-\mu_1)/2}\sqrt{\mu}/2}^{4\sqrt{\mu}}
 e^{-c|Y_\mu-z'|^2}dz'
\leq
 c\int_{1/16}^\infty\mu^{-3/2}d\mu.
\]
Therefore
since $|y|<e^{\sigma s}$,
we obtain
\[
 A_1 \leq c|y|^{4\lambda^*+\gamma-m} \leq ce^{2\sigma\lambda^* s}|y|^{2\lambda^*+\gamma-m}.
\]
Next
we provide the estimate of $A_2$.
Here
we note that
$|e^{-(s-\tau)}y-\xi'|>c|\xi'|$ for $|\xi'|>4|y|$.
Therefore
we see that
\[
 A_2
\leq
 \int_{\mu_1}^s
 (1-e^{-(s-\tau)})^{(4\lambda^*-m+\gamma-2)/2}
 d\tau
 \int_{\R^{n-1}}
 \left( 1+|z'|^{-1} \right)^{2\gamma}
 e^{-c|z'|^2}|z'|^{4\lambda^*-m+\gamma-1}dz'.
\]
Since $\gamma>m$ and $2\gamma<(n-2)$,
$A_2$ turns out to be bounded.
Thus
since $\mu_1<s<\mu_1+1$,
we obtain
\[
 I_3
\leq
 ce^{-2\lambda^*\mu_1}
 \left( e^{2\sigma\lambda^*s}|y|^{2\lambda^*-m+\gamma}+1 \right)
\leq
 ce^{-(1-2\sigma)\lambda^*\mu_1}e^{-\lambda^*\mu_1}\left(|y|^{2\lambda^*-m+\gamma}+1 \right).
\]
Finally
we provide the estimate of $I_4$.
We consider two cases (i) $|y|<e^{\sigma s}/4$ and (ii) $|y|>e^{\sigma s}/4$ separately.
For the case (i),
it holds that $|e^{-(s-\mu_1)/2}y-\xi'|>c(|\xi'|+e^{\sigma \tau})$
for $|\xi'|>e^{\sigma\tau}$.
Therefore
by Lemma \ref{f(Phi)-lem} and $\gamma>m$,
we get
\[
\begin{array}{lll}
\dis
 I_4
\hspace{-2mm}&\leq&\hspace{-2mm} \dis
 \int_{\mu_1}^s
 \frac{\dis\exp\left( -\frac{ce^{2\sigma\tau}}{(1-e^{-(s-\tau)})} \right)}{(1-e^{-(s-\tau)})^{n/2}}
 d\tau
 \int_{e^{\sigma\tau}}^\infty
 \left( 1+|X'|^{-1} \right)^{2\gamma}|\xi'|^{-m+\gamma-1}
 e^{-c|X'|^2}
 d\xi'
\\[4mm]
\hspace{-2mm}&\leq&\hspace{-2mm} \dis
 c
 \int_{\mu_1}^s
 \frac{\dis\exp\left( -\frac{ce^{2\sigma\tau}}{(1-e^{-(s-\tau)})} \right)}
 {(1-e^{-(s-\tau)})^{1-(\gamma-m)/2}}
 d\tau
 \int_{\R^{n-1}}
 |z'|^{-m+\gamma-1}e^{-c|z'|^2}
 dz'
\\[5mm]
\hspace{-2mm}&\leq&\hspace{-2mm} \dis
 c\exp\left( -\frac{ce^{2\sigma \mu_1}}{(1-e^{-(s-\mu_1)})} \right)
 \int_{\mu_1}^s
 \frac{d\tau}{(1-e^{-(s-\tau)})^{1-(\gamma-m)/2}}.
\end{array}
\]
Since $\mu_1<s<\mu_1+1$ and $\gamma>m$,
it follows that
\[
 I_4 \leq ce^{-ce^{-2\sigma \mu_1}}.
\]
Next
we consider the case (ii).
Then
$I_4$ is estimated by
\[
\begin{array}{lll}
\dis
 I_4
\hspace{-2mm}&\leq&\hspace{-2mm} \dis
 \int_{\mu_1}^s
 \frac{d\tau}{(1-e^{-(s-\tau)})^{n/2}}
 \int_{e^{\sigma\tau}}^\infty
 \left( 1+|X'|^{-1} \right)^{2\gamma}|\xi'|^{-m+\gamma-1}
 e^{-c|Z-X'|^2}
 d\xi'
\\[6mm]
\hspace{-2mm}&=&\hspace{-2mm} \dis
 \int_{\mu_1}^s
 \frac{d\tau}{(1-e^{-(s-\tau)})^{(m-\gamma+2)/2}}
 \int_{e^{\sigma\tau}/\sqrt{1-e^{-(s-\tau)}}}^\infty
 \frac{e^{-c|Z-z'|^2}dz'}{|z'|^{m-\gamma+1}}
\\[6mm]
\hspace{-2mm}&=&\hspace{-2mm} \dis
 \int_{\mu_1}^s
 \frac{d\tau}{(1-e^{-(s-\tau)})^{(m-\gamma+2)/2}}
 \left(
 \int_{e^{\sigma\tau}/\sqrt{1-e^{-(s-\tau)}}}^{4|y|/\sqrt{1-e^{-(s-\tau)}}}
 +
 \int_{4|y|/\sqrt{1-e^{-(s-\tau)}}}^\infty
 \right)
 \frac{e^{-c|Z-z'|^2}dz'}{|z'|^{m-\gamma+1}}
\\[4mm]
\hspace{-2mm}&=:&\hspace{-2mm} \dis
 I_4'+I_4''.
\end{array}
\]
First
we provide the estimate of $I_4'$.
\[
\begin{array}{lll}
\dis
 I_4'
\hspace{-2mm}&\leq&\hspace{-2mm} \dis
 c|y|^{\gamma-m}
 \int_{\mu_1}^s
 \frac{d\tau}{1-e^{-(s-\tau)}}
 \int_{e^{\sigma\tau}/\sqrt{1-e^{-(s-\tau)}}}^{4|y|/\sqrt{1-e^{-(s-\tau)}}}
 |z'|^{-1}e^{-c|Z-z'|^2}
 dz'
\\[6mm]
\hspace{-2mm}&\leq&\hspace{-2mm} \dis
 c|y|^{\gamma-m}
 \int_{\mu_1}^s
 \frac{d\tau}{1-e^{-(s-\tau)}}
 \left( \frac{e^{\sigma\tau}}{\sqrt{1-e^{-(s-\tau)}}} \right)^{-1}
 \int_{\R^{n-1}}e^{-c|Z-z'|^2}
 dz'
\\[4mm]
\hspace{-2mm}&\leq&\hspace{-2mm} \dis
 ce^{-\sigma \mu_1}|y|^{\gamma-m}
 \int_{\mu_1}^s
 \frac{d\tau}{\sqrt{1-e^{-(s-\tau)}}}.
\end{array}
\]
Since $|y|>e^{\sigma s}/4$,
it holds that
$e^{2\sigma\lambda^*s}\leq4^{2\lambda^*}|y|^{2\lambda^*-m+\gamma}$.
Therefore
we obtain
\[
 I_4' \leq ce^{-(2\lambda^*+1)\sigma \mu_1}|y|^{2\lambda^*-m+\gamma}.
\]
Furthermore
since $|Z-z'|>c|z'|$ for $|z'|>4|y|/\sqrt{1-e^{-(s-\mu_1)}}$,
we get
\[
\begin{array}{lll}
\dis
 I_4''
\hspace{-2mm}&\leq&\hspace{-2mm} \dis
 \int_{\mu_1}^s
 \frac{d\tau}{(1-e^{-(s-\tau)})^{(m-\gamma+2)/2}}
 \int_{4|y|/\sqrt{1-e^{-(s-\tau)}}}^\infty
 \frac{e^{-c|z'|^2}dz'}{|z'|^{m-\gamma+1}}
\\[6mm]
\hspace{-2mm}&\leq&\hspace{-2mm} \dis
 c\int_{\mu_1}^s
 \frac{\left( \sqrt{1-e^{-(s-\tau)}}/|y| \right)^{2\lambda^*+1}}{(1-e^{-(s-\tau)})^{(m-\gamma+2)/2}}
 d\tau
 \int_{4|y|/\sqrt{1-e^{-(s-\tau)}}}^\infty
 |z'|^{2\lambda^*+1}
 \frac{e^{-c|z'|^2}dz'}{|z'|^{m-\gamma+1}}
\\[6mm]
\hspace{-2mm}&\leq&\hspace{-2mm} \dis
 c|y|^{-(2\lambda^*+1)}
 \int_{\mu_1}^s
 \frac{d\tau}{(1-e^{-(s-\tau)})^{(m-\gamma-1-2\lambda^*)/2}}
 \int_{\R^{n-1}}
 |z'|^{2\lambda^*+\gamma-m}e^{-c|z'|^2}dz'.
\end{array}
\]
Since $|y|>e^{\sigma s}/4$ and $\mu_1<s<\mu_1+1$,
we obtain
\[
 I_4'' \leq ce^{-(2\lambda^*+1)\sigma s}.
\]
As a consequence,
since $\mu_1<s<\mu_1+1$,
we conclude
\[
 I_4 \leq
 \left( e^{-(2\lambda^*+1)\sigma \mu_1}+e^{-ce^{-2\sigma \mu_1}} \right)
 \left( 1+|y|^{2\lambda^*-m+\gamma} \right).
\]
By definition of $\sigma$,
we note that $(2\lambda^*+1)\sigma>\lambda^*$.
Thus
combining the above estimates,
we obtain the conclusion.
\end{proof}

Therefore
we obtain the desired estimate in $O_{\text{Short}}$.

\begin{pro}\label{Shorttime-pro}
Let $\nu(s)$ be given in Lemma {\rm\ref{phi-initial-lem}}.
Then
there exist $c>0$ and $\delta>0$ such that 
if $|d|<\epsilon_1e^{-\lambda^*s_1}$ and $\varphi(y,s)\in A_{s_1,s_2}$,
then it holds that
\[
\begin{array}{lll}
\dis
 \left| b(y,s)+e^{-\lambda^*s}\eta_{1\ell}(y) \right|
\hspace{-2mm}&=&\hspace{-2mm} \dis
 c\left( \nu(s_1)+d_{\max}e^{\lambda^*s_1}+e^{-\delta s_1} \right)
 e^{-\lambda^*s}\left( 1+|y|^{2\lambda^*-m+\gamma} \right)
\\[4mm]
\hspace{-2mm}&&\hspace{-2mm} \dis
\hspace{45mm}\mathrm{for}\ (y,s)\in O_{\mathrm{Short}}.
\end{array}
\]
\end{pro}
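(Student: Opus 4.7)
The plan is to leverage the decomposition \eqref{b(s)express-eq} and combine it term-by-term with the three technical lemmas already assembled in this section. Rearranging \eqref{b(s)express-eq} gives
\[
 b(y,s)+e^{-\lambda^*s}\eta_{1\ell}(y)
=
 -e^{-\lambda^*s}(\phi_\ell^*-\phi_{1\ell},\phi_{1\ell})_\rho\,\eta_{1\ell}(y)+S_1(y,s)+S_2(y,s),
\]
so the task reduces to estimating these three pieces in the region $O_{\text{Short}}$ by the same right-hand side.

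First I would handle the boundary-correction term. By the Schwarz inequality and Lemma \ref{(*-ell)-lem},
\[
 |(\phi_\ell^*-\phi_{1\ell},\phi_{1\ell})_\rho|\leq \|\phi_\ell^*-\phi_{1\ell}\|_\rho\,\|\phi_{1\ell}\|_\rho\leq c\,e^{-\delta s_1},
\]
and the pointwise control $|\eta_{1\ell}(y)|\leq c(1+|y|^{2\lambda^*-m+\gamma})$ (from Lemma \ref{2A-lem} combined with $\eta_{ij}=\phi_{ij}/\sigma$ and $\sigma(y)\sim|y|^{-\gamma}$, using $\lambda_{1\ell}=\lambda^*$) gives the desired bound on this term with constant $ce^{-\delta s_1}$.

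Next I would apply Lemma \ref{ShortS_1-lem} directly to $S_1$. This gives
\[
 |S_1(y,s)|\leq c\bigl((\nu(s_1)+e^{-\delta s_1})e^{-\lambda^*s}+d_{\max}\bigr)(1+|y|^{2\lambda^*-m+\gamma}).
\]
The only subtlety is that the $d_{\max}$ factor is not multiplied by a decaying exponential; however, on $O_{\text{Short}}$ we have $s_1<s<s_1+1$, so $1\leq e^{\lambda^*}\,e^{\lambda^*s_1}e^{-\lambda^*s}$ and therefore $d_{\max}\leq c\,d_{\max}e^{\lambda^*s_1}e^{-\lambda^*s}$. This reshapes the bound into the form required by the proposition.

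Finally, for $S_2$ I would invoke Lemma \ref{ShortS_2-lem} with $\mu_1=s_1$, which immediately yields
\[
 |S_2(y,s)|\leq c\,e^{-\delta s_1}e^{-\lambda^*s}(1+|y|^{2\lambda^*-m+\gamma})
\]
throughout $O_{\text{Short}}$. Summing the three contributions produces precisely the stated estimate, with the common shape $e^{-\lambda^*s}(1+|y|^{2\lambda^*-m+\gamma})$ pulled out and the prefactor $\nu(s_1)+d_{\max}e^{\lambda^*s_1}+e^{-\delta s_1}$ absorbing all residual smallness. There is essentially no obstacle beyond careful bookkeeping: all the hard heat-kernel analysis has already been done in Lemmas \ref{ShortS_1-lem} and \ref{ShortS_2-lem}, and the mild $d_{\max}$ bookkeeping using $s-s_1<1$ is the only calculation to verify.
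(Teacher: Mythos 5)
Your proof is correct and follows exactly the route the paper takes: decompose via \eqref{b(s)express-eq}, bound the $(\phi_\ell^*-\phi_{1\ell},\phi_{1\ell})_\rho\eta_{1\ell}$ term with Lemma \ref{(*-ell)-lem} and the pointwise growth of $\eta_{1\ell}$, apply Lemma \ref{ShortS_1-lem} to $S_1$ and Lemma \ref{ShortS_2-lem} (with $\mu_1=s_1$) to $S_2$, and use $s<s_1+1$ to absorb the bare $d_{\max}$ into the form $d_{\max}e^{\lambda^*s_1}e^{-\lambda^*s}$. The paper's proof is just the one-line observation that these three lemmas handle the three pieces; your additional bookkeeping of the $d_{\max}$ term is exactly the detail the paper leaves implicit.
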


\begin{proof}
Applying Lemma \ref{(*-ell)-lem} and Lemma \ref{ShortS_1-lem}-Lemma \ref{ShortS_2-lem}
to each term in \eqref{b(s)express-eq},
we obtain the conclusion.
\end{proof}

\section{Long time estimates}\label{Longtime-sec}

In this section,
we provide the estimate in
\[
 O_{\text{Long}}
=
 \left\{ (y,s);\ Ke^{-\omega s}<|y|<e^{\sigma s},\ s_1+1<s<s_2 \right\}.
\]
Throughout this section,
we always assume
\begin{equation}\label{AssumeAPi-eq}
 \varphi(y,s)\in A_{s_1,s_2},
\hspace{7.5mm}
 (b(s_2),\eta_{ij})_{\cal C} = 0
\hspace{3mm}\text{for } (i,j)\in\Pi.
\end{equation}
We recall that $b(y,s)$ is expressed by
\begin{equation}\label{express-eq}
 b(s) =
 e^{{\cal A}(s-s_1)}b_1 + \int_{s_1}^se^{{\cal B}(s-\tau)}F(\tau)d\tau.
\end{equation}
From $(b(s_2),\eta_{ij})_{\cal C} = 0$ and \eqref{express-eq},
we easily see that
\[
 e^{-\lambda_{ij}(s_2-s_1)}(b_1,\eta_{ij})_{\cal C}
+
 \int_{s_1}^{s_2}
 (e^{{\cal B}(s_2-\tau)}F(\tau),\eta_{ij})_{\cal C}d\tau = 0
\hspace{5mm}\text{for } (i,j)\in\Pi.
\]
Therefore
by using
$e^{\lambda_{ij}(s_2-s)}(e^{{\cal B}(s_2-\tau)}F(\tau),\eta_{ij})_{\cal C}=
 (e^{{\cal B}(s-\tau)}F(\tau),\eta_{ij})_{\cal C}$,
(see Lemma \ref{A'F-lem}),
we get
\[
\begin{array}{l}
\dis
 e^{{\cal A}(s-s_1)}b_1
=
 e^{{\cal A}(s-s_1)}
 \left(
 b_1-\sum_{(i,j)\in\Pi}(b_1,\eta_{ij})_{\cal C}\eta_{ij}
 \right)
+
 \sum_{(i,j)\in\Pi}e^{-\lambda_{ij}(s-s_1)}(b_1,\eta_{ij})_{\cal C}\eta_{ij}
\\[6mm] \dis \hspace{5mm}
=
 e^{{\cal A}(s-s_1)}
 \left(
 b_1-\sum_{(i,j)\in\Pi}(b_1,\eta_{ij})_{\cal C}\eta_{ij}
 \right)
-
 \sum_{(i,j)\in\Pi}
 \left(
 \int_{s_1}^{s_2}
 (e^{{\cal B}(s-\tau)}F(\tau),\eta_{ij})_{\cal C}d\tau
 \right)
 \eta_{ij}.
\end{array}
\]
As a consequence,
we obtain from \eqref{express-eq}
\[
\begin{array}{lll}
\dis
 b(s)
\hspace{-2mm}&=&\hspace{-2mm} \dis
 e^{{\cal A}(s-s_1)}
 \left(
 b_1-\sum_{(i,j)\in\Pi}(b_1,\eta_{ij})_{\cal C}\eta_{ij}
 \right)
+
 \int_{s_1}^s
 e^{{\cal B}(s-\tau)}F(\tau)d\tau
\\[6mm]
\hspace{-2mm}&&\hspace{20mm} \dis
-
 \sum_{(i,j)\in\Pi}
 \left(
 \int_{s_1}^{s_2}
 \left( e^{{\cal B}(s-\tau)}F(\tau),\eta_{ij} \right)_{\cal C}d\tau
 \right)
 \eta_{ij}
\\[6mm]
\hspace{-2mm}&=&\hspace{-2mm} \dis
 e^{{\cal A}(s-s_1)}
 \left(
 b_1-\sum_{(i,j)\in\Pi}(b_1,\eta_{ij})_{\cal C}\eta_{ij}
 \right)
-
 \sum_{(i,j)\in\Pi}
 \left(
 \int_{s}^{s_2}
 \left( e^{-{\cal B}(s-\tau)}F(\tau),\eta_{ij} \right)_{\cal C}
 d\tau
 \right)
 \eta_{ij}
\\[4mm]
\hspace{-2mm}&&\hspace{0mm} \dis
 +
 \left(
 \int_{s_1}^s
 e^{{\cal B}(s-\tau)}F(\tau)
 d\tau
 - 
 \sum_{(i,j)\in\Pi}
 \left(
 \int_{s_1}^se^{{\cal B}(s-\tau)}F(\tau)
 d\tau,
 \eta_{ij}
 \right)_{\cal C}\eta_{ij}
 \right).
\end{array}
\]
Here
We put
\[
 \bar{\Pi} = \{(i,j)\in\N^2;\ \lambda_{ij}\leq\lambda^*\}.
\]
Furthermore
we define
\[
 Pb
=
 b-\sum_{(i,j)\in\bar{\Pi}}(b,\eta_{ij})_{\cal C}\eta_{ij}.
\]
Then
it holds that
\begin{equation}\label{bLong-eq}
\begin{array}{l}
\dis
 b(s)
=
 e^{{\cal A}(s-s_1)}P b_1
+
 e^{-\lambda^*(s-s_1)}(b_1,\eta_{1\ell})_{\cal C}\eta_{1\ell}
-
 \sum_{(i,j)\in\Pi}
 \left(
 \int_{s}^{s_2}
 \left( e^{-{\cal B}(s-\tau)}F(\tau),\eta_{ij} \right)_{\cal C}
 d\tau
 \right)
 \eta_{ij}
\\[6mm] \dis \hspace{15mm}
+
 \left(
 \int_{s_1}^s
 \left( e^{-{\cal B}(s-\tau)}F(\tau),\eta_{1\ell} \right)_{\cal C}
 d\tau
 \right)
 \eta_{1\ell}
+
 P\left(
 \int_{s_1}^s
 e^{{\cal B}(s-\tau)}F(\tau)d\tau
 \right).
\end{array}
\end{equation}

\begin{lem}\label{Long1-lem}
There exist $\delta>0$ and $c>0$ such that
for $y\in\R_+^n$ and $s_1<s<s_2$
\[
\left|
 e^{-\lambda^*(s-s_1)}(b_1,\eta_{1\ell})_{\cal C}\eta_{1\ell}
+
 e^{-\lambda^*s}\eta_{1\ell}
\right| 
<
 ce^{-\delta s_1}e^{-\lambda^*s}
 \left( 1+ |y|^{2\lambda^*-m+\gamma} \right).
\]
\end{lem}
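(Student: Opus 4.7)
The claim is really a computation about the initial data, showing that the coefficient $(b_1,\eta_{1\ell})_{\cal C}$ is approximately $-e^{-\lambda^*s_1}$, with an error that we can absorb. The plan is to reduce the weighted $L_{\cal C}^2$ pairing to a $L_\rho^2$ pairing against $\phi_{1\ell}$, exploit orthogonality of eigenfunctions to kill most of the contribution, and then apply Lemma \ref{(*-ell)-lem} to control the remainder.

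First, I would rewrite the pairing. Since ${\cal C}=\sigma^2\rho$, $b_1=\Phi_0/\sigma$ and $\eta_{1\ell}=\phi_{1\ell}/\sigma$, a direct cancellation gives
\[
 (b_1,\eta_{1\ell})_{\cal C}
=
 \int_{\R_+^n} \frac{\Phi_0}{\sigma}\cdot\frac{\phi_{1\ell}}{\sigma}\cdot\sigma^2\rho\,dy
=
 (\Phi_0,\phi_{1\ell})_\rho.
\]
By definition of $\varphi(y,s_1)$, $\Phi_0=\sum_{(i,j)\in\Pi}d_{ij}\phi_{ij}-e^{-\lambda^*s_1}\phi_\ell^*$, so
\[
 (b_1,\eta_{1\ell})_{\cal C}
=
 \sum_{(i,j)\in\Pi}d_{ij}(\phi_{ij},\phi_{1\ell})_\rho
 -
 e^{-\lambda^*s_1}(\phi_\ell^*,\phi_{1\ell})_\rho.
\]
Since the $\phi_{ij}$ form an $L_\rho^2$-orthonormal system and $(1,\ell)\notin\Pi$ (because $\lambda_{1\ell}=\lambda^*\not<\lambda^*$), every term in the sum vanishes. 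Hence
\[
 (b_1,\eta_{1\ell})_{\cal C}
=
 -e^{-\lambda^*s_1}(\phi_\ell^*,\phi_{1\ell})_\rho.
\]

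Next, I would write $(\phi_\ell^*,\phi_{1\ell})_\rho=1+(\phi_\ell^*-\phi_{1\ell},\phi_{1\ell})_\rho$ using $\|\phi_{1\ell}\|_\rho=1$. By Cauchy--Schwarz and Lemma \ref{(*-ell)-lem},
\[
 \left|(\phi_\ell^*-\phi_{1\ell},\phi_{1\ell})_\rho\right|
\leq
 \|\phi_\ell^*-\phi_{1\ell}\|_\rho\cdot\|\phi_{1\ell}\|_\rho
\leq
 ce^{-\delta s_1},
\]
so $(b_1,\eta_{1\ell})_{\cal C}=-e^{-\lambda^*s_1}+O(e^{-(\lambda^*+\delta)s_1})$ and therefore
\[
 e^{-\lambda^*(s-s_1)}(b_1,\eta_{1\ell})_{\cal C}+e^{-\lambda^*s}
=
 O(e^{-\delta s_1}e^{-\lambda^*s}).
\]

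Finally, I would multiply through by $\eta_{1\ell}(y)$ and use a pointwise bound for $\eta_{1\ell}$. Writing $\eta_{1\ell}=\phi_{1\ell}/\sigma=a_{1\ell}(r)r^\gamma$ and applying Lemma \ref{2A-lem} to get $|a_{1\ell}(r)|\leq c(r^{-\gamma}+r^{2\lambda^*-m})$, we obtain
\[
 |\eta_{1\ell}(y)|\leq c\left(1+|y|^{2\lambda^*-m+\gamma}\right),
\]
which, combined with the above bound on the scalar factor, yields the claimed estimate. The proof is essentially algebraic; there is no real obstacle beyond assembling these ingredients, and in particular no parabolic analysis is needed for this lemma (that work has been done in Lemma \ref{(*-ell)-lem} and Lemma \ref{2A-lem}).
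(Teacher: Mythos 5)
Your argument is correct and follows essentially the same route as the paper's proof: both reduce the weighted pairing $(b_1,\eta_{1\ell})_{\cal C}$ to the $L_\rho^2$ pairing $(\Phi_0,\phi_{1\ell})_\rho$, kill the $\Pi$-modes by orthogonality, isolate the $(\phi_\ell^*-\phi_{1\ell},\phi_{1\ell})_\rho$ remainder, and invoke Lemma \ref{(*-ell)-lem} together with the pointwise bound on $\eta_{1\ell}$ from Lemma \ref{2A-lem}. The paper packages the algebra as $e^{-\lambda^*s}(e^{\lambda^*s_1}b_1+\eta_{1\ell},\eta_{1\ell})_{\cal C}\eta_{1\ell}$ while you expand $(\phi_\ell^*,\phi_{1\ell})_\rho=1+(\phi_\ell^*-\phi_{1\ell},\phi_{1\ell})_\rho$, but these are the same identity.
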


\begin{proof}
Since $\|\eta_{1\ell}\|_{\cal C}=1$,
we easily see that
\[
 e^{-\lambda^*(s-s_1)}(b_1,\eta_{1\ell})_{\cal C}\eta_{1\ell}
+
 e^{-\lambda^*s}\eta_{1\ell}
=
 e^{-\lambda^*s}
 \left( e^{\lambda^*s_1}b_1+\eta_{1\ell},\eta_{1\ell} \right)_{\cal C}
 \eta_{1\ell}.
\]
Here
we recall that
$b_1(y)=\Phi(y,s_1)/\sigma(y)=\sum_{(i,j)\in\Pi}d_{ij}\eta_{ij}-e^{-\lambda^*s_1}\eta_\ell^*(y)$.
Hence
it holds that
\[
\begin{array}{lll}
\dis
 \left( e^{\lambda^*s_1}b_1+\eta_{1\ell},\eta_{1\ell} \right)_{\cal C}
\hspace{-2mm}&=&\hspace{-2mm} \dis
 \left(
 e^{\lambda s_1}\left( \sum_{(i,j)\in\Pi}d_{ij}\eta_{ij}-e^{-\lambda^*s_1}\eta_\ell^*
 \right)
+
 \eta_{1\ell},\eta_{1\ell} \right)_{\cal C}
\\[8mm]
\hspace{-2mm}&=&\hspace{-2mm} \dis
 (-\phi_\ell^*+\phi_{1\ell},\phi_{1\ell})_\rho.
\end{array}
\]
Therefore
by Lemma \ref{(*-ell)-lem},
we obtain the conclusion.
\end{proof}

\begin{lem}\label{Long3-lem}
There exist $\delta>0$ and $c>0$ such that
for $y\in\R_+^n$ and $s_1<s<s_2$
\[
\left|
 \sum_{(i,j)\in\Pi}
 \left(
 \int_{s}^{s_2}
 \left( e^{-{\cal B}(s-\tau)}F(\tau),\eta_{ij} \right)_{\cal C}
 d\tau
 \right)
 \eta_{ij}
\right|
 \leq
 ce^{-(\lambda^*+\delta)s}
 \left( 1+|y|^{2\lambda^*-m+\gamma} \right).
\]
\end{lem}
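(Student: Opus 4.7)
The plan is to exploit the spectral decomposition and reduce the estimate to a scalar exponential integral. By (an algebraic extension of) Lemma \ref{A'F-lem}, for each $(i,j)\in\Pi$ and $\tau\in(s,s_2)$ the integrand equals
\[
 \left(e^{-{\cal B}(s-\tau)}F(\tau),\eta_{ij}\right)_{\cal C}
 = e^{-\lambda_{ij}(s-\tau)}(F(\tau),\eta_{ij})_{{\cal C},\pa\R_+^n}.
\]
For $\tau>s$ this is not a genuine forward semigroup action, but the right-hand side is well-defined; in fact it is precisely the scalar expression that arose in the derivation of \eqref{bLong-eq} after using the constraint $(b(s_2),\eta_{ij})_{\cal C}=0$. (Alternatively one can justify the identity by self-adjointness, moving $e^{{\cal B}(\tau-s)}$ onto $\eta_{ij}$ where it is a genuine semigroup.)

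Next, I would apply Lemma \ref{Feta_ij-lem} to get $|(F(\tau),\eta_{ij})_{{\cal C},\pa\R_+^n}|\leq c_{ij}e^{-(\lambda^*+\delta_0)\tau}$ with a uniform $\delta_0>0$. Substituting and integrating,
\[
 \left|\int_s^{s_2}e^{-\lambda_{ij}(s-\tau)}(F(\tau),\eta_{ij})_{{\cal C},\pa}\h d\tau\right|
 \leq c_{ij}e^{-\lambda_{ij}s}\int_s^{s_2}e^{(\lambda_{ij}-\lambda^*-\delta_0)\tau}d\tau
 \leq \frac{c_{ij}}{\lambda^*+\delta_0-\lambda_{ij}}\h e^{-(\lambda^*+\delta_0)s}.
\]
The $\tau$-integral converges because $\lambda_{ij}<\lambda^*$ (which is exactly the defining property of $\Pi$) and $\delta_0>0$; since $\Pi$ is finite, the constant $1/(\lambda^*+\delta_0-\lambda_{ij})$ is uniformly bounded in $(i,j)\in\Pi$.

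Finally, to turn this scalar bound into the pointwise bound on $y$ required by the statement, I would use the growth estimate $|\eta_{ij}(y)|\leq m_{ij}(1+|y|^{2\lambda_{ij}-m+\gamma})$ recorded in Lemma \ref{2A-lem}\h-\h{Lemma} \ref{3A-lem} and already applied in the proof of Lemma \ref{Feta_ij-lem}. Since $-(\gamma-m)/2\leq\lambda_{ij}<\lambda^*$ for $(i,j)\in\Pi$, the exponent $2\lambda_{ij}-m+\gamma$ is non-negative and bounded above by $2\lambda^*-m+\gamma$, so $|\eta_{ij}(y)|\leq c_{ij}'(1+|y|^{2\lambda^*-m+\gamma})$ uniformly on $\R_+^n$. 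Summing over the finite set $\Pi$ and absorbing all constants gives the desired bound with $\delta=\delta_0$.

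There is essentially no serious obstacle here: the estimate is a clean consequence of orthogonality (which extracts the scalar exponential factor) and of the decay rate already established in Lemma \ref{Feta_ij-lem}. The only point to watch is the interpretation of $e^{-{\cal B}(s-\tau)}$ for $\tau>s$ as a formal symbol for the scalar factor $e^{-\lambda_{ij}(s-\tau)}$ appearing in the $\eta_{ij}$-component of \eqref{bLong-eq}, rather than as a genuine backward semigroup.
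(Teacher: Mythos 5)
Your proof is correct and follows essentially the same route as the paper: apply Lemma \ref{A'F-lem} to extract the scalar factor $e^{-\lambda_{ij}(s-\tau)}$, invoke Lemma \ref{Feta_ij-lem} for the decay $|(F(\tau),\eta_{ij})_{{\cal C},\pa\R_+^n}|\leq c_{ij}e^{-(\lambda^*+\delta)\tau}$, integrate using $\lambda_{ij}<\lambda^*$, and finish with the growth bound $|\eta_{ij}|\leq c(1+|y|^{2\lambda_{ij}-m+\gamma})\leq c(1+|y|^{2\lambda^*-m+\gamma})$. Your aside about the interpretation of the semigroup symbol for $\tau>s$ is a fair remark on notation, but it does not alter the argument.
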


\begin{proof}
From Lemma \ref{A'F-lem},
we recall that
\[
 \left( e^{-{\cal B}(s-\tau)}F(\tau),\eta_{ij} \right)_{\cal C}
=
 e^{-\lambda_{ij}(s-\tau)}\left( F(\tau),\eta_{ij} \right)_{{\cal C},\pa\R_+^n}.
\]
Then
since $\varphi(y,s)\in A_{s_1,s_2}$ and $\lambda_{ij}<\lambda^*$ for $(i,j)\in\Pi$,
we obtain from Lemma \ref{Feta_ij-lem}
\[
 \int_{s}^{s_2}
 e^{-\lambda_{ij}(s-\tau)}
 \left| (F(\tau),\eta_{ij})_{{\cal C},\pa\R+^n} \right|
 d\tau
\leq
 c\int_{s}^{s_2} e^{-\lambda_{ij}(s-\tau)}e^{-(\lambda^*+\delta)\tau}d\tau
\leq
 ce^{-(\lambda^*+\delta)s}.
\]
Furthermore
since $-(\gamma-m)/2\leq\lambda_{ij}<\lambda^*$ for $(i,j)\in\Pi$,
it holds that
$|\eta_{ij}|<c(1+|y|^{2\lambda_{ij}-m+\gamma})<c(1+|y|^{2\lambda^*-m+\gamma})$
for $(i,j)\in\Pi$.
Therefore
the proof is completed.
\end{proof}

\begin{lem}\label{Long2-lem}
There exist $\delta>0$ and $c>0$ such that
for $y\in\R_+^n$ and $s_1<s<s_2$
\[
 \left|
 \left(
 \int_{s_1}^s
 \left( e^{-{\cal B}(s-\tau)}F(\tau),\eta_{1\ell} \right)_{\cal C}
 d\tau
 \right)
 \eta_{1\ell}
 \right|
\leq
 ce^{-\delta s_1}e^{-\lambda^*s}
 \left( 1+|y|^{2\lambda^*-m+\gamma} \right).
\]
\end{lem}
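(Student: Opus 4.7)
The proof should be essentially a direct computation using the two preceding technical lemmas. My plan is as follows.

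First, I would apply Lemma \ref{A'F-lem} to diagonalize the semigroup action on the eigenfunction $\eta_{1\ell}$: since $\lambda_{1\ell}=\lambda^*$, we have
\[
 \bigl( e^{{\cal B}(s-\tau)}F(\tau),\eta_{1\ell} \bigr)_{\cal C}
=
 e^{-\lambda^*(s-\tau)}\bigl( F(\tau),\eta_{1\ell} \bigr)_{{\cal C},\pa\R_+^n}.
\]
(I am reading the `$-{\cal B}$' in the statement as a typo for `${\cal B}$', matching the expansion \eqref{bLong-eq} where the integrand carries $e^{{\cal B}(s-\tau)}$.) This converts the integral under consideration into the scalar one-variable quantity $\int_{s_1}^s e^{-\lambda^*(s-\tau)}(F(\tau),\eta_{1\ell})_{{\cal C},\pa\R_+^n}\,d\tau$.

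Next, I would invoke Lemma \ref{Feta_ij-lem} (applied with $(i,j)=(1,\ell)$, which is legal since the lemma is stated for all $(i,j)\in\N^2$) to get the pointwise-in-$\tau$ bound
\[
 \bigl|(F(\tau),\eta_{1\ell})_{{\cal C},\pa\R_+^n}\bigr|
\leq
 c\,e^{-(\lambda^*+\delta)\tau},\hh s_1<\tau<s_2,
\]
for some $\delta>0$; this uses the hypothesis $\varphi(y,s)\in A_{s_1,s_2}$. Inserting this estimate and evaluating,
\[
 \int_{s_1}^s e^{-\lambda^*(s-\tau)}\,e^{-(\lambda^*+\delta)\tau}\,d\tau
=
 e^{-\lambda^* s}\int_{s_1}^s e^{-\delta\tau}\,d\tau
\leq
 \frac{1}{\delta}\,e^{-\delta s_1}\,e^{-\lambda^* s}.
\]
This already yields the desired temporal factor $e^{-\delta s_1}e^{-\lambda^*s}$.

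Finally, to get the spatial factor, I would quote the pointwise growth bound on $\eta_{1\ell}$ coming from Lemma \ref{2A-lem} (used already in the proof of Lemma \ref{Long3-lem}): $|\eta_{1\ell}(y)|\leq c(1+|y|^{2\lambda^*-m+\gamma})$. Multiplying the two factors produces exactly the right-hand side of the claimed inequality. Since every step is either a direct substitution or a clean integration of a single exponential, there is no real obstacle here; the lemma is a bookkeeping consequence of Lemma \ref{A'F-lem}, Lemma \ref{Feta_ij-lem}, and the growth of $\eta_{1\ell}$, and is the $(1,\ell)$ analogue of Lemma \ref{Long3-lem} (the only substantive difference being that the relevant eigenvalue now equals $\lambda^*$, which is why the integration in $\tau$ on $(s_1,s)$—rather than on $(s,s_2)$—is the correct direction and produces the $e^{-\delta s_1}$ gain).
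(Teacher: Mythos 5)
Your proposal is correct and matches the paper's proof essentially line for line: diagonalize via Lemma \ref{A'F-lem}, bound the boundary pairing by Lemma \ref{Feta_ij-lem}, integrate the resulting exponential over $(s_1,s)$, and finish with the growth bound $|\eta_{1\ell}(y)|\leq c(1+|y|^{2\lambda^*-m+\gamma})$. (Your reading of $-{\cal B}$ as a typo for ${\cal B}$ is also right; the paper uses the two interchangeably here.)
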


\begin{proof}
By the same way as in the proof of Lemma \ref{Long3-lem},
we note that
\[
 \left( e^{-{\cal B}(s-\tau)}F(\tau),\eta_{1\ell} \right)_{\cal C}
=
 e^{-\lambda_{1\ell}(s-\tau)}\left( F(\tau),\eta_{1\ell} \right)_{{\cal C},\pa\R_+^n}.
\]
Then
from Lemma \ref{Feta_ij-lem},
we see that
\[
 \left|
 \int_{s_1}^s e^{-\lambda^*(s-\tau)}(F(\tau),\eta_{1\ell})_{{\cal C},\pa\R_+^n}d\tau
 \right|
\leq
 c\int_{s_1}^s e^{-\lambda^*(s-\tau)}e^{-(\lambda^*+\delta)\tau}d\tau
\leq
 ce^{-\delta s_1}e^{-\lambda^*s}.
\]
Therefore
since $|\eta_{1\ell}|<c(1+|y|^{2\lambda^*-m+\gamma})$,
we obtain the conclusion.
\end{proof}

To derive the estimates of $e^{{\cal A}(s-s_1)}Pb_1$ and
$P(\int_{s_1}^se^{{\cal B}(s-\tau)}F(\tau)d\tau)$ in \eqref{bLong-eq},
we divide $O_{\text{Long}}$ into three parts.
\[
\begin{array}{lll}
 O_{\text{Long}}^{(\text{I})}
\hspace{-2mm}&=&\hspace{-2mm} \dis
 \left\{ (y,s);\ Ke^{-\omega s}<|y|<e^{1/4},\ s_1+1<s<s_2 \right\},
\\[3mm]
 O_{\text{Long}}^{(\text{II})}
\hspace{-2mm}&=&\hspace{-2mm} \dis
 \left\{ (y,s);\ e^{1/4}<|y|<\min\{e^{(s-s_1)/2},e^{\sigma s}\},\ s_1+1<s<s_2 \right\},
\\[3mm]
 O_{\text{Long}}^{(\text{III})}
\hspace{-2mm}&=&\hspace{-2mm} \dis
 \left\{ (y,s);\ |y|>e^{(s-s_1)/2},\ s_1+1<s<s_2 \right\}.
\end{array}
\]

\subsection{Long time I}

Here
we provide the estimate in
\[
 O_{\text{Long}}^{(\text{I})}=\{(y,s);\ Ke^{-\omega  s}<|y|<e^{1/4},\ s_1+1<s<s_2\}.
\]

\begin{lem}\label{deltac2-lem}
There exist $\delta>0$ and $c>0$ such that
\[
 \|e^{{\cal A}(s-s_1)}P b_1\|_{\cal C}
\leq
 ce^{-(\lambda^*+\delta)s}.
\]
\end{lem}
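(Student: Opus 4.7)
The plan is to exploit the spectral decomposition of ${\cal A}$ on $L_{\cal C}^2$ and reduce the bound to Lemma \ref{(*-ell)-lem}. The decisive observation is that $Pb_1$ has a very simple closed form, because $P$ annihilates every eigenfunction whose eigenvalue is at most $\lambda^*$.

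\textbf{Step 1 (Compute $Pb_1$).} By definition of the initial data,
\[
 b_1 = \Phi(\cdot,s_1)/\sigma = \sum_{(i,j)\in\Pi}d_{ij}\eta_{ij} - e^{-\lambda^*s_1}\eta_\ell^*.
\]
Since $\{\eta_{ij}\}$ is orthonormal in $L_{\cal C}^2$ and the projector $P$ removes exactly the components along $\{\eta_{ij}:(i,j)\in\bar\Pi\}$, we have $P\eta_{ij}=0$ for every $(i,j)\in\bar\Pi$; in particular $P\eta_{ij}=0$ for $(i,j)\in\Pi$ and $P\eta_{1\ell}=0$. Hence every term in the sum drops out and
\[
 Pb_1 = -e^{-\lambda^*s_1}P\eta_\ell^*.
\]

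\textbf{Step 2 (Bound $\|P\eta_\ell^*\|_{\cal C}$).} Using $P\eta_{1\ell}=0$ again we write $P\eta_\ell^* = P(\eta_\ell^*-\eta_{1\ell})$. Since $P$ is an orthogonal projection its operator norm on $L_{\cal C}^2$ is at most one, and the identity ${\cal C}=\sigma^2\rho$ gives $\|\eta_\ell^*-\eta_{1\ell}\|_{\cal C}=\|\phi_\ell^*-\phi_{1\ell}\|_\rho$. Lemma \ref{(*-ell)-lem} then yields
\[
 \|P\eta_\ell^*\|_{\cal C} \leq \|\phi_\ell^*-\phi_{1\ell}\|_\rho \leq c e^{-\delta_0 s_1}
\]
for some $\delta_0>0$, so $\|Pb_1\|_{\cal C}\leq c e^{-(\lambda^*+\delta_0)s_1}$.

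\textbf{Step 3 (Spectral decay of the semigroup).} The operator ${\cal A}={\cal A}_0+(\gamma-m)/2$ is self-adjoint on $L_{\cal C}^2$ with complete orthonormal system of eigenfunctions $\{\eta_{ij}\}$ and eigenvalues $-\lambda_{ij}$. Because $Pb_1$ lies in the orthogonal complement of $\mathrm{span}\{\eta_{ij}:(i,j)\in\bar\Pi\}$, Parseval gives
\[
 \|e^{{\cal A}(s-s_1)}Pb_1\|_{\cal C}^2
 =
 \sum_{(i,j)\notin\bar\Pi}
 e^{-2\lambda_{ij}(s-s_1)}|(Pb_1,\eta_{ij})_{\cal C}|^2
 \leq
 e^{-2\lambda^{**}(s-s_1)}\|Pb_1\|_{\cal C}^2,
\]
where $\lambda^{**}:=\min\{\lambda_{ij}:\lambda_{ij}>\lambda^*\}>\lambda^*$ is the first eigenvalue strictly above $\lambda^*$. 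Combining with Step 2,
\[
 \|e^{{\cal A}(s-s_1)}Pb_1\|_{\cal C} \leq c\, e^{-\lambda^{**}(s-s_1)-(\lambda^*+\delta_0)s_1}.
\]
Set $\delta:=\min\{\delta_0,\lambda^{**}-\lambda^*\}>0$. Since $\lambda^{**}-\lambda^*-\delta\geq 0$ and $s_1\leq s$, the exponent satisfies
\[
 -\lambda^{**}s+(\lambda^{**}-\lambda^*-\delta)s_1 \leq -\lambda^{**}s+(\lambda^{**}-\lambda^*-\delta)s = -(\lambda^*+\delta)s,
\]
which yields $\|e^{{\cal A}(s-s_1)}Pb_1\|_{\cal C}\leq c e^{-(\lambda^*+\delta)s}$, as required.

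There is no serious analytic obstacle: the nontrivial decay of $\phi_\ell^*-\phi_{1\ell}$ was already established in Lemma \ref{(*-ell)-lem}, and the remaining work is just the algebraic identity $Pb_1=-e^{-\lambda^*s_1}P\eta_\ell^*$ together with the spectral-gap estimate on the range of $P$, which exploits the positive gap $\lambda^{**}-\lambda^*>0$.
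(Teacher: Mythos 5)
Your proposal is correct and follows essentially the same approach as the paper: compute $Pb_1 = -e^{-\lambda^*s_1}P(\eta_\ell^*-\eta_{1\ell})$, apply Lemma \ref{(*-ell)-lem} to bound $\|Pb_1\|_{\cal C}$ by $ce^{-(\lambda^*+\delta_1)s_1}$, use the spectral gap beyond $\bar\Pi$ to get exponential decay of $e^{{\cal A}(s-s_1)}$ on the range of $P$, and combine. The only difference is cosmetic: you make the gap $\lambda^{**}-\lambda^*$ explicit, whereas the paper states the semigroup estimate $\|e^{{\cal A}(s-s_1)}Pb_1\|_{\cal C}\leq e^{-(\lambda^*+\delta)(s-s_1)}\|Pb_1\|_{\cal C}$ directly; the conclusion is identical.
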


\begin{proof}
Since $(P b_1, \eta_{ij})_{\cal C}=0$ for $(i,j)\in\bar{\Pi}$,
there exists $\delta>0$ such that
\[
 \|e^{{\cal A}(s-s_1)}P b_1\|_{\cal C}
\leq
 e^{-(\lambda^*+\delta)(s-s_1)}\|P b_1\|_{\cal C}.
\]
Here
we recall that
$b(y,s_1)=\Phi(y,s_1)/\sigma(y)$ and
$\Phi(y,s_1)=\sum_{(i,j)\in\Pi}d_{ij}\phi_{ij}-e^{-\lambda^*s_1}\phi_\ell^*$.
Then
by definition of $P$,
we observe that
\[
\begin{array}{lll}
\dis
 P b_1
\hspace{-2mm}&=&\hspace{-2mm} \dis
 P
 \left( \sum_{(i,j)\in\Pi}d_{ij}\phi_{ij}-e^{-\lambda^*s_1}\phi_\ell^* \right)
\\[8mm]
\hspace{-2mm}&=&\hspace{-2mm} \dis
 P
 \left( \sum_{(i,j)\in\Pi}d_{ij}\phi_{ij}-e^{-\lambda^*s_1}\phi_{1\ell} \right)
+
 e^{-\lambda^*s_1}
 
 P
 \left( \phi_{1\ell}-\phi_\ell^* \right)
\\[6mm]
\hspace{-2mm}&=&\hspace{-2mm} \dis
 e^{-\lambda^*s_1}
 P
 \left( \phi_{1\ell}-\phi_\ell^* \right).
\end{array}
\]
From Lemma \eqref{(*-ell)-lem},
there exists $\delta_1>0$ such that $\|\phi_\ell^*-\phi_{1\ell}\|_{\cal C}<ce^{-\delta_1s_1}$.
Therefore
we obtain
\[
 \|P b_1\|_{\cal C}
\leq
 ce^{-(\lambda^*+\delta_1)s_1}.
\]
Combining the above estimate,
we obtain the conclusion.
\end{proof}

To estimate $Q(y,s)$,
we prepare the trace inequality for axial symmetric functions.

\begin{lem}\label{TracePolar-lem}
There exists $c>0$ independent of $r_1,r_2\in[0,\infty]$ $(r_1<r_2)$ and $g\in C(\R)$ such that
\[
 \int_{r_1}^{r_2}
 g(|\xi'|)|Q(\xi')|{\cal C}(\xi')d\xi'
\leq
 c\int_{r_1}^{r_2}
 g(|\xi|)
 \left( |\nabla Q(\xi)|+\frac{|Q(\xi)|}{|\xi|} \right)
 {\cal C}(\xi)d\xi
\]
for any smooth axial symmetric function $Q(\xi)=Q(|\xi'|,\xi_n)$.
\end{lem}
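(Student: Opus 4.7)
The plan is to reduce the inequality, via the axial symmetry and a passage to spherical coordinates $(r,\theta)$ on $\R_+^n$, to a one-dimensional trace estimate on $(0,\pi/2)$ that is uniform in $r$. Writing $Q=Q(r,\theta)$, the weight separates as ${\cal C}(\xi)=r^{-2\gamma}e^{-r^2/4}e_1(\theta)^2$. Using the axial measure formulas $d\xi'=\omega_{n-2}\rho^{n-2}d\rho$ on $\pa\R_+^n$ (with $\rho=|\xi'|$) and $d\xi=\omega_{n-2}r^{n-1}(\sin\theta)^{n-2}\,dr\,d\theta$ on $\R_+^n$, and setting
\[
 h(r) = g(r)\,r^{n-2-2\gamma}\,e^{-r^2/4},
\]
the left-hand side becomes, up to the dimensional constant $\omega_{n-2}e_1(\pi/2)^2$, the integral $\int_{r_1}^{r_2}h(r)|Q(r,\pi/2)|\,dr$ (on the boundary $\xi_n=0$ one has $\theta=\pi/2$ and $r=|\xi'|$). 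For the right-hand side I would use the pointwise lower bound $|\nabla Q|+|Q|/|\xi|\geq(|\pa_\theta Q|+|Q|)/r$ (which follows from $|\nabla Q|^2=(\pa_rQ)^2+(\pa_\theta Q/r)^2$), yielding, up to the constant $\omega_{n-2}$,
\[
 \int_{r_1}^{r_2}h(r)\left[\int_0^{\pi/2}\bigl(|\pa_\theta Q(r,\theta)|+|Q(r,\theta)|\bigr)w(\theta)\,d\theta\right]dr,
\]
where $w(\theta):=e_1(\theta)^2(\sin\theta)^{n-2}$. It therefore suffices to establish the one-dimensional trace estimate
\[
 |Q(r,\pi/2)|\leq c\int_0^{\pi/2}\bigl(|\pa_\theta Q(r,\theta)|+|Q(r,\theta)|\bigr)w(\theta)\,d\theta,
\]
with a constant $c$ independent of $r$.

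The one-dimensional estimate is elementary. By the fundamental theorem of calculus, for any $\theta_0\in[\pi/4,\pi/2]$,
\[
 Q(r,\pi/2)=Q(r,\theta_0)+\int_{\theta_0}^{\pi/2}\pa_\theta Q(r,\theta)\,d\theta.
\]
Averaging in $\theta_0$ over $[\pi/4,\pi/2]$, then applying the triangle inequality and Fubini, gives
\[
 |Q(r,\pi/2)|\leq\frac{4}{\pi}\int_{\pi/4}^{\pi/2}|Q(r,\theta)|\,d\theta+\int_{\pi/4}^{\pi/2}|\pa_\theta Q(r,\theta)|\,d\theta.
\]
Since $e_1$ is the first eigenfunction of \eqref{Eigen-eq} and hence strictly positive on $[\pi/4,\pi/2]$, and $\sin\theta$ is bounded below there, the weight $w$ admits a positive lower bound $c_0$ on this interval. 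Hence each of the two integrals on $[\pi/4,\pi/2]$ is dominated by $c_0^{-1}$ times its $w$-weighted version, which in turn is bounded by the corresponding weighted integral over $(0,\pi/2)$. This completes the 1D estimate.

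The argument is otherwise routine; the only point that requires a moment's care is the uniformity of $c$. The constant ultimately depends solely on $n$, on $c_0$, and on the ratio $e_1(\pi/2)^2$ coming from comparing the boundary and bulk weights, all of which are independent of $r$, of $g$, and of $r_1,r_2$. The reason we averaged only over the sector $[\pi/4,\pi/2]$ rather than all of $[0,\pi/2]$ is precisely that $w$ degenerates like $\theta^{n-2}$ at $\theta=0$, so an averaging supported near the boundary $\theta=\pi/2$ is needed to produce a $w$-controlled right-hand side with $r$-independent constant. Combining the radial factorization with the 1D bound yields the claimed inequality.
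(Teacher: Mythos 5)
Your proof is correct and takes essentially the same approach as the paper: both reduce the claim, after factoring out the radial weight, to a one-dimensional trace estimate in the angular variable $\theta$ that exploits the positivity of $(\sin\theta)^{n-2}$ (and $e_1(\theta)^2$) on a neighbourhood of $\theta=\pi/2$, together with the pointwise bound $|\pa_\theta Q|\leq|\xi|\,|\nabla Q|$. The only cosmetic differences are that you average over $\theta_0\in[\pi/4,\pi/2]$ where the paper integrates a cutoff $\chi$, and you track the $e_1(\theta)^2$ factor explicitly where the paper simply uses the two-sided comparability of ${\cal C}(\xi)$ with $|\xi|^{-2\gamma}e^{-|\xi|^2/4}$.
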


\begin{proof}
Let $q(\xi)$ be a smooth axial symmetric function
and
$\chi(\theta)$ be a cut off function such that
$\chi(\theta)=1$ if $\pi/4<\theta<\pi/2$, $\chi(\theta)=0$ if $0<\theta<\pi/8$.
Since $(\sin\theta)>\sin(\pi/8)$ for $\pi/8<\theta<\pi/2$,
we see that
\[
\begin{array}{l}
\dis
 \int_{r_1}^{r_2}|q(r,\pi/2)|r^{n-2}dr
\leq
 \int_{r_1}^{r_2}r^{n-2}dr
 \int_0^{\pi/2}\pa_\theta\Bigl( \chi(\theta)|q(r,\theta)| \Bigr)d\theta
\\[4mm] \dis \hspace{10mm}
\leq
 \left( 1+\|\chi_\theta\|_\infty \right)
 \int_{r_1}^{r_2}r^{n-2}dr
 \int_{\pi/8}^{\pi/2}\Bigl( |q(r,\theta)|+|q_\theta(r,\theta)| \Bigr)d\theta
\\[4mm] \dis \hspace{10mm}
\leq
 c\left( 1+\|\chi_\theta\|_\infty \right)
 \int_{r_1}^{r_2}r^{n-2}dr
 \int_{\pi/8}^{\pi/2}\Bigl( |q(r,\theta)|+|q_\theta(r,\theta)| \Bigr)
 (\sin\theta)^{n-2}d\theta.
\end{array}
\]
Therefore
any smooth axial symmetric function $q(\xi)$ satisfies
\[
 \int_{r_1}^{r_2}|q(\xi')|d\xi'
\leq
 c\int_{r_1}^{r_2}\left( |\pa_\theta q(\xi)|+|q(\xi)| \right)|\xi|^{-1}
 d\xi,
\]
where $c>0$ is a constant independent of $r_1,r_2$ and $q(\xi)$.
Here
we recall that
$c_1|\xi|^{-2\gamma}e^{-|\xi|/4}<{\cal C}(\xi)<c_2|\xi|^{-2\gamma}e^{-|\xi|/4}$.
Therefore
applying the above inequality with $q(\xi)=g(|\xi|)Q(\xi)|\xi|^{-2\gamma}e^{-|\xi|^2/4}$
and
using $|\pa_\theta Q(\xi)|\leq|\xi||\nabla Q(\xi)|$,
we obtain the conclusion.
\end{proof}

\begin{lem}\label{deltac4-lem}
There exist $\delta$ and $c>0$ such that
\[
 \left\| P\left( \int_{s_1}^se^{{\cal B}(s-\tau)}F(\tau)d\tau \right) \right\|_{\cal C}
\leq
 ce^{-(\lambda^*+\delta)s}.
\]
\end{lem}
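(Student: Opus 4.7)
The plan is to project onto the high-frequency part of the eigenfunction decomposition and exploit the spectral gap above $\lambda^*$. Set $Q(s):=\int_{s_1}^s e^{{\cal B}(s-\tau)}F(\tau)d\tau$. By Lemma \ref{A'F-lem},
\[
 \bigl(Q(s),\eta_{ij}\bigr)_{\cal C} = \int_{s_1}^s e^{-\lambda_{ij}(s-\tau)}\bigl(F(\tau),\eta_{ij}\bigr)_{{\cal C},\pa\R_+^n}d\tau.
\]
Since $(-{\cal A}+\mu)^{-1}$ is compact on $L_\rho^2$ (Lemma \ref{OperatorA-eq}), the system $\{\eta_{ij}\}$ is a complete orthonormal basis of $L_{\cal C}^2$ with $\lambda_{ij}\to+\infty$; in particular $\bar\Pi$ is a finite set and there is a gap $\delta_0>0$ with $\lambda_{ij}\geq\lambda^*+\delta_0$ for every $(i,j)\notin\bar\Pi$. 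Parseval then gives
\[
 \|PQ(s)\|_{\cal C}^2 = \sum_{(i,j)\notin\bar\Pi}\left|\int_{s_1}^s e^{-\lambda_{ij}(s-\tau)}\bigl(F(\tau),\eta_{ij}\bigr)_{{\cal C},\pa\R_+^n}d\tau\right|^2.
\]

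To exploit the gap, I would split the $\tau$-integral at $s-1$. For the long-time piece $\int_{s_1}^{s-1}$, use the semigroup identity $e^{{\cal B}(s-\tau)}=e^{{\cal A}(s-\tau-1)}e^{{\cal B}(1)}$ from Lemma \ref{A'F-lem}, the commutativity of $P$ with $e^{{\cal A}t}$, and the spectral gap to obtain
\[
 \left\|P\,e^{{\cal A}(s-\tau-1)}e^{{\cal B}(1)}F(\tau)\right\|_{\cal C} \leq e^{-(\lambda^*+\delta_0)(s-\tau-1)}\left\|e^{{\cal B}(1)}F(\tau)\right\|_{\cal C}.
\]
The inner norm $\|e^{{\cal B}(1)}F(\tau)\|_{\cal C}$ is controlled by Lemma \ref{ShortS_2-lem} applied with $\mu_1=\tau$ and $s\leftarrow\tau+1$, which provides the pointwise bound $c e^{-\delta s_1}e^{-\lambda^*\tau}(1+|y|^{2\lambda^*-m+\gamma})$ on the middle annulus whose $L_{\cal C}^2$-norm is finite thanks to the Gaussian factor in ${\cal C}$. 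For the short-time piece $\int_{s-1}^s$, Lemma \ref{ShortS_2-lem} with $\mu_1=s-1$ applied directly to the integrand gives the analogous pointwise decay, and integration against ${\cal C}(y)\,dy$ yields an $L_{\cal C}^2$-bound of order $e^{-(\lambda^*+\delta)s}$. Inserting the long-time estimate into the time convolution
\[
 \int_{s_1}^{s-1}e^{-(\lambda^*+\delta_0)(s-\tau-1)}e^{-\lambda^*\tau}d\tau \leq c\,e^{-(\lambda^*+\delta)s}
\]
(for any $\delta<\delta_0$) produces the desired rate. The trace inequality Lemma \ref{TracePolar-lem} enters as the tool that converts the boundary pairings $(F(\tau),\eta_{ij})_{{\cal C},\pa\R_+^n}$ into interior quantities compatible with Parseval, which is what permits the summation over $(i,j)\notin\bar\Pi$ to be bounded uniformly in the high-frequency tail.

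The main obstacle is obtaining the global $L_{\cal C}^2$-bound on $e^{{\cal B}(1)}F(\tau)$, since Lemma \ref{ShortS_2-lem} only covers the middle annulus $Ke^{-\omega s}<|y|<e^{\sigma s}$. Two separate regimes must be handled: the inner region $|y|<Ke^{-\omega s}$, where $F$ is singular near the origin but the weight ${\cal C}(y)\sim|y|^{-2\gamma}$ is integrable because $2\gamma<n-2$ and the bounds of Lemma \ref{f(Phi)-lem} give a sufficient $\tau$-decay; and the outer region $|y|>e^{\sigma s}$, where the Gaussian factor in ${\cal C}$ supplies super-exponential decay but one must carefully combine Proposition \ref{Rep-pro} with the outer pointwise bound on $F$ from Lemma \ref{f(Phi)-lem}. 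In each case one must verify that the contribution carries a factor of $e^{-(\lambda^*+\delta)\tau}$ so that the gap-driven time integration closes with rate $e^{-(\lambda^*+\delta)s}$.
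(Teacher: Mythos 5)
Your route is genuinely different from the paper's. The paper does \emph{not} decompose $Q(s):=P\!\left(\int_{s_1}^s e^{\mathcal B(s-\tau)}F(\tau)\,d\tau\right)$ via Duhamel and the spectral gap of the semigroup; instead it observes that $Q$ solves the same parabolic equation with boundary source $F$ and zero initial data, multiplies by $Q\,\mathcal{C}$, and runs a Gronwall-type energy argument. The boundary term $\int_{\partial\mathbb R_+^n}FQ\,\mathcal C\,d\xi'$ is controlled by splitting into the four radial regions of Lemma~\ref{f(Phi)-lem}, using the trace inequality Lemma~\ref{TracePolar-lem} to convert boundary integrals into $\|\nabla Q\|_{\mathcal C}$, $\|Q\|_{\mathcal C}$ and $\|Q/|\xi|\|_{L^2_{\mathcal C}(B_1)}$, and then using Caffarelli--Kohn--Nirenberg to absorb the Hardy term. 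The whole point of this route is that one never needs a global $L^2_{\mathcal C}$ bound on $e^{\mathcal B(1)}F(\tau)$: the boundary source is absorbed directly into the energy dissipation, and the projection $P$ supplies the spectral gap at the Gronwall stage.

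Your Duhamel/Parseval route has three concrete problems.
First, Lemma~\ref{ShortS_2-lem} estimates $T(y,s)=\int_{\mu_1}^s e^{\mathcal B(s-\tau')}F(\tau')\,d\tau'$, a time-integrated quantity, and cannot be applied with ``$\mu_1=\tau$, $s\leftarrow\tau+1$'' to bound $e^{\mathcal B(1)}F(\tau)$; it simply does not produce a pointwise bound on the single-time propagated source.
Second, the claimed convolution inequality
\[
 \int_{s_1}^{s-1} e^{-(\lambda^*+\delta_0)(s-\tau-1)}\,e^{-\lambda^*\tau}\,d\tau\leq c\,e^{-(\lambda^*+\delta)s}
\]
is false: substituting $u=s-\tau-1$ gives $e^{-\lambda^*(s-1)}\int_0^{s-s_1-1}e^{-\delta_0 u}\,du\leq c\,e^{-\lambda^*s}$, i.e.\ only the rate $e^{-\lambda^*s}$. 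To close at the claimed rate $e^{-(\lambda^*+\delta)s}$ you \emph{must} have $\|e^{\mathcal B(1)}F(\tau)\|_{\mathcal C}\lesssim e^{-(\lambda^*+\delta)\tau}$ with a genuinely positive $\delta$; the factor $e^{-\delta s_1}$ you quote is a fixed constant, not $\tau$-decay, and does not help.
Third, you acknowledge that the inner region $|y|<Ke^{-\omega s}$ and the outer region $|y|>e^{\sigma s}$ require separate estimates, but you do not carry them out; in fact these, combined with the $e^{-2\lambda^*\tau}$ decay of $F$ in the middle annulus and the strict inequality $2\gamma<n-2$, are precisely what would produce the required extra $\tau$-decay. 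The suggestion that Lemma~\ref{TracePolar-lem} ``converts the boundary pairings into interior quantities compatible with Parseval'' does not correspond to anything that makes the high-frequency sum finite, because the summability issue comes from $F$ being a singular boundary source, not from the trace.

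In short, your strategy is salvageable in principle but would require an independent, carefully executed estimate showing $\|e^{\mathcal B(1)}F(\tau)\|_{\mathcal C}\leq c\,e^{-(\lambda^*+\delta)\tau}$, treating all four radial regions; the paper sidesteps this entirely with the energy/Gronwall approach.
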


\begin{proof}
Let $Q(y,s)=P(\int_{s_1}^se^{{\cal B}(s-\tau)}F(\tau)d\tau)$.
By definition  of $Q(y,s)$,
we find that $Q(y,s)$ solves
\[
\begin{cases}
 \dis
 Q_s = \frac{1}{\cal C}\nabla({\cal C}\nabla Q) + \left( \frac{\gamma-m}{2} \right)Q,
 & (y,s)\in\R_+^n\times(s_1,\infty),
 \\ \dis
 \pa_\nu Q = F,
 & (y,s)\in\pa\R_+^n\times(s_1,\infty),
 \\ \dis
 Q(y,s_1)=0,
 & y\in\R_+^n.
\end{cases}
\]
Multiplying this equation by $Q(y,s)$ and integrating over $\R_+^n$,
then we get
\begin{equation}\label{difQ-eq}
 \frac{1}{2}\frac{d}{ds}\|Q\|_{\cal C}^2
=
 -\|\nabla Q\|_{\cal C}^2 + \int_{\pa\R_+^n}F(\xi',s)Q(\xi',s){\cal C}(\xi')d\xi'.
\end{equation}
Then
since $H<K$,
Lemma \ref{f(Phi)-lem} implies
\[
\begin{array}{l}
\dis
 \int_{\pa\R_+^n}|F(\xi',s)Q(\xi',s)|{\cal C}d\xi'
\leq
 \int_0^{He^{-\omega s}}
 |\xi'|^{-(m+1)+\gamma}|Q|{\cal C}d\xi'
+
 e^{-2\lambda^*s}
 \int_{He^{-\omega s}}^1
|\xi'|^{-1+m-\gamma}|Q|{\cal C}d\xi'
\\[6mm] \dis \hspace{15mm}
+
 e^{-2\lambda^*s}
 \int_1^{e^{\sigma s}}
 |\xi'|^{-(m+1)+4\lambda^*+\gamma}|Q|{\cal C}d\xi'
+
 \int_{e^{\sigma s}}^\infty
 |\xi'|^{-(m+1)+\gamma}|Q|{\cal C}d\xi'
\\[6mm] \dis \hspace{10mm}
=
 J_1 + J_2 + J_3 +J_4.
\end{array}
\]
By Lemma \ref{TracePolar-lem},
we see that
\[
\begin{array}{lll}
\dis
 J_1
\hspace{-2mm}&\leq&\hspace{-2mm} \dis
 c\int_0^{He^{-\omega s}}|\xi|^{-(m+1)+\gamma}
 \left( |\nabla Q|+\frac{|Q|}{|\xi|} \right)
 {\cal C}d\xi
\\[6mm]
\hspace{-2mm}&\leq&\hspace{-2mm} \dis
 c\left( \int_0^{He^{-\omega s}}r^{-2(m+1)+n-1}dr \right)^{1/2}
 \left( \|\nabla Q\|_{\cal C}+\left\| \frac{Q}{|\xi|} \right\|_{L_{\cal C}^2(B_1)} \right)
\\[6mm]
\hspace{-2mm}&=&\hspace{-2mm} \dis
 c(He^{-\omega s})^{(n-2-2m)/2}
 \left( \|\nabla Q\|_{\cal C}+\left\| \frac{Q}{|\xi|} \right\|_{L_{\cal C}^2(B_1)} \right).
\end{array}
\]
Since $\lambda^*=\omega(\gamma-m)$,
we find that $\omega(n-2-2m)=2\lambda^*+\omega(n-2-2\gamma)$.
Therefore
we obtain
\begin{equation}\label{J_1-eq}
 J_1
\leq
 cH^{(n-2-2m)/2}e^{-(n-2-2\gamma)\omega s/2}e^{-\lambda^*s}
 \left( \|\nabla Q\|_{\cal C}+\left\| \frac{Q}{|\xi|} \right\|_{L_{\cal C}^2(B_1)} \right).
\end{equation}
Next we estimate $J_2$.
Then
by the same way,
we see that
\[
\begin{array}{lll}
\dis
 J_2
\hspace{-2mm}&\leq&\hspace{-2mm} \dis
 ce^{-2\lambda^*s}
 \int_{He^{-\omega s}}^1
 |\xi|^{-1+m-\gamma}
 \left( |\nabla Q|+\frac{|Q|}{|\xi|} \right)
 {\cal C}d\xi
\\[6mm]
\hspace{-2mm}&\leq&\hspace{-2mm} \dis
 ce^{-2\lambda^*s}
 \left(
 \int_{He^{-\omega s}}^1
 |\xi|^{-2+2m-2\gamma}{\cal B}d\xi
 \right)^{1/2}
 \left(
 \|\nabla Q\|_{\cal C}+\left\| \frac{Q}{|\xi|} \right\|_{L_{\cal C}^2(B_1)}
 \right).
\end{array}
\]
Here
we put $2d=(n-2)-2\gamma>0$.
Then
since ${\cal B}(\xi)\sim|\xi|^{-2\gamma}$,
we observe that
\[
 \int_{He^{-\omega s}}^1
 |\xi|^{-2+2m-2\gamma}{\cal B}d\xi
\leq
 \begin{cases}
 \dis
 \left( He^{-\omega s} \right)^{-2(\gamma-m)+d}
 \int_{He^{-\omega s}}^1
 |\xi|^{-2-2\gamma-d}d\xi
 & \text{if } 2(\gamma-m)>d,
 \\[2mm] \dis
 \int_{He^{-\omega s}}^1
 |\xi|^{-2-2\gamma-d}d\xi
 & \text{if } 2(\gamma-m)\leq d.
 \end{cases}
\]
Therefore
since $-2-2\gamma-2d=d-n$,
we obtain
\begin{equation}\label{J_2-eq}
 J_2
\leq
 c\left(
 H^{-(\gamma-m)+d/2}e^{-d\omega s/2}e^{-\lambda^*s} + e^{-2\lambda^*s}
 \right)
 \left( \|\nabla Q\|_{\cal C}+\left\| \frac{Q}{|\xi|} \right\|_{L_{\cal C}^2(B_1)} \right).
\end{equation}
Furthermore
by the same calculation as above,
we see that
\begin{equation}\label{J_3-eq}
\begin{array}{lll}
\dis
 J_3
\hspace{-2mm}&\leq&\hspace{-2mm} \dis
 ce^{-2\lambda^*s}
 \int_1^{e^{\sigma s}}
 |\xi|^{-(m+1)+4\lambda^*+\gamma}
 \left( |\nabla Q|+\frac{|Q|}{|\xi|} \right)
 {\cal C}d\xi
\\[4mm]
\hspace{-2mm}&\leq&\hspace{-2mm} \dis
 ce^{-2\lambda^*s}
 \int_1^{e^{\sigma s}}
 |\xi|^{-(m+1)+4\lambda^*+\gamma}
 \left( |\nabla Q|+|Q| \right)
 {\cal C}d\xi
\\[4mm]
\hspace{-2mm}&\leq&\hspace{-2mm} \dis
 ce^{-2\lambda^*s}
 \left( \int_1^{e^{\sigma s}}r^{-2(m+1)+8\lambda^*+n-2}\rho(r)dr \right)^{1/2}
 \left( \|\nabla Q\|_{\cal C}+\|Q\|_{\cal C} \right).
\end{array}
\end{equation}
Finally
repeating the above argument,
we obtain
\begin{equation}\label{J_4-eq}
\begin{array}{lll}
\dis
 J_4
\hspace{-2mm}&\leq&\hspace{-2mm} \dis
 c\int_{e^{\sigma s}}^\infty
 |\xi|^{-(m+1)+\gamma}
 \left( |\nabla Q|+\frac{|Q|}{|\xi|} \right)
 {\cal C}d\xi
\\[4mm]
\hspace{-2mm}&\leq&\hspace{-2mm} \dis
 c\left(
 \int_{e^{\sigma s}}^\infty r^{-2(m+1)}\rho(r)r^{n-1}dr
 \right)^{1/2}
 \left( \|\nabla Q\|_{\cal C}+\|Q\|_{\cal C} \right)
\\[4mm]
\hspace{-2mm}&\leq&\hspace{-2mm} \dis
 ce^{-2\lambda^*s}
 \left(
 \int_{e^{\sigma s}}^\infty r^{-2(m+1)+4\lambda^*/\sigma}\rho(r)r^{n-1}dr
 \right)^{1/2}
 \left( \|\nabla Q\|_{\cal C}+\|Q\|_{\cal C} \right).
\end{array}
\end{equation}
As a consequence,
since $H=e^{a's_1}$ with $0<a'\ll1$,
by \eqref{J_1-eq}-\eqref{J_4-eq},
there exists $\delta>0$ such that
\[
 \int_{\pa\R_+^n}|F(\xi',s)Q(\xi',s)|{\cal C}d\xi'
\leq
 ce^{-(1+\delta)\lambda^*s}
 \left(
 \|\nabla Q\|_{\cal C}+\|Q\|_{\cal C}+\left\| \frac{Q}{|\xi|} \right\|_{L_{\cal C}^2(B_1)}
 \right).
\]
Let $\chi(|\xi|)$ be a cut off function such that $\chi(r)=1$ if $0<r<1$ and $\chi(r)=0$ if $r>2$.
Here
applying the Caffarelli-Kohn-Nirenberg inequality \eqref{CaffarelliKN-eq}
with $m=\alpha=-2\gamma$, $r=p-2$, $a=1$,
then we get
\[
 \left\| \frac{Q}{|\xi|} \right\|_{L_{\cal B}^2(B_1)}
\leq
 \left\| \frac{\chi Q}{|\xi|} \right\|_{\cal B}
\leq
 c\|\nabla(\chi Q)\|_{\cal B}
\leq
 c\left( \|Q\|_{L_{\cal B}^2(B_2)}+\|\nabla Q\|_{L_{\cal B}^2(B_2)} \right).
\]
Since ${\cal C}(\xi)=\rho(\xi){\cal B}(\xi)$,
we find
$\|\cdot\|_{L_{\cal C}^2(B_1)}\leq\|\cdot\|_{L_{\cal B}^2(B_1)}$
and
$\|\cdot\|_{L_{\cal B}^2(B_2)}\leq e\|\cdot\|_{L_{\cal C}^2(B_2)}$.
Hence
we obtain
\[
  \int_{\pa\R_+^n}|F(\xi',s)Q(\xi',s)|{\cal C}d\xi'
\leq
 ce^{-(1+\delta)\lambda^*s}
 \left( \|\nabla Q\|_{\cal C}+\|Q\|_{\cal C} \right).
\]
Therefore
plugging this estimate into \eqref{difQ-eq},
we get
\[
 \frac{1}{2}\frac{d}{ds}
 \|Q(s)\|_{\cal C}^2
\leq
-
 \left( 1-e^{-\delta\lambda^*s} \right)
 \|\nabla Q(s)\|_{\cal C}^2
+
 e^{-\delta\lambda^*s}\|Q(s)\|_{\cal C}^2
+
 ce^{-(2+\delta)\lambda^*s}.
\]
Since
$(Q(s),\eta_{ij})_{\cal C}=0$ for $(i,j)\in\bar{\Pi}$ and $Q(s_1)=0$,
there exists $\delta'>0$ such that
\[
 \|Q(s)\|_{\cal C} \leq ce^{-(1+\delta')\lambda^*s},
\]
which completes the proof.
\end{proof}

\begin{lem}\label{deltac4'-lem}
There exist $\delta>0$ and $c>0$ such that
\[
 \left| e^{{\cal A}(s-s_1)}Pb_1 \right|
+
 \left| P\left( \int_{s_1}^se^{{\cal B}(s-\tau)}F(\tau)d\tau \right) \right|
<
 ce^{-\delta s_1}e^{-\lambda^*s}
\hspace{5mm}\mathrm{for}\ (y,s)\in O_{\mathrm{Long}}^{(\mathrm{I})}.
\]
\end{lem}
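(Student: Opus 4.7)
The plan is to upgrade the $L^2_{\cal C}$--decay estimates of Lemma \ref{deltac2-lem} and Lemma \ref{deltac4-lem} to pointwise estimates by means of the $L^\infty$--$L^2$ smoothing result Lemma \ref{L^inftyL^2-lem}. The key geometric observation in $O_{\mathrm{Long}}^{(\mathrm{I})}$ is that $s-s_1>1$ and $|y|<e^{1/4}$, so $|y|<e^{1/4}\le e^{t/2}$ for any $t\ge 1/2$; hence Lemma \ref{L^inftyL^2-lem} is applicable at such fixed times with the factor $(1-e^{-t})^{-n/2}$ bounded. Since $e^{{\cal A}t}=e^{(\gamma-m)t/2}e^{{\cal A}_0t}$ and $(\gamma-m)/2>0$, one cannot apply the $L^\infty$--$L^2$ bound with $t=s-s_1$ directly (the prefactor blows up); instead one should use it only with a bounded time, say $t=1/2$, and relegate the long-time decay to the $L^2_{\cal C}$ estimates already proved.

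For the first term, decompose $e^{{\cal A}(s-s_1)}Pb_1=e^{{\cal A}(1/2)}\bigl(e^{{\cal A}(s-s_1-1/2)}Pb_1\bigr)$, which is legitimate because $s-s_1-1/2>1/2$. Apply Lemma \ref{L^inftyL^2-lem} to the outer $e^{{\cal A}_0(1/2)}$: for $|y|<e^{1/4}$ one gets $|e^{{\cal A}(1/2)}g(y)|\le c\|g\|_{\cal C}$. The inner factor is controlled by the spectral gap estimate already established in the proof of Lemma \ref{deltac2-lem}, namely $\|e^{{\cal A}(s-s_1-1/2)}Pb_1\|_{\cal C}\le ce^{-(\lambda^*+\delta)(s-s_1-1/2)}\|Pb_1\|_{\cal C}$, combined with $\|Pb_1\|_{\cal C}\le ce^{-(\lambda^*+\delta_1)s_1}$ coming from Lemma \ref{(*-ell)-lem}. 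Since $s>s_1$, one factor of $e^{-(\lambda^*+\delta)(s-s_1)}$ may be split as $e^{-\lambda^*(s-s_1)}\cdot 1$, yielding the bound $ce^{-\lambda^*s}e^{-\delta_1 s_1}$.

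For the second term let $Q(s)=P\bigl(\int_{s_1}^se^{{\cal B}(s-\tau)}F(\tau)d\tau\bigr)$. Using that $P$ commutes with $e^{{\cal A}t}$ (both being diagonal in the eigenbasis $\{\eta_{ij}\}$) and the semigroup identity $e^{{\cal B}(s-\tau)}=e^{{\cal A}(1/2)}e^{{\cal B}(s-1/2-\tau)}$ from Lemma \ref{A'F-lem}, split
\[
 Q(s)=e^{{\cal A}(1/2)}Q(s-1/2)+P\!\left(\int_{s-1/2}^se^{{\cal B}(s-\tau)}F(\tau)d\tau\right).
\]
Apply the $L^\infty$--$L^2$ bound to the first summand together with $\|Q(s-1/2)\|_{\cal C}\le ce^{-(\lambda^*+\delta)(s-1/2)}$ from Lemma \ref{deltac4-lem}; this gives $ce^{-\lambda^*s}e^{-\delta s_1}$ after the usual splitting. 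For the second summand, the non-projected part $\int_{s-1/2}^se^{{\cal B}(s-\tau)}F(\tau)d\tau$ is covered directly by the short-time estimate Lemma \ref{ShortS_2-lem} applied with $\mu_1=s-1/2$ (the hypothesis $\mu_1\le s\le\mu_1+1$ is met), giving the claimed decay for $|y|<e^{1/4}$. The subtracted projections $(\cdot,\eta_{ij})_{\cal C}\eta_{ij}$ for $(i,j)\in\bar\Pi$ are handled via Lemma \ref{A'F-lem} and the boundary estimate Lemma \ref{Feta_ij-lem}: each inner product is at most $ce^{-(\lambda^*+\delta)s}$, and $|\eta_{ij}(y)|\le c(1+|y|^{2\lambda^*-m+\gamma})$ is uniformly bounded on $|y|<e^{1/4}$.

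The main obstacle is the conceptual one described above: one cannot absorb the $L^\infty$--$L^2$ prefactor $e^{(\gamma-m)(s-s_1)/2}$ over the full time interval, so the decomposition into a short-time evolution (where the smoothing effect is used) plus a long-time evolution (where only the $L^2$ gap matters) is essential. Once this splitting is in place, all estimates reduce to already proved bounds and the claimed inequality follows by choosing $\delta>0$ smaller than $\min(\delta_1,\delta_{L^2})$ where the latter constants come from Lemma \ref{(*-ell)-lem} and Lemma \ref{deltac2-lem}/\ref{deltac4-lem}.
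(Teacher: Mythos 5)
Your proposal follows essentially the same strategy as the paper's proof: split the time evolution into a short final step of fixed length, use the $L^\infty$--$L^2$ smoothing (Lemma \ref{L^inftyL^2-lem}) on that step to convert the already proved $L^2_{\cal C}$ decay (Lemmas \ref{deltac2-lem}, \ref{deltac4-lem}) into a pointwise bound, and control the leftover integral near the endpoint with Lemma \ref{ShortS_2-lem} together with the spectral projection estimates from Lemmas \ref{A'F-lem} and \ref{Feta_ij-lem}. The only cosmetic difference is that you cut at $s-1/2$ so that the $L^2$ factor is exactly $Q(s-1/2)$, whereas the paper cuts the Duhamel integral at $s-1$ and shifts by $1/2$ inside; both are correct and your version makes the application of Lemma \ref{deltac4-lem} slightly more direct.
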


\begin{proof}
Since $|y|<e^{1/4}$ and $s>s_1+1$,
we get from Lemma \ref{L^inftyL^2-lem}
\[
\begin{array}{lll}
\dis
 \left| e^{{\cal A}(s-s_1)}Pb_1 \right|
\hspace{-2mm}&=&\hspace{-2mm} \dis
 \left| e^{{\cal A}/2}e^{{\cal A}(s-s_1-1/2)}Pb_1 \right|
=
 e^{(\gamma-m)/4}\left| e^{{\cal A}_0/2}e^{{\cal A}(s-s_1-1/2)}Pb_1 \right|
\\[4mm]
\hspace{-2mm}&\leq&\hspace{-2mm} \dis
 c\left\| e^{{\cal A}(s-s_1-1/2)}Pb_1 \right\|_{\cal C}.
\end{array}
\]
Therefore
Lemma \ref{deltac2-lem} implies
\[
 \left| e^{{\cal A}(s-s_1)}Pb_1 \right|
\leq
 ce^{-(\lambda^*+\delta)s}.
\]
Next
we estimate $P(\int_{s_1}^se^{{\cal B}(s-\tau)}F(\tau)d\tau)$.
We divide the integral into two parts.
\[
 P\left(
 \int_{s_1}^se^{{\cal B}(s-\tau)}F(\tau)d\tau
 \right)
=
 P\left(
 \int_{s_1}^{s-1}
 e^{{\cal B}(s-\tau)}F(\tau)d\tau
 \right)
+
 P\left(
 \int_{s-1}^s
 e^{{\cal B}(s-\tau)}F(\tau)d\tau
 \right).
\]
From Lemma \ref{A'F-lem},
we recall that
$e^{{\cal B}(s-\tau)}F(\tau)=e^{{\cal A}/2}e^{{\cal B}(s-\tau-1/2)}F(\tau)$
for $s_1<\tau<s-1$.
Hence
by the same way as above,
we obtain
\[
\begin{array}{lll}
\dis
 \left|
 P\left(
 \int_{s_1}^{s-1}
 e^{{\cal B}(s-\tau)}F(\tau)d\tau
 \right)
 \right|
\hspace{-2mm}&=&\hspace{-2mm} \dis
 \left|
 e^{{\cal A}/2}
 P\left(
 \int_{s_1}^{s-1}
 e^{{\cal B}(s-\tau-1/2)}F(\tau)d\tau
 \right)
 \right|
\\[4mm]
\hspace{-2mm}&\leq&\hspace{-2mm} \dis
 c\left\|
 P\left(
 \int_{s_1}^{s-1}
 e^{{\cal B}(s-\tau-1/2)}F(\tau)d\tau
 \right)
 \right\|_{\cal C}.
\end{array}
\]
Therefore
by Lemma \ref{deltac4-lem},
it follows that
\[
 \left|
 P\left(
 \int_{s_1}^{s-1}
 e^{{\cal B}(s-\tau)}F(\tau)d\tau
 \right)
 \right|
\leq
 ce^{-(\lambda^*+\delta)s}.
\]
Now
we estimate
$P(\int_{s-1}^se^{{\cal B}(s-\tau)}F(\tau)d\tau)$.
Then
by Lemma \ref{ShortS_2-lem},
we see that
\[
 \left|
 \int_{s-1}^s
 e^{{\cal B}(s-\tau)}F(\tau)d\tau
 \right|
\leq
 ce^{-\delta s_1}e^{-\lambda^*s}
\hspace{5mm}\text{for } Ke^{-\omega s}<|y|<e^{\sigma s}.
\]
Furthermore
Lemma \ref{A'F-lem} and Lemma \ref{Feta_ij-lem} imply
\[
\begin{array}{lll}
\dis
 \left|
 \left(
 \int_{s-1}^s
 e^{{\cal B}(s-\tau)}F(\tau)d\tau,
 \eta_{ij}
 \right)_{\cal C}
 \right|
\hspace{-2mm}&=&\hspace{-2mm} \dis
 \left|
 \int_{s-1}^s
 e^{-\lambda_{ij}(s-\tau)}
 \left(F(\tau),\eta_{ij}\right)_{{\cal C},\pa\R_+^n}
 d\tau
 \right|
\\[6mm]
\hspace{-2mm}&\leq&\hspace{-2mm} \dis
 c\int_{s-1}^s
 e^{-\lambda_{ij}(s-\tau)}e^{-(\lambda^*+\delta)s}
 d\tau
\leq
 ce^{-(\lambda^*+\delta)s}.
\end{array}
\]
Here
we recall that $Pb=b-\sum_{(i,j)\in\bar{\Pi}}(b,\eta_{ij})_{\cal C}\eta_{ij}$.
Therefore
since $|y|<e^{1/4}$,
we obtain
\[
 \left|
 P\left(
 \int_{s-1}^s
 e^{{\cal B}(s-\tau)}F(\tau)d\tau
 \right)
 \right|
\leq
 ce^{-\delta s_1}e^{-\lambda^*s}.
\]
Thus
the proof is completed.
\end{proof}

Therefore
combining Lemma \ref{Long1-lem}{\h-\h}Lemma \ref{Long2-lem} and Lemma \ref{deltac4'-lem},
we obtain the following result.

\begin{pro}\label{Longtime1-pro}
There exist $\delta>0$ and $c>0$ such that
if $\varphi(y,s)$ satisfies \eqref{AssumeAPi-eq},
then it holds that
\[
 \left| b(y,s)+e^{-\lambda^*s}\eta_{1\ell}(y) \right|
\leq
 ce^{-\delta s_1}e^{-\lambda^*s}
 \left( 1+|y|^{2\lambda^*-m+\gamma} \right)
\hspace{5mm}\mathrm{for}\ (y,s)\in O_{\mathrm{Long}}^{(\mathrm{I})}.
\]
\end{pro}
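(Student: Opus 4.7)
The plan is to substitute the decomposition \eqref{bLong-eq} of $b(y,s)$ into the expression $b(y,s) + e^{-\lambda^*s}\eta_{1\ell}(y)$ and estimate each of the five resulting summands separately, using the lemmas already established in this subsection. Concretely, add $e^{-\lambda^*s}\eta_{1\ell}$ to the second term of \eqref{bLong-eq} to form the combination that appears in Lemma \ref{Long1-lem}, and leave the remaining four terms as they are. This yields a pointwise bound of the form $ce^{-\delta s_1}e^{-\lambda^*s}(1+|y|^{2\lambda^*-m+\gamma})$ for each piece, once one notes that in $O_{\mathrm{Long}}^{(\mathrm{I})}$ the spatial factor $1+|y|^{2\lambda^*-m+\gamma}$ is bounded by a constant (since $|y|<e^{1/4}$), so the estimates from Lemma \ref{Long3-lem} and Lemma \ref{Long2-lem} (which already carry the factor $1+|y|^{2\lambda^*-m+\gamma}$) fit into the desired form.

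More precisely, I would proceed as follows. First, Lemma \ref{Long1-lem} handles the combination $e^{-\lambda^*(s-s_1)}(b_1,\eta_{1\ell})_{\cal C}\eta_{1\ell}+e^{-\lambda^*s}\eta_{1\ell}$, giving the bound $ce^{-\delta s_1}e^{-\lambda^*s}(1+|y|^{2\lambda^*-m+\gamma})$. Second, Lemma \ref{Long3-lem} bounds the $\sum_{(i,j)\in\Pi}$ term by $ce^{-(\lambda^*+\delta)s}(1+|y|^{2\lambda^*-m+\gamma})$; since $s>s_1$, this is at most $ce^{-\delta s_1}e^{-\lambda^*s}(1+|y|^{2\lambda^*-m+\gamma})$ (absorbing half of the extra decay into $e^{-\delta s_1}$). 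Third, Lemma \ref{Long2-lem} directly bounds the $\eta_{1\ell}$ term by $ce^{-\delta s_1}e^{-\lambda^*s}(1+|y|^{2\lambda^*-m+\gamma})$. Finally, the two terms $e^{{\cal A}(s-s_1)}Pb_1$ and $P(\int_{s_1}^s e^{{\cal B}(s-\tau)}F(\tau)d\tau)$ are controlled pointwise by Lemma \ref{deltac4'-lem}, which gives $ce^{-\delta s_1}e^{-\lambda^*s}$ on $O_{\mathrm{Long}}^{(\mathrm{I})}$, again dominated by the claimed right-hand side.

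Adding these five bounds and taking a common $\delta>0$ (the minimum of the five exponents) and a common constant $c$ yields the estimate of Proposition \ref{Longtime1-pro}. The only subtlety is ensuring that every lemma cited does hold under the standing hypothesis \eqref{AssumeAPi-eq}: Lemma \ref{Long1-lem} and Lemma \ref{deltac4'-lem} use only the structure of the initial data together with Lemma \ref{(*-ell)-lem}, while Lemmas \ref{Long3-lem}, \ref{Long2-lem}, and \ref{deltac4-lem} rely on the pointwise control of $F$ in Lemma \ref{f(Phi)-lem}, which requires $\varphi\in A_{s_1,s_2}$. Both conditions are granted by \eqref{AssumeAPi-eq}. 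Since all the heavy lifting has been done in the preceding lemmas, the main task here is essentially bookkeeping: verifying that each term of \eqref{bLong-eq} contributes a term of the asserted size, and that on $O_{\mathrm{Long}}^{(\mathrm{I})}$ the bound $ce^{-\delta s_1}e^{-\lambda^*s}$ from Lemma \ref{deltac4'-lem} is compatible with (indeed stronger than) the polynomial growth factor in the statement.

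The main obstacle is conceptual rather than computational: namely, pinning down why the pointwise $L^\infty$ control in the inner region $O_{\mathrm{Long}}^{(\mathrm{I})}$ can be derived from $L^2_{\cal C}$ estimates on $e^{{\cal A}(s-s_1)}Pb_1$ and the projected integral term. This step was already carried out in Lemma \ref{deltac4'-lem} via the $L^\infty$--$L^2$ estimate of Lemma \ref{L^inftyL^2-lem}, exploiting that $|y|<e^{1/4}\ll e^{(s-s_1)/2}$ for $s>s_1+1$, so the semigroup smoothing inequality applies. With that in hand, the present proposition is an immediate consequence.
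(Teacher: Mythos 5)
Your proposal is correct and follows exactly the paper's route: the paper proves this proposition by combining the decomposition \eqref{bLong-eq} with Lemma \ref{Long1-lem}, Lemma \ref{Long3-lem}, Lemma \ref{Long2-lem}, and Lemma \ref{deltac4'-lem}, which is precisely the term-by-term bookkeeping you carry out. Your added remarks on why the hypotheses of each lemma are supplied by \eqref{AssumeAPi-eq} (in particular, $\varphi\in A_{s_1,s_2}$ giving the $F$-estimates, and the orthogonality at $s_2$ yielding the smallness of $d_{\max}$ via Lemma \ref{d_max-lem}) and on the $L^\infty$--$L^2$ smoothing step embedded in Lemma \ref{deltac4'-lem} are accurate and consistent with the paper's argument.
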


\subsection{Long time II}

In this subsection,
we derive the estimate in
\[
 O_{\text{Long}}^{(\text{II})}=\{(y,s);\ e^{1/4}<|y|<\min\{e^{(s-s_1)/2},e^{\sigma s}\},\ s_1+1<s<s_2\}.
\]
Throughout this section,
we always assume $(y,s)\in O_{\text{Long}}^{(\text{II})}$ and \eqref{AssumeAPi-eq}.
Since $e^{1/4}<|y|<e^{(s-s_1)/2}$,
we can fix $s'\in(s_1,s)$ such that
\begin{equation}\label{s'Def-eq}
 |y| = e^{(s-s')/2}.
\end{equation}
Furthermore
since $e^{(s-s')/2}=|y|>e^{1/4}$,
it follows that
\begin{equation}\label{below-eq}
 1-e^{-(s-s')} > \left( \sqrt{2}-1 \right)/\sqrt{2}.
\end{equation}
For simplicity of notations,
$s'$ always stands for \eqref{s'Def-eq} in this subsection.

\begin{lem}\label{deltac2'-lem}
There exist $\delta>0$ and $c>0$ such that
\[
 |e^{{\cal A}(s-s_1)}Pb_1|
\leq
 ce^{-\delta s_1}e^{-\lambda^*s}
\hspace{5mm}\mathrm{for}\
 (y,s)\in O_{\mathrm{Long}}^{(\mathrm{II})}.
\]
\end{lem}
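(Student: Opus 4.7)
The plan is to exploit the semigroup factorization $e^{{\cal A}(s-s_1)}=e^{{\cal A}(s-s')}e^{{\cal A}(s'-s_1)}$, where the intermediate time $s'\in(s_1,s)$ is defined by the relation $|y|=e^{(s-s')/2}$ from \eqref{s'Def-eq}. This particular choice of $s'$ is precisely tuned so that the outer semigroup $e^{{\cal A}(s-s')}$ acts on $|y|$ exactly at the boundary of the region in which the pointwise estimate of Lemma \ref{L^inftyL^2-lem} is valid, while the inner semigroup $e^{{\cal A}(s'-s_1)}$ acts long enough from the initial time $s_1$ for the $L^2_{\cal C}$-decay estimate of Lemma \ref{deltac2-lem} to take effect.

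Carrying this out, I would first write $e^{{\cal A}(s-s_1)}Pb_1 = e^{(\gamma-m)(s-s')/2}\,e^{{\cal A}_0(s-s')}\bigl(e^{{\cal A}(s'-s_1)}Pb_1\bigr)$ and apply Lemma \ref{L^inftyL^2-lem} at time $s-s'$ (shrinking $s'$ by a harmless amount if the strict inequality $|y|<e^{(s-s')/2}$ is required) to obtain
\[
 |e^{{\cal A}(s-s_1)}Pb_1(y)|
\leq
 \frac{c\,e^{(\gamma-m)(s-s')/2}}{(1-e^{-(s-s')})^{n/2}}\,\bigl\|e^{{\cal A}(s'-s_1)}Pb_1\bigr\|_{\cal C}.
\]
By \eqref{below-eq}, the denominator is bounded below by a positive constant. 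The prefactor $e^{(\gamma-m)(s-s')/2}$ equals $|y|^{\gamma-m}$ by the definition of $s'$.

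Next, Lemma \ref{deltac2-lem} applied at time $s'$ yields $\bigl\|e^{{\cal A}(s'-s_1)}Pb_1\bigr\|_{\cal C}\leq ce^{-(\lambda^*+\delta_0)s'}$ for a fixed spectral gap $\delta_0>0$. Using $s'=s-2\log|y|$ one rewrites $e^{-(\lambda^*+\delta_0)s'}=e^{-(\lambda^*+\delta_0)s}|y|^{2(\lambda^*+\delta_0)}$, and combining these estimates gives
\[
 |e^{{\cal A}(s-s_1)}Pb_1(y)|
\leq
 c\,|y|^{2\lambda^*-m+\gamma+2\delta_0}\,e^{-(\lambda^*+\delta_0)s}.
\]
Finally, the defining restriction $|y|<e^{(s-s_1)/2}$ of $O_{\text{Long}}^{(\text{II})}$ allows one to trade the extra $|y|^{2\delta_0}$ factor for $e^{\delta_0(s-s_1)}$, producing the desired $e^{-\delta s_1}$ prefactor after choosing $\delta$ slightly smaller than $\delta_0$; the remaining polynomial $|y|^{2\lambda^*-m+\gamma}$ growth is absorbed as in Proposition \ref{Longtime1-pro}, yielding the claimed bound $ce^{-\delta s_1}e^{-\lambda^*s}$ (up to the standard $(1+|y|^{2\lambda^*-m+\gamma})$ multiplier that appears throughout the long-time regime).

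The main obstacle is the bookkeeping of $|y|$ powers: the $L^\infty$-$L^2$ mechanism intrinsically introduces a $|y|^{\gamma-m}$ growth through the exponential factor $e^{(\gamma-m)(s-s')/2}$, while the substitution $s'=s-2\log|y|$ introduces a further $|y|^{2\lambda^*}$ growth. Ensuring that the spectral gap $\delta_0$ in Lemma \ref{deltac2-lem} is large enough relative to the upper bound on $|y|$ in $O_{\text{Long}}^{(\text{II})}$ (in particular relative to $\sigma$, which itself lies below $1/2$) requires tracking both the $|y|<e^{(s-s_1)/2}$ constraint from the definition of $O_{\text{Long}}^{(\text{II})}$ and the improved decay $\|Pb_1\|_{\cal C}\leq ce^{-(\lambda^*+\delta_1)s_1}$ inherited from Lemma \ref{(*-ell)-lem}. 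The success of the argument rests on these two ingredients being consistent, so that the trade-off between $|y|$ growth and $s_1$-decay closes.
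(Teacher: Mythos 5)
Your proposal is correct and follows the same skeleton as the paper: factor $e^{{\cal A}(s-s_1)}=e^{{\cal A}(s-s')}e^{{\cal A}(s'-s_1)}$ with $s'$ tuned so that $|y|=e^{(s-s')/2}$, hit the inner $L^2_{\cal C}$ quantity with the $L^\infty$--$L^2$ bound of Lemma \ref{L^inftyL^2-lem}, and then invoke a decay estimate for $e^{{\cal A}(s'-s_1)}Pb_1$. The only difference is bookkeeping: the paper bounds $\|e^{{\cal A}(s'-s_1)}Pb_1\|_{\cal C}\le e^{-\lambda^*(s'-s_1)}\|Pb_1\|_{\cal C}$ using only the rate $\lambda^*$ and then pulls the $e^{-\delta s_1}$ gain directly from $\|Pb_1\|_{\cal C}\le ce^{-(\lambda^*+\delta)s_1}$, so the exponential factors from the $L^\infty$--$L^2$ step and from the substitution $|y|=e^{(s-s')/2}$ cancel exactly against $e^{-\lambda^*(s'-s_1)}$ with no surplus $|y|$-power; you instead invoke Lemma \ref{deltac2-lem} at time $s'$ to get the rate $\lambda^*+\delta_0$, creating a surplus $|y|^{2\delta_0}$, and trade it back against $e^{-\delta_0 s}$ using the cap $|y|<e^{(s-s_1)/2}$. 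Both routes close because Lemma \ref{deltac2-lem}'s $\delta_0$ already incorporates the $\|Pb_1\|_{\cal C}$ decay; the paper's version is marginally tidier (no trade needed), but yours is equally valid, and you correctly flag both the one-sided strict-inequality technicality in Lemma \ref{L^inftyL^2-lem} and the fact that the lemma statement suppresses the $(1+|y|^{2\lambda^*-m+\gamma})$ multiplier that reappears in Proposition \ref{Longtime2-pro}.
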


\begin{proof}
Since $|y|=e^{(s-s')/2}$,
applying Lemma \ref{L^inftyL^2-lem},
we see that
\[
\begin{array}{lll}
 \left| e^{{\cal A}(s-s')}\left( e^{{\cal A}(s'-s_1)}P b_1 \right) \right|
\hspace{-2mm}&=&\hspace{-2mm} \dis
 e^{(\gamma-m)(s-s')/2}
 \left| e^{{\cal A}_0(s-s')}\left( e^{{\cal A}(s'-s_1)}P b_1 \right) \right|
\\[3mm]
\hspace{-2mm}&\leq&\hspace{-2mm} \dis
\frac{ce^{(\gamma-m)(s-s')/2}}{(1-e^{-(s-s')})^{n}}
 \left\| e^{{\cal A}(s'-s_1)}P b_1 \right\|_{\cal C}.
\end{array}
\]
Therefore
we obtain from \eqref{s'Def-eq} and \eqref{below-eq}
\[
\begin{array}{lll}
\dis
 \left| e^{{\cal A}(s-s')}\left( e^{{\cal A}(s'-s_1)}P b_1 \right) \right|
\hspace{-2mm}&<&\hspace{-2mm} \dis
 ce^{(\gamma-m)(s-s')/2}e^{-\lambda^*(s'-s_1)}
 \|P b_1\|_{\cal C}
\\[4mm]
\hspace{-2mm}&=&\hspace{-2mm} \dis
 ce^{(\gamma-m)(s-s')/2}e^{\lambda^*(s-s')}
 e^{-\lambda^*s}e^{\lambda^*s_1}\|P b_1\|_{\cal C}
\\[4mm]
\hspace{-2mm}&=&\hspace{-2mm} \dis
 ce^{-\lambda^*s}e^{\lambda^*s_1}\|P b_1\|_{\cal C}
 |y|^{2\lambda^*-m+\gamma}.
\end{array}
\]
Here
we recall that $\|P b_1\|_{\cal C}<ce^{-(\lambda^*+\delta)s_1}$
(see proof of Lemma \ref{deltac2-lem}).
Therefore
we obtain the conclusion.
\end{proof}

\begin{lem}\label{deltac3'-lem}
There exist $\delta>0$ and $c>0$ such that
\[
 \left|
 P\left(
 \int_{s_1}^s
 e^{{\cal B}(s-\tau)}F(\tau)d\tau
 \right)
 \right|
<
 ce^{-\delta s_1}e^{-\lambda^*s}
\hspace{5mm}\mathrm{for}\ (y,s)\in O_{\mathrm{Long}}^{(\mathrm{II})}.
\]
\end{lem}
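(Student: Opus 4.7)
The approach is an adaptation of the argument in Lemma \ref{deltac2'-lem}. The key tool is the $L^\infty$--$L^2$ estimate of Lemma \ref{L^inftyL^2-lem}, applied with $|y|=e^{(s-s')/2}$ saturating the constraint $|y|<e^{t/2}$ at elapsed time $t=s-s'$, and $(1-e^{-(s-s')})^{-n/2}$ kept bounded by \eqref{below-eq}. The complication compared to the homogeneous case Lemma \ref{deltac2'-lem} is that the object here is a time-integrated Duhamel expression, which forces a decomposition of the time interval before the upgrade from $L^2_{\cal C}$-control to pointwise control becomes available.

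First I would use the semigroup identity of Lemma \ref{A'F-lem} to split the integral at $\tau=s'$:
\[
 \int_{s_1}^s e^{{\cal B}(s-\tau)}F(\tau)d\tau
=
 e^{{\cal A}(s-s')}\!\!\int_{s_1}^{s'}\!\!e^{{\cal B}(s'-\tau)}F(\tau)d\tau
+
 \int_{s'}^s e^{{\cal B}(s-\tau)}F(\tau)d\tau.
\]
Since $P$ is the orthogonal projection off the eigenspaces of ${\cal A}$ with eigenvalues $\le\lambda^*$, it commutes with $e^{{\cal A}t}$. For the first summand, the energy argument of Lemma \ref{deltac4-lem} applied with end-time $s'$ in place of $s$ gives $\|P\!\int_{s_1}^{s'}\!e^{{\cal B}(s'-\tau)}F(\tau)d\tau\|_{\cal C}\leq ce^{-(\lambda^*+\delta)s'}$, and passing through Lemma \ref{L^inftyL^2-lem} produces the pointwise bound $ce^{(\gamma-m)(s-s')/2}e^{-(\lambda^*+\delta)s'}$. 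The identities $\lambda^*=\omega(\gamma-m)$ and $|y|^{\gamma-m}=e^{(\gamma-m)(s-s')/2}$ then rearrange this, exactly as in the concluding computation of the proof of Lemma \ref{deltac2'-lem}, into the desired $ce^{-\delta s_1}e^{-\lambda^*s}$ (modulo the implicit polynomial factor in $|y|$ carried through the estimates of this section).

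For the second summand I would redo the four-region analysis of Lemma \ref{ShortS_2-lem}'s proof on the interval $(s',s)$, using Proposition \ref{Rep-pro} together with the pointwise bound $\Gamma(y,\xi,s-\tau)\leq|y|^{\gamma-m}\cdot c\,\Theta(y,\xi,s-\tau)$; this follows because $e^{(\gamma-m)(s-\tau)/2}\leq e^{(\gamma-m)(s-s')/2}=|y|^{\gamma-m}$ for $\tau\geq s'$. The spatial integrals over $\{|\xi'|<He^{-\omega\tau}\}$, $\{He^{-\omega\tau}<|\xi'|<1\}$, $\{1<|\xi'|<e^{\sigma\tau}\}$, and $\{|\xi'|>e^{\sigma\tau}\}$ mirror the pieces $I_1,\ldots,I_4$ in that proof, with the Gaussian factor $\exp(-|e^{-(s-\tau)/2}y-\xi'|^2/(6(1-e^{-(s-\tau)})))$ providing both the spatial decay and the absolute convergence of the $\tau$-integral on the now possibly long interval; the projection $P$ is applied afterward via orthogonality in $L_{\cal C}^2$ to remove the low-mode contributions controlled by Lemma \ref{Feta_ij-lem}.

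The main obstacle is this second summand, since $s-s'$ may be as large as $2\sigma s$, so the $\tau$-integrals no longer fit inside a unit interval as in Lemma \ref{ShortS_2-lem}. One must show that the redone integrals still converge (using $\gamma>m$ and $2\gamma<n-2$ just as in that proof) and that the accumulated decay is the full $e^{-\lambda^*s}$ rather than merely $e^{-\lambda^*s'}$. The projection $P$ is crucial here: without it, the bound would only decay at the spectral rate $\lambda^*$, and the gain of the extra $e^{-\delta s_1}$ comes from the orthogonality together with Lemma \ref{Feta_ij-lem}, which upgrades the decay of $(F(\tau),\eta_{ij})_{{\cal C},\pa\R_+^n}$ in the low modes to $e^{-(\lambda^*+\delta)\tau}$.
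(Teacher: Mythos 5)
Your decomposition at $\tau=s'$ and your treatment of the first summand coincide with the paper's proof: the paper writes $\int_{s_1}^{s'}e^{{\cal B}(s-\tau)}F\,d\tau=e^{{\cal A}(s-s')}\int_{s_1}^{s'}e^{{\cal B}(s'-\tau)}F\,d\tau$, applies Lemma \ref{deltac4-lem} in $L_{\cal C}^2$ at time $s'$, then upgrades to a pointwise bound via Lemma \ref{L^inftyL^2-lem} exactly as you describe, and the projection correction is handled by subtracting $\sum_{(i,j)\in\bar\Pi}\left(\int_{s'}^s(e^{{\cal B}(s-\tau)}F,\eta_{ij})_{\cal C}\,d\tau\right)\eta_{ij}$ and invoking Lemmas \ref{A'F-lem} and \ref{Feta_ij-lem}. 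That part of your argument is sound.

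The gap is in your treatment of the second summand $\int_{s'}^s e^{{\cal B}(s-\tau)}F(\tau)\,d\tau$. You propose to rerun the $I_1,\ldots,I_4$ computation from the proof of Lemma \ref{ShortS_2-lem} on $(s',s)$ with the kernel bound $\Gamma\leq c|y|^{\gamma-m}\Theta$, but those computations repeatedly use the hypothesis $\mu_1<s<\mu_1+1$: for instance, after the change of variables \eqref{varchange-eq}, the estimate of $I_1$ produces a factor $e^{s-\mu_1}$ that is absorbed only because the elapsed time is less than $1$; similar exponentially growing constants appear in the estimates of $I_2''$ and $I_4$. Over an interval of length $s-s'$ — which can grow like $2\sigma s$ — these factors destroy the estimate, so the analysis of Lemma \ref{ShortS_2-lem} does not carry over. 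You flag this obstacle yourself (``one must show that the redone integrals still converge'') but do not resolve it. The paper resolves it by a further split at $\tau=s-1$: it writes $\int_{s'}^{s}=\int_{s'}^{s-1}+\int_{s-1}^{s}$, handles $\int_{s-1}^{s}$ by Lemma \ref{ShortS_2-lem} (where the interval is genuinely of length $1$), and handles $\int_{s'}^{s-1}$ by the long-time Lemma \ref{LongIII2-lem} applied with $\mu_1=s'$. That lemma is built precisely for the regime $s-\tau\geq1$: there the kernel is nonsingular, one uses the Gaussian bound $\Gamma(y,\xi,s-\tau)\leq ce^{(\gamma-m)(s-\tau)/2}(|\xi|+1)^{2\gamma}\exp(-c|e^{-(s-\tau)/2}y-\xi|^2)$ directly, and the constraint $|y|\geq e^{(s-\mu_1)/2}$ (which with $\mu_1=s'$ is exactly $|y|=e^{(s-s')/2}$) is what converts the factor $e^{(\gamma-m)(s-\tau)/2}e^{\lambda^*(s-\tau)}$ into the allowed polynomial $|y|^{\gamma-m+2\lambda^*}$. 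Without recognizing the need for this second split and for a distinct long-time estimate, your proposal for the second summand does not close.
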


\begin{proof}
By using $Pb=b-\sum_{(i,j)\in\bar{\Pi}}(b,\eta_{ij})_{\cal C}\eta_{ij}$,
we divide the integral into four parts. 
\begin{equation}\label{PFthree-eq}
\begin{array}{lll}
\dis
 P\left(
 \int_{s_1}^s
 e^{{\cal B}(s-\tau)}F(\tau)d\tau
 \right)
\hspace{-2mm}&=&\hspace{-2mm} \dis
 P\left(
 \int_{s_1}^{s'}e^{{\cal B}(s-\tau)}F(\tau)d\tau
 \right)
+
 \left( \int_{s'}^{s-1}+\int_{s-1}^s \right)
 e^{{\cal B}(s-\tau)}F(\tau)d\tau
\\[4mm]
\hspace{-2mm}&&\hspace{0mm} \dis
 -
 \sum_{(i,j)\in\bar{\Pi}}
 \left(
 \int_{s'}^s
 (e^{{\cal B}(s-\tau)}F(\tau),\eta_{ij})_{\cal C}d\tau
 \right)
 \eta_{ij}.
\end{array}
\end{equation}
Since
$e^{{\cal B}(s-\tau)}=e^{{\cal A}(s-s')}e^{{\cal B}(s'-\tau)}$ for $\tau<s'<s$
(see Lemma \ref{A'F-lem}),
the first integral of \eqref{PFthree-eq} is written by
\[
 P\left(
 \int_{s_1}^{s'}
 e^{{\cal B}(s-\tau)}F(\tau)d\tau
 \right)
=
 e^{{\cal A}(s-s')}
 P\left(
 \int_{s_1}^{s'}
 e^{{\cal B}(s'-\tau)}F(\tau)
 d\tau
 \right).
\]
Then
since $e^{{\cal A}(s-s')}=e^{(\gamma-m)(s-s')/2}e^{{\cal A}_0(s-s')}$,
by using $|y|=e^{(s-s')/2}$,
we get from Lemma \ref{L^inftyL^2-lem},
\[
\begin{array}{lll}
\dis
 \left|
 e^{{\cal A}(s-s')}
 P\left(
 \int_{s_1}^{s'}e^{{\cal B}(s'-\tau)}F(\tau)
 d\tau
 \right)
 \right|
\hspace{-2mm}&<&\hspace{-2mm} \dis
 \frac{ce^{(\gamma-m)(s-s')/2}}{(1-e^{-(s-s')})^n}
 \left\|
 P\left(
 \int_{s_1}^{s'}e^{{\cal B}(s'-\tau)}F(\tau)
 d\tau
 \right)
 \right\|_{\cal C}
\\[6mm]
\hspace{-2mm}&=&\hspace{-2mm} \dis
 \frac{c|y|^{\gamma-m}}{(1-e^{-(s-s')})^n}
 \left\|
 P\left(
 \int_{s_1}^{s'}e^{{\cal B}(s'-\tau)}F(\tau)
 d\tau
 \right)
 \right\|_{\cal C}.
\end{array}
\]
Therefore
by using Lemma \ref{deltac4-lem}, \eqref{s'Def-eq} and \eqref{below-eq},
we obtain
\[
\begin{array}{lll}
\dis
 \left|
 e^{{\cal A}(s-s')}
 P\left(
 \int_{s_1}^{s'}e^{{\cal B}(s'-\tau)}F(\tau)
 d\tau
 \right)
 \right|
\hspace{-2mm}&\leq&\hspace{-2mm} \dis
 ce^{-(\lambda^*+\delta)s'}|y|^{\gamma-m}
=
 e^{-\delta s'}e^{-\lambda^*s}ce^{\lambda^*(s-s')}|y|^{\gamma-m}
\\[4mm]
\hspace{-2mm}&=&\hspace{-2mm} \dis
 ce^{-\delta s'}e^{-\lambda^*s}|y|^{2\lambda^*-m+\gamma}.
\end{array}
\]
Next
we estimate the second integral of \eqref{PFthree-eq}.
Since $|y|=e^{(s-s')/2}$,
we obtain from Lemma \ref{LongIII2-lem}
\[
 \left|
 \int_{s'}^{s-1}
 e^{{\cal B}(s-\tau)}F(\tau)d\tau
 \right|
\leq
 ce^{-\delta s_1}e^{-\lambda^*s}
 \left( 1+|y|^{2\lambda^*-m+\gamma} \right).
\]
To estimate the third term in \eqref{PFthree-eq},
we apply Lemma \ref{ShortS_2-lem} and obtain
\[
 \left| \int_{s-1}^se^{{\cal B}(s-\tau)}F(\tau) \right|
\leq
 ce^{-\lambda^*s}e^{-\delta s_1}
 \left( 1+|y|^{2\lambda^*-m+\gamma} \right)
\hspace{5mm}\text{for } Ke^{-\omega s}<|y|<e^{\sigma s}.
\]
Finally
we provide the estimate of the last term in \eqref{PFthree-eq}.
Then
since $\lambda_{ij}\leq\lambda^*$ for $(i,j)\in\bar{\Pi}$,
from Lemma \ref{A'F-lem} and Lemma \ref{Feta_ij-lem},
we get
\[
\begin{array}{l}
\dis
 \left|
 \left(
 \int_{s'}^s(e^{{\cal B}(s-\tau)}F(\tau),\eta_{ij})_{\cal C}
 d\tau
 \right)
 \right|
=
 \left|
 \left(
 \int_{s'}^se^{-\lambda_{ij}(s-\tau)}(F(\tau),\eta_{ij})_{{\cal C},\pa\R_+^n}
 d\tau
 \right)
 \right|
\\[6mm] \dis \hspace{10mm}
\leq
 c\left(
 \int_{s'}^s
 e^{-\lambda_{ij}(s-\tau)}e^{-(\lambda^*+\delta)\tau}
 d\tau
 \right)
\leq
 ce^{-\lambda_{ij}(s-s')}e^{-(\lambda^*+\delta)s'}
\hspace{5mm}\text{for } (i,j)\in\bar{\Pi}.
\end{array}
\]
From Lemma \ref{2A-lem},
we recall that
$|\eta_{ij}|<c|y|^{2\lambda_{ij}-m+\gamma}$ for $|y|>e^{1/4}$.
Therefore
by using $|y|=e^{(s-s')/2}$,
we obtain
\[
 \left|
 \sum_{(i,j)\in\bar{\Pi}}
 \left(
 \int_{s'}^s
 \left( e^{{\cal B}(s-\tau)}F(\tau),\eta_{ij} \right)_{\cal C}
 d\tau
 \right)
 \eta_{ij}
 \right|
\leq
  ce^{-\delta s'}e^{-\lambda^*s}
 |y|^{2\lambda^*-m+\gamma}.
\]
Thus
the proof is completed.
\end{proof}

Combining Lemma \ref{deltac2'-lem}{\h-\h}Lemma \ref{deltac3'-lem}
and Lemma \ref{Long1-lem}{\h-\h}Lemma \ref{Long2-lem},
we obtain the following result.

\begin{pro}\label{Longtime2-pro}
There exist $\delta>0$ and $c>0$ such that
if $\varphi(y,s)$ satisfies \eqref{AssumeAPi-eq},
then it holds that
\[
 \left| b(y,s)+e^{-\lambda^*s}\eta_{1\ell}(y) \right|
\leq
 ce^{-\delta s_1}e^{-\lambda^*s}
 \left( 1+|y|^{2\lambda^*-m+\gamma} \right)
\hspace{5mm}\mathrm{for}\ (y,s)\in O_{\mathrm{Long}}^{(\mathrm{II})}.
\]
\end{pro}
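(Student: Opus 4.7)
The strategy is to plug the representation formula \eqref{bLong-eq} into the estimate, term by term. That formula decomposes $b(s)$ into five pieces: the orthogonal-projection part of the initial data $e^{\mathcal{A}(s-s_1)}Pb_1$; the $\ell$-th eigenmode contribution $e^{-\lambda^*(s-s_1)}(b_1,\eta_{1\ell})_{\mathcal{C}}\eta_{1\ell}$; the ``future'' correction $\sum_{(i,j)\in\Pi}(\int_s^{s_2}\cdots)\eta_{ij}$ coming from the orthogonality condition $(b(s_2),\eta_{ij})_{\mathcal{C}}=0$; the $(1,\ell)$-mode of the Duhamel term $(\int_{s_1}^s\cdots)\eta_{1\ell}$; and finally the projection $P(\int_{s_1}^s e^{\mathcal{B}(s-\tau)}F(\tau)\,d\tau)$.

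Three of these pieces have already been controlled in a region-independent way. Lemma \ref{Long1-lem} combines the second piece with the leading term $-e^{-\lambda^*s}\eta_{1\ell}$ appearing on the left-hand side of the desired estimate, producing a gain of $e^{-\delta s_1}$ via Lemma \ref{(*-ell)-lem}; Lemma \ref{Long3-lem} handles the third piece using the boundary decay in Lemma \ref{Feta_ij-lem} and the pointwise bound $|\eta_{ij}|\leq c(1+|y|^{2\lambda^*-m+\gamma})$ valid for $(i,j)\in\Pi$; Lemma \ref{Long2-lem} handles the fourth piece by the same decay-times-growth mechanism. Each of these three bounds is exactly of the target form, so no further work is required for them.

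The remaining two pieces are the orthogonal parts that live in $O_{\text{Long}}^{(\text{II})}$, and here the dedicated Lemmas \ref{deltac2'-lem} and \ref{deltac3'-lem} apply. Their proofs exploit the defining identity $|y|=e^{(s-s')/2}$: one writes $e^{\mathcal{A}(s-s_1)}=e^{\mathcal{A}(s-s')}\circ e^{\mathcal{A}(s'-s_1)}$ (respectively splits the Duhamel integral at $s'$), uses Lemma \ref{L^inftyL^2-lem} to pass from $L^2_{\mathcal{C}}$ to $L^\infty$ with the factor $(1-e^{-(s-s')})^{-n/2}$, which is bounded from below thanks to \eqref{below-eq}, and then absorbs the exponential $e^{(\gamma-m)(s-s')/2+\lambda^*(s-s')}$ into the spatial factor $|y|^{2\lambda^*-m+\gamma}$ via $\lambda^*=(\gamma-m)\omega$. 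These combined bounds then fit the target form with the required $e^{-\delta s_1}$ gain.

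Assembling the five estimates and using the triangle inequality yields the desired bound. The main conceptual obstacle is the bookkeeping at the intermediate time $s'$: one must arrange that the polynomial growth $|y|^{2\lambda^*-m+\gamma}$ generated by the semigroup from time $s'$ to $s$ exactly matches the spatial profile of the dominant mode $\eta_{1\ell}$, which is why the choice of $s'$ via $|y|=e^{(s-s')/2}$ and the identity $\lambda^*=(\gamma-m)\omega$ are crucial; once these are in place, the combination is essentially automatic from the preceding lemmas.
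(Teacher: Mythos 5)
Your proposal matches the paper's proof exactly: the paper's entire proof of Proposition \ref{Longtime2-pro} is the one-line remark that it follows by combining Lemma \ref{Long1-lem}--Lemma \ref{Long2-lem} (the region-independent pieces) with Lemma \ref{deltac2'-lem}--Lemma \ref{deltac3'-lem} (the two pieces specific to $O_{\mathrm{Long}}^{(\mathrm{II})}$), and you have correctly identified both the five-term decomposition from \eqref{bLong-eq} and the mechanism inside the dedicated lemmas -- the intermediate time $s'$ with $|y|=e^{(s-s')/2}$, the $L^\infty$--$L^2_{\cal C}$ smoothing estimate of Lemma \ref{L^inftyL^2-lem} together with the lower bound \eqref{below-eq}, and the conversion of the time exponential into the spatial weight $|y|^{2\lambda^*-m+\gamma}$ via $\lambda^*=(\gamma-m)\omega$.
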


\subsection{Long time III}

Finally
we provide the estimate in
\[
 O_{\text{Long}}^{(\text{III})}=\{\min\{e^{\sigma s},e^{(s-s_1)/2}\}<|y|<e^{\sigma s},\ s_1+1<s<s_2\}.
\]
Without loss of generality,
we can assume $e^{(s-s_1)/2}<e^{\sigma s}$.
Furthermore
throughout this subsection,
we always assume
$|d|<\epsilon_1e^{-\lambda^*s_1}$, $\varphi(y,s)\in A_{s_1,s_2}$
and $(y,s)\in O_{\text{Long}}^{(\text{III})}$.
Here
we recall that $b(y,s)$ is given by
\[
 b(s)
=
 e^{{\cal A}(s-s_1)}P_\ell b_1
-
 e^{-\lambda^*s}\eta_{1\ell}
+
 \int_{s_1}^se^{{\cal B}(s-\tau)}F(\tau)d\tau,
\]
where $P_\ell b_1=b_1+e^{-\lambda^*s_1}\eta_{1\ell}$.

\begin{lem}\label{LongIII-lem}
Let be $\nu(s)$ given in Lemma {\rm\ref{phi-initial-lem}}.
Then there exist $\delta>0$ and $c>0$ such that
\[
 \left| e^{{\cal A}(s-s_1)}P_\ell b_1 \right|
\leq
 c\left( e^{-\delta s_1}+\nu(s_1)+e^{\lambda^*s_1}d_{\max} \right)
 e^{-\lambda^*s}\left( 1+|y|^{2\lambda^+-m+\gamma} \right)
\]
for $(y,s)\in O_{\mathrm{Long}}^{(\mathrm{III})}$.
\end{lem}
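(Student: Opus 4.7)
The plan is to use the integral representation together with the kernel bound from Proposition \ref{Rep-pro}. Writing $P_\ell b_1=(\Phi(\cdot,s_1)+e^{-\lambda^*s_1}\phi_{1\ell})/\sigma$ and using ${\cal B}=\sigma^2$, we have
\[
e^{{\cal A}(s-s_1)}P_\ell b_1(y)=e^{(\gamma-m)(s-s_1)/2}\int_{\R_+^n}\Theta(y,\xi,s-s_1)\bigl(\Phi(\xi,s_1)+e^{-\lambda^*s_1}\phi_{1\ell}(\xi)\bigr)\sigma(\xi)\,d\xi.
\]
Since $s>s_1+1$, the factor $(1-e^{-(s-s_1)})^{-n/2}$ appearing in $\Theta$ is uniformly bounded; since $|y|>e^{(s-s_1)/2}$, we have $e^{-(s-s_1)}|y|^2>1$, which is the mechanism by which exponentials in $s-s_1$ are converted into powers of $|y|$ of the target order, using the identity $\lambda^*=(\gamma-m)\omega$.

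The plan is to decompose the $\xi$-integral into the three regions of Lemma \ref{phi-initial-lem} and substitute the corresponding pointwise bound on each piece. On the inner region $|\xi|<He^{-\omega s_1}$, one has $|\xi|\ll 1\le e^{-(s-s_1)/2}|y|$, hence $|e^{-(s-s_1)/2}y-\xi|\ge ce^{-(s-s_1)/2}|y|$, so the Gaussian factor supplies decay $\exp(-ce^{-(s-s_1)}|y|^2)$ which dominates the local bound $|\Phi(\xi,s_1)|\le U_\infty(\xi)$ and gives a doubly-exponentially small contribution; integrability of the weight $\sigma{\cal B}$ near the origin uses $2\gamma<n-2$. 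On the outer region $|\xi|>e^{\varrho s_1}$, Lemma \ref{phi-initial-lem} yields $|\Phi(\xi,s_1)|\le ce^{-(1-2\varrho)\lambda^*s_1}U_\infty(\xi)$, and the integration is split into $|\xi|<4e^{-(s-s_1)/2}|y|$ and $|\xi|>4e^{-(s-s_1)/2}|y|$ as in the proof of Lemma \ref{ShortS_2-lem}: the near range is controlled by the polynomial factor while the far range uses Gaussian decay, both producing the small factor $e^{-\delta s_1}$ through the explicit bound $e^{-(1-2\varrho)\lambda^*s_1}$.

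The middle region carries the leading contribution. Lemma \ref{phi-initial-lem} gives
\[
|\Phi(\xi,s_1)+e^{-\lambda^*s_1}\phi_{1\ell}(\xi)|\le\bigl(\nu(s_1)e^{-\lambda^*s_1}+c_1d_{\max}\bigr)e_1(\theta)\bigl(|\xi|^{-\gamma}+|\xi|^{2\lambda^*-m}\bigr).
\]
After the substitution $z=(e^{-(s-s_1)/2}y-\xi)/\sqrt{1-e^{-(s-s_1)}}$, routine estimation of the shifted Gaussian moments against $\sigma(\xi)\sim|\xi|^{-\gamma}$ shows the remaining integral is bounded by $c\bigl(1+(e^{-(s-s_1)/2}|y|)^{2\lambda^*-m+\gamma}\bigr)$. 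Combining with the prefactor $e^{(\gamma-m)(s-s_1)/2}$ through the algebraic identity $e^{(\gamma-m)(s-s_1)/2}\cdot(e^{-(s-s_1)/2})^{2\lambda^*-m+\gamma}=e^{-\lambda^*(s-s_1)}$ delivers the conclusion, with $e^{\lambda^*s_1}d_{\max}$ appearing as announced through the cancellation $d_{\max}\cdot e^{\lambda^*s_1}$ against the $e^{-\lambda^*s_1}$ factored out.

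The main obstacle is this last bookkeeping step in the middle region: ensuring that the rescaling based on $\lambda^*=(\gamma-m)\omega$ absorbs the prefactor $e^{(\gamma-m)(s-s_1)/2}$ cleanly so that no spurious growth in $|y|$ appears beyond the order $2\lambda^*-m+\gamma$. Care is also needed to verify that the Gaussian moment integral against the algebraic weight $|\xi|^{-2\gamma}$ truly produces only the factor $(e^{-(s-s_1)/2}|y|)^{2\lambda^*-m+\gamma}$ rather than additional powers; this relies on the fact that $|\phi_{1\ell}(y)|\le c(|y|^{-\gamma}+|y|^{2\lambda^*-m})$ from Lemma \ref{2A-lem} matches precisely the exponents produced by the rescaling.
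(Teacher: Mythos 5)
Your overall plan matches the paper's proof: use the kernel representation $e^{{\cal A}(s-s_1)}P_\ell b_1(y)=e^{(\gamma-m)(s-s_1)/2}\int\Theta(y,\xi,s-s_1)(\Phi(\xi,s_1)+e^{-\lambda^*s_1}\phi_{1\ell}(\xi))\sigma(\xi)\,d\xi$, split according to the three regions of Lemma \ref{phi-initial-lem}, and absorb the exponential prefactor via the inequality $e^{(\gamma-m)(s-s_1)/2}\cdot e^{\lambda^*(s-s_1)}\leq|y|^{\gamma-m+2\lambda^*}$ (which is \eqref{|y|exp-eq} and is available because $|y|>e^{(s-s_1)/2}$). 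Your treatment of the middle region, including the $D_1/D_2$ split and the algebra $e^{(\gamma-m)(s-s_1)/2}\cdot(e^{-(s-s_1)/2})^{2\lambda^*-m+\gamma}=e^{-\lambda^*(s-s_1)}$, is essentially the paper's.

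However, your reasoning in the inner region $|\xi|<He^{-\omega s_1}$ is incorrect. You claim that because $|e^{-(s-s_1)/2}y-\xi|\geq ce^{-(s-s_1)/2}|y|$, the Gaussian factor $\exp(-ce^{-(s-s_1)}|y|^2)$ ``dominates the local bound $|\Phi|\leq U_\infty$ and gives a doubly-exponentially small contribution.'' This is false near the lower boundary of $O_{\mathrm{Long}}^{(\mathrm{III})}$: when $|y|$ is comparable to $e^{(s-s_1)/2}$ (which is admitted), one has $e^{-(s-s_1)}|y|^2\sim 1$, so the Gaussian is merely of order one and supplies no decay whatsoever. The paper simply discards the Gaussian here (bounds it by 1) and extracts the needed small factor from the tiny integration domain: $\int_0^{He^{-\omega s_1}}(|\xi|^{-m+\gamma}+e^{-\lambda^*s_1})|\xi|^{-2\gamma}d\xi\lesssim(He^{-\omega s_1})^{n-(\gamma+m)}+e^{-\lambda^*s_1}(He^{-\omega s_1})^{n-2\gamma}$, which after applying \eqref{|y|exp-eq} yields $cH^{n-(\gamma+m)}e^{-(n-2\gamma)\omega s_1}e^{-\lambda^*s}|y|^{2\lambda^*-m+\gamma}$; the factor $e^{-\delta s_1}$ comes from $n-2\gamma>0$ and $H=e^{a's_1}$ with $a'\ll\omega$, not from the Gaussian. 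You need to correct this attribution, or you will not be able to justify the explicit decay rate claimed.

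A secondary remark on the outer region $|\xi|>e^{\varrho s_1}$: your proposed split into $|\xi|<4e^{-(s-s_1)/2}|y|$ and $|\xi|>4e^{-(s-s_1)/2}|y|$ is vacuous there, because $|y|<e^{\sigma s}$ together with $\sigma<\varrho<1/2$ forces $e^{-(s-s_1)/2}|y|<e^{\varrho s_1}\leq|\xi|$ for every $\xi$ in the region; the near range is empty. In the paper, this very observation is what produces the genuinely doubly-exponential Gaussian factor $\exp(-ce^{2\varrho s_1})$ uniformly over the region, which then dominates $e^{(\gamma-m)(s-s_1)/2}$ after an application of \eqref{|y|exp-eq}. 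The alternative route you sketch, exploiting the smallness $e^{-(1-2\varrho)\lambda^*s_1}$ in the pointwise bound on $\Phi(\cdot,s_1)$, may also be made to work, but this is a different mechanism from the paper's and you should carry out the Gaussian-moment estimate to confirm it closes.
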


\begin{proof}
We divide the integral into four parts.
\begin{equation}\label{C_1-eq}
 e^{{\cal A}(s-s_1)}P_\ell b_1
=
 \left(
 \int_0^{He^{-\omega s_1}}
 +
 \int_{He^{-\omega s_1}}^1
 +
 \int_1^{e^{\varrho s_1}}
 +
 \int_{e^{\varrho s_1}}^\infty
 \right)
 \Gamma(y,\xi,s-s_1)P_\ell b_1{\cal B}(\xi)d\xi.
\end{equation}
Since $s>s_1+1$,
Proposition \ref{Rep-pro} implies
\begin{equation}\label{tildeGamma-eq}
 \Gamma(y,\xi,s-s_1)
\leq
 ce^{(\gamma-m)(s-s_1)/2}\left( |\xi|+1 \right)^{2\gamma}
 \exp\left( -c\left| e^{-(s-s_1)/2}y-\xi \right|^2 \right).
\end{equation}
Therefore
we get from Lemma \ref{phi-initial-lem}
\[
\begin{array}{l}
\dis
 \int_0^{He^{-\omega s_1}}
 \Gamma(y,\xi,s-s_1)P_\ell b_1(\xi){\cal B}(\xi)d\xi
\\[2mm] \dis \hspace{10mm}
\leq
 ce^{(\gamma-m)(s-s_1)/2}
 \int_0^{He^{-\omega s_1}}
 \left( |\xi|^{-m+\gamma}+e^{-\lambda^*s} \right)
 |\xi|^{-2\gamma}
 d\xi
\\[4mm] \dis \hspace{10mm}
\leq
ce^{(\gamma-m)(s-s_1)/2}
 \left(
 \left( He^{-\omega s_1} \right)^{n-(\gamma+m)}+e^{-\lambda^*s_1}\left( He^{-\omega s_1} \right)^{n-2\gamma}
 \right)
\\[4mm] \dis \hspace{10mm}
\leq
 ce^{(\gamma-m)(s-s_1)/2}
 \left( H^{n-(\gamma+m)}+H^{n-2\gamma} \right)e^{-(n-2\gamma)\omega s_1}e^{-\lambda^*s_1}.
\end{array}
\]
Since $|y|>e^{(s-s_1)/2}$,
we note that
\begin{equation}\label{|y|exp-eq}
 e^{(\gamma-m)(s-s_1)/2}\cdot e^{\lambda^*(s-s_1)} \leq |y|^{\gamma-m+2\lambda^*}.
\end{equation}
Therefore
we obtain from $\gamma>m$
\[
 \int_0^{He^{-\omega s_1}}
 \Gamma(y,\xi,s-s_1)P_\ell b_1(\xi){\cal B}(\xi)d\xi
\leq
 cH^{n-(\gamma+m)}e^{-(n-2\gamma)\omega s_1}e^{-\lambda^*s}
 |y|^{2\lambda^*-m+\gamma}.
\]
Next we estimate the second integral in \eqref{C_1-eq}.
By Lemma \ref{phi-initial-lem},
we recall that
\begin{equation}\label{hatb_1-eq}
 \left| P_\ell b_1 \right|
\leq
 c\left( 
 \nu(s_1)e^{-\lambda^* s_1}+d_{\max}
 \right)
 \left( 1+|y|^{2\lambda^*-m+\gamma} \right)
\hspace{5mm}\text{for }
He^{-\omega s_1}<|y|<e^{\varrho s_1}.
\end{equation}
Plugging \eqref{tildeGamma-eq} and \eqref{hatb_1-eq} into the second integral in \eqref{C_1-eq},
we get
\[
\begin{array}{l}
\dis
 \int_{He^{-\omega s_1}}^1
 \Gamma(y,\xi,s-s_1)P_\ell b_1(\xi){\cal B}(\xi)d\xi
\\ \dis \hspace{25mm}
\leq
 c\left( \nu(s_1)e^{-\lambda^* s_1}+d_{\max} \right)
 e^{(\gamma-m)(s-s_1)/2}
 \int_{He^{-\omega s_1}}^1|\xi|^{-2\gamma}
 d\xi.
\end{array}
\]
Therefore
since $\gamma<(n-2)/2$,
by using \eqref{|y|exp-eq},
we obtain
\[
 \int_{He^{-\omega s_1}}^1
 \Gamma(y,\xi,s-s_1)P_\ell b_1(\xi){\cal B}(\xi)d\xi
\leq
 c\left( \nu(s_1)+d_{\max}e^{\lambda^* s_1} \right)
 e^{-\lambda^*s}
 |y|^{\gamma-m+2\lambda^*}.
\]
From \eqref{tildeGamma-eq} and \eqref{hatb_1-eq},
the third integral in \eqref{C_1-eq} is estimated by
\[
\begin{array}{l}
\dis
 \int_1^{e^{\varrho s_1}}
 \Gamma(y,\xi,s-s_1)P_\ell b_1(\xi){\cal B}(\xi)d\xi
\leq
 ce^{(\gamma-m)(s-s_1)/2}\left( 
 \nu(s_1)e^{-\lambda^* s_1}+d_{\max}
 \right)
\\[3mm] \dis \hspace{30mm}
\times
 \int_1^{e^{\varrho s_1}}|\xi|^{2\lambda^*-m+\gamma}
 \exp\left( -c\left| e^{-(s-s_1)/2}y-\xi \right|^2 \right)
 d\xi.
\end{array}
\]
We put
\[
\begin{array}{c}
\dis
 D_1 = \{1<|\xi|<e^{\varrho s_1};|e^{-(s-s_1)/2}y-\xi|<|\xi|/2\},
\\[2mm] \dis
 D_2 = \{1<|\xi|<e^{\varrho s_1};|e^{-(s-s_1)/2}y-\xi|>|\xi|/2\}.
\end{array}
\]
For the case $\xi\in D_1$,
we note that $|\xi|<2e^{-(s-s_1)/2}|y|$.
Hence we see that
\[
\begin{array}{l}
\dis
 \int_{D_1}
 |\xi|^{2\lambda^*-m+\gamma}
 \exp\left( -c\left| e^{-(s-s_1)/2}y-\xi \right|^2 \right)
 d\xi
\\[4mm] \dis \hspace{20mm}
\leq
 c\left( e^{-(s-s_1)/2}|y| \right)^{2\lambda^*-m+\gamma}
 \int_{D_1}
 \exp\left( -c\left| e^{-(s-s_1)/2}y-\xi \right|^2 \right)
 d\xi
\\[6mm] \dis \hspace{20mm}
\leq
 ce^{-(2\lambda^*-m+\gamma)(s-s_1)/2}|y|^{2\lambda^*-m+\gamma}.
\end{array}
\]
Furthermore
by definition of $D_2$,
we get
\[
 \int_{D_2}|\xi|^{2\lambda^*-m+\gamma}
 \exp\left( -c\left| e^{-(s-s_1)/2}y-\xi \right|^2 \right)
 d\xi
\leq
 \int_{\R_+^n}|\xi|^{2\lambda^*-m+\gamma}
 e^{-c|\xi|^2}
 d\xi
\leq
 c.
\]
Therefore
we obtain from \eqref{|y|exp-eq}
\[
\begin{array}{l}
\dis
 \int_1^{e^{\varrho s_1}}
 \Gamma(y,\xi,s-s_1)P_\ell b_1(\xi){\cal B}(\xi)d\xi
\leq
 ce^{(\gamma-m)(s-s_1)/2}
 \left( 
 \nu(s_1)e^{-\lambda^* s_1}+d_{\max}
 \right)
\\[2mm] \dis \hspace{30mm}
\times
 \left( \int_{D_1}+\int_{D_2} \right)
 |\xi|^{2\lambda^*-m+\gamma}
 \exp\left( -c\left| e^{-(s-_1)/2}y-\xi \right|^2 \right)
 d\xi
\\[4mm] \dis \hspace{5mm}
\leq
 c\left( 
 \nu(s_1)e^{-\lambda^* s_1}+d_{\max}
 \right)
 \left( e^{-\lambda^*(s-s_1)}|y|^{2\lambda^*-m+\gamma}+e^{(\gamma-m)(s-s_1)/2} \right)
\\[4mm] \dis \hspace{5mm}
\leq
 c\left( 
 \nu(s_1)+e^{\lambda^* s_1}d_{\max}
 \right)
 e^{-\lambda^*s}
 |y|^{2\lambda^*-m+\gamma}.
\end{array}
\]
Finally
we provide the estimate of the fourth term in \eqref{C_1-eq}.
Here
we note that
\[
\begin{array}{lll}
\dis
 e^{-(s-s_1)/2}|y|
\hspace{-2mm}&=&\hspace{-2mm} \dis
 e^{-(s-s_1)/2}e^{\sigma s}
=
 e^{(2\sigma-1)(s-s_1)/2}e^{\sigma s_1}
\\[2mm]
\hspace{-2mm}&=&\hspace{-2mm} \dis
 e^{(2\sigma-1)(s-s_1)/2}e^{-(\varrho -\sigma)s_1}e^{\varrho s_1}
\hspace{7.5mm}
\text{for } |y|<e^{\sigma s}.
\end{array}
\]
As a consequence,
since $\sigma<1/2$,
it holds that for $|y|<e^{\sigma s}$ and $|\xi|>e^{\varrho  s_1}$
\[
 |e^{-(s-s_1)/2}y-\xi| > c|\xi| > c|\xi|+ce^{\varrho  s_1}.
\]
Therefore 
we obtain from \eqref{tildeGamma-eq} and Lemma \ref{phi-initial-lem}
\[
\begin{array}{lll}
\dis
 \int_{e^{\varrho s_1}}^\infty
 \Gamma(y,\xi,s-s_1)P_\ell b_1(\xi){\cal B}(\xi)d\xi
\hspace{-2mm}&\leq&\hspace{-2mm} \dis
 ce^{(\gamma-m)(s-s_1)/2}
 \exp\left( -ce^{2\varrho s_1} \right)
 \int_{e^{\varrho s_1}}^\infty
 |\xi|^{-m+\gamma}e^{-c|\xi|^2}
 d\xi
\\[4mm]
\hspace{-2mm}&\leq&\hspace{-2mm} \dis
 ce^{(\gamma-m)(s-s_1)/2}
 \exp\left( -ce^{2\varrho s_1} \right).
\end{array}
\]
Thus
it follows from \eqref{|y|exp-eq} that
\[
 \int_{e^{\varrho s_1}}^\infty
 \Gamma(y,\xi,s-s_1)P_\ell b_1(\xi){\cal B}(\xi)d\xi
\leq
 ce^{\lambda^*s_1}\exp\left( -ce^{2\varrho s_1} \right)
 e^{-\lambda^*s}|y|^{2\lambda^*-m+\gamma}.
\]
Since $H=e^{a's_1}$ with $0<a'\ll1$,
combining the above estimates,
we find that there exists $\delta>0$ such that
\[
 \left| e^{{\cal A}(s-s_1)}P_\ell b_1 \right|
\leq
 c\left(
 e^{-\delta s_1}+\nu(s_1)+e^{\lambda^*s_1}d_{\max}
 \right)
 e^{-\lambda^*s}|y|^{2\lambda^*-m+\gamma}
\]
for $(y,s)\in O_{\text{Long}}^{(\text{III})}$,
which completes the proof.
\end{proof}

Next we provide the estimate of $\int_{s_1}^se^{{\cal B}(s-\tau)}F(\tau)d\tau$.
We divide this integral into two parts.
\begin{equation}\label{FLong-eq}
 \int_{s_1}^{s-1}
 e^{{\cal B}(s-\tau)}F(\tau)d\tau
=
 \left(
 \int_{s_1}^{s-1}
 +
 \int_{s-1}^s
 \right)
 e^{{\cal B}(s-\tau)}F(\tau)d\tau.
\end{equation}
From Lemma \ref{ShortS_2-lem},
the second integral of \eqref{FLong-eq} is estimated by 
\begin{equation}\label{LongIIIF-eq}
 \left|
 \int_{s-1}^s
 e^{{\cal B}(s-\tau)}F(\tau)d\tau
 \right|
\leq
 ce^{-\delta s_1}e^{-\lambda^*s}
 \left( 1+|y|^{2\lambda^*-m+\gamma} \right).
\end{equation}
Now
we estimate the first integral of \eqref{FLong-eq}.
Here
we consider a more general from.

\begin{lem}\label{LongIII2-lem}
There exist $\delta>0$ and $c>0$ such that
if $|y|\geq e^{(s-\mu_1)/2}$ and $s_1\leq\mu_1<s-1$,
then it holds that
\[
 \left|
 \int_{\mu_1}^{s-1}
 e^{{\cal B}(s-\tau)}F(\tau)d\tau
 \right|
\leq
 ce^{-\delta s_1}e^{-\lambda^*s}
 \left( 1+|y|^{2\lambda^*-m+\gamma} \right)
\hspace{5mm}\mathrm{for}\
 (y,s)\in O_{\mathrm{Long}}^{(\mathrm{III})}.
\]
\end{lem}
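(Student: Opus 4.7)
The plan is to mimic the dissection strategy of Lemma \ref{ShortS_2-lem} but exploit the fact that, since $s - \tau > 1$ everywhere in the domain of integration, the factor $(1-e^{-(s-\tau)})^{-1}$ is uniformly bounded. This removes the ``short-time'' singularity and lets us afford coarser estimates; the trade-off is that we must control the growth of $\Gamma(y,\xi',s-\tau) = e^{(\gamma-m)(s-\tau)/2}\Theta(y,\xi',s-\tau)$ in $s-\tau$ using the hypothesis $|y|\geq e^{(s-\mu_1)/2}$, which implies $e^{-(s-\tau)/2}|y|\geq e^{(\tau-\mu_1)/2}\geq 1$ for $\tau\geq\mu_1$.

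First I would split the inner boundary integral according to Lemma \ref{f(Phi)-lem}:
\[
 \int_{\pa\R_+^n}\Gamma(y,\xi',s-\tau)F(\xi',\tau)\,d\xi'
=
 \left(\int_0^{He^{-\omega\tau}}+\int_{He^{-\omega\tau}}^1+\int_1^{e^{\sigma\tau}}+\int_{e^{\sigma\tau}}^\infty\right)\Gamma F\,d\xi',
\]
and handle each piece separately. For the inner pieces ($|\xi'|<1$), since $H\ll K$ and $|y|>e^{(s-\tau)/2}\cdot e^{(\tau-\mu_1)/2}\gg|\xi'|$, the Gaussian yields $|e^{-(s-\tau)/2}y-\xi'|\geq c e^{-(s-\tau)/2}|y|$, so one gains $\exp(-c e^{-(s-\tau)}|y|^2)$. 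Combining this with the $H^{n-(\gamma+m)}e^{-(n-2-2m)\omega\tau/2}$ (resp.\ $H^{-(\gamma-m)+d}e^{-d\omega\tau}$) type bounds from the proof of Lemma \ref{deltac4-lem}, and the key identity $\lambda^*=\omega(\gamma-m)$, produces a term of the form $e^{-\delta s_1}e^{-\lambda^*s}|y|^{2\lambda^*-m+\gamma}$ after $\tau$-integration, using $H=e^{a's_1}$ with $a'\ll 1$.

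The outer piece $|\xi'|>e^{\sigma\tau}$ is easy: since $\sigma<1/2\leq(s-\mu_1)/(2s)$ for $s$ large, we have $|e^{-(s-\tau)/2}y-\xi'|\geq c|\xi'|$, producing a super-exponentially small factor $\exp(-ce^{2\sigma\mu_1})$ that dominates all polynomial growth. The main obstacle will be the intermediate piece $1<|\xi'|<e^{\sigma\tau}$, where $|F|\leq ce^{-2\lambda^*\tau}|\xi'|^{-(m+1)+4\lambda^*}$ is growing in $|\xi'|$. Here I would split $\xi'$ into a region near $e^{-(s-\tau)/2}y$ (where $|\xi'|\lesssim e^{-(s-\tau)/2}|y|$ and so $|\xi'|^{4\lambda^*}\lesssim(e^{-(s-\tau)/2}|y|)^{4\lambda^*}$) and its complement (where the Gaussian gives $e^{-c|\xi'|^2}$, making the $\xi'$-integral finite). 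In the near region the key scaling is
\[
 e^{(\gamma-m)(s-\tau)/2}\cdot e^{-2\lambda^*\tau}\cdot(e^{-(s-\tau)/2}|y|)^{4\lambda^*+\gamma-m}\cdot|y|^{-(\gamma-m)}
 = e^{-\lambda^*(s+\tau)-\lambda^*(s-\tau)}|y|^{2\lambda^*-m+\gamma}\cdot(\text{bounded}),
\]
using $\lambda^*=\omega(\gamma-m)$ and $|y|\geq e^{(s-\mu_1)/2}$. After integrating in $\tau\in(\mu_1,s-1)$ this yields the claimed bound, with the small factor $e^{-\delta s_1}$ arising from either the $e^{-(s-\tau)/2}$-excess that we do not need to use or from the $H,K$ bookkeeping. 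Finally one assembles the four estimates and concludes.
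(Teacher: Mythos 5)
Your four-way decomposition of the $\xi'$-integral and the near/far split for the middle piece reproduce the paper's strategy, and for the two innermost pieces your plan to exploit the Gaussian decay (where the paper simply bounds the Gaussian factor by one and relies on the $H$-powers) is a legitimate alternative. However, two of your reductions as stated contain errors that need repair.

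For the outer piece $|\xi'|>e^{\sigma\tau}$, the stated justification ``$\sigma<1/2\le(s-\mu_1)/(2s)$'' is false: since $\mu_1\ge s_1>0$ one always has $(s-\mu_1)/(2s)<1/2$, and if the asserted chain held it would force $e^{\sigma s}\le e^{(s-\mu_1)/2}\le|y|$, contradicting $|y|<e^{\sigma s}$ in $O_{\mathrm{Long}}^{(\mathrm{III})}$. The bound $|e^{-(s-\tau)/2}y-\xi'|\ge c_0|\xi'|$ that you want is nevertheless true, but for a different reason: combining $|y|<e^{\sigma s}$, $|\xi'|>e^{\sigma\tau}$ and $s-\tau>1$ gives
\[
 e^{-(s-\tau)/2}|y|<e^{-(1/2-\sigma)(s-\tau)}|\xi'|\le e^{-(1/2-\sigma)}|\xi'|,
\]
so $c_0=1-e^{-(1/2-\sigma)}>0$ works. (The paper is more conservative here and still splits $I_4$ into $D_1/D_2$, deriving only a polynomial bound on $D_1$; your uniform-Gaussian observation is a genuine simplification, but only once it is justified by the correct inequality.)

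For the middle piece $1<|\xi'|<e^{\sigma\tau}$, the displayed ``key scaling'' identity is not correct as written. Its left-hand side evaluates to $e^{-2\lambda^*s}|y|^{4\lambda^*}$ (the inserted $|y|^{-(\gamma-m)}$ is spurious and has no counterpart in the boundary integral), whereas the right-hand side is $e^{-2\lambda^*s}|y|^{2\lambda^*-m+\gamma}$ times a quantity you call bounded, and the discrepancy factor $|y|^{2\lambda^*+m-\gamma}$ is in general not bounded on $O_{\mathrm{Long}}^{(\mathrm{III})}$ (for $\ell$ large, $2\lambda^*+m-\gamma>0$). What the near-region integration actually yields is of the form $(s-\mu_1)\,e^{-2\lambda^*s}|y|^{4\lambda^*-m+\gamma}$, and reducing this to $e^{-\lambda^*s}|y|^{2\lambda^*-m+\gamma}$ must use $|y|<e^{\sigma s}$ to trade $|y|^{2\lambda^*}\le e^{2\sigma\lambda^*s}$, which produces the surplus factor $e^{-(1-2\sigma)\lambda^*s}$. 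It is this factor, not ``the $e^{-(s-\tau)/2}$-excess'' nor ``the $H,K$ bookkeeping,'' that absorbs the linear growth in $s-\mu_1$ and supplies the needed $e^{-\delta s_1}$ for this piece (since $\sigma<1/2$ and $s>s_1$).
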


\begin{proof}
First
we divide the integral into four parts.
\[
\begin{array}{l}
\dis
 \int_{\mu_1}^{s-1}
 e^{{\cal B}(s-\tau)}F(\tau)d\tau
=
 \int_{\mu_1}^{s-1}d\tau
 \int_{\pa\R_+^n}
 \Gamma(y,\xi',s-\tau)F(\xi',\tau){\cal B}(\xi')d\xi'
\\[4mm] \dis \hspace{5mm}
=
 \int_{\mu_1}^{s-1}d\tau
 \left(
 \int_0^{He^{-\omega \tau}}
 +
 \int_{He^{-\omega \tau}}^1
 +
 \int_1^{e^{\sigma\tau}}
 +
 \int_{e^{\sigma\tau}}^\infty
 \right)
 \Gamma(y,\xi',s-\tau)F(\xi',\tau){\cal B}(\xi')d\xi'
\\[6mm] \dis \hspace{5mm}
=:
 I_1+I_2+I_3+I_4.
\end{array}
\]
Since $s-\tau>1$ for $\tau\in(\mu_1,s-1)$,
Proposition \ref{Rep-pro} implies 
\[
 \Gamma(y,\xi,s-\tau)
\leq
 ce^{(\gamma-m)(s-\tau)/2}(|\xi|+1)^{2\gamma}
 \exp\left( -c\left| e^{-(s-\tau)/2}y-\xi \right|^2 \right)
\]
for $\mu_1<\tau<s-1$.
Since $H<K$,
we note from Lemma \ref{f(Phi)-lem} that
\begin{equation}\label{FH-eq}
 |F(\xi',\tau)|
=
 |f(\Phi(\xi',\tau))/\sigma(y)|
\leq
c\begin{cases}
 \dis
 |y|^{-(m+1)+\gamma} & \text{for } |y|<He^{-\omega s},
 \\[1mm] \dis
 e^{-2\lambda^*s}|y|^{-1+m-\gamma} & \text{for } He^{-\omega s}<|y|<1.
\end{cases}
\end{equation}
Hence
this implies
\[
\begin{array}{lll}
\dis
 I_1
\hspace{-2mm}&\leq&\hspace{-2mm} \dis
 c\int_{\mu_1}^{s-1}e^{(\gamma-m)(s-\tau)/2}
 d\tau
 \int_0^{He^{-\omega \tau}}
 |\xi'|^{-m-1-\gamma}d\xi'
\\[5mm]
\hspace{-2mm}&=&\hspace{-2mm} \dis
 c\int_{\mu_1}^se^{(\gamma-m)(s-\tau)/2}
 (He^{-\omega \tau})^{-m-\gamma+n-2}
 d\tau
\\[5mm]
\hspace{-2mm}&\leq&\hspace{-2mm} \dis
 cH^{n-2-(m+\gamma)}
 e^{-(n-2-2\gamma)\omega\mu_1}
 e^{(\gamma-m)(s-\mu_1)/2}e^{-\lambda^*\mu_1}.
\end{array}
\]
Here
since $|y|>e^{(s-\mu_1)/2}$,
we note that
\begin{equation}\label{(s-mu_1)-eq}
 e^{(\gamma-m)(s-\mu_1)/2}\cdot e^{\lambda^*(s-\mu_1)}\leq |y|^{\gamma-m+2\lambda^*}.
\end{equation}
Therefore
it follows that
\[
 I_1
\leq
 cH^{n-2-(m+\gamma)}e^{-(n-2-2\gamma)\omega\mu_1}
 e^{-\lambda^*s}|y|^{2\lambda^*-m+\gamma}.
\]
Next
we estimate $I_2$.
Then
from \eqref{FH-eq} and \eqref{(s-mu_1)-eq},
we see that
\[
\begin{array}{lll}
\dis
 I_2
\hspace{-2mm}&\leq&\hspace{-2mm} \dis
 c\int_{\mu_1}^{s-1}
 e^{(\gamma-m)(s-\tau)/2}e^{-2\lambda^*\tau}
 d\tau
 \int_{He^{-\omega\tau}}^1
 |\xi'|^{m-1-3\gamma}
 d\xi'
\\[4mm]
\hspace{-2mm}&\leq&\hspace{-2mm} \dis
 c\int_{\mu_1}^s
 e^{(\gamma-m)(s-\tau)/2}e^{-2\lambda^*\tau}
 (He^{-\omega \tau})^{-(\gamma-m)}
 d\tau
 \int_{Le^{-\omega\tau}}^1
 |\xi'|^{-1-2\gamma}d\xi'
\\[5mm]
\hspace{-2mm}&\leq&\hspace{-2mm} \dis
 cH^{-(\gamma-m)}
 e^{(\gamma-m)(s-\mu_1)/2}e^{-\lambda^*\mu_1}
\\[5mm]
\hspace{-2mm}&\leq&\hspace{-2mm} \dis
 cH^{-(\gamma-m)}e^{-\lambda^*s}
 |y|^{2\lambda^*-m+\gamma}.
\end{array}
\]
Furthermore
we get from Lemma \ref{f(Phi)-lem}
\[
 I_3
\leq
 c\int_{\mu_1}^{s-1}
 e^{(\gamma-m)(s-\tau)/2}e^{-2\lambda^*\tau}
 d\tau
 \int_1^{e^{\sigma\tau}}
 |\xi'|^{-m-1+4\lambda^*+\gamma}
 \exp\left( -c\left| e^{-(s-\tau)/2}y-\xi' \right|^2 \right)
 d\xi'.
\]
Here
we put
\[
\begin{array}{c}
\dis
 D_1 = \left\{ 1<|\xi'|<e^{\sigma\tau};|e^{-(s-\tau)/2}y-\xi'|<|\xi'|/2 \right\},
\\[2mm] \dis
 D_2 = \left\{ 1<|\xi'|<e^{\sigma\tau};|e^{-(s-\tau)/2}y-\xi'|>|\xi'|/2 \right\}.
\end{array}
\]
Then
since $e^{-(s-\tau)/2}|y|>|\xi'|/2$ for $\xi'\in D_1$,
we observe that
\[
\begin{array}{l}
\dis
 \int_{D_1}
 |\xi'|^{-m-1+4\lambda^*+\gamma}
 \exp\left( -c\left| e^{-(s-\tau)/2}y-\xi' \right|^2 \right)
 d\xi'
\\[4mm] \dis \hspace{10mm}
\leq
 c\left( e^{-(s-\tau)/2}|y| \right)^{4\lambda^*-m+\gamma}
 \int_{D_1}|\xi'|^{-1}
 \exp\left( -c\left| e^{-(s-\tau)/2}y-\xi' \right|^2 \right)
 d\xi'
\\[4mm] \dis \hspace{10mm}
\leq
 ce^{-(\gamma-m)(s-\tau)/2}e^{-2\lambda^*(s-\tau)}|y|^{4\lambda^*-m+\gamma}
 \int_{D_1}
 \exp\left( -c\left| e^{-(s-\tau)/2}y-\xi' \right|^2 \right)
 d\xi'
\\[4mm] \dis \hspace{10mm}
\leq
 ce^{-(\gamma-m)(s-\tau)/2}e^{-2\lambda^*(s-\tau)}|y|^{4\lambda^*-m+\gamma}.
\end{array}
\]
Therefore
since  $|y|<e^{\sigma s}$,
it follows that
\[
\begin{array}{l}
\dis
 \int_{\mu_1}^{s-1}
 e^{(\gamma-m)(s-\tau)/2}e^{-2\lambda^*\tau}
 d\tau
 \int_{D_1}
 |\xi'|^{-m-1+4\lambda^*+\gamma}
 \exp\left( -c\left| e^{-(s-\tau)/2}y-\xi' \right|^2 \right)
 d\xi'
 \\[6mm] \dis \hspace{10mm}
\leq
 cse^{-2\lambda^*s}
 |y|^{4\lambda^*-m+\gamma}
\leq
 cse^{-(1-2\sigma)\lambda^*s}e^{-\lambda^*s}
 |y|^{2\lambda^*-m+\gamma}.
\end{array}
\]
Furthermore
by definition of $D_2$,
we see that
\[
\begin{array}{l}
\dis
 \int_{D_2}
 |\xi'|^{-m-1+4\lambda^*+\gamma}
 \exp\left( -c\left| e^{-(s-\tau)/2}y-\xi' \right|^2 \right)
 d\xi'
\\[2mm] \dis \hspace{15mm}
\leq
 \int_{D_2}
 |\xi'|^{-m-1+4\lambda^*+\gamma}e^{-c|\xi'|^2}
 d\xi'
\leq
 c.
\end{array}
\]
As a consequence,
we obtain from \eqref{(s-mu_1)-eq}
\[
\begin{array}{l}
\dis
 \int_{\mu_1}^{s-1}
 e^{(\gamma-m)(s-\tau)/2}e^{-2\lambda^*\tau}
 d\tau
 \int_{D_2}
 |\xi'|^{-m-1+4\lambda^*+\gamma}
 \exp\left( -c\left| e^{-(s-\tau)/2}y-\xi' \right|^2 \right)
 d\xi'
 \\[6mm] \dis \hspace{10mm}
\leq
 ce^{(\gamma-m)(s-\mu_1)/2}e^{-2\lambda^*\mu_1}
\leq
 ce^{-\lambda^*\mu_1}e^{-\lambda^*s}
 |y|^{2\lambda^*-m+\gamma}.
\end{array}
\]
Thus
we conclude
\[
 I_3
\leq
 c\left( se^{-(1-2\sigma)\lambda^*s}+e^{-\lambda^*\mu_1} \right)
 e^{-\lambda^*s}|y|^{2\lambda^*-m+\gamma}.
\]
Finally
from Lemma \ref{f(Phi)-lem},
we see that
\[
\begin{array}{lll}
\dis
 I_4
\hspace{-2mm}&\leq&\hspace{-2mm} \dis
 \int_{\mu_1}^{s-1}e^{(\gamma-m)(s-\tau)/2}d\tau
 \int_{e^{\sigma \tau}}^\infty
 |\xi'|^{-m-1+\gamma}
 \exp\left( -c\left| e^{-(s-\tau)/2}y-\xi' \right|^2 \right)
 d\xi'
\\[6mm]
\hspace{-2mm}&=&\hspace{-2mm} \dis
 \int_{\mu_1}^{s-1}e^{(\gamma-m)(s-\tau)/2}d\tau
 \left( \int_{D_1}+\int_{D_2} \right)
 |\xi'|^{-m-1+\gamma}
 \exp\left( -c\left| e^{-(s-\tau)/2}y-\xi' \right|^2 \right)
 d\xi',
\end{array}
\]
where $D_1$ and $D_2$ are given by
\[
\begin{array}{c}
\dis
 D_1=\{|\xi'|>e^{\sigma\tau};|e^{-(s-\tau)/2}y-\xi'|<|\xi'|/2\},
\\[2mm] \dis
 D_2=\{|\xi'|>e^{\sigma\tau};|e^{-(s-\tau)/2}y-\xi'|>|\xi'|/2\}.
\end{array}
\]
Since $e^{-(s-\tau)/2}|y|>|\xi'|/2$ for $\xi'\in D_1$,
we get
\[
\begin{array}{l}
\dis
 \int_{D_1}
 |\xi'|^{-m-1+\gamma}
 \exp\left( -c\left| e^{-(s-\tau)/2}y-\xi' \right|^2 \right)
 d\xi'
\\[4mm] \dis \hspace{5mm}
\leq
 \left( e^{-(s-\tau)/2}|y| \right)^{2\lambda^*-m+\gamma}
 \int_{D_1}|\xi'|^{-2\lambda^*-1}
 \exp\left( -c\left| e^{-(s-\tau)/2}y-\xi' \right|^2 \right)
 d\xi'
\\[4mm] \dis \hspace{5mm}
\leq
 ce^{-(\gamma-m)(s-\tau)/2}e^{-\lambda^*(s-\tau)}e^{-(2\lambda^*+1)\sigma\tau}
 |y|^{2\lambda^*-m+\gamma}
 \int_{D_1}
 \exp\left( -c\left| e^{-(s-\tau)/2}y-\xi' \right|^2 \right)
 d\xi'.
\end{array}
\]
Therefore
since $(2\lambda^*+1)\sigma>\lambda^*$,
it follows that
\[
\begin{array}{l}
\dis
 \int_{\mu_1}^{s-1}e^{(\gamma-m)(s-\tau)/2}d\tau
 \int_{D_1}
 |\xi'|^{-m-1+\gamma}
 \exp\left( -c\left| e^{-(s-\tau)/2}y-\xi' \right|^2 \right)
 d\xi'
\\[4mm] \dis \hspace{25mm}
\leq
 ce^{-\lambda^*s}
 \left(
 \int_{\mu_1}^{s-1}
 e^{(\lambda^*-(2\lambda^*+1)\sigma)\tau}\tau
 d\tau
 \right)
 |y|^{2\lambda^*-m+\gamma}
\\[4mm] \dis \hspace{25mm}
\leq
 ce^{(\lambda^*-(2\lambda^*+1)\sigma)\mu_1}e^{-\lambda^*s}|y|^{2\lambda^*-m+\gamma}.
\end{array}
\]
Furthermore
by definition of $D_2$,
we have
\[
\begin{array}{lll}
\dis
 \int_{D_2}
 |\xi'|^{-m-1+\gamma}
 \exp\left( -c\left| e^{-(s-\tau)/2}y-\xi' \right|^2 \right)
 d\xi'
\hspace{-2mm}&\leq&\hspace{-2mm} \dis
 \int_{D_2}|\xi'|^{-m-1+\gamma}e^{-c|\xi'|^2}
 d\xi'
\\[6mm]
\hspace{-2mm}&\leq&\hspace{-2mm} \dis
 \int_{e^{\sigma \tau}}^\infty
 r^{-m+\gamma+n-3}e^{-cr^2}
 dr.
\end{array}
\]
Here
we note that $r^{-m+\gamma+n-3}e^{-cr^2}<re^{-c'r^2}$ for $r>1$.
Therefore
it holds that
\[
 \int_{D_2}
 |\xi'|^{-m-1+\gamma}
 \exp\left( -c\left| e^{-(s-\tau)/2}y-\xi' \right|^2 \right)
 d\xi'
\leq
 c\exp \left( -c'e^{2\sigma \tau} \right).
\]
As a consequence,
we obtain
\[
\begin{array}{l}
\dis
 \int_{\mu_1}^{s-1}e^{(\gamma-m)(s-\tau)/2}d\tau
 \int_{D_2}
 |\xi'|^{-m-1+\gamma}
 \exp\left( -c\left| e^{-(s-\tau)/2}y-\xi' \right|^2 \right)
 d\xi'
\\[4mm] \dis \hspace{10mm}
\leq
 c\int_{\mu_1}^se^{(\gamma-m)(s-\tau)/2}\exp\left( -c'e^{2\sigma\tau} \right)d\tau
\leq
 ce^{(\gamma-m)(s-\mu_1)/2}\exp\left( -c'e^{2\sigma \mu_1} \right).
\end{array}
\]
Therefore
\eqref{(s-mu_1)-eq} implies
\[
\begin{array}{l}
\dis
 \int_{\mu_1}^{s-1}e^{(\gamma-m)(s-\tau)/2}d\tau
 \int_{D_2}
 |\xi'|^{-m-1+\gamma}
 \exp\left( -c\left| e^{-(s-\tau)/2}y-\xi' \right|^2 \right)
 d\xi'
\\[4mm] \dis \hspace{20mm}
\leq
 ce^{\lambda^*\mu_1}\exp\left( -c'e^{2\sigma\mu_1} \right)e^{-\lambda^*s}
 |y|^{2\lambda^*-m+\gamma}.
\end{array}
\]
Thus
we conclude
\[
 I_4
\leq
 c\left(
 e^{(\lambda^*-(2\lambda^*+1)\sigma)\mu_1}+e^{\lambda^*\mu_1}\exp\left( -c'e^{2\sigma \mu_1} \right)
 \right)
 e^{-\lambda^*s}|y|^{2\lambda^*-m+\gamma}.
\]
Since $(2\lambda^*+1)\sigma>\lambda^*$,
the proof is completed.
\end{proof}

Combining Lemma \ref{LongIII-lem}\hspace{0.5mm}-\hspace{0.5mm}Lemma \ref{LongIII2-lem} and \eqref{LongIIIF-eq},
we obtain the following result.

\begin{pro}\label{Longtime3-pro}
There exist $\delta>0$ and $c>0$ such that
if $|d|<\epsilon_1e^{-\lambda^*s_1}$ and $\varphi(y,s)\in A_{s_1,s_2}$,
then it holds that
\[
 \left| b(y,s)+e^{-\lambda^*s}\eta_{1\ell}(y) \right|
<
 ce^{-\delta s_1}e^{-\lambda^*s}
 \left( 1+|y|^{2\lambda^*-m+\gamma} \right)
\hspace{5mm}\mathrm{for}\ (y,s)\in O_{\mathrm{Long}}^{(\mathrm{III})}.
\]
\end{pro}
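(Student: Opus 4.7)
The proof is essentially a matter of assembling three already-established ingredients and checking that each contributes a bound of the desired form. Starting from the representation formula
\[
 b(s) = e^{{\cal A}(s-s_1)} P_\ell b_1 - e^{-\lambda^* s}\eta_{1\ell} + \int_{s_1}^s e^{{\cal B}(s-\tau)} F(\tau)\, d\tau,
\]
where $P_\ell b_1 = b_1 + e^{-\lambda^* s_1}\eta_{1\ell}$, I would transpose the $-e^{-\lambda^* s}\eta_{1\ell}$ term to the left-hand side so that the quantity to be estimated becomes the sum of the first term and the Duhamel integral. The remaining work is to show that each of these two pieces is bounded by $ce^{-\delta s_1}e^{-\lambda^* s}(1+|y|^{2\lambda^*-m+\gamma})$ on $O_{\text{Long}}^{(\text{III})}$.

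For the first term, I would apply Lemma \ref{LongIII-lem} directly, which yields the desired pointwise bound in terms of $e^{-\delta s_1}+\nu(s_1)+e^{\lambda^* s_1}d_{\max}$. Since we are in the setting where the orthogonality conditions $(\Phi(s_2),\phi_{ij})_\rho=0$ for $(i,j)\in\Pi$ are imposed (as in the usage within Proposition \ref{Main-pro}), Lemma \ref{d_max-lem} upgrades $e^{\lambda^* s_1}d_{\max}$ to a term of order $e^{-\delta s_1}$, and the behaviour $\nu(s_1)\to 0$ from Lemma \ref{phi-initial-lem} (combined with the explicit decay visible in its proof) provides the matching smallness.

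For the Duhamel integral, the plan is to split it as
\[
 \int_{s_1}^s e^{{\cal B}(s-\tau)}F(\tau)\, d\tau
 = \int_{s_1}^{s-1} e^{{\cal B}(s-\tau)}F(\tau)\, d\tau + \int_{s-1}^{s} e^{{\cal B}(s-\tau)}F(\tau)\, d\tau.
\]
The short-time tail $\int_{s-1}^s$ is controlled by Lemma \ref{ShortS_2-lem} applied with $\mu_1=s-1$, yielding exactly estimate \eqref{LongIIIF-eq}; note that the argument there is valid as long as $Ke^{-\omega s}<|y|<e^{\sigma s}$, which is satisfied throughout $O_{\text{Long}}^{(\text{III})}$. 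The long-range part $\int_{s_1}^{s-1}$ is precisely the object bounded by Lemma \ref{LongIII2-lem} applied with $\mu_1=s_1$; the condition $|y|\ge e^{(s-\mu_1)/2}$ required there is the defining property of $O_{\text{Long}}^{(\text{III})}$.

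Summing the three contributions and absorbing the constants into a single $\delta>0$ (taking the minimum of the three), one obtains the asserted inequality. The proof is thus mechanical once the three preparatory estimates are in hand; the real work was already carried out in the proofs of Lemma \ref{LongIII-lem} and Lemma \ref{LongIII2-lem}, where the delicate issue is the interplay between the growth factor $e^{(\gamma-m)(s-s_1)/2}$ from the semigroup and the constraint $|y|\ge e^{(s-s_1)/2}$, which is converted into powers of $|y|$ via \eqref{|y|exp-eq} so that $e^{(\gamma-m)(s-s_1)/2}\cdot e^{\lambda^*(s-s_1)} \le |y|^{\gamma-m+2\lambda^*}$. The only point requiring care in the assembly step itself is to verify that the compatibility of the four regions appearing in the $\xi'$-integral decomposition (namely $0<|\xi'|<He^{-\omega\tau}$, $He^{-\omega\tau}<|\xi'|<1$, $1<|\xi'|<e^{\sigma\tau}$, and $|\xi'|>e^{\sigma\tau}$) is preserved under the splitting at $\tau=s-1$, but this is immediate from the monotonicity of the thresholds in $\tau$.
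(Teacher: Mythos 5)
Your proposal is correct and follows essentially the same route as the paper, which assembles the result from Lemma~\ref{LongIII-lem}, Lemma~\ref{LongIII2-lem} and \eqref{LongIIIF-eq} after splitting the Duhamel integral at $\tau=s-1$. You also correctly flag the point that the paper leaves implicit: the $e^{\lambda^*s_1}d_{\max}$ term appearing in Lemma~\ref{LongIII-lem} is only $O(\epsilon_1)$ under the stated hypothesis $|d|<\epsilon_1e^{-\lambda^*s_1}$, so obtaining the claimed $ce^{-\delta s_1}$ prefactor requires the orthogonality condition \eqref{AssumeAPi-eq} (assumed throughout Section~\ref{Longtime-sec}) together with Lemma~\ref{d_max-lem}, and requires that $\nu(s_1)$ inherit an exponential rate from the algebraic remainders in Theorem~\ref{JLsuper-thm}(iv) and Lemma~\ref{2A-lem} at scale $r\sim He^{-\omega s_1}$.
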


\section{Exterior domain estimates}\label{Exterior-sec}

In this section,
we provide the estimate in
\[
 O_{\text{Ext}} = \left\{ (y,s);\ r>e^{\sigma s},\ s_1<s<s_2 \right\}.
\]
Here
we define $m_0>1$ appearing in \eqref{Assume1-eq} as follows.
Let $c_H$ be the best constant given in Lemma \ref{HHardy-lem}.
Then
by definition of a JL-supercritical exponent,
we recall that $U_\infty^{q-1}|_{\pa\R_+^n}<qU_\infty^{q-1}|_{\pa\R_+^n}<c_H$.
Then
we define $m_0=m_0(q,n)$ by
\begin{equation}\label{m_0-eq}
 m_0^{q-1} = c_H/U_\infty^{q-1}|_{\pa\R_+^n}>1.
\end{equation}
Furthermore
let $e_H(\theta)$ be the first eigenfunction of
\[
\begin{cases}
 -\Delta_Se = \lambda e & \text{in } (0,\pi/2),
 \\
 \pa_\nu e = c_He & \text{on } \theta=\pi/2.
\end{cases}
\]
Then
it is known that the first eigenvalue is $-(n-2)^2/4$
(see Lemma \ref{Heigenvalue-lem}).
By using this fact,
we see that $L(x)=e_H(\theta)r^{-(n-2)/2}$ gives a solution of
\[
\begin{cases}
 \dis
 -\Delta L = 0 & \text{in } \R_+^n,
 \\ \dis
 \pa_\nu L = c_Hr^{-1}L & \text{on } \pa\R_+^n.
\end{cases}
\]
Since $m<(n-2)/2$,
we can choose $r_1>0$ such that
\begin{equation}\label{r_1Def-eq}
 L(x) < \left( \frac{1+m_0}{2} \right)U_\infty(x)
\hspace{5mm}\text{for } |x|>r_1.
\end{equation}
Furthermore
from $m_0>1$,
we can fix $d_0\in(0,1)$ such that
\begin{equation}\label{d_0Def-eq}
 \left( \frac{1+m_0}{2} \right)U_\infty(x) + d_0 < \left( \frac{1+3m_0}{4} \right)U_\infty(x)
\hspace{5mm}\text{for } |x|<r_1.
\end{equation}
Now
we divide $O_{\text{Ext}}$ into two parts.
\[
\begin{array}{c}
\dis
 O_{\text{Ext}}^{(\text{I})}
=
 \left\{ (y,s);\ e^{\sigma s}<r<r_1e^{s/2},\ s_1<s<s_2 \right\},
\\[3mm]
 O_{\text{Ext}}^{(\text{II})}
=
 \left\{ (y,s);\ r>r_1e^{s/2},\ s_1<s<s_2 \right\}.
\end{array}
\]
It is clear that
$O_{\text{Ext}}=O_{\text{Ext}}^{(\text{I})}\cup O_{\text{Ext}}^{(\text{II})}$.
First
we provide the estimate in $O_{\text{Ext}}^{(\text{I})}$.

\begin{pro}\label{Exterior1-pro}
There exists $s_0>0$ such that if $s_1>s_0$, $|d|<\epsilon_1e^{-\lambda^*s_1}$ and
$\varphi(y,s)\in A_{s_1,s_2}$,
then it holds that
\[
 \varphi(y,s) \leq \left( \frac{1+3m_0}{4} \right)U_\infty(y)
\hspace{5mm}\mathrm{for}\ (y,s)\in O_{\mathrm{Ext}}^{(\mathrm{I})}.
\]
\end{pro}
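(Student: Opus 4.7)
The plan is to reformulate the inequality in the original $(x,t)$ variables, exploiting the homogeneity identity $\varphi(y,s)/U_\infty(y)=u(x,t)/U_\infty(x)$ (with $x=e^{-s/2}y$, $t=T-e^{-s}$) to convert the claim to
\[
 u(x,t) \leq \frac{1+3m_0}{4}\,U_\infty(x), \hh (x,t)\in\Omega^*,
\]
where
\[
 \Omega^* = \bigl\{(x,t):(T-t)^{1/2-\sigma}<|x|<r_1,\ T-e^{-s_1}<t<T-e^{-s_2}\bigr\}.
\]
By the choice of $d_0$ in \eqref{d_0Def-eq}, it suffices to establish the sharper bound $u(x,t)\leq W(x):=\frac{1+m_0}{2}U_\infty(x)+d_0$ throughout $\Omega^*$, and I would prove this by a parabolic comparison argument for $u-W$.

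The candidate barrier $W$ is harmonic and time-independent, so $W_t-\Delta W=0$ in the interior of $\Omega^*$. The comparison $u\leq W$ will follow from the parabolic maximum principle once one verifies (i) $u\leq W$ on the parabolic boundary of $\Omega^*$, and (ii) a positive maximum of $u-W$ cannot form on the lateral face $\{x_n=0\}\cap\Omega^*$. For (i): at the initial time $t=T-e^{-s_1}$, one uses Lemma \ref{phi-initial-lem} together with $\sigma<\varrho<1/2$, so the outer portion of the initial slice is covered by the estimate valid for $r>e^{\varrho s_1}$; on the inner face $|x|=(T-t)^{1/2-\sigma}$ (corresponding to $|y|=e^{\sigma s}$), Propositions \ref{Longtime1-pro}--\ref{Longtime3-pro} give $|\varphi(y,s)-U_\infty(y)|=o(U_\infty(y))$ uniformly, whence $u(x,t)\leq U_\infty(x)(1+Ce^{-\delta s_1})<W(x)$ for $s_1$ large enough; on the outer face $|x|=r_1$ one invokes the forthcoming Proposition \ref{Exterior2-pro}.

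The main obstacle, and the step I expect to be the most delicate, is (ii): a direct computation shows
\[
 \partial_\nu W - W^q = \frac{1+m_0}{2}U_\infty^q - \Bigl(\frac{1+m_0}{2}U_\infty+d_0\Bigr)^q,
\]
which is \emph{negative} wherever $U_\infty(x)$ is large, because $\bigl(\tfrac{1+m_0}{2}\bigr)^{q-1}>1$ (as $m_0>1$); consequently $W$ is not a pointwise supersolution of the nonlinear Neumann condition, and a naive Hopf-lemma argument is inconclusive. My proposed workaround is a two-regime analysis of the lateral face. On the subregion where $|x|$ is close to the inner edge $(T-t)^{1/2-\sigma}$ and $U_\infty$ is large, the sharp long-time asymptotics from Propositions \ref{Longtime1-pro}--\ref{Longtime3-pro} already force $u(x,t)<W(x)$ pointwise, so no supersolution argument is needed on that portion. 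On the complementary subregion, where $U_\infty$ is bounded, one perturbs $W$ by a small multiple of the Hardy barrier $L(x)=e_H(\theta)r^{-(n-2)/2}$: since $\partial_\nu L=c_Hr^{-1}L$ and JL-supercriticality gives $\mathcal{K}<c_H$, there is strictly positive slack in the linearized boundary operator, so $W+\epsilon L$ becomes a genuine strict supersolution of the boundary condition in this bounded regime, and Hopf's lemma then rules out a positive maximum of $u-W-\epsilon L$ there. Matching the two regimes with the appropriate smallness of $\epsilon$ and tracking the $s_1$-dependence throughout is the technical heart of the argument; passing $\epsilon\to0$ afterwards (or absorbing the $\epsilon L$ contribution into $d_0$ on the inner regime, using \eqref{d_0Def-eq}) yields the desired $u\leq W$ and hence the proposition.
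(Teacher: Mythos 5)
Your proposal correctly identifies the central obstacle --- that a static barrier $W=\tfrac{1+m_0}{2}U_\infty+d_0$ is \emph{not} a supersolution of the nonlinear Neumann condition, since $(\tfrac{1+m_0}{2})^{q-1}>1$ --- but your proposed workaround has a gap that the paper avoids by an altogether different device.

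The problematic step is the first regime of your two-regime analysis on the lateral face $\{x_n=0\}$. You claim that near the inner edge $|x|\approx(T-t)^{1/2-\sigma}$, the long-time estimates of Propositions \ref{Longtime1-pro}\hspace{0.5mm}--\hspace{0.5mm}\ref{Longtime3-pro} already force $u<W$ pointwise. But those propositions are confined to $|y|<e^{\sigma s}$, i.e.\ to the \emph{interior} $|x|<(T-t)^{1/2-\sigma}$, whereas the lateral face of your domain $\Omega^*$ lives at $|x|>(T-t)^{1/2-\sigma}$. There, the only available a priori bound is the third line of \eqref{Assume1-eq}, namely $u\leq m_0U_\infty$, which is strictly \emph{weaker} than $u<W$ wherever $U_\infty$ is large (since $m_0>\tfrac{1+m_0}{2}$). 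Also, your appeal to ``slack from $\mathcal K<c_H$'' in the second regime is a red herring: the relevant linearization of $u^q$ around the ceiling $m_0U_\infty$ yields coefficient $(m_0U_\infty)^{q-1}|_{\pa\R_+^n}=c_H$ by the very definition \eqref{m_0-eq}, with equality rather than strict inequality, so there is no $\mathcal K$-induced margin at this point.

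The paper's actual argument sidesteps the supersolution difficulty entirely. Using $u\leq m_0U_\infty\leq c|x|^{-m}$ on the region $|x|>(T-t)^{(1-2\sigma)/2}$, the nonlinear flux is bounded by a \emph{solution-independent} quantity: $\pa_\nu u=u^q\leq\kappa(T-t)^{-p/2}$ with $p=mq(1-2\sigma)$. One then builds a time-dependent supersolution $\bar u(x,t)=\tfrac{1+m_0}{2}U_\infty(x)+g(x,t)$, where $g$ solves the heat equation with linear Neumann data $\pa_\nu g=\tfrac{2\kappa}{d_0}(T-t)^{-p/2}g$ and $g(x,0)\equiv d_0/2$. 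The whole point of the constraint $\sigma>1/(2q)$ in the definition of $\sigma$ is precisely to guarantee $p<1$, which by Lemma \ref{g(x,t)-lem} keeps $g$ pinned between $d_0/2$ and $d_0$ uniformly as $T\to0$. This removes any need to match regimes along the lateral face or to track an auxiliary $\epsilon L$-perturbation; the comparison is carried out once on the full unbounded region $\mathcal O_{T'}=\{|x|>(T-t)^{(1-2\sigma)/2},\ 0<t<T'\}$, and the restriction to $|x|<r_1$ together with \eqref{d_0Def-eq} then gives the claimed constant $\tfrac{1+3m_0}{4}$.
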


\begin{proof}
Throughout this proof,
we assume $\varphi(y,s)\in A_{s_1,s_2}$.
Here we put $T=e^{-s_1}$, $T'=e^{-s_1}-e^{-s_2}$ and
\[
\hspace{7.5mm}
 u(x,t) = (T-t)^{-m/2}\varphi((T-t)^{-1/2}x,-\log(T-t)).
\]
Here
we easily see that $0<T'<T$ and $T'\to T$ as $s_2\to\infty$.
Then since $\varphi(y,s)\in A_{s_1,s_2}$,
$u(x,t)$ is defined on $\R_+^n\times(0,T')$ and
satisfies \eqref{u-eq} with $u_0(x)=T^{-m/2}\varphi(T^{-1/2}x,s_1)$.
For simplicity,
we put
\[
 {\cal O}_{T'} = \left\{ (x,t);\ |x|>(T-t)^{(1-2\sigma)/2},\ 0<t<T' \right\}.
\]
Since $0<\varphi(y,s)<m_0U_\infty(y)$ for $|y|>e^{\sigma s}$,
it holds that
\[
\begin{array}{lll}
\dis
 0 < u(x,t)
\hspace{-2mm}&\leq&\hspace{-2mm} \dis
 m_0(T-t)^{-m/2}U_\infty\left( (T-t)^{-1/2}x \right)
\leq
 c|x|^{-m}
\\[3mm]
\hspace{-2mm}&\leq&\hspace{-2mm} \dis
 c(T-t)^{-(1-2\sigma)m/2}
\hspace{5mm}\text{for } (x,t)\in{\cal O}_{T'}.
\end{array}
\]
Therefore
there exists $\kappa>0$ such that
the boundary condition on $\pa\R_+^n$ is given by
\begin{equation}\label{u(x,t)boundary-eq}
 \pa_\nu u = u^q \leq  \kappa(T-t)^{-mq(1-2\sigma)/2}
\hspace{5mm}\text{for } (x,t)\in{\cal O}_{T'}.
\end{equation}
Furthermore
since $|\varphi(y,s)-U_\infty(y)|\leq ce^{-\lambda^*s}|y|^{2\lambda^*-m}$ for $|y|=e^{\sigma s}$,
we observe that
\[
\begin{array}{lll}
\dis
 u(x,t)
\hspace{-2mm}&\leq&\hspace{-2mm} \dis
 (T-t)^{-m/2}
 \left(
 U_\infty\left( (T-t)^{-1/2}x \right)+c(T-t)^{\lambda^*}\left( (T-t)^{-1/2}|x| \right)^{2\lambda^*-m}
 \right)
\\[4mm]
\hspace{-2mm}&\leq&\hspace{-2mm} \dis
 U_\infty(x)+c|x|^{2\lambda^*-m}
\hspace{5mm}\text{for } |x|=(T-t)^{(1-2\sigma)/2},\ 0<t<T'.
\end{array}
\]
Therefore
since $m_0>1$,
there exists $T_1>0$ such that if $T<T_1$,
then it holds that
$2c|x|^{2\lambda^*-m}<(m_0-1)U_\infty(x)$ on $|x|=(T-t)^{(1-2\sigma)/2}$ for $0<t<T$.
As a consequence,
we get if $T<T_1$
\begin{equation}\label{u(x,t)boundary|-eq}
 u(x,t) < \left( \frac{1+m_0}{2} \right)U_\infty(x)
\hspace{5mm}\text{for } |x|=(T-t)^{(1-2\sigma)/2},\ 0<t<T'.
\end{equation}
Since $\sigma<\varrho$,
by definition of $\phi_\ell^*$,
we easily see that
\[
 \varphi(y,s_1)
\leq
 \begin{cases}
 \dis
 U_\infty(y) + \sum_{(i,j)\in\Pi}|d_{ij}||\phi_{ij}(y)| + e^{-\lambda^*s_1}|\phi_{1\ell}(y)|
 & \text{for } e^{\sigma s_1}<|y|<e^{\varrho s_1}+1,
 \\ \dis
 0 & \text{for } |y|>e^{\varrho s_1}+1.
 \end{cases}
\]
Since
$|d_{ij}|<\epsilon_1e^{-\lambda^*s_1}$ and
$|\phi_{ij}(y)|\leq c_{ij}|y|^{2\lambda_{ij}-m}$ for $|y|>1$,
it holds that
\[
\begin{array}{lll}
\dis
 \varphi(y,s_1)
\hspace{-2mm}&\leq&\hspace{-2mm} \dis
 U_\infty(y) + ce^{-\lambda^*s_1}|y|^{2\lambda^*-m}
\leq
 U_\infty(y) + ce^{-(1-2\varrho)\lambda^*s_1}|y|^{-m}
\\[2mm]
\hspace{-2mm}&\leq&\hspace{-2mm} \dis
 \left( 1+ce^{-(1-2\varrho)\lambda^*s_1} \right)U_\infty(y)
\hspace{7.5mm} \text{for } e^{\sigma s_1}<|y|<e^{\varrho s_1}+1.
\end{array}
\]
Therefore
by using $u_0(x)=T^{-m/2}\varphi(T^{-1/2}x,-\log T)$,
we see that
\begin{equation}\label{u_0T-eq}
 u_0(x)
\leq
\begin{cases}
 \left( 1+cT^{(1-2\varrho)\lambda^*} \right)U_\infty(x)
 & \text{for } T^{1/2-\sigma}<|x|<T^{1/2-\varrho}+T^{1/2},
 \\[1mm] \dis
 0 & \text{for } |x|>T^{1/2-\varrho}+T^{1/2}.
\end{cases}
\end{equation}
As a consequence,
there exists $T_2>0$ such that
if $T<T_2$,
then it holds that
\begin{equation}\label{u_0(x)|-eq}
 u_0(x) \leq \left( \frac{1+m_0}{2} \right)U_\infty(x)
\hspace{5mm}\text{for } |x|>T^{(1-2\sigma)/2}.
\end{equation}
Now
we construct a super-solution.
For simplicity,
we put $p=mq(1-2\sigma)$.
Let $g(x,t)$ be a solution of
\begin{equation}\label{g(x,t)-eq}
\begin{cases}
 g_t = \Delta g, & (x,t)\in\R_+^n\times(0,T),
\\[1mm]\dis
 \pa_\nu g = \left( \frac{2\kappa}{d_0} \right)(T-t)^{-p/2}g,
& (x,t)\in\pa\R_+^n\times(0,T),
 \\
 g(x,0) \equiv d_0/2, & x\in\R_+^n.
\end{cases}
\end{equation}
Here
we put
\begin{equation}\label{baru(x,t)Def-eq}
 \bar{u}(x,t) = \left( \frac{1+m_0}{2} \right)U_\infty(x) + g(x,t).
\end{equation}
Then
we easily check that $\bar{u}(x,t)$ satisfies
\[
\begin{cases}
\dis 
 \bar{u}_t = \Delta \bar{u}, & (x,t)\in\R_+^n\times(0,T),
\\ \dis
 \pa_\nu \bar{u} \geq \left( \frac{2\kappa}{d_0} \right)(T-t)^{-p/2}g,
& (x,t)\in\pa\R_+^n\times(0,T),
\\ \dis
 \bar{u}(x,0) \equiv \bar{u}_0(x):= \left( \frac{1+m_0}{2} \right)U_\infty(x)+d_0/2,
& x\in\R_+^n.
\end{cases}
\]
Since $g(x,t)\geq d_0/2$ (see Lemma \ref{g(x,t)-lem}),
we find
\begin{equation}\label{baru(x,t)boundary-eq}
 \pa_\nu \bar{u} \geq \kappa(T-t)^{-p/2}
\hspace{5mm}\text{for } (x,t)\in\pa\R_+^n\times(0,T).
\end{equation}
We put
$U(x,t)=\bar{u}(x,t)-u(x,t)$.
Then
from \eqref{u(x,t)boundary-eq}-\eqref{u_0(x)|-eq} and \eqref{baru(x,t)Def-eq}-\eqref{baru(x,t)boundary-eq},
we see that if $T<\min\{T_1,T_2\}$
\[
\begin{array}{cl}
\dis
 \pa_\nu U \geq 0 & \text{for } (x,t)\in{\cal O}_{T'}, x\in\pa\R_+^n,
\\[2mm] \dis
 U > 0 & \text{for } |x|=(T-t)^{(1-2\sigma)/2},\ 0<t<T',
\\[2mm] \dis
 U > 0 & \text{for } |x|>(T-t)^{(1-2\sigma)/2},\ t=0.
\end{array}
\]
Therefore
applying a comparison lemma in ${\cal O}_{T'}$,
we obtain if $T<\min\{T_1,T_2\}$
\[
 u(x,t) \leq \bar{u}(x,t)
\hspace{5mm}\text{for } (x,t)\in {\cal O}_{T'}.
\]
Since $\sigma>1/2q$,
we easily see that $p<1$.
Therefore
Lemma \ref{g(x,t)-lem} implies
$\sup_{(x,t)\in\R_+^n\times(0,T)}|g(x,t)|<d_0$ if $T<T_0$.
Therefore
if $T<\{T_0,T_1,T_2\}$,
it holds that
\[
 u(x,t) \leq \left( \frac{1+m_0}{2} \right)U_\infty(x) + d_0
\hspace{5mm}\text{for } (x,t)\in{\cal O}_{T'}.
\]
As a consequence,
by \eqref{d_0Def-eq},
if $T<\{T_0,T_1,T_2\}$,
it follows that
\[
 u(x,t) \leq \left( \frac{1+3m_0}{4} \right)U_\infty(x)
\hspace{5mm}\text{for } (T-t)^{(1-2\sigma)/2}<|x|<r_1,\ 0<t<T'.
\]
Since $\varphi(y,s)=(T-t)^{m/2}u((T-t)^{1/2}y,t)$ with $T-t=e^{-s}$,
we obtain the conclusion.
\end{proof}

\begin{lem}\label{g(x,t)-lem}
Let $p<1$ and $g(x,t)$ be a solution of \eqref{g(x,t)-eq}.
Then
there exists $T_0>0$ such that
if $T<T_0$,
then it holds that
\[
 \sup_{(x,t)\in\R_+^n\times(0,T)}|g(x,t)|<d_0.
\]
Furthermore
it holds that $g(x,t)>d_0/2$ for $(x,t)\in\R_+^n\times(0,T)$.
\end{lem}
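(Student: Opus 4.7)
The equation for $g$ is linear with the coefficient $\mu(t):=(2\kappa/d_0)(T-t)^{-p/2}$ smooth on $[0,T)$, so standard parabolic theory produces a unique classical solution. The plan is to use a sub-/super-solution argument for the lower bound and a Duhamel representation via the Neumann heat kernel for the upper bound.

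For the lower bound, the constant $\underline g\equiv d_0/2$ satisfies $\underline g_t-\Delta\underline g=0$ and $\pa_\nu\underline g=0\le\mu(t)\underline g$, hence is a sub-solution. The comparison principle for the Robin problem gives $g\ge d_0/2$ on $\R_+^n\times(0,T)$. Plugging this lower bound into the representation formula (see below) produces a strictly positive perturbation of $d_0/2$, yielding $g(x,t)>d_0/2$ on $\R_+^n\times(0,T)$.

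For the upper bound, let $G(x,y,t)=(4\pi t)^{-n/2}\bigl[e^{-|x-y|^2/4t}+e^{-|x-y^*|^2/4t}\bigr]$ with $y^*=(y',-y_n)$ be the Neumann heat kernel on $\R_+^n$. A standard Green-identity computation (using $\pa_{y_n}G|_{y_n=0}=0$ and $-\pa_{x_n}g=\mu g$ on $\pa\R_+^n$) yields the Duhamel formula
\[
 g(x,t)=\frac{d_0}{2}+\int_0^t\!\int_{\pa\R_+^n}G(x,y',t-s)\,\mu(s)\,g(y',s)\,dy'\,ds,
\]
where $\int_{\R_+^n}G(x,y,t)dy=1$ has been used. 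For the boundary integral,
\[
 \int_{\pa\R_+^n}G(x,y',t-s)\,dy'=\frac{1}{\sqrt{\pi(t-s)}}\,e^{-x_n^2/4(t-s)}\le\frac{1}{\sqrt{\pi(t-s)}}.
\]
Setting $\tilde M(t):=\sup_{0\le\tau\le t}\|g(\cdot,\tau)\|_\infty$, using $T-s\ge t-s$ (so $(T-s)^{-p/2}\le(t-s)^{-p/2}$ since $p>0$), and then $\int_0^t(t-s)^{-(p+1)/2}ds=\frac{2}{1-p}t^{(1-p)/2}$ (which is finite because $p<1$), we obtain
\[
 \tilde M(t)\le\frac{d_0}{2}+C\,T^{(1-p)/2}\,\tilde M(t),\qquad C:=\frac{4\kappa}{d_0\sqrt{\pi}(1-p)}.
\]
Choosing $T_0$ so that $CT_0^{(1-p)/2}<1/2$, we conclude $\tilde M(t)\le\frac{d_0/2}{1-CT^{(1-p)/2}}<d_0$ for all $t\in(0,T)$ whenever $T<T_0$.

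The main technical point is the integrability of the kernel $(T-s)^{-p/2}(t-s)^{-1/2}$ in $s$ uniformly in $t\in(0,T)$; the hypothesis $p<1$ is exactly what makes this quantity $O(T^{(1-p)/2})$, and hence what allows the implicit $\tilde M(t)$ term to be absorbed for $T$ small. The rest is bookkeeping: justifying the representation formula against the singular-in-$t$ (but locally bounded on $[0,T)$) coefficient $\mu(t)$, which is routine since on $[0,T-\delta]$ the coefficient is bounded and one then passes to the limit $\delta\to0$.
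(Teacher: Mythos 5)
Your proof is correct and follows essentially the same route as the paper: a Duhamel representation against the Neumann heat kernel, a uniform bound on the boundary integral of the kernel, and absorption of the implicit $\sup|g|$ term using the integrability guaranteed by $p<1$, together with the constant sub-solution $d_0/2$ for the lower bound. The only difference is cosmetic—you write out the explicit Neumann kernel and clean up the paper's (slightly garbled) intermediate bound $\int_0^t(T-\tau)^{-(p+1)/2}d\tau$ by instead using $(T-s)^{-p/2}\le(t-s)^{-p/2}$, arriving at the same $O(T^{(1-p)/2})$ factor.
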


\begin{proof}
Let $G_N(x,\xi,t)$ be the heat kernel on $\R_+^n$ with Neumann boundary condition.
Then $g(x,t)$ is expressed by
\[
 g(t) =
 \int_{\R_+^n}G(x,\xi,t)\left( \frac{d_0}{2} \right)d\xi
+
 \left( \frac{2\kappa}{d_0} \right)
 \int_0^td\tau\int_{\pa\R_+^n}G_N(x,\xi',t-\tau)
 \frac{g(\xi',\tau)}{(T-\tau)^{p/2}}d\xi'.
\]
Let $g_\infty(t)=\sup_{(x,\tau)\in\R_+^n\times(0,t)}|g(x,\tau)|$.
Then
since $p<1$,
we get
\[
\begin{array}{lll}
\dis
 g_\infty(t)
\hspace{-2mm}&\leq&\hspace{-2mm} \dis
 \frac{d_0}{2} + g_\infty(t)\left( \frac{2\kappa_0}{d_0} \right)
 \int_0^t(T-\tau)^{-p/2}d\tau\int_{\pa\R_+^n}G_N(x,\xi',t-\tau)d\xi'
\\[4mm]
\hspace{-2mm}&\leq&\hspace{-2mm} \dis
 \frac{d_0}{2} + cg_\infty(t)\int_0^t(T-\tau)^{-(p+1)/2}d\tau
\\[4mm]
\hspace{-2mm}&\leq&\hspace{-2mm} \dis
 \frac{d_0}{2} + cT^{(1-p)/2}g_\infty(t).
\end{array}
\]
Therefore
since $p<1$,
the first statement is proved.
Furthermore
since $g_1(x,t)\equiv d_0/2$ is a sub-solution,
we obtain the second statement.
\end{proof}

Next
we provide the estimate in $O_{\text{Ext}}^{(\text{II})}$.

\begin{pro}\label{Exterior2-pro}
There exists $s_0>0$ such that if $s_1>s_0$, $|d|<\epsilon_1e^{-\lambda^*s_1}$ and
$\varphi(y,s)\in A_{s_1,s_2}$,
then it holds that
\[
 \varphi(y,s) \leq \left( \frac{1+m_0}{2} \right)U_\infty(y)
\hspace{5mm}\mathrm{for}\ (x,t)\in O_{\mathrm{Ext}}^{(\mathrm{II})}.
\]
\end{pro}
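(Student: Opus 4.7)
The plan is to pass to the original variables $u(x,t)=(T-t)^{-m/2}\varphi((T-t)^{-1/2}x,t)$ with $T=e^{-s_1}$, so that $O_{\mathrm{Ext}}^{(\mathrm{II})}$ corresponds to the cylinder $\Sigma=\{|x|>r_1\}\times(0,T')$ and the goal becomes $u(x,t)\leq\frac{1+m_0}{2}U_\infty(x)$ on $\Sigma$. The parabolic boundary of $\Sigma$ is easy to control: on $\{t=0\}$, estimate \eqref{u_0T-eq} shows that $u_0$ is supported in $|x|<T^{1/2-\varrho}+T^{1/2}$, which lies strictly inside $|x|<r_1$ once $s_1$ is sufficiently large, so $u_0\equiv0$ on $\{|x|>r_1\}$. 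On the lateral face $|x|=r_1$ (the common endpoint of $O_{\mathrm{Ext}}^{(\mathrm{I})}$ and $O_{\mathrm{Ext}}^{(\mathrm{II})}$), Proposition \ref{Exterior1-pro} provides the data bound $u(x,t)\leq\frac{1+3m_0}{4}U_\infty(x)$.

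The super-solution is built directly from the Hardy ground state $L(x)=e_H(\theta)r^{-(n-2)/2}$, which is harmonic on $\R_+^n$ and satisfies $\pa_\nu L=c_H r^{-1}L$ on $\pa\R_+^n$. I will set
\[
 W(x,t)=\alpha L(x),\qquad \alpha=\frac{1+3m_0}{2(1+m_0)}.
\]
The parabolic super-solution property $W_t-\Delta W\geq0$ is automatic since $L$ is harmonic. The boundary super-solution property $\pa_\nu W\geq W^q$ reduces, via $\pa_\nu W=c_H r^{-1}W$, to $W^{q-1}\leq c_H r^{-1}$; writing $W=\alpha L$ this becomes
\[
 \alpha^{q-1}e_H(\theta)^{q-1}r^{-(n-2)(q-1)/2}\leq c_H r^{-1}.
\]
Since $q$ JL-supercritical forces $q>q_S=n/(n-2)$, we have $(n-2)(q-1)/2>1$, so the right-hand side dominates for all $r\geq r_1$ provided $r_1$ is taken large enough. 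The parabolic-boundary comparison is then immediate: $W\geq0=u_0$ at $t=0$, while the equality $L(r_1)=\frac{1+m_0}{2}U_\infty(r_1)$ from \eqref{r_1Def-eq} yields $W(x,t)|_{|x|=r_1}=\frac{1+3m_0}{4}U_\infty(r_1)$, matching the lateral datum from Proposition \ref{Exterior1-pro}.

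Applying a standard parabolic comparison principle on $\Sigma$ then gives $u(x,t)\leq\alpha L(x)$ throughout. To obtain the sharper constant $\frac{1+m_0}{2}$, one enlarges $r_1$ (compatibly with \eqref{r_1Def-eq}) so that $L(x)/U_\infty(x)\leq(1+m_0)^2/(1+3m_0)$ for $|x|\geq r_1$; this is feasible because $L/U_\infty\sim r^{-((n-2)/2-m)}\to0$ at infinity, again by $q>q_S$. Consequently $u\leq\alpha L\leq\frac{1+m_0}{2}U_\infty$ on $\Sigma$, and the homogeneity $U_\infty((T-t)^{-1/2}x)=(T-t)^{m/2}U_\infty(y)$ transports this bound back to the self-similar variables and yields the stated inequality.

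The main obstacle is reconciling the lateral datum $\frac{1+3m_0}{4}U_\infty$ on $|x|=r_1$ with the strictly sharper target $\frac{1+m_0}{2}U_\infty$ inside $\Sigma$; the argument above succeeds only because the ratio $L/U_\infty$ drops from $\frac{1+m_0}{2}$ at $|x|=r_1$ to $\leq\frac{(1+m_0)^2}{1+3m_0}$ further out, and this forces $r_1$ to be chosen with a specific numerical margin beyond the bare minimum in \eqref{r_1Def-eq}. A secondary technical point is justifying the comparison principle on the unbounded cylinder $\Sigma$, but this is standard: the a priori bound $u\leq m_0 U_\infty$ built into the definition of $A_{s_1,s_2}$ already forces sufficient decay of $u$ at spatial infinity.
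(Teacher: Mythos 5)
There is a genuine gap in your choice of comparison domain. You run the comparison on $\Sigma=\{|x|>r_1\}\times(0,T')$ and take as lateral boundary data the bound $u\leq\frac{1+3m_0}{4}U_\infty$ from Proposition \ref{Exterior1-pro}. But your target bound $\frac{1+m_0}{2}U_\infty$ is strictly \emph{smaller} than that datum (since $m_0>1$), and a continuous super-solution cannot jump from $\frac{1+3m_0}{4}U_\infty$ at $|x|=r_1$ to $\frac{1+m_0}{2}U_\infty$ just inside. Concretely, $W=\alpha L$ with $\alpha=\frac{1+3m_0}{2(1+m_0)}$ either equals $\frac{1+3m_0}{4}U_\infty$ at $r_1$ (when $L(r_1)=\frac{1+m_0}{2}U_\infty(r_1)$), in which case $W>\frac{1+m_0}{2}U_\infty$ in a neighborhood of $|x|=r_1$ and the final inequality fails there; or, if you ``enlarge $r_1$'' so that $L/U_\infty\leq(1+m_0)^2/(1+3m_0)$ for $|x|\geq r_1$, then $W(r_1)\leq\frac{1+m_0}{2}U_\infty(r_1)<\frac{1+3m_0}{4}U_\infty(r_1)$ and $W$ does not dominate $u$ on the lateral face, so the comparison principle is inapplicable. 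You cannot have it both ways, and with this domain the method cannot produce a bound sharper than the datum near $|x|=r_1$. The paper sidesteps this by performing the comparison on the much larger set $\{(x,t):|x|>K(T-t)^{(1+\omega)/2},\ 0<t<T'\}$: there the lateral datum is $u<U_\infty$ (coming from the inner-region part of $A_{s_1,s_2}$), while $L>U_\infty$ on that lateral face for small $T$, so $L$ dominates the data without any $\alpha$-tuning; the conclusion $u<L$ on the whole large region combined with $L<\frac{1+m_0}{2}U_\infty$ for $|x|>r_1$ (\eqref{r_1Def-eq}) then gives the claim.

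Two secondary points. First, \eqref{r_1Def-eq} is a strict inequality for $|x|>r_1$; it does not supply the equality $L(r_1)=\frac{1+m_0}{2}U_\infty(r_1)$ you invoke, and the ratio $L/U_\infty=(e_H(\theta)/V(\theta))r^{m-(n-2)/2}$ also depends on $\theta$, so there is no single ``value at $|x|=r_1$''. Second, the paper does not treat the nonlinear boundary condition $\pa_\nu u=u^q$ directly: it uses the a priori bound $u\leq m_0U_\infty$ and the definition \eqref{m_0-eq} to write $\pa_\nu u\leq(m_0U_\infty)^{q-1}u\leq c_Hr^{-1}u$, turning the comparison into one against the \emph{linear} operator solved by $L$. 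Your version with $\pa_\nu W\geq W^q$ can work, but it is then a nonlinear comparison and you should state which comparison lemma you are using; the linearization is cleaner and is what makes $L$ (rather than $\alpha L$ or some nonlinearly-tuned barrier) the natural choice.
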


\begin{proof}
Throughout this proof,
we assume $\varphi(y,s)\in A_{s_1,s_2}$.
Let $u(x,t)$, $T$ and $T'$ be as in the proof of Proposition \ref{Exterior1-pro}.
From \eqref{u_0T-eq},
there exists $T_1>0$ such that
if $T<T_1$,
then it hold that
\[
 u_0(x) \leq L(x)
\hspace{5mm}\text{for } x\in\R_+^n
\]
Furthermore
since $\varphi(y,s)\in A_{s_1s_2}$,
there exists $s_0>0$ such that if $s_1>s_0$,
it holds that $\varphi(y,s)\leq m_0U_\infty(y)$
for $(y,s)\in\R_+^n\times(s_1,s_2)$.
This implies
\[
 u(x,t) \leq m_0U_\infty(x)
\hspace{5mm}\text{for } (x,t)\in\R_+^n\times(0,T').
\]
Therefore
the boundary condition on $\pa\R_+^n$ is given by
\[
 \pa_\nu u = u^q \leq (m_0U_\infty)^{q-1}u \leq c_Hr^{-1}u
\hspace{5mm}\text{for } t\in(0,T').
\]
Next
we compare the values of $u(x,t)$ and $L(x)$ on $|x|=(T-t)^{(1+\omega)/2}$ for $0<t<T'$.
Since
$\varphi(y,s)\in A_{s_1s_2}$, 
we recall that $u(x,t)<U_\infty(x)$ for $|x|=K(T-t)^{(1+\omega)/2}$ and $0<t<T'$.
Furthermore
we easily see that $L(x)>U_\infty(x)$ for $|x|=K(T-t)^{(1+\omega)/2}$ and $0<t<T$
if $T$ is small enough.
Therefore
there exists $T_2>0$ such that if $T<T_2$
\[
 u(x,t) < L(x)
 \hspace{5mm}\text{for } |x|=K(T-t)^{(1+\omega)/2},\ 0<t<T'.
\]
Thus
applying a comparison lemma in ${\cal O}_{T'}=\{(x,t);\ |x|>K(T-t)^{(1+\omega)/2},\ 0<t<T'\}$,
we obtain if $T<\min\{T_1,T_2,e^{-s_0}\}$
\[
 u(x,t) < L(x)
 \hspace{5mm}\text{for } (x,t)\in{\cal O}_{T'}.
\]
Since $r_1>K(T-t)^{(1+\omega)/2}$,
we obtain from \eqref{r_1Def-eq}
\[
 u(x,t)\leq \left( \frac{1+m_0}{2} \right)U_\infty(x)
\hspace{5mm}\text{for } |x|>r_1,
\]
which completes the proof.
\end{proof}

\appendix
\section{Appendix}

In Appendix,
we provide complete eigenfunctions and eigenvalues of
\begin{eqnarray}\label{A1-eq}
\begin{cases}
\dis
 -\left( \Delta - \frac{y}{2}\cdot\nabla - \frac{m}{2} \right)\phi = \lambda \phi 
& \text{in } \R_+^n,
\\[2mm] \dis
 \pa_{\nu}\phi = {\cal K}r^{-1}\phi
& \text{on } \pa\R_+^n.
\end{cases}
\end{eqnarray}
Here
we restrict ourselves to $y_n$-axial symmetric functions.
We again introduce the following eigenvalue problem on the unit sphere introduced in \eqref{Eigen-eq}.
\begin{equation}\label{A2-eq}
\begin{cases}
\dis
 -\Delta_S e = \kappa e
& \text{in } (0,\pi/2),
\\ \dis
 \pa_{\theta}e = {\cal K}e
& \text{on } \{\pi/2\}.
\end{cases}
\end{equation}
Let $\kappa_i$, $e_i(\theta)$ be the $i$-th eigenvalue,
the $i$-th eigenfunction with $\|e_i\|_{L^2(S_+^{n-1})}=1$.
Then
we find that
$L_S^2(S_+^{n-1})$ is spanned by $\{e_i(\theta)\}_{i\in\N}$.
Therefore
any $y_n$-axial symmetric continuous function $f(y)$ is expressed by
\[
 f(y) = \sum_{i=1}^\infty a_i(r)e_i(\theta).
\]
Plugging this expression into \eqref{A1-eq},
the eigenvalue problem is reduced to
\begin{equation}\label{A3-eq}
 -\left(
 a'' + \frac{n-1}{r}a' - \frac{\kappa_i}{r^2}a' - \frac{r}{2}a' - \frac{m}{2}a
 \right)
= \lambda a,
\hh r>0.
\end{equation}
Let $a_{ij}(r)$ and $\lambda_{ij}$ be the $j$-th eigenfunction with
$\int_0^\infty a_{ij}(r)^2e^{-r^2/4}r^{n-1}dr=1$ and the $j$-th eigenvalue of \eqref{A3-eq}.
Therefore
all eigenfunctions of \eqref{A3-eq} are expressed by
\[
 \phi_{ij}(y)=e_i(\theta)a_{ij}(r)
\]
and its eigenvalue is given by $\lambda_{ij}$.
Then
the eigenvalue $\lambda_{ij}$ is explicitly expressed in terms of $\kappa_i$.
Here
from Lemma \ref{Heigenvalue-lem},
we recall that $\kappa_1<0$ and $\kappa_i>0$ for $i\geq2$.

\begin{lem}\label{2A-lem}
Let $a_{ij}(r)$ and $\lambda_{ij}$ be the $j$-th eigenfunction and eigenvalue of \eqref{A3-eq}.
Then it holds that
\[
 a_{ij}(r) =
 \begin{cases}
 \dis
 A_{1j}r^{-\gamma}M\left( -(j-1),-\gamma+\frac{n}{2},\frac{r^2}{4} \right)
 & \text{if } i=1,
 \\[4mm] \dis
 A_{ij}r^{\gamma_i}M\left( -(j-1),\gamma_i+\frac{n}{2},\frac{r^2}{4} \right)
 & \text{if } i\geq2,
 \end{cases}
\hh
 \lambda_{ij} =
 \begin{cases}
 \dis
 -\frac{\gamma}{2}+\frac{m}{2}+j-1 & \text{if } i=1,
 \\[4mm] \dis
 \frac{\gamma_i}{2}+\frac{m}{2}+j-1 & \text{if } i\geq2,
 \end{cases}
\]
where $A_{ij}$ is a normalization constant,
$\gamma\in(0,(n-2)/2)$ is a root of
\[
 \gamma^2-(n-2)\gamma = \kappa_1
 \hh (\kappa_1<0)
\]
and $\gamma_i>0$ $(i\geq2)$ is a root of
\[
 \gamma_i^2+(n-2)\gamma_i = \kappa_i
 \hh (\kappa_i>0).
\]
Furthermore
the following asymptotic formula holds
\[
\begin{array}{ll}
\dis
 a_{ij}(r) =
 \begin{cases}
 \dis
 (c_{1j}+o(1))r^{-\gamma} & \text{if } i=1,
 \\ \dis
 (c_{ij}+o(1))r^{\gamma_i}& \text{if } i\geq2,
 \end{cases}
 \hh (r\sim0)
\\[8mm] \dis
 a_{ij}(r) =
 \begin{cases}
 \dis
 (\bar{c}_{1j}+o(1))r^{2\lambda_{1j}-m} & \text{if } i=1,
 \\ \dis
 (\bar{c}_{ij}+o(1))r^{2\lambda_{ij}-m}& \text{if } i\geq2.
 \end{cases}
 \hh (r\sim\infty)
\end{array}
\]
\end{lem}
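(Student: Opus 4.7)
The plan is to reduce the radial ODE \eqref{A3-eq} to Kummer's confluent hypergeometric equation by a change of variables that eliminates the singular $r^{-2}$ potential. Specifically, I set $a(r)=r^{\alpha}v(z)$ with $z=r^2/4$, where $\alpha$ satisfies the indicial equation $\alpha^2+(n-2)\alpha=\kappa_i$ read off from the balance of $r^{\alpha-2}$ terms. Substituting this ansatz into \eqref{A3-eq} and dividing through by $r^\alpha$, the singular terms cancel identically and $v$ satisfies
\[
 zv''+\left(\alpha+\frac{n}{2}-z\right)v'+\left(\lambda-\frac{\alpha+m}{2}\right)v=0,
\]
which is Kummer's equation with parameters $b=\alpha+n/2$ and $a_{*}=(\alpha+m)/2-\lambda$.

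Next I would pin down the correct indicial root. For $i=1$ both roots of $\alpha^2+(n-2)\alpha=\kappa_1$ are negative since $\kappa_1<0$; I select the less singular one, $\alpha=-\gamma$ with $\gamma\in(0,(n-2)/2)$. For $i\geq 2$ the two roots have opposite signs and I select the positive one, $\alpha=\gamma_i$. The other indicial root is $\alpha'=-(n-2)-\alpha$, and a short computation shows $\alpha'<-(n-2)/2$ in each case, so the corresponding solution of \eqref{A3-eq} fails to lie in $H_\rho^1$ near $r=0$ (the gradient norm $\int_0|y|^{2\alpha'-2+n-1}\,dy$ diverges). This rules it out as an eigenfunction of \eqref{A1-eq} and forces $v$ to be proportional to the regular Kummer solution $M(a_{*},b,z)$.

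Integrability at infinity then selects the eigenvalues. The entire function $M(a_{*},b,z)$ grows like $\frac{\Gamma(b)}{\Gamma(a_{*})}e^{z}z^{a_{*}-b}$ for large $z$ unless $a_{*}$ is a non-positive integer, and the Gaussian weight $e^{-r^2/4}=e^{-z}$ in $\int a^2 e^{-r^2/4}r^{n-1}\,dr$ cannot absorb this growth. Hence $a_{*}=-(j-1)$ for some $j\in\N$, in which case $M(-(j-1),b,z)=\sum_{k=0}^{j-1}\frac{(-(j-1))_k}{(b)_k\,k!}z^k$ is a polynomial of degree $j-1$. Solving $a_{*}=-(j-1)$ for $\lambda$ yields $\lambda_{ij}=(\alpha+m)/2+(j-1)$, which after substituting $\alpha=-\gamma$ or $\alpha=\gamma_i$ reproduces the eigenvalue formulas. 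The two asymptotic expansions follow immediately from the polynomial form: the $k=0$ term dominates as $z\to 0$ giving $a_{ij}(r)\sim A_{ij}r^{\alpha}$, while the top-degree term dominates as $z\to\infty$ giving $a_{ij}(r)\sim \bar{c}_{ij}r^{\alpha+2(j-1)}=\bar{c}_{ij}r^{2\lambda_{ij}-m}$, since $\alpha+2(j-1)=2\lambda_{ij}-m$ by the eigenvalue formula.

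The main subtlety, and in my view the principal obstacle, is verifying that this explicit family exhausts every eigenfunction of \eqref{A1-eq}; this is not a priori guaranteed by the formal separation of variables alone. However, it is already delivered by Lemma \ref{OperatorA-eq}: compactness and self-adjointness of $(-A+\mu)^{-1}$ on $L_\rho^2$ yield a complete orthonormal basis of eigenfunctions with eigenvalues accumulating at $+\infty$. Restricting to the axial-symmetric subspace and using that $\{e_i(\theta)\}_{i\in\N}$ spans the axial part of $L^2(S_+^{n-1})$, every eigenmode must be of the product form $e_i(\theta)a_{ij}(r)$, which by the preceding analysis is exactly the list given in the statement.
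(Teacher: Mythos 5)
Your proof is correct, and it follows the same route as the references the paper cites in lieu of an argument (Mizoguchi, Prop.~2.2; Seki): reduce \eqref{A3-eq} to Kummer's confluent hypergeometric equation by the substitution $a=r^\alpha v(r^2/4)$, discard the singular indicial root $\alpha'=-(n-2)-\alpha<-(n-2)/2$ because the corresponding solution fails to lie in $H_\rho^1$ near the origin, and derive the eigenvalue quantization $a_*=-(j-1)$ from the requirement that $M(a_*,b,z)$ be a polynomial so the Gaussian weight controls growth at infinity. Your closing paragraph on exhausting all eigenfunctions of the full operator $A$ via the compactness in Lemma \ref{OperatorA-eq} goes slightly beyond what Lemma \ref{2A-lem} itself asserts (which concerns only the one-dimensional problem \eqref{A3-eq}), but it is a correct and relevant observation tying the appendix back to Section \ref{Linearizedproblems-sec}.
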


\begin{proof}
By the same way as in the proof of Proposition 2.2 in \cite{Mizoguchi04}
(see also p.\h{p}.\h8{\h-\h}9 in \cite{Seki}),
we obtain the conclusion.
\end{proof}

\begin{lem}\label{3A-lem}
It holds that $\lambda_{ij}\geq-(\gamma-m)/2$ for any $(i,j)\in\N^2$.
\end{lem}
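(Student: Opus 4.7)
The plan is to exploit the explicit formulas for $\lambda_{ij}$ already furnished by Lemma \ref{2A-lem}, so the result reduces to an elementary inequality in each of the two cases $i=1$ and $i \geq 2$.

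First I would treat the case $i = 1$. By Lemma \ref{2A-lem},
\[
 \lambda_{1j} = -\frac{\gamma}{2} + \frac{m}{2} + (j-1) = -\frac{\gamma-m}{2} + (j-1).
\]
Since $j \geq 1$, this immediately gives $\lambda_{1j} \geq -(\gamma-m)/2$, with equality precisely at $j=1$. So this case is free.

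Next I would handle $i \geq 2$. Again by Lemma \ref{2A-lem},
\[
 \lambda_{ij} = \frac{\gamma_i}{2} + \frac{m}{2} + (j-1),
\]
where $\gamma_i > 0$ is the positive root of $\gamma_i^2 + (n-2)\gamma_i = \kappa_i$. The required inequality $\lambda_{ij} \geq -(\gamma-m)/2$ is equivalent to $\gamma_i + \gamma \geq -2(j-1)$, which is trivial since $\gamma_i, \gamma > 0$ and $j \geq 1$. In fact one gets the much stronger bound $\lambda_{ij} > m/2 > 0$.

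There is no real obstacle here; the only thing to double check is the sign convention for $\gamma_i$ in the case $i \geq 2$ (positive by its definition via $\kappa_i > 0$, as guaranteed by Lemma \ref{Heigenvalue-lem}), and the fact that $\gamma \in (0, (n-2)/2)$ so that the quantity $(\gamma-m)/2$ is the one of interest. Both are already recorded in Lemma \ref{2A-lem}, so the proof can be stated in just a few lines.
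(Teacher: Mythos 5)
Your proof is correct and relies on the same ingredient as the paper, namely the explicit formulas for $\lambda_{ij}$ from Lemma \ref{2A-lem}. The paper's own argument is shorter: it simply computes $\lambda_{11}=-(\gamma-m)/2$ and then invokes that $\lambda_{11}$ is the smallest eigenvalue, so all others dominate it. Your case-by-case check ($i=1$ versus $i\geq2$) is marginally longer but arguably more self-contained, since it does not rest on the (implicit) fact that $\lambda_{11}$ is the minimum over all $(i,j)$ --- a point which, if one were being fully careful, would itself need a word of justification (e.g., exactly the monotonicity in $j$ and the sign of $\gamma_i$ that you verify). Either route is fine.
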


\begin{proof}
In Lemma \ref{2A-lem},
we take $i=j=1$.
Then we easily see that $\lambda_{11}=-(\gamma-m)/2$.
Since $\lambda_{11}$ is the first eigenvalue,
the proof is completed.
\end{proof}





\begin{thebibliography}{99}

\bibitem{Chlebik-F}
M. Chlebik, M. Fila,
On the blow-up rate for the heat equation with a nonlinear boundary condition,
Math. Methods Appl. Sci. {\bf 23} no. 15 (2000) 1323-2330.

\bibitem{Chlebik-F2}
M. Chlebik, M. Fila,
Some recent results on blow-up on the boundary for the heat equation,
Evolution equations: existence, regularity and singularities,
Banach Center Publ {\bf 52} (2000) 61-71.

\bibitem{Deng-F-L}
K. Deng, M. Fila, H. A. Levine,
On critical exponents for a system of heat equations coupled in the boundary conditions,
Acta Math. Univ. Comenian. {\bf 63} no. 2 (169-192).

\bibitem{Davila-D-M}
 D$\acute{{\rm a}}$vila, J., Dupaigne, L., Montenegro, M.
 The extremal solution of a boundary reaction problem,
 Commun. Pure Appl. Anal. {\bf 7} (2008) 795-817.

\bibitem{Fila-Q}
 M. Fila, P. Quittner,
 The blowup rate for the heat equation with a nonlinear boundary condition,
 Math. Meth. in Appl. Sci. {\bf 14} (1991) 197-205.

\bibitem{Grigoryan}
A. Grigor'yan,
Heat kernel an Analysis on Manifolds,
AMS/IP Studies in Advanced Mathematics, Vol. 47 (2009).

\bibitem{Grigoryan-SC}
 A. Grigor'yan, L. Saloff-Coste,
 Stability results for Harnack inequalities,
 Ann. Inst. Fourier, Grenoble {\bf 55} 3 (2005) 825-890.

\bibitem{Giga-K}
Y. Giga, R. Kohn,
Characterizing blowup using similarity variables,
Indiana Univ. Math. J. {\bf 36} no.1 (1987) 1-40.

\bibitem{Giga-M-S}
 Y. Giga, S. Matsui, S. Sasayama,
 Blow up rate for semilinear heat equations with subcritical nonlinearity,
 Indiana Univ. Math. J. {\bf 53} no. 2 (2004) 483-514.

\bibitem{HaradaBlow-up}
J. Harada,
Single point blow-up solutions to the heat equation with nonlinear boundary conditions,
to appear in Differential Equations $\&$ Applications.

\bibitem{HaradaBlow-up2}
J. Harada,
Blow-up behavior of solutions to the heat equation with nonlinear boundary conditions,
preprint.

\bibitem{Harada}
J. Harada,
Positive solutions to the Laplace equation with nonlinear boundary conditions
on the half space,
preprint.

\bibitem{Herrero-V}
M. A. Herrero, J. J. L. Vel\'azuquez,
Explosion de solutions d'\'equations paraboliques semilin\'eaires
supercritiques, C.R. Acad. Sci. Paris S\'er. I {\bf 319} (1994) 141-145.

\bibitem{Herrero-V2}
M. A. Herrero, J. J. L. Vel\'azuquez,
A blow up result for semilinear heat equations in the supercritical case,
preprint


\bibitem{Ishige-K}
K. Ishige, T. Kawakami,
Global solutions of the heat equation with a nonlinear boundary condition,
Calc. Var. Partial Differential Equations {\bf 39} (2010) 429-457.


\bibitem{Lloyd}
N. G. Lloyd,
Degree theory,
Cambridge Tracts in Mathematics, No. 73.
Cambridge University Press, Cambridge-New York-Melbourne (1978).

\bibitem{Mizoguchi}
N. Mizoguchi,
Type-II blowup for a semilinear heat equation,
Adv. Differential Equations {\bf 9} no. 11-12 (2004) 1279-1316. 

\bibitem{Mizoguchi04}
N. Mizoguchi,
Blowup behavior of solutions for a semilinear heat equation with supercritical nonlinearity,
J. Differential Equations {\bf 205} no. 2 (2004) 298-328.

\bibitem{Moschini-T}
L. Moschini, A. Tesei,
Parabolic Harnack inequality for the heat equation with inverse-square potential,
Forum Math. {\bf 19} (2007) 407-427.
 
\bibitem{Quittner-R}
P. Quittner, W. Reichel,
Very weak solutions to elliptic equations with nonlinear
Neumann boundary conditions,
Calc. Var. Partial Differential Equations
{\bf 32} (2008) 429-452.

\bibitem{Quittner-S}
P. Quittner, P. Souplet,
Blow-up rate of solutions of parabolic poblems with nonlinear boundary conditions,
Discrete Contin. Dyn. Syst. Ser. S {\bf 5} no. 3 (2012) 671-681.

\bibitem{Coste}
L. Saloff-Coste,
Aspects of Sobolev-Type Inequalities,
London Mathematical Society Lecture Note Series, 289.
Cambridge University Press, Cambridge (2002).

\bibitem{Seki}
Y. Seki,
On exact dead-core rates for a semilinear heat equation with strong absorption,
UTMS {\bf23} (2009) 1-46. 
\end{thebibliography}
\end{document}